\documentclass[twoside,11pt,reqno]{amsart}
\usepackage{xypic, tensor}
\usepackage{mathtools}
\usepackage[utf8]{inputenc}
\usepackage[normalem]{ulem}

\usepackage[bookmarks,
bookmarksnumbered,%
 colorlinks=true,%
 linkcolor=red,%
 citecolor=blue,%
 filecolor=blue,%
 urlcolor=blue,%
]
{hyperref}

\usepackage{amssymb, amsmath, latexsym}
\usepackage{epigraph}

\makeatletter
\@namedef{subjclassname@2020}{\textup{2020} Mathematics Subject Classification}
\makeatother

\DeclareMathAlphabet{\curly}{OT1}{rsfs}{n}{it}
 
\DeclareMathOperator{\Aut}{Aut}
   
\DeclareMathOperator{\codim}{codim}

\DeclareMathOperator{\rank}{rank}

\DeclareMathOperator{\im}{Im}

\DeclareMathOperator{\card}{Card}

\DeclareMathOperator{\Fix}{Fix}
\DeclareMathOperator{\tr}{tr}

\DeclareMathOperator{\inc}{in}

\DeclareMathOperator{\pr}{pr}
\DeclareMathOperator{\coker}{Coker}

\DeclareMathOperator{\Sq}{Sq}
\DeclareMathOperator{\Bl}{Bl}
\DeclareMathOperator{\Tors}{Tors}
\DeclareMathOperator{\interior}{Int}

\DeclareMathOperator{\Conj}{c}

\DeclareMathOperator{\gr}{gr}
\DeclareMathOperator{\Dim}{dim}

\DeclareMathOperator{\Fl}{Fl}
\DeclareMathOperator{\id}{Id}
\DeclareMathOperator{\Conf}{Conf}

\DeclareMathOperator{\defi}{\mathfrak D}

\def\ra{\rightarrow}
\def\cal{\mathcal} 
\def\wt{\widetilde}

\def\CC{\mathbb{C}}
\def\PP{\mathbb{P}}

\def\ZZ{\mathbb{Z}}
\def\NN{\mathbb{N}}  
\def\RR{\mathbb{R}}

\def\FF{\mathbb{F}}

\def\I{{\mathcal{I}}}

\def\EE{\cal E}
\def\ff{\cal F}

\def\aa{\cal A} 
\def\OO{\cal O} 
\def\BB{\cal B}

\def\mm{\cal M}

\def\s-{\setminus}

\def\HH{\mathbb{H}}

\newtheorem{main}{Theorem}

\newtheorem{thm}{Theorem}[section]
\newtheorem{prop}[thm]{Proposition}
\newtheorem{lemma}[thm]{Lemma}
\newtheorem{defn}[thm]{Definition}

\newtheorem{cor}[thm]{Corollary}
\newtheorem{coro}[main]{Corollary}
\newtheorem{lem}[thm]{Lemma}

\newtheorem{rmk}[thm]{Remark}

\newtheorem{examples}[thm]{Example}

\numberwithin{equation}{section}

\newenvironment{xpl}{\mbox{ }\\{\bf  Examples}\mbox{ }}{
}

\newenvironment{notations}{\mbox{ }\\{\bf  Notations and conventions:}\mbox{ }}

\begin{document}

\title[Kalinin Effectivity and Wonderful Compactifications]
{Kalinin Effectivity and Wonderful Compactifications}

\author[Kharlamov]{Viatcheslav Kharlamov}

\address{
        IRMA UMR 7501, Strasbourg University, 7 rue Ren\'e-Descartes, 67084 Strasbourg Cedex,  FRANCE}

\email{kharlam@math.unistra.fr}

\author[R\u asdeaconu]{Rare\c s R\u asdeaconu}

\address{        
        Department of Mathematics,1326 Stevenson Center, Vanderbilt University, Nashville, TN, 37240, USA}
  \address{
	Institute of Mathematics of the Romanian Academy,  P.O. Box 1-764, Bucharest 014700,  Romania}

\keywords{real algebraic manifolds, Smith exact sequence, Smith-Thom maximality, Galois maximality, Kalinin effectivity}

\subjclass[2020]{Primary: 14P25; Secondary: 14C05, 14J99}

\email{rares.rasdeaconu@vanderbilt.edu}

\begin{abstract} 
We review the definition and main properties of Kalinin effectivity and describe 
methods for constructing effective spaces together with several examples. 
We analyze the Kalinin effectivity of wonderful compactifications and prove that 
the wonderful compactifications of hyperplane arrangements and of configuration 
spaces associated to Kalinin effective compact complex manifolds are themselves 
Kalinin effective. As an application, we show that the Deligne–Mumford space of 
real rational curves with marked points is effective. Finally, we apply Kalinin 
effectivity to study Smith–Thom maximality for Hilbert squares.
\end{abstract}

\maketitle

\thispagestyle{empty}

\setlength\epigraphwidth{.55\textwidth}
\epigraph{I would say that
mathematics is the science of skillful operations with concepts and rules invented 
just for this purpose.}{E.~Wigner - ``The Unreasonable Effectiveness of Mathematics 
in the Natural Sciences" \\}

\setcounter{tocdepth}{2}
\tableofcontents


\section{Introduction}


\subsection{Motivation and Context}


The main object of interest in this paper is the topology of the real loci of 
Deligne-Mumford spaces $\overline{\mathcal M}_{0,n}$ and, more generally, 
the topology of the real loci of so-called wonderful compactifications  \cite{li} of 
various arrangements.

Most of the previously obtained results on the topology of these real manifolds 
concern calculations of their Betti numbers and cohomology rings. In \cite{ehkr}, 
P.~Etingof, A.~Henriques, J.~Kamnitzer, and E.~Rains described 
the integral cohomology ring of the real locus of $\overline{\mathcal M}_{0,n}$ 
for the real model corresponding to a choice of real marked points. For the other 
real structures on $\overline{\mathcal M}_{0,n}$, the additive structure of the 
cohomology with $\FF_2$-coefficients was described by \"O.~Ceyhan \cite{ceyhan}
in terms of graph-homology. More recently, X.~Chen, P.~Georgieva, and A. Zinger 
\cite{cgz} computed  the rational cohomology ring of the Deligne-Mumford moduli 
space of real rational curves with conjugate marked points and showed that the 
integer homology has only $2$-torsion. In the case of the De Concini-Procesi models 
associated to arrangements of real projective spaces, E.~Rains computed in 
\cite{rains} the integral homology, modulo $2$-torsion of their real loci and described 
its multiplicative structure.

Motivated by the importance of Steenrod squares in understanding the topology 
of involutions and their fixed loci, we adopt a different but related point of view. 
Specifically, we apply Kalinin’s approach \cite{kalinin-eff, kalinin}, which relates 
the cohomology ring and the Steenrod algebra structure in equivariant cohomology 
to those of the fixed locus.


\subsection{Framework:  Smith Theory, Kalinin Effectivity, and Conjugation Spaces}


The topology of the real locus of complex manifolds equipped with an 
anti-holomorphic involution has long been studied using the classical methods 
of Smith theory.  A fundamental consequence of this theory is that the total Betti 
number of the fixed locus is bounded above by the total Betti number of the 
ambient complex manifold\footnote{Throughout this paper we will only consider 
cohomology with coefficients in $\FF_2.$}. When the equality is attained, the 
manifold is said to be Smith-Thom maximal. Another important tool in this context is 
the Borel-Serre spectral sequence together with its stabilized version specific to 
involutions, the Kalinin spectral sequence \cite{kalinin-ss, dik}. In this language, 
Smith-Thom maximality is equivalent to the collapse of the Kalinin spectral 
sequence at the first page, while another closely related condition, the Galois 
maximality, corresponds to collapse at the second page 
(see Section \ref{K-effectivity}).

Kalinin's notion of effective space originated from his work \cite{kalinin-ss} on 
the topology of real projective hypersurfaces and was further developed in 
\cite{kalinin-eff}. Informally, effective spaces are spaces equipped with an involution 
for which the limiting term of the Kalinin spectral sequence provides complete 
and functorial control over the cohomology ring and the Steenrod algebra structure 
of the fixed point set (see Theorem \ref{prop-def}). In particular, for Smith-Thom 
maximal effective spaces, this control becomes especially strong: their cohomology 
ring is isomorphic, as an algebra over the Steenrod algebra, to the cohomology 
ring of the fixed point set, as described in Theorem \ref{coh-conj_spaces}.

Kalinin introduced the notion of effective space in his 2002 paper \cite{kalinin-eff}. 
Three years later, J. C.~Hausmann, T.~Holm, and V. Puppe \cite{HHP} introduced 
the notion of conjugation space, with applications ranging from transformation 
groups to toric topology. It turns out, however, that the two theories meet in a very 
precise way: a conjugation space is an effective space in Kalinin’s sense satisfying 
Smith-Thom maximality. We prove this equivalence in Proposition \ref{equivalence}, 
thereby placing conjugation spaces within the framework of Kalinin effectivity.

The class of effective spaces is considerably broader than that of conjugation 
spaces. This additional flexibility makes it possible to analyze the topology of real 
loci in situations where Smith-Thom maximality fails. One of the main objectives 
of this paper is to identify natural geometric constructions that preserve effectivity 
and, when present, also preserve Smith-Thom or Galois maximality. In particular, 
we obtain new examples of Kalinin effective spaces and of Smith-Thom and Galois 
maximal spaces.


\subsection{Main Results and Applications} 


We begin by reviewing Kalinin’s spectral sequence and the notion of Kalinin 
effectivity for pairs, which provides the most convenient framework for the 
applications considered here.We also exhibit several new examples of effective 
spaces. The stability of Kalinin effectivity under blow-ups is then investigated 
and established for pairs satisfying a cohomological condition that we call 
stretchedness (see Section \ref{stretchedness}).

This stability result is subsequently applied to produce new examples of 
Kalinin effective and conjugation spaces among wonderful compactifications 
of arrangements, in the general setting introduced by Li \cite{li}. Our main 
result in this direction, proved in Section \ref{Wond-Compact}, gives a general 
criterion (see Theorem \ref{thm-stretched}) describing when the wonderful 
compactification of an arrangement equipped with a real structure is effective. 
As an application, in Section \ref{examples} we study the Kalinin effectivity of 
several wonderful compactifications of particular interest and obtain the following:


\begin{main}
\label{dcp-theorem}
The De Concini - Procesi wonderful compactification of any arrangement of real linear 
subspaces in $\PP^n$ is a conjugation space. 
\end{main}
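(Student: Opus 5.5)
The plan is to obtain the statement from the general criterion of Theorem \ref{thm-stretched}, together with the stability of effectivity and Smith--Thom maximality under blow-ups along stretched centers from Section \ref{stretchedness}. The De Concini--Procesi model of an arrangement $\mathcal G$ of real linear subspaces in $\PP^n$ is, following Li \cite{li}, obtained from $\PP^n$ by an iterated blow-up. One takes the building set $\mathcal G$, ordered compatibly with inclusion, and blows up successively the dominant transforms of its members, smallest dimension first. Since every subspace is real, each center is invariant under complex conjugation, and each intermediate blow-up carries an induced real structure. So the task is to check the hypotheses of Theorem \ref{thm-stretched} for this particular arrangement, and to check that Smith--Thom maximality propagates along the way. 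Together these give effectivity plus maximality, which by Proposition \ref{equivalence} means a conjugation space.

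The key steps, in order, are as follows.

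First, check the base case. The pair $(\PP^n, L)$ with $L$ a real projective linear subspace, and $\PP^n$ and $L$ themselves, are conjugation spaces. Their cohomology is generated by the hyperplane class $h$, which restricts to the class $a$ of $\RR P^n$ with $h\mapsto a$ halving degrees. Moreover $H^*(\PP^n)\to H^*(L)$ is surjective. This is the prototypical stretched situation: in the Kalinin spectral sequence the restriction is compatible with the filtration, and degree $2k$ goes to the $k$-th filtration level.

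Second, verify inductively that at each stage of Li's construction the center is stretched in the current space. The center is the dominant transform of some $L\in\mathcal G$. It is itself an iterated blow-up of a real projective space along dominant transforms of the intersections $L\cap L'$, which are again real linear. Li's theorem says the restriction map in cohomology from the intermediate space to the center is surjective. He gives explicit Keel-type presentations in which the generators are $h$ and the exceptional divisor classes $E_{L'}$, all real divisors. The real parts of these generators are the real hyperplane class and the real exceptional divisors, and the conjugation-space correspondence $x\mapsto x_{\RR}$ halving degrees is visibly compatible with restriction. This gives stretchedness.

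Third, apply the blow-up stability result. It gives that each successive blow-up is effective. It also gives that maximality is preserved, since the total Betti numbers add: the blow-up formula $H^*(\Bl_Z X)=H^*(X)\oplus\bigoplus_{i=1}^{c-1} H^*(Z)$ has the real counterpart $H^*(\Bl_{Z_\RR}X_\RR)=H^*(X_\RR)\oplus\bigoplus_{i=1}^{c-1}H^*(Z_\RR)$. Finally, conclude via Proposition \ref{equivalence}.

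The main obstacle is the second step: producing stretchedness uniformly along the whole induction. The centers are not the original linear subspaces but their dominant transforms, and these are themselves wonderful models. So the induction has to be organized over all arrangements simultaneously, that is, over the induced arrangements on each $L\in\mathcal G$. I would first try to prove the stronger inductive statement that every pair (wonderful model, dominant transform of a member of the building set) is a stretched conjugation pair. For the surjectivity and the compatible generators I would rely on Li's presentation of the cohomology ring by divisor classes.
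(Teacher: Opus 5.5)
Your architecture matches the paper's. You realize the model as an iterated equivariant blow-up, propagate effectivity and Smith--Thom maximality along stretched blow-ups, and conclude with Proposition~\ref{equivalence}. But you misplace the work. The paper's proof consists only of your first step. Every member of the arrangement generated by the $L_i$ is a real linear subspace $L$, and the restrictions $H^*(\PP^n)\to H^*(L)$ and $H^*(F(\PP^n))\to H^*(F(L))$ are onto. Also, $\PP^n$ and $L$ are conjugation spaces by Theorem~\ref{eff-cellular}. Hence the $G$-arrangement is stretched, maximal and effective, and Corollary~\ref{w-conj-space} finishes.

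Your ``main obstacle'' is exactly what Theorem~\ref{thm-stretched} already settles through Proposition~\ref{induction-stretch}. The device there is to carry the whole arrangement $\aa^{(k)}$ through the induction: all intersections, plus the traces $\wt A\cap E$ on each new exceptional divisor. Then every future center is a member of the current arrangement, so its stretchedness, effectivity and maximality are part of the inductive hypothesis. The one-step propagation is proved with Gitler's blow-up theorem \cite{gitler} (and its real version with $w_1$) together with Leray--Hirsch, separately on complex and real loci.

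Your replacement for this step is a genuinely different mechanism, and it can be made to work. You get complex surjectivity from a Li/Keel-type presentation, in which the cohomology of each center is generated by restrictions of $h$ and of exceptional divisor classes. You then deduce real surjectivity from naturality of the halving isomorphism. For an equivariant inclusion $i\colon Z\to Y$ of conjugation spaces, $\kappa_Z\circ i^*=F(i)^*\circ\kappa_Y$, so $i^*$ onto forces $F(i)^*$ onto.

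The price is that you must know beforehand that each center is a conjugation space. This forces the induction on dimension over all linear arrangements, and it requires importing from Li the description of each center as a model of the induced arrangement on $L$. It is also confined to the maximal case. The paper's argument works directly on the real loci, and therefore also yields the effective, non-maximal statements such as Theorem~\ref{eff-wc-config}.
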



The De Concini-Procesi wonderful compactification of the braid arrangement 
$A_{n-2}$ can be identified with the Deligne-Mumford compactification 
$\overline{\mathcal M}_{0,n}$ \cite[page 483]{dp}. This observation shows that the 
method developed for proving Theorem  \ref{thm-stretched} also applies to the moduli 
space of real stable rational curves with real marked points. 

The Deligne-Mumford space $\overline{\mathcal M}_{0,n}$ admits several natural 
real structures $\Conj_\sigma,$ indexed by permutations $\sigma\in S_n$ 
of order $2$ acting on the marked points (see Section \ref{KKspace-effectivity}).
The case $\sigma=\mathrm{id}$ corresponds to real stable rational curves with
all markings real. We prove:


\begin{main}
\label{R-DM-effective}
Let $n\ge 3$ and let $\Conj_\sigma$ be a real structure on the Deligne--Mumford
space $\overline{\mathcal M}_{0,n}$ defined by an order $2$ permutation 
$\sigma\in S_n$. Then the following hold:
\begin{itemize}
  \item[(i)] If $\sigma=\mathrm{id}$, the real pair
  $(\overline{\mathcal M}_{0,n},\Conj_{\mathrm{id}})$ is a conjugation space.
  \item[(ii)] If $\Fix(\sigma)\neq\emptyset$, the real pair
  $(\overline{\mathcal M}_{0,n},\Conj_\sigma)$ is effective and Galois maximal.
\end{itemize}
\end{main}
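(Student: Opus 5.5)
Both parts will come from the Keel/De Concini--Procesi presentation of $\overline{\mathcal M}_{0,n}$ as an iterated blow-up, combined with the blow-up stability of Kalinin effectivity for stretched pairs and the criterion of Theorem~\ref{thm-stretched}. The plan is to realize $\overline{\mathcal M}_{0,n}$ as the wonderful compactification of the braid arrangement $A_{n-2}$ in $\PP^{n-3}$. Equivalently, we can use Kapranov's model: blow up $\PP^{n-3}$ successively along the strict transforms of the linear spans of $n-1$ points in general position, in order of increasing dimension. In this picture, each $\Conj_\sigma$ is induced by a real structure on $\PP^{n-3}$ that preserves the arrangement. We choose the marked point fixed by $\sigma$ (which exists by the hypothesis $\Fix(\sigma)\neq\emptyset$; for $\sigma=\id$ every point is fixed) as the distinguished point $p_n$ in Kapranov's construction. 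With this choice, $\sigma$ permutes the remaining $n-1$ points $p_1,\dots,p_{n-1}$. These points can then be taken either real or in complex-conjugate pairs, and the building set of their spans is invariant under the real structure of $\PP^{n-3}$.

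For (i), $\sigma=\id$, so all points $p_i$ are real and every center is a real linear subspace. Hence the arrangement is an arrangement of real linear subspaces in $\PP^{n-3}$, and Theorem~\ref{dcp-theorem} (or directly Theorem~\ref{thm-stretched}) gives that $\overline{\mathcal M}_{0,n}$ is a conjugation space. If needed, we will check that Kapranov's ordered iterated blow-up coincides with the De Concini--Procesi model for the minimal building set, which is the identification quoted from \cite[p.~483]{dp}.

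For (ii), we verify the hypotheses of Theorem~\ref{thm-stretched} for the $\sigma$-twisted arrangement. The base $(\PP^{n-3},\text{conj})$ is a conjugation space, hence effective. Each center is either a real linear subspace, or a union of two disjoint conjugate strict transforms (spans of conjugate subsets of points), or a span of a $\sigma$-invariant subset, which is again real. The main step, and the main obstacle, is \emph{stretchedness} at each stage. For real centers it is the same computation as in (i): the restriction map in cohomology is surjective, and the Kalinin filtration on the center is compatible with that of the ambient space. For pairs of conjugate centers $Z\sqcup \bar Z$, the involution swaps the components. The contribution of the exceptional divisor over $Z\sqcup\bar Z$ is then a free $\ZZ/2$-module and contributes nothing to the fixed locus. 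Here we would apply the free-orbit case of the blow-up lemma: we expect effectivity to persist, but the Smith--Thom maximality to drop. That is exactly why only Galois maximality is claimed.

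For Galois maximality, we will track the Kalinin spectral sequence through each blow-up. At a real center we aim to show the collapse page is unchanged. At a conjugate-pair center, the new classes $H^*(E_Z)\oplus H^*(E_{\bar Z})$ form a free module, so they die at the $E_2$ page and do not obstruct degeneration at $E_2$. Inductively the sequence degenerates at $E_2$, giving Galois maximality. The delicate point I anticipate is that strict transforms of later centers meet earlier conjugate exceptional divisors. To control this, we order the blow-ups by dimension as Li requires, so that at each step the relevant centers are disjoint and the required stretchedness can be checked locally, center by center.
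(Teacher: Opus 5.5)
Your strategy is the paper's own. You use Kapranov's model with a $\sigma$-fixed marking as the distinguished point, and Corollary~\ref{w-conj-space} for $\sigma=\mathrm{id}$; part (i) is fine. For $\Fix(\sigma)\neq\emptyset$ you run an induction along Kapranov's tower: real centers are handled by the stretched blow-up results, conjugate pairs by Proposition~\ref{blowup-emptyset}, and Galois maximality by stability under blow-up. The paper uses Proposition~\ref{derval-GM} for the last point, and your ``free summand dies on the second page'' is the same mechanism. The gap sits exactly where you anticipated it. Ordering by dimension does \emph{not} make the two members of a conjugate pair disjoint at the moment they are blown up, and for such a pair neither of your two cases applies. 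The paper's written proof asserts the same pairwise disjointness, so the gap is shared. It is nonetheless genuine as soon as $\sigma$ has at least three transpositions; it does not arise with at most two, in particular for $n\le 6$.

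Take $n=7$ and $\sigma=(1\,4)(2\,5)(3\,6)$, so that $p_4=\bar p_1$, $p_5=\bar p_2$, $p_6=\bar p_3$ in $\PP^4$. The conjugate planes $Q=\langle p_1,p_2,p_3\rangle$ and $\bar Q=\langle p_4,p_5,p_6\rangle$ must meet, since any two planes in $\PP^4$ do. By general position they meet transversally in one real point $q$ lying on none of the points and lines blown up earlier. Equivalently, the boundary divisors $D_{\{1,2,3\}}$ and $D_{\{4,5,6\}}$ of $\overline{\mathcal M}_{0,7}$ meet, along a stratum $\PP^1\times\PP^1$ whose factors $\Conj_\sigma$ exchanges. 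So at the last stage the invariant center is $\wt Q\cup\wt{\bar Q}$, which is singular at $q$. Proposition~\ref{blowup-emptyset} does not apply, the real-center step does not apply because the components are not invariant, and blowing up $\wt Q$ first is not equivariant. Nor is the fixed locus untouched. Near $q$ the structure is $(u,v)\mapsto(\bar v,\bar u)$ on $\CC^2\times\CC^2$ with $Q$, $\bar Q$ the coordinate planes, and the blow-up is $\Bl_0\CC^2\times\Bl_0\CC^2$. Its real locus is a copy of $\Bl_0\CC^2$, so a new real $2$-sphere appears.

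In general, suppose $\sigma$ has $t\ge 3$ transpositions. Let $S$ consist of the fixed points of $\sigma$ in $\{1,\dots,n-1\}$ together with one element of each transposed pair. Then $|S|=n-1-t\le n-4$ and $S\cup\sigma S=\{1,\dots,n-1\}$. Hence the boundary divisors $D_{S\cup\{n\}}$ and $D_{\sigma S\cup\{n\}}$ lying over $\langle S\rangle$ and $\langle\sigma S\rangle$ are compatible and meet, so these two conjugate centers still intersect when they are blown up.

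One way to close the gap, to be checked in general, is as follows.
\begin{itemize}
\item[1)] Add the real subspaces $\langle S\rangle\cap\langle\sigma S\rangle$ to the building set. Then every equivariant center becomes smooth and is either real or a disjoint conjugate pair, so your induction applies.
\item[2)] The resulting space is an iterated blow-up of $\overline{\mathcal M}_{0,n}$ along the dominant transforms of these subspaces. For $n=7$ these are disjoint copies of $\PP^1\times\PP^1$, the real ones being quadric ellipsoids, which are effective by Proposition~\ref{hyperquadrics}.
\item[3)] Now descend to $\overline{\mathcal M}_{0,n}$. Effectivity passes from $\Bl_Z Y$ down to $Y$ when $Z$ is effective and $(Y,Z)$ is stretched, by Propositions~\ref{effective-bundles}, \ref{exceptional-stretch}, \ref{stretch-effective} and~\ref{rel=PAL}. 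Galois maximality descends via $\pi_*\pi^!=\mathrm{id}$ and Proposition~\ref{GM-char}.
\end{itemize}
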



As a direct consequence of Theorem \ref{R-DM-effective} and Theorem 
\ref{coh-conj_spaces} we obtain the following improvement of Theorem 5.6 in 
\cite{ehkr}:

\begin{coro}
\label{improved-ehkr}
Let $F\overline{\mathcal M}_{0,n}$ denote the fixed locus of the involution 
$\Conj_{\mathrm{id}}$ on $\overline{\mathcal M}_{0,n}.$ Then 
$H^*(F\overline{\mathcal M}_{0,n})$ with its standard and 
$H^{2*}(\overline{\mathcal M}_{0,n})$ with its intrinsic structure
of algebra over Steenrod algebra are naturally isomorphic\footnote{ See Section 
\ref{eff-spaces} for a description of this isomorphism.}.
\end{coro}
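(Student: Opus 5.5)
The plan is to deduce Corollary \ref{improved-ehkr} from two results: Theorem \ref{R-DM-effective}(i), which is stated earlier, and Theorem \ref{coh-conj_spaces}, the structure theorem for Smith--Thom maximal effective spaces, i.e.\ conjugation spaces.

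First I check that the hypotheses of Theorem \ref{coh-conj_spaces} hold. By Theorem \ref{R-DM-effective}(i), the real pair $(\overline{\mathcal M}_{0,n},\Conj_{\mathrm{id}})$ is a conjugation space. By Proposition \ref{equivalence}, this means it is Kalinin effective and Smith--Thom maximal.

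Second, I need the cohomology of $\overline{\mathcal M}_{0,n}$ to be concentrated in even degrees. This is standard: $\overline{\mathcal M}_{0,n}$ has an algebraic cell decomposition, or Keel's presentation, so $H^{\mathrm{odd}}(\overline{\mathcal M}_{0,n};\FF_2)=0$. Hence the relevant graded piece is $H^{2*}(\overline{\mathcal M}_{0,n})$. In any case, this evenness is built into the definition of a conjugation space in the sense of \cite{HHP}.

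Third, I apply Theorem \ref{coh-conj_spaces}. It provides a natural ring isomorphism $\kappa\colon H^{2*}(\overline{\mathcal M}_{0,n})\to H^{*}(F\overline{\mathcal M}_{0,n})$ that divides degrees by two. Under this isomorphism, the intrinsic Steenrod algebra structure on $H^{2*}$ (the one induced through the Kalinin spectral sequence and the equivariant cohomology, $\Sq^{2i}\mapsto \Sq^i$) goes to the standard Steenrod action on the fixed locus. Naturality comes from the functoriality of the limit term of the Kalinin spectral sequence for effective spaces, as in Theorem \ref{prop-def}. Reading off this statement gives the corollary.

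The only real content is the hypothesis check, which Theorem \ref{R-DM-effective}(i) supplies. The one point needing care is whether the ``intrinsic structure'' in the statement matches the structure appearing in Theorem \ref{coh-conj_spaces}. I would resolve this by referring to the explicit description in Section \ref{eff-spaces}.

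The comparison with \cite[Theorem 5.6]{ehkr} is a remark rather than part of the proof. Their result gives only the additive and ring structure, whereas we obtain compatibility with the Steenrod squares as well.
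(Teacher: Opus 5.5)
Your proposal is correct and is exactly the paper's argument. Theorem \ref{R-DM-effective}(i) together with Proposition \ref{equivalence} makes $(\overline{\mathcal M}_{0,n},\Conj_{\mathrm{id}})$ maximal and effective, and Theorem \ref{coh-conj_spaces} then gives the natural isomorphism.

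Two small points. Your separate evenness step is unnecessary, since the vanishing of odd cohomology already follows from Proposition \ref{no-odd}. Also, the intrinsic structure on $H^{2*}$ is simply the one in which the $i$-th generator of the Steenrod algebra acts by $\Sq^{2i}$; it is not something induced through the spectral sequence.
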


Kalinin effective spaces that are not necessarily conjugation spaces also arise
among the wonderful compactifications of configuration spaces of ordered points. 
Let $(X,\Conj)$ be a closed, nonsingular complex variety equipped 
with an anti-holomorphic involution and $n$ a positive integer. Then the space of 
$n$-tuples of distinct points
$$
\Conf_n(X)=\{(p_1,\dots, p_n)\, | \, p_i\in X, \, p_i\neq p_j\, {\text{for every}}\,  i\neq j\}
$$
has a naturally induced anti-holomorphic involution. Several wonderful compactifications 
of $\Conf_n(X)$ were constructed by Fulton-MacPherson \cite{fm}, Ulyanov \cite{ulyanov} 
and Kuperberg-Thurston \cite{kt} and discussed by Li in  \cite{li}. Such compactifications 
automatically satisfy the aforementioned stretchedness condition, and we find:


\begin{main}
\label{eff-wc-config}
Let $(X,\Conj)$ be a closed, nonsingular complex variety equipped with an 
anti-holomorphic involution. 
\begin{itemize}
\item[ 1)] If $X$ is Kalinin effective, then the Fulton-MacPherson, Ulyanov and 
Kuperberg-Thurston wonderful compactifications are Kalinin effective spaces.
\item[ 2)] If $X$ is maximal or Galois maximal, then the Fulton-MacPherson, 
Ulyanov and Kuperberg-Thurston wonderful compactifications
are maximal or Galois maximal spaces, respectively.
\item[ 3)] If $X$ is  a conjugation space, then the Fulton-MacPherson, Ulyanov and 
Kuperberg-Thurston wonderful compactifications are conjugation spaces.
\end{itemize}
\end{main}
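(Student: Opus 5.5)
The plan is to realize each of the three compactifications as an iterated blow-up of $X^n$ along a building set of diagonals, and to apply inductively the blow-up stability result of Section \ref{stretchedness} together with the general criterion Theorem \ref{thm-stretched} for wonderful compactifications. Following Li \cite{li}, the Fulton--MacPherson space $X[n]$, Ulyanov's polydiagonal compactification and the Kuperberg--Thurston compactification $X\langle n\rangle$ are the wonderful compactifications of $X^n$ with respect to specific building sets $\mathcal G$ of the arrangement of partial diagonals $\Delta_I=\{p_i=p_j,\ i,j\in I\}$ (respectively polydiagonals). They are obtained by blowing up $X^n$ along the members of $\mathcal G$ (or their proper transforms) in an order of non-decreasing dimension. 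The diagonal product involution $\Conj^{\times n}$ preserves every $\Delta_I$, and each $\Delta_I\cong X^{n-|I|+1}$ equivariantly, so all strata in the construction are real.

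First, I will check that effectivity, maximality, Galois maximality and the conjugation-space property pass from $X$ to all products $X^k$. This should follow from a K\"unneth argument for the Kalinin spectral sequence, which I expect was recorded among the constructions of effective spaces earlier in the paper. The same properties then hold for each $\Delta_I$ and for the intersections of the arrangement, since each of these is again a product of copies of $X$. Second, I will verify the stretchedness condition for each pair (ambient space, center) arising at every stage. The key point is that the restriction $H^*(X^n)\to H^*(\Delta_I)$ is surjective, since a diagonal of a product is a retract via a projection. For the proper transforms at later stages, I will use Li's description of the cohomology of the intermediate blow-ups (Keel-type formula) to see that surjectivity, and the compatibility with the Kalinin filtration, are preserved. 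This is presumably the ``automatic stretchedness'' mentioned before the statement.

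Third, with effectivity of all centers and stretchedness in hand, I apply the blow-up theorem iteratively. The blow-up of an effective stretched pair along an effective real center is effective, which gives 1). For 2), the Kalinin spectral sequence of the blow-up splits as that of the ambient space plus shifted copies (by the projectivized normal bundle) of that of the center. Collapse at $E_1$, respectively at $E_2$, is therefore inherited, since Betti numbers of fixed loci add up in the same pattern; here I use that the real locus of a real projective bundle of rank $r$ has mod $2$ cohomology a free module on $1,h,\dots,h^{r-1}$. Statement 3) follows from 1) and 2) together with Proposition \ref{equivalence}.

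The main obstacle is the second step: showing that stretchedness persists for the proper transforms of diagonals after earlier blow-ups, not only for the initial diagonals in $X^n$. I would first try to reduce this to Li's result that each proper transform is itself a wonderful compactification of $\Delta_I$ with respect to the induced building set. This allows an induction on $n$ and on the number of blow-ups, with surjectivity of restriction coming from the explicit presentation by exceptional divisor classes.
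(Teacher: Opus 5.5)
Your plan is correct and is essentially the paper's proof. The polydiagonals are equivariantly products $X^k$, so they are effective, maximal or Galois maximal by the product propositions. The polydiagonal arrangement is stretched: your equivariant retraction onto $\Delta_I$ also restricts to the real loci, and it is the same content as the K\"unneth argument of Lemma \ref{diag-stretched}. Theorem \ref{thm-stretched} then gives items 1) and 2), and item 3) follows via Proposition \ref{equivalence} (Corollary \ref{w-conj-space}). The step you single out as the main obstacle is persistence of stretchedness and effectivity for dominant transforms and for their intersections with exceptional divisors, and it is already handled by Proposition \ref{induction-stretch} inside Theorem \ref{thm-stretched}, using Gitler's blow-up formula on both the complex and real loci. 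So you need neither Li's cohomology presentation nor a splitting of the Kalinin spectral sequence.
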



As another application of Kalinin effectivity, which was the starting point of our 
investigations of Kalinin effectivity, we compute the deficiency of its Hilbert square 
of effective Galois maximal varieties.


\begin{main}
\label{defect-GM}
Let $X$ be a compact connected nonsingular complex manifold of dimension 
$n,$ such that $H_{\rm odd}(X)=0,$ equipped with a real structure, which is 
effective and Galois maximal.If the Smith-Thom deficiency of $X$ is $\defi(X)=a,$ then 
the Smith-Thom deficiency of the Hilbert square $X^{[2]}$ equipped with the 
induced real structure is 
$$
\defi(X^{[2]})=\sum_{k=1}^{2n}(2k-1)\delta_k+a\beta_*+\frac{a(a-1)}2.
$$
\end{main}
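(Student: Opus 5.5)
The plan is to compute both sides of the Smith--Thom inequality for $X^{[2]}$ separately and then subtract. Write $\beta_*=\beta_*(X)$ for the total Betti number, $F=X(\mathbb R)$ for the real locus, and $\beta_*(F)=\beta_*-2a$. I take the $\delta_k$ to be the degree-wise ranks of the $E_1$-differentials (equivalently, the jumps of the Kalinin filtration), so that $\sum_k\delta_k=a$; this reading of the statement's notation is an assumption I have not checked against the paper's definitions.

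\emph{Complex side.} Since $H_{\rm odd}(X)=0$, the standard description $X^{[2]}=\Bl_\Delta(X\times X)/\ZZ_2$ and the blow-up formula give
$$\beta_*(X^{[2]})=\tfrac{\beta_*(\beta_*+1)}2+(n-1)\beta_*,$$
with all cohomology in even degrees. This step is routine.

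\emph{Real side.} The real locus of $X^{[2]}$ splits into two disjoint pieces.
\begin{itemize}
\item The first piece $R_1=\Bl_{\Delta}(F\times F)/\ZZ_2$ parametrizes pairs of real points.
\item The second piece $R_2=\Bl_F X/\Conj$ parametrizes pairs of conjugate points. It is a manifold with the exceptional divisor $P(\nu_F)$ as a smooth locus where the involution acts trivially, so $R_2$ is closed.
\end{itemize}

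\emph{The piece $R_1$.} Its Betti number is obtained from Nakaoka's description of $H^*(F^2/\ZZ_2)$ and $H^*_{\ZZ_2}(F^2)$, corrected by the blow-up along the diagonal, which replaces the contribution of $\Delta\times B\ZZ_2$ by $F\times\RR P^{n-1}$. This yields a formula of the shape $\tfrac{\beta_*(F)(\beta_*(F)+1)}2+(n-1)\beta_*(F)$ plus correction terms. Those correction terms depend on how the squares $\Sq^i$ act on $H^*(F)$.

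Here effectivity enters. By Theorem \ref{prop-def}, the Steenrod structure of $F$ is read off from the limit page of the Kalinin spectral sequence of $X$, and Galois maximality identifies that page with $E_2$. Hence the rank of each relevant square is expressed through the degree-wise defects $\delta_k$.

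\emph{The piece $R_2$.} I use the Gysin/transfer sequence for the double cover $\Bl_F X\to R_2$, branched along the divisor $P(\nu_F)$. This gives $H^*(R_2)$ in terms of $H^*_{\ZZ_2}(X,F)$. Galois maximality makes $H^*_{\ZZ_2}(X)\to H^*(F)[h]$ controlled by the $E_2$-term, so each class in degree $2k$ killed by $d_1$ contributes a defect that propagates through the relevant range of $h$-degrees. I expect this propagation to produce the weight $2k-1$ in front of $\delta_k$ after summing over the truncation imposed by the dimension of $R_2$.

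\emph{Assembly.} Adding $\beta_*(R_1)+\beta_*(R_2)$ and subtracting from $\beta_*(X^{[2]})$, the quadratic terms combine to give the remaining contributions. Expanding $\tfrac{\beta_*(\beta_*+1)}2-\tfrac{(\beta_*-2a)(\beta_*-2a+1)}2$ and cancelling against the $a$-dependent pieces coming from $R_2$ should leave exactly $a\beta_*+\tfrac{a(a-1)}2$. The $(n-1)$ blow-up terms should cancel against the projective-bundle contribution of $P(\nu_F)$.

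\emph{Main obstacle.} The hardest step is the precise computation of $\beta_*(R_2)$: getting the degree bookkeeping of the $h$-tower right so that the coefficient $2k-1$ comes out. My first attempt would be to work on the Kalinin spectral sequence of the pair $(\Bl_F X,P(\nu_F))$. The blow-up stability results for stretched pairs (Section \ref{stretchedness}) should keep this pair effective and Galois maximal, after which a degree-by-degree rank count finishes the computation.
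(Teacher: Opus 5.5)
\textbf{There is a genuine gap.} Your decomposition of the real locus is wrong, and the error hides the actual content of the theorem.

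$F(X^{[2]})$ is not a disjoint union of two closed manifolds. A non-reduced real length-two subscheme supported at $p\in F$ is a real tangent line. It is a limit both of pairs of real points and of conjugate pairs $\{q,\Conj(q)\}$. Moreover, dividing a manifold by an involution that fixes a hypersurface gives a manifold \emph{with boundary} equal to that hypersurface, so $R_2$ is not closed. Read correctly, your $R_1$ and $R_2$ are the compact manifolds $\HH$ and $\HH_0$. They have the common boundary $F(E)=\PP_\RR(T^*F)$, and $F(X^{[2]})=\HH_0\cup_{F(E)}\HH$.

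Mayer--Vietoris then gives
\[
\beta_*(F(X^{[2]}))=\beta_*(\HH_0)+\beta_*(\HH)+n\beta_*(F)-2\rank\mu_* ,
\]
where the cohomological map $(a,b)\mapsto\inc_0^*a+\inc_1^*b$ has image $\im\inc_0^*+\im\inc_1^*$. By Lemma \ref{half-and-half}, each of these two images has dimension $\frac n2\beta_*(F)$. So the correction term $n\beta_*(F)-2\rank\mu_*$ is $\le 0$, and it vanishes if and only if $\im\inc_0^*=\im\inc_1^*$. Your additive count $\beta_*(R_1)+\beta_*(R_2)$ silently assumes exactly this equality. That equality is the heart of the paper's proof: Proposition \ref{eff-images}, followed by Lemma \ref{rank-mu} and Proposition \ref{6.1-deficiency}.

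This is also where effectivity really enters. It does not enter in $\beta_*(R_1)$: the paper takes $\beta_*(\HH)$ from \cite{loss-general} as a function of the Betti numbers of $F$ alone. The missing argument runs as follows.
\begin{enumerate}
\item By Proposition \ref{Image-Beta} together with effectivity, each $z\in H^p(F)$ lifts to some $\hat z\in H^{2p}_G(X)$ whose restriction to $F$ is $\sum_i u^{p-i}\Sq^i z$.
\item Equivariant Mayer--Vietoris for a tubular neighbourhood $U\supset F$ and its complement $V$ (on which $G$ acts freely) then shows $\sum_i\gamma^{p-i}\Sq^i z\in\im\inc_0^*$.
\item The same argument applied to $(F\times F,\tau)$, which is effective and Galois maximal by Proposition \ref{sym-kalinin}, places the same class in $\im\inc_1^*$.
\item Multiplying by $\gamma^l$ for $0\le l\le n-1-p$ yields $\frac n2\beta_*(F)$ independent classes in $\im\inc_0^*\cap\im\inc_1^*$. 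This forces the two images to coincide.
\end{enumerate}

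\textbf{Secondary problems.}
\begin{itemize}
\item With the paper's definition $\defi(X)=\beta_*(X)-\beta_*(F)$, one has $\beta_*(F)=\beta_*-a$, not $\beta_*-2a$. Your final algebra is built on the wrong value.
\item The $\delta_k$ are the ranks of the connecting maps $\Delta_k$ in the Smith sequence (\ref{rses}), not ranks of $E_1$-differentials.
\item The term $\sum(2k-1)\delta_k$ comes from computing $\beta_*(\HH_0)=n\beta_*-\frac n2\beta_*(F)-\sum_k(2k-1)\delta_k$ via that Smith sequence. Your proposal only anticipates this weight rather than deriving it.
\item The stretched-pair blow-up results concern complex centres. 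They say nothing about blowing up $X$ along the totally real submanifold $F$.
\end{itemize}
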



This result improves and extends Theorems 1.7 and 1.8 in \cite{loss-general}. As 
a consequence we find new examples of maximal spaces among Hilbert squares:


\begin{coro} 
\label{square}


The Hilbert square of a maximal effective space  is maximal.
\end{coro}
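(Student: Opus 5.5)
The plan is to obtain Corollary \ref{square} as the special case $a=0$ of Theorem \ref{defect-GM}. The one point needing care is that Theorem \ref{defect-GM} assumes $H_{\rm odd}(X)=0$, while the corollary does not state it.

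First I check that $X$ satisfies the hypotheses of Theorem \ref{defect-GM}. Let $X$ be a maximal effective space. Smith-Thom maximality means the Kalinin spectral sequence collapses at the first page, so it certainly collapses at the second. Hence $X$ is Galois maximal, and $\defi(X)=0$. The terms $a\beta_*$ and $\frac{a(a-1)}2$ in the formula then vanish, leaving
\[
\defi(X^{[2]})=\sum_{k=1}^{2n}(2k-1)\delta_k .
\]

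Next I show every $\delta_k$ vanishes. I expect $\delta_k$ to be defined as the drop in dimension from one page of the Kalinin spectral sequence of $X$ to the next, in degree $k$. Under collapse at the first page all these drops are zero, so the sum is zero and $\defi(X^{[2]})=0$. That is, $X^{[2]}$ is Smith-Thom maximal. The weights $2k-1$ are positive, so nothing can cancel; $\defi(X^{[2]})=0$ if and only if all $\delta_k=0$.

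Finally, the odd-homology hypothesis. If the corollary is read in the same setting as Theorem \ref{defect-GM}, it is simply part of the standing assumptions. Otherwise I would use that a maximal effective space is a conjugation space (Proposition \ref{equivalence}). By Theorem \ref{coh-conj_spaces} its cohomology is then concentrated in even degrees, halved relative to the fixed locus, which gives $H_{\rm odd}(X)=0$ with $\FF_2$ coefficients.

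The main obstacle is matching the precise definition of $\delta_k$ in the proof of Theorem \ref{defect-GM} with the differentials of the Kalinin spectral sequence. If the $\delta_k$ instead measure something like Steenrod-square or Kalinin-filtration discrepancies specific to Galois maximality, I would have to check that they vanish under Smith-Thom maximality. My first attempt there would be to use the multiplicative isomorphism of Theorem \ref{coh-conj_spaces}, which forces the Kalinin filtration to split completely.
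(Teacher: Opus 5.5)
Your route is the same as the paper's. The paper's proof also just specializes Theorem \ref{defect-GM} to $\defi(X)=0$, using that a maximal effective space is Galois maximal, has $\beta_{\rm odd}=0$, and has $\delta_k=0$.

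The gap is your justification of $\delta_k=0$. In the paper $\delta_k$ is not a page-to-page drop of the Kalinin spectral sequence. It is $\rank\Delta_k$, where $\Delta_k\colon H^k(\bar X,F)\oplus H^k(F)\to H^{k+1}(\bar X,F)$ is the connecting homomorphism of the Smith sequence (\ref{rses}). Collapse of the Kalinin spectral sequence says nothing directly about these ranks, and neither does the splitting of the filtration you invoke from Theorem \ref{coh-conj_spaces}. In fact, checking them literally would fail. Exactness of (\ref{rses}) gives $\defi(X)=2\sum_k\rank\bigl(\pr^*\colon H^k(\bar X,F)\to H^k(X)\bigr)$. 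So for maximal $X$ one has $\pr^*=0$, and every $\Delta_k$ maps onto $H^{k+1}(\bar X,F)$. For $X=\CC\PP^1$ we have $\bar X=D^2$ and $F=S^1$, and $\Delta_1$ is the coboundary $H^1(S^1)\to H^2(D^2,S^1)$, an isomorphism. Hence $\delta_1=1$, and the formula would give $\defi((\CC\PP^1)^{[2]})=1$. But $(\CC\PP^1)^{[2]}=\CC\PP^2$, with real locus $\RR\PP^2$, is maximal. So the vanishing of $\delta_k$ asserted in the paper's proof must refer to a normalization inherited from \cite{loss-general}, not to the literal ranks, and your guessed definition cannot be confirmed from the paper.

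You can close the argument without knowing what $\delta_k$ is. Proposition \ref{6.1-deficiency} already records the maximal case, $\defi(X^{[2]})=2\rank\mu_*-n\beta_*-\beta_{\rm odd}$. Its hypothesis $\Tors_2H_*(X,\ZZ)=0$ follows from $H_{\rm odd}(X)=0$ by universal coefficients. Since $X$ is maximal it is Galois maximal (Proposition \ref{GM-def}), so Proposition \ref{eff-images} and Lemma \ref{rank-mu} give $\rank\mu_*=\frac n2\beta_*$. Finally, $\beta_{\rm odd}=0$ by Proposition \ref{no-odd}; that proposition, rather than Theorem \ref{coh-conj_spaces}, is what kills odd cohomology. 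Hence $\defi(X^{[2]})=0$.
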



\subsection*{Acknowledgements}  
The second author acknowledges the support of the University of Strasbourg and 
especially thanks Dan Margalit and Vanderbilt University for their support 
during the preparation of this work.


\notations


\begin{itemize}
\item[1)] By a complex manifold equipped with a real structure we mean a pair 
$(X,\Conj)$ consisting of a complex manifold $X$ together with an anti-holomorphic 
involution $\Conj:X\to X$.

\item[2)]
Let $G=\{\mathrm{id},\Conj\}$ be the cyclic group of order $2$. 
If a complex manifold $X$ is equipped with an anti-holomorphic involution 
$\Conj:X\to X$, then $\Conj$ canonically determines an action of $G$ on $X$, 
and we view $(X,\Conj)$ as a $G$-space with respect to this action.

\item[3)] If $(X,\iota)$ is a space equipped with an involution, we denote 
by $F(X)$ the set of fixed points of $\iota$.

\item[4)] Unless explicitly stated, all homology and cohomology groups are taken 
with coefficients in the field $\FF_2=\ZZ/2\ZZ$. We denote by $\beta_i(\,\cdot\,)$ 
the $i$-th Betti number with $\FF_2$-coefficients, by $\beta_*(\,\cdot\,)$ the 
total Betti number, and by $\beta_{\rm odd}(\,\cdot\,)$ the sum of the Betti 
numbers in odd degrees.

\end{itemize}

\smallskip


\section{Kalinin effectivity and maximality}\label{K-effectivity}


Let $G$ be the cyclic group of order $2.$

\begin{defn}\label{defn-flag}

A {\it $G$-flag} $(V_p,V_{p-1},\dots, V_1;\Conj)$ consists of a CW-complex
$V=V_p$ equipped with a continuous involution $\Conj,$ that sends cells to cells
and acting identically on each invariant cell,  and $\Conj$-invariant 
subcomplexes $V_k\subseteq V$ such that $V_{k}\subseteq V_{k+1}$ for 
all  $k=1,\dots, p-1.$
\end{defn}

For convenience, a $G$-flag will be called a {\it $G$-space} if $p=1$,
a {\it $G$-pair} if $p=2$, and a {\it $G$-triple} if $p=3.$ We will often
omit the involution $\Conj$ from notation of a $G$-flag when it is understood 
from the context.

\medskip

The fixed point sets of a $G$-flag $(V_p,V_{p-1},\dots, V_1;\Conj)$ form a 
flag of $CW$-complexes
$$
F(V_p,\dots, V_1):=(F(V_p),\dots,  F(V_1)),
$$ 
where 
 $F(V_j)=F(V_p)\cap V_j$ is the fixed point set of $V_j,\, j=1,\dots,p.$ 
The flag $F(V_p,\dots, V_1)$ will be considered as a $G$-flag with $G$ acting 
identically on it.


\begin{rmk}
{\rm All $G$-flags considered in this article are flags of manifolds. In what follows, 
we will mainly use  $G$-pairs and $G$-triples.}
\end{rmk}


\subsection{The Kalinin spectral sequence}


\begin{thm}[Kalinin, \cite{kalinin-ss}]
\label{main-ss}
Let $(X,Y)$ be a $G$-pair. Then the following objects exist and are natural 
with respect to $G$-maps:
\begin{itemize}
\item[ 1)] A group filtration $\ff=\{\ff_n(X,Y)\}_{n\geq -1},$ where
$$
0=\ff_{-1}(X,Y)\subseteq\ff_0(X,Y)\subseteq\cdots\subseteq 
\ff_n(X,Y)\subseteq\cdots H^*(F(X,Y)).
$$
\item[ 2)] A $\ZZ$-graded spectral sequence 
$(\prescript{r}{}H^*(X,Y),\prescript{r}{}d^*),$ where 
$$
\prescript{r}{}d^q: \prescript{r}{}H^q(X,Y)\ra \prescript{r}{}H^{q-r+1}(X,Y),
\quad \prescript{r}{}d^{q-r+1}\circ \prescript{r}{}d^q=0,
$$
for which we have
$$
\prescript{1}{}H^q=H^q(X,Y), \quad \prescript{1}{}d^*=1+\Conj^*,
$$
and 
$$
\prescript{2}{}H^q=H^1(G, H^q(X,Y)).
$$
\item[ 3)] An isomorphism
$$
\phi_i:\gr^i_\ff(H^*(F(X,Y)))\ra \prescript{\infty}{}H^i,
$$
where $\gr^i_\ff(H^*(F(X,Y)))$ is the graded group associated to the filtration $\ff.$ \qed
\end{itemize}
\end{thm}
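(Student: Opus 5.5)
The plan is to build the spectral sequence from the Smith exact sequences, organized into an exact couple, following Kalinin's original construction. Let $C^*=C^*(X,Y)$ be the cellular cochain complex with $\FF_2$-coefficients. Set $\sigma=1+\Conj^*$; it is a $G$-equivariant chain endomorphism. Because $\Conj$ permutes cells and fixes each invariant cell pointwise, $\sigma^2=0$. Moreover $\ker\sigma=\im\sigma\oplus C^*(F(X),F(Y))$, where the second summand is spanned by the invariant cells. This gives the two Smith short exact sequences
$$
0\ra \im\sigma\oplus C^*(F(X,Y))\ra C^*\xrightarrow{\ \sigma\ } \im\sigma\ra 0,
$$
together with the inclusion of $\im\sigma$ into $\ker\sigma$.

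Following Kalinin, I then introduce the descending filtration of $C^*$ by the subcomplexes $\ker\sigma$, $\ker\sigma\cap\im\sigma$, and so on. More conveniently, I take the Smith cohomology $H^*_\sigma=H^*(\im\sigma)$, which is isomorphic to the equivariant cohomology of the pair relative to the fixed part. I splice the Smith long exact sequence
$$
\cdots\ra H^q_\sigma\oplus H^q(F(X,Y))\ra H^q(X,Y)\xrightarrow{\sigma_*} H^q_\sigma\ra H^{q+1}_\sigma\oplus H^{q+1}(F(X,Y))\ra\cdots
$$
into an exact couple
$$
D=\bigoplus_q H^q_\sigma,\qquad E=\bigoplus_q H^q(X,Y),
$$
with maps $i$ (the connecting map composed with projection to $H_\sigma$), $j=\sigma_*$, and $k$ (the map $H^*_\sigma\to H^*(X,Y)$ induced by inclusion). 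Deriving this couple produces the spectral sequence. The first differential is $d^1=k\circ j$, which is $1+\Conj^*$ on $H^q(X,Y)$. By definition ${}^2H=\ker(1+\Conj^*)/\im(1+\Conj^*)=H^1(G,H^q(X,Y))$, since for $G=\ZZ/2$ Tate and ordinary $H^1$ agree. The degree shift $q\mapsto q-r+1$ comes from the fact that $i$ has degree $+1$ while the couple is read in decreasing filtration, so the $r$-th differential is $k\circ i^{-(r-1)}\circ j$ and lowers degree by $r-1$.

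Convergence is the main obstacle. In high degrees ($q>\dim X$) the map $i$ is an isomorphism $H^q_\sigma\to H^{q+1}_\sigma$ modulo the fixed-point summand. So the colimit $\varinjlim(D,i)$ is the localized equivariant cohomology, which by the localization theorem equals $H^*(F(X,Y))\otimes\FF_2[u,u^{-1}]$. The filtration $\ff_n$ of $H^*(F(X,Y))$ is the filtration by images of $D^{n}$ in this limit, made degree-independent using $u$-periodicity: $\ff_n$ consists of the classes realized by equivariant classes of degree $\le n$ after dividing by powers of $u$. Once the derived couple is in place, the standard exact-couple convergence theorem identifies ${}^\infty H^i$ with $\ff_i/\ff_{i-1}$, and this gives $\phi_i$. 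Two points need care. First, the sequence stabilizes after finitely many pages because the complex is finite-dimensional. Second, ${}^\infty H$ must be $\ZZ$-graded and not periodic; I would handle this by reading it off the first degree in which a class appears. Naturality in $G$-maps of pairs is automatic, since every construction is functorial in the cellular cochain complex with its involution, up to cellular approximation, which is equivariant for $G$-CW pairs.
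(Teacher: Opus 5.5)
Your proposal has a genuine gap, and it sits in the exact couple itself. The Smith sequence of $(X,Y)$ is not of the shape $D\to D\to E\to D$, because it carries the extra summand $H^*(F(X,Y))$. In cohomology it reads $H^q_\sigma\to H^q(X,Y)\to H^q_\sigma\oplus H^q(F(X,Y))\to H^{q+1}_\sigma$, as in (\ref{rses}). At the cochain level it comes from $0\to\im\sigma\to C^*\to\im\sigma\oplus C^*(F(X,Y))\to 0$. Your splitting $\ker\sigma=\im\sigma\oplus C^*(F(X,Y))$ holds only as graded vector spaces, since the duals of invariant cells do not span a subcomplex.

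Discarding or projecting away the fixed summand destroys exactness. For a point with trivial involution, $H^*_\sigma=0$ while $E=\FF_2$, so $D\to E\to D$ reads $0\to\FF_2\to 0$.

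More seriously, convergence fails outright. $H^q_\sigma=H^q(\bar X,F(X)\cup\bar Y)$ is the cohomology of a subcomplex of $C^*(X,Y)$, so it vanishes for $q>\dim X$, and hence $\varinjlim(D,i)=0$. You noted yourself that $H^*_\sigma$ is equivariant cohomology relative to the fixed part, and that is exactly what localization kills. The localization theorem gives $H^*(F(X,Y))\otimes\FF_2[u^{\pm1}]$ for $H^*_G(X,Y)$, not for $H^*_G(X,Y\cup F(X))$. The filtration $\ker\sigma\supseteq\ker\sigma\cap\im\sigma\supseteq\cdots$ is also degenerate, since $\im\sigma\subseteq\ker\sigma$.

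The missing idea is to pass to the Borel construction, which is what the paper's sketch does. Apply the Smith sequence to the free $G$-pair $S^\infty\times(X,Y)$ instead of to $(X,Y)$. There is then no fixed summand and $\ker\sigma=\im\sigma$, so you get the Gysin (transfer) sequence of the double covering $S^\infty\times(X,Y)\to(X,Y)_G$:
\[
\cdots\to H^{n-1}_G(X,Y)\xrightarrow{\;u\;}H^n_G(X,Y)\xrightarrow{\;\rho\;}H^n(X,Y)\xrightarrow{\;\tr\;}H^n_G(X,Y)\xrightarrow{\;u\;}H^{n+1}_G(X,Y)\to\cdots
\]
This is an honest exact couple with $D=H^*_G(X,Y)$ and $E=H^*(X,Y)$, and it gives everything the statement asks for:
\begin{itemize}
\item[(a)] The differentials are $d^1=\rho\circ\tr=1+\Conj^*$ and $d^r=\rho\circ u^{-(r-1)}\circ\tr$, which lowers degree by $r-1$.
\item[(b)] Multiplication by $u$ is bijective for $n>\dim X$, because $H^n(X,Y)=0$ there.
\item[(c)] Since $D^n=0$ for $n<0$, you get $\prescript{\infty}{}H^q\cong\ff_q/\ff_{q-1}$, where $\ff_q$ is the image of $H^q_G(X,Y)$ in $\varinjlim_n H^n_G(X,Y)$. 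This is automatically $\ZZ$-graded, so there is no periodicity to undo.
\item[(d)] The colimit is identified with $H^*(F(X,Y))$ by Borel localization (\ref{Borel-identification}). That proof is where your $H^*_\sigma$ legitimately enters: it is the term $H^*_G(X,Y\cup F(X))=H^*(\bar X,\bar Y\cup F(X))$ in the equivariant sequence of the triple $(X,Y\cup F(X),Y)$, and it vanishes above $\dim X$.
\end{itemize}
Over $\RR\PP^\infty$ the Serre filtration of $H^*_G(X,Y)$ is the $u$-adic one. So this couple is exactly the exact-couple form of the $u$-stabilized Leray--Serre spectral sequence the paper uses.
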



The above spectral sequence is called {\em Kalinin's spectral sequence}.{\footnote{A similar 
spectral sequence in homology exists as well \cite{dik}.} 
We briefly recall how Kalinin's spectral sequence is obtained from the 
Leray-Serre spectral sequence of the Borel fibration.

\medskip

The Kalinin spectral sequence is derived from the Leray-Serre spectral sequence 
$\{E^{p,q}_r\}$ of the Borel fibration $\pi: (X,Y)_G=S^\infty\times_G (X,Y)\to BG$, 
in the following way. Since $BG=\RR\PP^\infty,$ the multiplication by the generator 
$$
u\in H^1(\RR\PP^\infty,\FF_2)\simeq \FF_2[u]
$$ 
produces isomorphisms
$\prescript{r}{}\phi_{p}: \prescript{r}{}E^{p,*}\to \prescript{r}{}E^{p+1, *}$
for $p\gg 0$ and consider the direct limits
$$
\prescript{r}{}H^q:=
\varinjlim (\prescript{r}{}E^{p,q}, \prescript{r}{}\phi_p).
$$ 
By the Borel theorem, the inclusion homomorphisms 
\begin{equation*}
H^i((X,Y)_G)\to H^i(F(X,Y)_G)
\end{equation*}
are isomorphisms for each $i>\dim X$, which establishes natural isomorphisms
\begin{equation}
\label{Borel-identification}
\varinjlim H^i((X,Y)_G)=\varinjlim H^i(F(X,Y)_G)=H^*(F(X,Y)).
\end{equation}
The filtration $\{\ff_n\}$ on $H^*(F(X,Y))$ is defined by 
$$
\ff_n:=\varinjlim (\ff_i'\cap H^{n+i}((X,Y)_G)),
$$ where 
$H^*((X,Y)_G)=\ff'_0\supseteq \ff'_1 \supseteq \dots$ is the filtration
by $\ff'_j=\pi^*(H^{\ge j}(BG))$. In particular, by taking $i$ sufficiently 
large and using the identification (\ref{Borel-identification}),
we obtain 
$$
\ff_n=\ff'_{i-n}\cap H^i((X,Y)_G).
$$

Notice that from functoriality of this spectral sequence it follows that
\begin{equation}
\label{automatic}
\ff_i(X, Y)\subseteq H^{\le i}(F(X,Y))\quad \text{for each $i$}.
\end{equation}

\medskip

For the reader’s convenience, we recall next several basic properties of 
Kalinin's spectral sequence.


\begin{prop}[Kalinin, \cite{kalinin-ss}]
\label{multiplicative-kalinin}
The cup-product in $X$ descends to a multiplicative structure in $\prescript{r}{}H^*,$
so that  $\prescript{r}{}H^*$ is a $\ZZ_2$-algebra. Moreover:
 \begin{itemize}
  \item[ 1)] The differentials $\prescript{r}{}d^*$ are derivations for all $r\geq  2,$ i.e., 
$$
\prescript{r}{}d^*(x\cup y) = \prescript{r}{}d^*x \cup y + x\cup \prescript{r}{}d^*y.
$$
\item[ 2)] The filtration $\ff_*$ is multiplicative, i.e., for every $p,q\geq 0$
$$
\ff_p \cup \ff_q\subseteq \ff_{p+q}.
$$ 
\item[ 3)] The maps  $\phi_i,\, i\geq 0$ satisfy 
$$
\phi_{i+j}(x\cup y)=\phi_i(x)\cup\phi_j(y),
$$
for every $x\in \gr^i_\ff(H^*(F(X,Y)))$ and  $y\in \gr^j_\ff(H^*(F(X,Y))).$
\end{itemize}
\end{prop}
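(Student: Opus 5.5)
The plan is to work throughout at the level of the Leray--Serre spectral sequence $\{E^{p,q}_r\}$ of the Borel fibration $\pi:(X,Y)_G\to BG$. This sequence is multiplicative with respect to the cup product on $H^*((X,Y)_G)$ (relative cup product $H^*(X_G)\otimes H^*((X,Y)_G)\to H^*((X,Y)_G)$ in the pair case). Everything in the statement is then transported through the direct limit over multiplication by $u$.

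First I would check that the maps $\prescript{r}{}\phi_p$, which are multiplication by $u$, are compatible with products. Since $u$ lies in the base, $\pi^*u$ is central and $(u^a x)(u^b y)=u^{a+b}xy$. The product $E_r^{p,q}\otimes E_r^{p',q'}\to E_r^{p+p',q+q'}$ therefore induces a bilinear map on the direct limits
\[
\prescript{r}{}H^{q}\otimes \prescript{r}{}H^{q'}\to \prescript{r}{}H^{q+q'}.
\]
On $\prescript{1}{}H=H^*(X,Y)$ this is the cup product, because $E_2^{p,q}=H^p(G;H^q(X,Y))$ and the product there is induced by cup product on the fiber together with the ring structure of $H^*(BG)$. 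This yields the algebra structure on each page.

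For 1), the Leray--Serre differentials $d_r$ are derivations, with no sign issues over $\FF_2$. The differential $\prescript{r}{}d$ is the limit of $d_{r}$ on stable columns, shifted by the bidegree $(r,1-r)$, so it is again a derivation. One point needs care: the index shift between Kalinin's $r$ and Leray--Serre's $r$, and the case $r=1$. There $1+\Conj^*$ is not a derivation, which is why the statement begins at $r\ge2$. For $r\ge 2$ we are working with genuine Leray--Serre differentials $d_r$, $r\geq2$.

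For 2) and 3), I would use the multiplicativity of the Serre filtration, $\ff'_a\cup\ff'_b\subseteq\ff'_{a+b}$, which holds since $\ff'_j=\pi^*H^{\ge j}(BG)\cdot H^*$ is the image of products with base classes. Take $x\in\ff_p$ and $y\in\ff_q$, represented in $H^{i}$ and $H^{j}$ with $i,j$ large, so that $x\in\ff'_{i-p}$ and $y\in\ff'_{j-q}$. Then $xy\in\ff'_{i+j-p-q}\cap H^{i+j}$, which is $\ff_{p+q}$.

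The step I expect to be the main obstacle is checking that the Borel identification (\ref{Borel-identification}) is multiplicative. Restriction to $F_G=F\times BG$ is a ring map, and $H^*(F_G)=H^*(F)[u]$. The identification of $\varinjlim$ with $H^*(F)$ is given by $\sum u^{n-k}f_k\mapsto\sum f_k$, that is, by setting $u=1$, which is a ring map. Compatibility with the limit maps then gives the product rule on $H^*(F)$. Finally, $\phi_i$ is induced by the isomorphism $\ff'_a/\ff'_{a+1}\cong E_\infty^{a,*}$, which is multiplicative in the Serre spectral sequence. Passing to limits gives $\phi_{i+j}(xy)=\phi_i(x)\phi_j(y)$.
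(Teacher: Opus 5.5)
The paper gives no proof of this proposition. It is quoted from Kalinin, and the only supporting material in the text is the description of $\prescript{r}{}H^*$ and of $\ff$ as $u$-stabilizations of the Leray--Serre spectral sequence of the Borel fibration and of its filtration. Your argument is the natural one in exactly that framework, and it is correct. Three ingredients give the algebra structure and items 1)--3):
\begin{itemize}
\item the multiplicativity of the Leray--Serre spectral sequence;
\item the compatibility of its products with multiplication by $u$, which makes them pass to the direct limits;
\item the fact that the Borel identification with $H^*(F(X,Y))$ is restriction to $F(X,Y)_G$ followed by setting $u=1$, which is a ring map.
\end{itemize}

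Two small points should be tidied up.

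First, your sentence about $r=1$ mixes two things. $\prescript{1}{}H^*=H^*(X,Y)$ carries the cup product by definition and is not a stabilized Leray--Serre page. What actually needs the $E_2$ description is that the stabilized product on $\prescript{2}{}H^*=H^1(G;H^*(X,Y))$ is the one induced by the cup product on $\ker(1+\Conj^*)$. This holds because multiplication by $u$ identifies $H^p(G;M)$, $p\ge 1$, with $M^G/(1+\Conj^*)M$ compatibly with cup products. The cup product respects this quotient, since $(a+\Conj^*a)\,y=(1+\Conj^*)(ay)$ whenever $\Conj^*y=y$.

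Second, in 2) you treat $\ff'_j$ as the ideal generated by $\pi^*H^{\ge j}(BG)$. In 3) you instead use $\ff'_a/\ff'_{a+1}\cong E_\infty^{a,*}$, which is a property of the Serre filtration. The two filtrations do coincide here. Multiplication by $u$ maps $E_2^{p,q}$ onto $E_2^{p+1,q}$ for all $p$ and bijectively for $p\ge 1$. An induction on $r$, using that $u$ is injective on the columns $p\ge r-1$ of $E_r$, shows that $u$ stays onto on every column of $E_\infty$. You can avoid this check entirely by invoking directly the multiplicativity of the Serre filtration, which holds in general.
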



\begin{prop}[Kalinin, \cite{kalinin-eff}]
Let $(X, Y)$ and $(A,B)$ be two $G$-pairs. Then:
\begin{itemize}
\item[ 1)] The multiplication
$$
\times: H^*(X,Y)\otimes H^*(A,B) \to H^*(X\times A, Y\times A \cup X \times B) 
$$
induces spectral sequence isomorphisms
$$
\times: \prescript{r}{}H^i(X,Y)\otimes \prescript{r}{}H^j(A,B) 
\to \prescript{r}{}H^{i+j}(X\times A, Y\times A \cup X \times B),
$$
where 
$
\prescript{r}{}d(a \otimes b) = a \otimes \prescript{r}{}d(b) + 
\prescript{r}{}d(a) \otimes b,
$
for any $r>1.$
\item[ 2)]  We have 
$$
\times: \ff_i(X,Y) \otimes \ff_j(A,B)\subseteq \ff_{i+j}(X\times A, Y\times A \cup X \times B).
$$
\item[ 3)] We have 
$$
\phi_*(a \otimes b) = \phi_*(a) \times  \phi_*(b).
$$
\end{itemize}
\qed
\end{prop}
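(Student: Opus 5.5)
The plan is to derive all three assertions from the Borel-fibration construction of Kalinin's spectral sequence recalled above, using the Künneth formula for the Borel constructions. Write $Z=X\times A$ and $W=Y\times A\cup X\times B$, with the diagonal involution. The key observation is that the diagonal Borel construction $(Z,W)_G$ is the pullback of $(X,Y)_G\times(A,B)_G$ along the diagonal $\Delta:BG\to BG\times BG$. Hence the equivariant cross product
$$
H^*((X,Y)_G)\otimes H^*((A,B)_G)\to H^*((Z,W)_G),\qquad \alpha\otimes\beta\mapsto \Delta^*(\alpha\times\beta),
$$
is defined, and it is natural and multiplicative. In the Leray–Serre spectral sequences it induces a pairing $\prescript{}{}E_r^{p,q}(X,Y)\otimes E_r^{p',q'}(A,B)\to E_r^{p+p',q+q'}(Z,W)$. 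This pairing satisfies the Leibniz rule, because it is the composition of the external pairing of spectral sequences for the product fibration over $BG\times BG$ with the map induced by $\Delta$. Since the pairing is $H^*(BG)=\FF_2[u]$-linear, it commutes with multiplication by $u$. Passing to the direct limit over $p$ therefore gives pairings $\prescript{r}{}H^i(X,Y)\otimes\prescript{r}{}H^j(A,B)\to\prescript{r}{}H^{i+j}(Z,W)$ that satisfy the stated Leibniz formula (signs are irrelevant over $\FF_2$). Note that $\otimes$ of limits over $p$ and $p'$ lands in the limit over $p+p'$.

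For 1) it remains to show that these pairings are isomorphisms. At $r=1$ the map is the ordinary Künneth isomorphism $H^*(X,Y)\otimes H^*(A,B)\cong H^*(Z,W)$, which holds over the field $\FF_2$ for finite CW pairs. I would then argue by induction on $r$ using the algebraic Künneth theorem: if a map of differential graded vector spaces $C\otimes C'\to C''$ is an isomorphism of complexes for the differentials $\prescript{r}{}d$, then it induces an isomorphism $H(C)\otimes H(C')\cong H(C'')$ on homology, which is the next page. This is the main obstacle. The induction requires checking that the pairing on page $r$ is not merely compatible with the differentials but that the tensor product differential is exactly $\prescript{r}{}d$ on the target. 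This holds for $r\ge 2$ by the Leibniz rule established above. The delicate case is the transition from page $1$ to page $2$: there $\prescript{1}{}d=1+\Conj^*$ is not a derivation, so I would verify directly that $H^1(G,M\otimes N)\cong H^1(G,M)\otimes H^1(G,N)$ in the stable (Tate) sense for $\FF_2[G]$-modules, using the decomposition of such modules into trivial and free summands. Once page $2$ is identified, the induction for $r\ge2$ proceeds as described.

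For 2), take $a\in\ff_i(X,Y)$ and $b\in\ff_j(A,B)$. Represent $a$ by a class in $\ff'_{k-i}\cap H^k((X,Y)_G)$ and $b$ by a class in $\ff'_{l-j}\cap H^l((A,B)_G)$, with $k,l$ large. The filtration $\ff'$ is defined via $\pi^*H^{\ge *}(BG)$ and multiplication by $u$, and the cross product is $\FF_2[u]$-bilinear, so $\ff'_s\times\ff'_t\subseteq\ff'_{s+t}$. The product therefore lies in $\ff'_{k+l-i-j}\cap H^{k+l}((Z,W)_G)$. Under the Borel identification \eqref{Borel-identification}, restriction to fixed points carries the equivariant cross product to the ordinary cross product on $F(Z,W)=F(X,Y)\times F(A,B)$, because $F(Z,W)_G=F(Z,W)\times BG$ and $\Delta^*$ corresponds to multiplying the $u$-powers. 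This proves 2).

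Finally, 3) follows by the same bookkeeping. The map $\phi$ is induced by the identification of $\ff_n/\ff_{n-1}$ with $E_\infty^{k-n,n}$ in the limit. Since the pairing of spectral sequences converges to the filtered cross product, it is compatible with these identifications, and this gives $\phi(a\otimes b)=\phi(a)\times\phi(b)$.
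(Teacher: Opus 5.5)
The paper does not prove this proposition. It is quoted from Kalinin \cite{kalinin-eff} and closed with \qed, so there is no internal argument to compare against. Your reconstruction is correct, and it stays inside the description the paper recalls: Kalinin's sequence as the $u$-stabilized Leray--Serre sequence of the Borel fibration.

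The pairing $\alpha\otimes\beta\mapsto\Delta^*(\alpha\times\beta)$ is $\FF_2[u]$-bilinear and satisfies the Leibniz rule on $E_r$ for $r\ge 2$, so it passes to the limits over $p$. The real content is the base case $r=2$, which needs two ingredients:
\begin{itemize}
\item[(a)] the $G$-equivariant Künneth identification $H^*(Z,W)\cong H^*(X,Y)\otimes H^*(A,B)$, with $G$ acting diagonally;
\item[(b)] the fact that the stable $E_2$-pairing is the group-cohomology cup product, which on $\ker(1+\Conj^*)/\im(1+\Conj^*)$ is simply $[a]\otimes[b]\mapsto[a\times b]$. With the periodic resolution, the twist by a power of $\Conj^*$ drops out because the representatives are invariant.
\end{itemize}
Given these, your trivial/free decomposition of $\FF_2[G]$-modules yields the isomorphism, and the algebraic Künneth theorem carries it to every $r\ge 2$.

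You are right not to use $r=1$ as an inductive step. The identity $(1+\Conj^*)(a\times b)=a\times b+\Conj^*a\times\Conj^*b$ differs from $(1+\Conj^*)a\times b+a\times(1+\Conj^*)b$, and this is exactly why the statement is restricted to $r>1$. Item 1) should also be read with $\bigoplus_{i+j=k}$ on the left.

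One step deserves a sentence you did not write. In 2) you use $\ff'_s=u^sH^*_G$, which makes the bilinearity argument immediate. In 3) you use that the graded pieces of $\ff'$ are the $E_\infty$-terms, i.e. that $\ff'$ is the Serre filtration. These two filtrations agree here. The preimage of $\RR\PP^{s-1}\subset BG$ is $X\times_G S^{s-1}$, which is the sphere bundle of $s$ copies of the tautological line bundle over $X_G$, with mod $2$ Euler class $u^s$. The Gysin sequence therefore gives $\ker\bigl(H^*_G(X,Y)\to H^*((X,Y)\times_G S^{s-1})\bigr)=u^sH^*_G(X,Y)$. With this remark, all three items go through as you describe.
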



\begin{prop}[Kalinin, \cite{kalinin-eff}]
\label{al-lef}
Let $(X,Y)$ be a $G$-pair, where $X$ is an $n$-dimensional closed manifold and $Y$ 
a submanifold. Then:
\begin{itemize}
\item[ 1)] For $r\leq \infty,$ the non-degenerate bilinear form
$$
\Phi_{(X,Y)}:H^*(X,Y)\otimes H^*(X \setminus Y)\to \FF_2, \Phi_{(X,Y)}(x\otimes y)=(x\cup y)\cap [X,Y],
$$
induces a non-degenerate bilinear form
$$
\prescript{r}{}\Phi_{(X,Y)}: \prescript{r}{}H^*(X,Y)\otimes \prescript{r}{}H^*(X \setminus Y)\to\FF_2;
$$
for $r<\infty,$ the differentials $\prescript{r}{}d_{(X,Y)}$ and 
$\prescript{r}{}d_{X\setminus Y}$ are mutually adjoint with respect to this form.
\item[ 2)] We have 
$$
\ff_i(X, Y)^{\perp} = w^{-1}\ff_{n-i-1} (X \setminus Y),
$$ 
where $w$ is the total Stiefel-Whitney class of the normal
vector bundle of the submanifold $F(X \setminus Y)$ in $X \setminus Y.$
\item[ 3)] We have 
$$
\prescript{\infty}{}\Phi_{(X,Y)}(\phi_*x\otimes \phi_*y)=\Phi_{F(X,Y)}((
w^{-1}x)\otimes y)
$$
for any $x\in H^*(F(X,Y))$ and $y\in H ^*(F(X\setminus Y)).$
\end{itemize}
\qed
\end{prop}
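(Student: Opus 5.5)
The plan is to derive all three assertions from the Borel fibration picture recalled above, using equivariant Poincaré–Lefschetz duality. Before starting I would fix one normalization. Since $\ff_n(X,Y)\subseteq H^{\le n}(F(X,Y))$ by (\ref{automatic}), the fixed point set $F(X,Y)$ is a manifold of possibly varying dimension, and its fundamental class $[F(X,Y)]$ is defined with respect to the relative structure coming from $(F(X),F(Y))$.

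\emph{Step 1: the pairing on pages $r<\infty$.} On the first page, $\Phi_{(X,Y)}$ is Poincaré–Lefschetz duality for the compact manifold pair, in the form $H^*(X,Y)\cong H_{n-*}(X\setminus Y)$ composed with the Kronecker pairing. It is $G$-invariant because $\Conj$ maps $[X,Y]$ to itself; the class is taken mod $2$, so the sign issues from orientation reversal disappear. It follows that $1+\Conj^*$ on $H^*(X,Y)$ is adjoint to $1+\Conj^*$ on $H^*(X\setminus Y)$. This shows that $\prescript{1}{}d$ is self-adjoint, so the pairing descends to homology, $\prescript{2}{}H$.

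For higher $r$, I would realize $\Phi$ at the level of the Borel construction. Equivariant duality gives a pairing
$$H^*_G(X,Y)\otimes H^*_G(X\setminus Y)\to H^{*}_G(X,\partial)\to H^{*-n}(BG),$$
obtained by integration along the fibre. This pairing respects the filtration $\ff'$. The Leray–Serre spectral sequences of the two Borel fibrations then become dual to one another, with the differentials adjoint, by the standard argument for dual filtered complexes; one can also use the cellular chain model $C^*(X,Y)\otimes \FF_2[u]$ with differential $\delta+u(1+\Conj^*)$. Stabilizing in $p$ (the direct limit under multiplication by $u$) preserves non-degeneracy, because the relevant groups are finite-dimensional and stabilize at a finite stage. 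Induction on $r$ then gives non-degeneracy on each page, and passage to the limit gives it at $r=\infty$.

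\emph{Step 2: localization, which gives parts 2) and 3).} The key tool is the Atiyah–Bott/Borel localization formula for the integration map $\int_X: H^*_G(X,Y)\otimes H^*_G(X\setminus Y)\to H^{*-n}(BG)$. After inverting $u$, it equals
$$\int_{F}\frac{(\,\cdot\,)|_F}{e_G(\nu_F)}.$$
The equivariant Euler class of the normal bundle of $F$, on which $G$ acts by $-1$, is $e_G(\nu)=\sum_j u^{k-j}w_j(\nu)$, where $k$ is the codimension, varying with the component. After the Borel identification (\ref{Borel-identification}) this is $u^k w(\nu)$ in the stable limit.

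Take $x\in H^*(F(X,Y))$ represented in high equivariant degree, and similarly for $y$. Compute the $u$-adic leading term of $\int_X \tilde x\tilde y$. The localization formula turns this into $\Phi_F(w^{-1}x\otimes y)$, with the power of $u$ shifted according to the filtration degrees. Comparing the degrees in $u$ gives two things: the pairing of $\ff_i(X,Y)$ with $w^{-1}\ff_{n-i-1}(X\setminus Y)$ lands in negative $u$-degree and hence vanishes, and the leading term computes $\prescript{\infty}{}\Phi(\phi_*x\otimes\phi_*y)$. This proves 3) and the inclusion $\ff_i^{\perp}\supseteq w^{-1}\ff_{n-i-1}$. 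Equality then follows from a dimension count. The non-degeneracy in 1) on $\prescript{\infty}{}H=\gr_\ff$ forces $\dim\ff_i(X,Y)+\dim\ff_{n-i-1}(X\setminus Y)=\dim H^*(F)$. Here $\Phi_F$ on $F(X,Y)\times F(X\setminus Y)$ is non-degenerate by Poincaré–Lefschetz duality on $F$, and $w$ is invertible.

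\emph{Main obstacle.} I expect the hardest part to be the degree bookkeeping in Step 2. Components of $F$ have different dimensions, so $e_G$ has components with different $u$-powers. One must check that the off-by-one index $n-i-1$ correctly matches the leading term of the stable limit with the graded pieces $\gr^i$, and that the relative boundary $Y$ causes no correction term. To handle this, I would first carry out the computation for a single component of $F$ of fixed dimension $m$, where $e_G=u^{n-m}w(\nu)+\dots$ is explicit, and then sum over components.
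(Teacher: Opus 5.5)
The paper does not prove this proposition; it is quoted from Kalinin, so there is no in-text argument to compare yours against. Judged on its own, your Step 2 is correct and carries the real content, and the obstacle you anticipate does not arise. Suppose $\tilde x\in H^i_G(X,Y)$ and $\tilde y\in H^j_G(X\setminus Y)$ restrict on $F$ to $\sum_k x_k u^{i-k}$ and $\sum_l y_l u^{j-l}$. On a component $F_\alpha$ of dimension $m$ one has $e_G(\nu_\alpha)^{-1}=u^{-(n-m)}\sum_s (w^{-1})_s u^{-s}$, and $\int_{F_\alpha}$ picks out total degree $m$. Hence
\[
\int_M \tilde x\,\tilde y \;=\; u^{\,i+j-n}\,\Phi_{F(X,Y)}\bigl(w^{-1}x\otimes y\bigr),
\]
with the same power of $u$ on every component. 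Here $M$ is the complement of an open invariant tubular neighbourhood of $Y$, and relative localization on $(M,\partial M)$ absorbs $Y$.

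You should state two identifications that you use implicitly. First, $\ff_i(X,Y)$ is exactly the set of such $x$ with $\tilde x\in H^i_G(X,Y)$. This holds because the Serre filtration of the Borel fibration is $u^jH^{*-j}_G$, by the Gysin sequence of $jL$, where $L$ is the tautological line bundle pulled back from $BG$. Second, $\phi_i(x)$ is the class of the fibre restriction of $\tilde x$. Consequently, for $i+j=n$ the constant term is $\Phi_{(X,Y)}$ of the fibre restrictions, which is $\prescript{\infty}{}\Phi(\phi_ix\otimes\phi_jy)$. Then $j=n-i-1$ gives the inclusion in 2), $j=n-i$ gives 3), and your dimension count finishes 2).

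The genuine gap is Step 1 for $r\ge 3$. A pairing with values in $H^*(BG)=\FF_2[u]$ on the infinite Borel construction is not a perfect pairing of filtered complexes. Moreover, the two full Leray--Serre spectral sequences are not dual: on $E_2$ the column $p=0$ is $H^q(X,Y)^G$, which pairs degenerately with $H^1(G;H^{n-q}(X\setminus Y))$. The complex $C^*(X,Y)\otimes\FF_2[u]$ has the same defect, since there is no top power of $u$. So neither the ``dual filtered complexes'' lemma nor stabilization yields non-degeneracy or adjointness beyond $\prescript{2}{}H$.

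You need an honest $\FF_2$-valued duality. One way is to use the finite approximations $S^N\times_G(M,\partial M)\to\RR P^N$, which are compact $(N+n)$-manifolds with boundary.
\begin{itemize}
\item Their multiplicative Leray--Serre spectral sequence pairs $E_r^{p,q}(M,\partial M)$ with $E_r^{N-p,n-q}(M)$ into the corner $E_r^{N,n}$, followed by evaluation on the fundamental class. This corner is never hit and supports no differential.
\item The pairing is perfect on $E_2$: for $0<p<N$ by your $r=2$ argument, and at the ends $H^0(\RR P^N;A)=A^G$ pairs with $H^N(\RR P^N;B)=B_G$.
\item The differentials are mutually adjoint, since $d_r(ab)=0$: the product $ab$ would lie in $E_r^{*,n+r-1}=0$.
\item For $r\ll p\ll N-r$ these pages coincide with the Borel pages, hence with Kalinin's stabilized ones, and multiplication by $u$ is self-adjoint.
\end{itemize}
The Tate complex $C^*[u,u^{-1}]$ with a residue pairing would serve equally well. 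With this repair your outline becomes a complete proof.

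A minor point: $F$ being a manifold of varying dimension has nothing to do with (\ref{automatic}); it is simply the fixed set of a smooth involution.
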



\begin{prop}[Kalinin, \cite{kalinin}]
\label{boundary}
Let $(X,Y,Z)$ be a $G$-triple. Then:
\begin{itemize}
\item[ 1)] The boundary homomorphism $\delta:H^*(X,Y)\ra H^{*+1}(Y,Z)$ in the 
cohomology sequence of a triple induces
a spectral sequence morphism
$$
\prescript{r}{}\delta: \prescript{r}{}H^*(X,Y)\ra \prescript{r}{}H^{*+1}(Y,Z).
$$
\item[ 2)] $F\delta(\ff_i(X,Y)) \subseteq \ff_{i+1}(Y,Z),$ where $F\delta$ is 
the boundary homomorphism in the cohomology sequence of the triple $F(X,Y,Z)$.
\item[ 3)] $\phi_*F\delta = \prescript{\infty}{}\delta\phi_*.$
\end{itemize}\qed
\end{prop}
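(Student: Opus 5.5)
The plan is to deduce all three assertions of Proposition~\ref{boundary} from the naturality of the Leray--Serre spectral sequence of the Borel construction, in the same way as the spectral sequence and the filtration of Theorem~\ref{main-ss} were obtained. For a $G$-triple $(X,Y,Z)$, applying the Borel construction $S^\infty\times_G(\,\cdot\,)$ gives a triple of spaces fibered over $BG$, namely $(X,Y,Z)_G\to BG$. The coboundary of this triple,
$$
\delta_G: H^*((X,Y)_G)\to H^{*+1}((Y,Z)_G),
$$
is induced by a map of relative cochain complexes. That map is compatible with the filtration pulled back from the skeleta of $BG$, i.e.\ with the Serre filtration $\ff'_j$. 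Hence $\delta_G$ induces a morphism of Leray--Serre spectral sequences $E_r^{p,q}(X,Y)\to E_r^{p,q+1}(Y,Z)$. On $E_2$ this morphism is $H^p(G;\delta)$, where $\delta$ is the ordinary coboundary of the triple. It is also $H^*(BG)$-linear, because $\delta_G$ commutes with the cup product by $\pi^*u$; this is the usual module property of the connecting homomorphism.

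For 1), since the morphism commutes with multiplication by $u$, it commutes with the stabilizing maps $\prescript{r}{}\phi_p$. Passing to direct limits therefore gives a morphism $\prescript{r}{}\delta:\prescript{r}{}H^*(X,Y)\to\prescript{r}{}H^{*+1}(Y,Z)$ commuting with the differentials. On $\prescript{1}{}H$ it is $\delta$ itself. To see this I would use the standard description of the stabilized $E_1$-page via the cellular chains of $S^\infty$: the $E_1$-term there is $H^q$ of the fibre, with differential $1+\Conj^*$, and $\delta$ is equivariant.

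For 2), I would apply the Borel localization isomorphism (\ref{Borel-identification}) to the triple. The key point is naturality: the inclusion $F(X,Y,Z)_G=F(X,Y,Z)\times BG\to (X,Y,Z)_G$ is a map of triples, so it commutes with the coboundaries. For a trivial action, the coboundary $\delta_G$ of $F(X,Y,Z)\times BG$ is $F\delta\otimes \mathrm{id}$. Hence under the identifications with $\varinjlim$ the map $\delta_G$ becomes $F\delta$. The degree shift then gives the filtration statement. An element of $\ff_i(X,Y)$ is represented in $H^{m}((X,Y)_G)$, with $m$ large, by a class in $\ff'_{m-i}$. Its image under $\delta_G$ lies in $\ff'_{m-i}\cap H^{m+1}((Y,Z)_G)$, and this group equals $\ff'_{(m+1)-(i+1)}\cap H^{m+1}$, which is $\ff_{i+1}(Y,Z)$.

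For 3), $\phi_i$ is defined as the identification of $\gr$ of the $\ff'$-filtration with $E_\infty$, stabilized. Since $\delta_G$ is filtration preserving, the map it induces on associated graded groups is exactly the $E_\infty$-component of the morphism of spectral sequences. Passing to the limit gives $\phi_*F\delta=\prescript{\infty}{}\delta\phi_*$.

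The main obstacle I expect is the bookkeeping for signs and grading at the $E_1$/$E_2$ level. One has to check that the connecting map of the Borel triple really induces, on the stabilized pages, the map denoted $\prescript{r}{}\delta$ with the claimed degree shift $q\mapsto q+1$, with no shift in the base degree $p$. I would handle this by working with the explicit double complex $C^*(S^\infty)\otimes_G C^*(X,Y)$, so that the coboundary is visibly fibrewise. Working mod $2$ makes the signs disappear.
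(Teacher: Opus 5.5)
Your argument is correct, and it is the natural one. The paper gives no proof of this proposition; it is quoted from Kalinin without proof. What you describe is exactly the naturality built into the construction of $\prescript{r}{}H^*$ and $\ff$ recalled after Theorem~\ref{main-ss}: the connecting map of the Borel triple preserves the Serre filtration and commutes with multiplication by $u$, so it induces a bidegree $(0,1)$ morphism of Leray--Serre spectral sequences that survives stabilization; and Borel localization carries it to $F\delta\otimes\mathrm{id}$ on $F(\cdot)\times BG$.

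One correction, which you inherited from the printed statement. In cohomology the connecting homomorphism of a triple goes $H^*(Y,Z)\to H^{*+1}(X,Y)$, coming from $0\to C^*(X,Y)\to C^*(X,Z)\to C^*(Y,Z)\to 0$. There is no such map $H^*(X,Y)\to H^{*+1}(Y,Z)$ in the cohomology sequence of the triple. This is also how the proposition is actually used in Proposition~\ref{delta-vanish}, which needs $F\delta(\ff_i(Y,Z))\subseteq\ff_{i+1}(X,Y)$.

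So your $\delta_G$ should be $H^*((Y,Z)_G)\to H^{*+1}((X,Y)_G)$. With the two pairs interchanged throughout, every step of your proposal goes through unchanged, including the index count $\ff'_{m-i}\cap H^{m+1}=\ff'_{(m+1)-(i+1)}\cap H^{m+1}$.

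You can also make ``induced by a map of cochain complexes'' literal. Extension by zero gives a $G$-equivariant splitting $s$ of the restriction $C^*(X,Z)\to C^*(Y,Z)$. Then $ds+sd$ is a cochain map $C^*(Y,Z)\to C^{*+1}(X,Y)$ representing $\delta$ (mod $2$). Applied coefficientwise in $\mathrm{Hom}_G(C_*(S^\infty),C^*(\cdot))$, it commutes with the $S^\infty$-part of the differential, so it preserves the filtration. This gives the morphism of spectral sequences directly.
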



\subsection{Effectivity}
\label{eff-spaces}


\begin{defn}
Let $(X,Y)$ be a $G$-pair.
\begin{itemize}
\item[ 1)] The $G$-pair $(X,Y)$ is said to be effective if 
 $$
 \ff_i(X,Y) = \Sq H^{\leq i/2}(F(X,Y)),
 $$ 
 where $\Sq = 1 + \Sq^1 +\Sq^2 +\cdots$  is the total Steenrod operation. 
 \item[ 2)] The  $G$-space $X$ is said to be effective if the 
 $G$-pair $(X,\emptyset)$ is effective.
 \end{itemize}
 \end{defn}


\begin{prop}
\label{rel=PAL}
Let $(X,\Conj)$ be a $G$-space, where $X$ is an $n$-dimensional 
closed manifold, and $Y$ a $\Conj$-invariant submanifold. Then, the 
$G$-pair $(X,Y)$ is effective if and only if the $G$-space 
$X\setminus Y$ is effective.
\end{prop}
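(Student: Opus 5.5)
The plan is to deduce the statement from the duality in Proposition \ref{al-lef}, combined with a purely topological computation of orthogonal complements on the fixed point set. By Proposition \ref{al-lef}(2), for every $i$ we have
$$
\ff_i(X,Y)^{\perp}=w^{-1}\ff_{n-i-1}(X\setminus Y),
$$
where $\perp$ is taken with respect to the nondegenerate Lefschetz pairing $\Phi_{F(X,Y)}$ between $H^*(F(X,Y))$ and $H^*(F(X\setminus Y))$. This pairing is nondegenerate because $F(X)$ is a closed manifold, $F(Y)$ is a submanifold, and all groups are finite-dimensional over $\FF_2$. Taking orthogonal complements is an inclusion-reversing involution on subspaces. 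Hence effectivity of $(X,Y)$, i.e.\ $\ff_i(X,Y)=\Sq H^{\le i/2}(F(X,Y))$ for all $i$, holds if and only if
$$
\ff_{n-i-1}(X\setminus Y)=w\cdot\bigl(\Sq H^{\le i/2}(F(X,Y))\bigr)^{\perp}\quad\text{for all } i .
$$
So the proposition reduces to the following identity, to be proved purely on the fixed locus:
$$
\bigl(\Sq H^{\le i/2}(F(X,Y))\bigr)^{\perp}=w^{-1}\,\Sq H^{\le (n-i-1)/2}(F(X\setminus Y)). \qquad (\ast)
$$
Given $(\ast)$ for all $i$, both directions of the equivalence follow at once. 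Effectivity of $(X,Y)$ at index $i$ is equivalent to effectivity of $X\setminus Y$ at index $n-i-1$, and $i\mapsto n-i-1$ is a bijection on the relevant range. Outside that range both filtrations are $0$ or everything, by \eqref{automatic} and the finiteness of the filtration.

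To prove $(\ast)$, I will use the Wu-type adjunction formula for Steenrod squares on the compact manifold pair $(F(X),F(Y))$. Writing $\chi$ for the antipode of the Steenrod algebra and $\bar\Sq=\chi(\Sq)$, one has
$$
\langle \Sq(a)\cup b,[F]\rangle=\langle a\cup \bar\Sq(b)\,\bar w(F),[F]\rangle,
$$
a formula involving the dual Stiefel--Whitney class of $F$. Next I will use $w(TF)\,w=w(TX)|_F$ to replace tangent classes of $F$ by the normal class $w$ appearing in Proposition \ref{al-lef}. The inclusion
$$
w^{-1}\Sq H^{\le(n-i-1)/2}\subseteq\bigl(\Sq H^{\le i/2}\bigr)^{\perp}
$$
should then come from a degree count: after moving all squares to one side, a product of a class of degree $\le i/2$ with a class coming from degree $\le (n-i-1)/2$ has too small a degree to pair nontrivially. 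It is convenient to do this count componentwise on $F$, since components may have different dimensions. Equality would then follow from a dimension count. Since $\Sq$ is invertible, $\dim \Sq H^{\le k}=\dim H^{\le k}$, and by Poincaré--Lefschetz duality on $F$ the two sides of $(\ast)$ have the same dimension.

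The main obstacle is exactly this degree bookkeeping. The naive count has to use the interplay between $n=\dim X$ and the dimension of the fixed components, and it is where the factor $w^{-1}$ and the characteristic classes of $X$ itself enter. If the direct Wu-formula argument does not close up, I would instead try to obtain $(\ast)$ by applying Proposition \ref{al-lef} to model effective pairs. A natural candidate is a tubular neighbourhood of $F$ with the involution acting by $-1$ on the normal bundle, i.e.\ a Thom space situation, where $\ff$ is computable and equals $\Sq H^{\le i/2}$ by the Thom isomorphism. The identity $(\ast)$ could then be read off from naturality.
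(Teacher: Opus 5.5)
\textbf{Gap in the proof of $(\ast)$.} Your reduction to $(\ast)$ via Proposition~\ref{al-lef}(2) is correct, and it is essentially the paper's route. The paper combines that duality with the Wu formulas to get $(\Sq^{-1}\ff_i(X,Y))^{\perp}=\Sq^{-1}\ff_{n-i-1}(X\setminus Y)$. It then uses $(H^{\le i/2}(F(X,Y)))^{\perp}=H^{\le (n-i-1)/2}(F(X\setminus Y))$. Your $(\ast)$ packages the same two facts with $\Sq$ moved to the other side. The gap is in the step you yourself flag as the main obstacle: all the content is there, and your sketch of it would not close.

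First, the adjunction formula you display is wrong. Wu's formula gives $\langle \Sq(a)\cup b,[F]\rangle=\langle a\cup v\,\Sq^{-1}(b),[F]\rangle=\langle a\cup \Sq^{-1}(w(F)\,b),[F]\rangle$, where $v$ is the total Wu class with $\Sq v=w(F)$. It does not give $\Sq^{-1}(b)\,\bar w(F)$ (recall $\chi(\Sq)=\Sq^{-1}$). For example, on $\RR P^4$ with $a=t$ and $b=1$, the left side is $0$, while your right side is $\langle t^4,[\RR P^4]\rangle=1$. The correct formula yields $(\Sq U)^{\perp}=w(F)^{-1}\Sq(U^{\perp})$ for every subspace $U$.

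Second, and more importantly, $(\ast)$ then needs exactly two facts that your plan never supplies:
\begin{itemize}
\item $w=w(TF)$, i.e.\ the normal class equals the tangent class of $F$;
\item every component of $F$ has dimension exactly $n/2$.
\end{itemize}
Both come from $\Conj$ being anti-holomorphic, which is implicit in the paper's argument. Multiplication by $\sqrt{-1}$ identifies the normal bundle of $F$ with $TF$, and $F$ is totally real of half dimension. With these two facts, Lefschetz duality on $F$ gives $(H^{\le i/2})^{\perp}=H^{\le n/2-\lfloor i/2\rfloor-1}=H^{\le (n-i-1)/2}$. Then $(\ast)$ follows at once from $(\Sq U)^{\perp}=w(F)^{-1}\Sq(U^{\perp})$, with no separate degree or dimension count.

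The Whitney relation $w(TF)\,w=w(TX)|_F$ cannot replace the first fact: you would need $w(TX)|_F=w(TF)^2$, which is the same input. Your worry about components of different dimensions also points the wrong way. On a component of dimension $d\neq n/2$, the annihilator of $H^{\le k}$ is $H^{\le d-k-1}$, so $(\ast)$ fails in general there, and neither componentwise bookkeeping nor a Thom-space model can repair it. Finally, the characteristic classes of $X$ play no role in the correct argument: only $w(F)$ and $w$ appear, and they cancel.
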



\proof Proposition \ref{al-lef} and the Wu formulas imply that
$$
(\Sq^{-1} \ff_i(X,Y))^{\perp} = \Sq^{-1} \ff_{n-i-1}(X \setminus Y),
$$
where $\Sq = 1 + \Sq^1 +\Sq^2 +\cdots$  is the total Steenrod operation. 
The proof of the assertion follows now directly from the definition and the 
Poincar\'e-Alexander-Lefschetz duality.
\qed

 Let $(X, Y)$ be an effective $G$-pair. We denote by 
 $$
 \psi^i_{X,Y}: H^i(F(X,Y))\to \prescript{\infty}{}H^{2i}(X,Y).
 $$

 the composition of the homomorphisms
$$
H^i(F(X,Y))\xrightarrow{\Sq} \ff_{2i}(X,Y)\xrightarrow{\pr}
\frac{\ff_{2i}(X,Y)}{\ff_{2i-1}(X,Y)}\xrightarrow{\prescript{\infty}{}\phi_{2i}}
\prescript{\infty}{}H^{2i}(X,Y).
$$

The following remarkable property was one of the main Kalinin's 
motivations for introducing and study of the notion of effective spaces.


\begin{thm}[Kalinin \cite{kalinin-eff, kalinin}]
\label{prop-def}
 Let $(X, Y)$ be an effective $G$-pair. Then: 
\begin{itemize}
\item[ 1)] $\psi^i_{X,Y}$  is an isomorphism.
\item[ 2)] The homomorphism
$$
\psi^*_{X,Y}=
\bigoplus_{i\geq 0}\psi^i_{X,Y}: H^*(F(X,Y))\rightarrow 
\prescript{\infty}{}H^*(X,Y)
$$
is a ring isomorphism.
\item[ 3)] $\psi^*_{X,Y}(\Sq^i x)=\Sq^{2i} (\psi^*_{X,Y}(x))$ for any $x\in H^*(F(X,Y))$. \qed
\end{itemize}
\end{thm}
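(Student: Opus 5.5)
The plan is to derive all three assertions directly from the defining equality $\ff_i(X,Y)=\Sq H^{\le i/2}(F(X,Y))$, combined with two facts about the total Steenrod square on $H^*(F(X,Y))$. First, $\Sq=1+\Sq^1+\Sq^2+\cdots$ is an additive automorphism: it is unipotent with respect to degree, and its inverse is $\Sq^{-1}$. Second, $\Sq$ is multiplicative by the Cartan formula. Since $\lfloor (2i+1)/2\rfloor=i$, effectivity gives $\ff_{2i+1}=\ff_{2i}$. Hence the odd graded pieces of $\gr_\ff$ vanish, and by the isomorphisms $\phi$ of Theorem \ref{main-ss} we get $\prescript{\infty}{}H^{\rm odd}(X,Y)=0$. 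All of the information therefore sits in the quotients $\ff_{2i}/\ff_{2i-1}=\Sq H^{\le i}/\Sq H^{\le i-1}$.

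For 1), the map $\Sq$ carries $H^i(F(X,Y))$ into $\ff_{2i}$, and its composite with the projection is surjective, because $\Sq H^{\le i}=\Sq H^i+\Sq H^{\le i-1}$. For injectivity, suppose $x\in H^i$ and $\Sq x\in \Sq H^{\le i-1}$. Applying $\Sq^{-1}$ gives $x\in H^{\le i-1}$, so $x=0$. Composing with the isomorphism $\prescript{\infty}{}\phi_{2i}$ shows that $\psi^i_{X,Y}$ is bijective.

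For 2), take $x\in H^i$ and $y\in H^j$. By Cartan, $\Sq(x\cup y)=\Sq x\cup\Sq y$, which lies in $\ff_{2i+2j}$ by Proposition \ref{multiplicative-kalinin}(2). Its class in $\gr^{2i+2j}_\ff$ is the product of the classes of $\Sq x$ and $\Sq y$. Proposition \ref{multiplicative-kalinin}(3) then yields $\psi(x\cup y)=\psi(x)\cup\psi(y)$, and together with 1) this gives a ring isomorphism.

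Part 3) is the step I expect to be the main obstacle. One must first make precise the Steenrod action on $\prescript{\infty}{}H^*$. I would take it to be induced by the Steenrod squares on $H^*((X,Y)_G)$, which commute with multiplication by $u$ only up to the factor $\Sq(u)=u+u^2$. One then has to check that these squares descend to the direct limit and respect the filtration. My approach is to work on the localized side, $H^*(F(X,Y)_G)=H^*(F(X,Y))[u]$. Under the identification (\ref{Borel-identification}), a representative of $\psi(x)$ is an equivariant class $z$ of large degree $N$ whose restriction to $F(X,Y)_G$ is the Steenrod-type polynomial $u^{N-2i}\sum_k u^{i-k}\Sq^k x$. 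Applying $\Sq^{2j}$ to this polynomial and expanding with the Cartan formula, $\Sq(u^m)=u^m(1+u)^m$, and the Adem relations, one should find that the leading part (in the $u$-filtration) equals the analogous polynomial built from $\Sq^j x$, with the remaining terms lying in lower filtration. This is the equivariant analogue of the classical identity $\Sq^{2j}(x^2)=(\Sq^j x)^2$ for the Steenrod "square" $P(x)=\sum_k u^{i-k}\Sq^k x$. Reducing modulo $\ff_{2i+2j-1}$ then gives $\Sq^{2j}\psi(x)=\psi(\Sq^j x)$.

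I would first verify this identity in the universal case, where it should reduce to Steenrod's computation of $\Sq$ on the cyclic reduced power $P(x)\in H^*(F\times BG)$. I would then transport it by naturality, together with the fact that only the top filtration component survives in $\prescript{\infty}{}H^{2i+2j}$.
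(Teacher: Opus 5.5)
The paper gives no proof of this theorem; it quotes it from Kalinin. Your argument therefore has to stand on its own.

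\textbf{Parts 1) and 2) are correct.} $\Sq$ is an additive automorphism of $H^*(F(X,Y))$, unipotent with respect to degree. Effectivity gives $\ff_{2i}=\Sq H^{\le i}$ and $\ff_{2i-1}=\ff_{2i-2}=\Sq H^{\le i-1}$. Hence $H^i(F(X,Y))\to\ff_{2i}/\ff_{2i-1}$ is bijective and the odd graded pieces vanish. The Cartan formula together with Proposition~\ref{multiplicative-kalinin} then gives multiplicativity. Defining the action on $\prescript{\infty}{}H^*$ through the Serre filtration $F^p$ of $H^*((X,Y)_G)$ is also fine: naturality gives $\Sq^kF^p\subseteq F^p$, and $\Sq^k(uz)-u\Sq^kz=u^2\Sq^{k-1}z\in F^{p+2}$.

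\textbf{The gap is in 3), at the step you flag.} It is more than an unperformed computation. Your class $z\in F^{N-2i}H^N((X,Y)_G)$ restricts to $u^{N-2i}P(x)$, where $P(y)=\sum_r u^{\deg y-r}\Sq^ry$. You need every term of $\Sq^{2j}(u^{N-2i}P(x))$ other than $u^{N-2i}P(\Sq^jx)$ to be the restriction of a class in $F^{N-2i+1}H^{N+2j}((X,Y)_G)$, i.e.\ to give an element of $\ff_{2i+2j-1}$ at $u=1$.

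A higher power of $u$ does not ensure this. For instance, $u^{N-2i+1}y$ with $0\neq y\in H^{2i+2j-1}(F(X,Y))$ gives $y\notin\Sq H^{\le i+j-1}=\ff_{2i+2j-1}$, because elements of the latter have no components above degree $2i+2j-2$. In high degrees the image of $F^p$ is in general strictly smaller than the $u$-adic filtration, and that difference is exactly what Kalinin's filtration records. So you need the exact shape of the correction terms, and it is given by the identity
\[
\Sq^k\bigl(u^mP(x)\bigr)=\sum_{0\le l\le k/2}\binom{m+i-l}{k-2l}\,u^{m+k-2l}\,P(\Sq^lx),\qquad x\in H^i(F(X,Y)).
\]
This identity is purely algebraic, so no universal example or naturality argument is needed. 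It follows from the Cartan formula, $\Sq(u)=u+u^2$, and the Adem relations, e.g.\ in the Bullett--Macdonald form $\Sq_{s^2+st}\Sq_{t^2}=\Sq_{t^2+st}\Sq_{s^2}$ with $\Sq_t=\sum_kt^k\Sq^k$.

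\textbf{Closing the argument.} Effectivity must now be used a second time, for each $\Sq^lx\in H^{i+l}(F(X,Y))$ with $l\le j$. It provides $z_l\in F^{N-2i}H^{N+2l}((X,Y)_G)$ restricting to $u^{N-2i}P(\Sq^lx)$, with $z_0=z$. Restriction to $F(X,Y)_G$ is injective in high degrees, so taking $m=N-2i$ and $k=2j$ yields
\[
\Sq^{2j}z=z_j+\sum_{l<j}\binom{N-i-l}{2j-2l}\,u^{2j-2l}z_l\quad\text{in } H^{N+2j}((X,Y)_G).
\]
All terms with $l<j$ lie in $F^{N-2i+2}$. Reducing modulo $F^{N-2i+1}$ gives $\Sq^{2j}\psi(x)=\psi(\Sq^jx)$. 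For odd $k$ the same formula gives $\Sq^{2j+1}z\in F^{N-2i+1}$. With this formula and the second use of effectivity made explicit, your outline becomes a complete proof.
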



\begin{prop}{\rm (\cite[Proposition 3.6]{kalinin})}
\label{delta-vanish}
Let 
$(X,Y,Z)$ be a $G$-triple such that the $G$-pairs
$(X,Y)$ and $(Y,Z)$ 
are effective. Then the boundary homomorphism
$$
\delta:H^*(F(Y,Z))\ra H^{*+1}(F(X,Y))
$$
is trivial.
\end{prop}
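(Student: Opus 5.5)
We have to show that the boundary map $\delta:H^*(F(Y,Z))\to H^{*+1}(F(X,Y))$ of the fixed triple is zero. We assume effectivity of both pairs. The plan is to compare $\delta$ with the boundary morphism in the Kalinin spectral sequences via Proposition \ref{boundary}. Then we use the fact that effectivity forces classes of degree $i$ to sit in filtration level exactly $2i$, while the spectral-sequence boundary morphism raises the $\infty$-degree by only one.

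Fix $x\in H^i(F(Y,Z))$. By effectivity of $(Y,Z)$ and invertibility of $\Sq$, write $x=\Sq\, y$ with $y\in H^{\le i}(F(Y,Z))$. Decomposing $y$ into homogeneous pieces and using linearity, it suffices to treat $x'=\Sq\, y_j$ with $y_j$ homogeneous of degree $j$. Since $\Sq$ and $F\delta$ commute (Steenrod squares are stable and commute with coboundaries), we have $F\delta(\Sq\, y_j)=\Sq\,F\delta(y_j)$. So it is enough to prove $F\delta(y)=0$ for every homogeneous $y\in H^j(F(Y,Z))$, and we argue by induction on $j$.

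By effectivity, $\Sq\, y\in\ff_{2j}(Y,Z)$. By Proposition \ref{boundary}(2), $F\delta(\Sq\, y)=\Sq\,F\delta(y)$ lies in $\ff_{2j+1}(X,Y)$. Now $F\delta(y)$ has degree $j+1$. By effectivity of $(X,Y)$, the element $\Sq\,F\delta(y)$ belongs to $\ff_{2j+1}(X,Y)=\Sq H^{\le j}(F(X,Y))$. Since $\Sq$ is injective and degree-nondecreasing, with lowest-degree component the identity, comparing lowest-degree parts gives the following. If $F\delta(y)\neq0$, then $\Sq\,F\delta(y)=\Sq\, z$ with $z\in H^{\le j}$, hence $F\delta(y)=z$. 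This is impossible by degree, because $F\delta(y)$ is homogeneous of degree $j+1$. Therefore $F\delta(y)=0$.

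I expect the delicate step to be justifying this last comparison: that $\Sq H^{\le j}$ meets $\Sq H^{j+1}$ only in $0$, which follows from bijectivity of $\Sq$ on $H^*$. It also requires $F\delta$ to commute with the total square. If necessary, I would instead invoke Proposition \ref{boundary}(3) together with the isomorphism $\psi$ of Theorem \ref{prop-def}. There, ${}^\infty\delta$ sends ${}^\infty H^{2j}$ to ${}^\infty H^{2j+1}$, and the latter vanishes because $\psi$ is an isomorphism onto the even part. This gives a second route to the vanishing, modulo lower filtration, and the induction on $j$ then removes the lower-filtration error.
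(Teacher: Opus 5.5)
Your core argument is correct and essentially the paper's. Both put $\Sq x\in\ff_{2i}(Y,Z)$ by effectivity, then use Proposition~\ref{boundary}(2) and the commutation of $\Sq$ with the coboundary to get $\Sq(\delta x)\in\ff_{2i+1}(X,Y)$. The paper then concludes from the isomorphism $\psi^{i+1}_{X,Y}$ of Theorem~\ref{prop-def}; the injectivity of that map is exactly your observation that $\ff_{2i+1}(X,Y)=\Sq H^{\le i}(F(X,Y))$ cannot contain $\Sq$ of a nonzero class of degree $i+1$.

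Your opening reduction is unnecessary, since it only restates the claim for homogeneous classes. Its assertion $y=\Sq^{-1}x\in H^{\le i}$ is false in general, because $\Sq^{-1}$ also raises degree, though nothing depends on it. The announced induction on $j$ is never used.
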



\proof
Let $x\in H^i(F(Y,Z)).$ Using item 2) of Proposition \ref{boundary}, we find
$$
\Sq(\delta x)=(F\delta)(\Sq x) \in \ff_{2i+1}(X,Y)\subseteq \ff_{2i+2}(X,Y).
$$ 
Therefore,
$$
\psi^{i+1}_{X,Y}(\delta x)= \phi_{2i+2}(\pr(\Sq(\delta x)))=\phi_{2i+2}(0)=0.
$$

Since the pair $(X,Y)$ is effective, the map $\psi^{i+1}_{X,Y}$ is an isomorphism, 
and so $\delta x=0.$
\qed


\subsection{Maximality}


Let $(X,Y;\Conj)$ be $G$-pair, where $\dim\, H^*(X,Y)<\infty,$ and denote  
by $\bar X$ and $\bar Y$ the quotient spaces $X/\Conj$ and $Y/\Conj,$ 
respectively. There exists \cite[Section 1.2.8]{dik} a natural exact sequence

\begin{align}
\label{rses}
\cdots \ra H^{k}(\bar X, F(X)\cup \bar Y)\xrightarrow[]{\pr^*} 
&H^k(X,Y)\xrightarrow[]{\tr_*\oplus\inc^*}\notag\\
H^k(\bar X,F(X)\cup \bar Y)\oplus H^k(F(X), F(Y)) 
&\xrightarrow{\Delta_k} H^{k+1}(\bar X,F(X)\cup \bar Y)\xrightarrow[]{\,}\cdots, 
\end{align}
and a similar one in homology. The connecting homomorphisms $\Delta_k$ 
are given by
$
x\oplus f\mapsto x\cup\omega+\delta x,
$
where $\omega \in H^1(\bar X\setminus F)$ is the characteristic class of 
the double covering $X\setminus F\to\bar X\setminus F.$ 
The sequence (\ref{rses}) is called the {\it Smith sequence} in cohomology. 

As a consequence, the following inequalities hold:
\begin{itemize}
\item[ 1)] $\dim\, H^*(F(X,Y))\le \dim\, H^*(X, Y)$ ({\it Smith inequality}).
\item[ 2)] $\dim\, H^*(F(X,Y))\le \dim\, H^1(G; H^*(X, Y))$ ({\it Borel-Swan inequality}),
\end{itemize}
where $H^1(G; H^*(X, Y))=\ker(1+\Conj_*)/\im(1+\Conj_*).$
The difference
$$
\defi(X,Y)= \dim\, H^*(X,Y)-\dim\, H^*(F(X,Y))
$$ 
is called {\it the Smith-Thom deficiency of} $(X,Y).$ We will use the simplified 
notation $\defi(X)$ to denote $\defi(X,\emptyset).$ 
\begin{defn}
\label{def-max}
Let $(X,Y;\Conj )$ be a $G$-pair, where $\dim\, H^*(X,Y)<\infty.$
\begin{itemize}
\item[ 1)] The pair $(X,Y)$ is called {\it Smith-Thom maximal}
if its Smith-Thom deficiency vanishes.
\item[ 2)] The pair $(X,Y)$ is called {\it Galois maximal}, or a {\it $GM$-pair}, if 
 $$
 \dim\, H^*(F(X,Y))=\dim\, H^1(G; H^*(X, Y)).
 $$
\end{itemize}
\end{defn}


\begin{rmk}
\label{terminology}
{\rm
Throughout this article, for convenience, by ``maximal'' we will mean
``Smith-Thom maximal", whenever the context allows.
}
\end{rmk}


\smallskip

The Smith-Thom or Galois maximality conditions control the stage at 
which Kalinin's spectral sequence degenerates and therefore quantify 
how closely the cohomology of the fixed locus reflects that of the 
ambient space. The following criteria are consequences of Theorem 
\ref{main-ss} ({\it cf.} \cite{dik} and \cite{krasnov}, respectively).


\begin{prop}
\label{GM-def}
Let  $(X,Y)$ be a $G$-pair.
\begin{itemize}
\item[ 1)] The pair $(X,Y)$ is a maximal pair if and only if Kalinin's spectral sequence 
collapses at the first page.
\item[ 2)] The pair $(X,Y)$ is  a Galois maximal pair if and only if Kalinin's spectral 
sequence collapses at the second page. \qed
\end{itemize}
\end{prop}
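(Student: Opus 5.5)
The plan is to read both statements off a dimension count in Kalinin's spectral sequence (Theorem \ref{main-ss}). Since $\FF_2$ is a field and the filtration $\ff$ is finite, item 3) of Theorem \ref{main-ss} gives
$$
\dim H^*(F(X,Y))=\sum_i \dim \gr^i_\ff H^*(F(X,Y))=\dim \prescript{\infty}{}H^*(X,Y).
$$
The filtration is exhaustive: it is defined through the direct limit identification (\ref{Borel-identification}), and $H^*(F(X,Y))$ is finite dimensional. Both criteria then amount to comparing $\dim \prescript{\infty}{}H^*$ with $\dim \prescript{1}{}H^*$ and with $\dim \prescript{2}{}H^*$.

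The basic observation is that for any page, $\prescript{r+1}{}H^*$ is the homology of $(\prescript{r}{}H^*,\prescript{r}{}d^*)$. Hence
$$
\dim \prescript{r+1}{}H^*=\dim \prescript{r}{}H^* - 2\,\rank \prescript{r}{}d^*,
$$
so the total dimension is non-increasing in $r$ and stays constant exactly when $\prescript{r}{}d^*=0$. The dimensions are finite because $\dim H^*(X,Y)<\infty$, which is the standing assumption of Definition \ref{def-max}. The sequence also stabilizes after finitely many pages: by (\ref{automatic}) and the finite dimensionality of $X$, only finitely many differentials can be nonzero. Consequently $\dim\prescript{\infty}{}H^*=\dim\prescript{r}{}H^*$ if and only if $\prescript{s}{}d^*=0$ for all $s\ge r$, that is, if and only if the sequence collapses at page $r$.

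For 1), we use $\prescript{1}{}H^*=H^*(X,Y)$. Then $\defi(X,Y)=\dim\prescript{1}{}H^*-\dim\prescript{\infty}{}H^*$, and this vanishes if and only if the sequence collapses at the first page. This also recovers the Smith inequality. For 2), we use $\prescript{2}{}H^*=H^1(G,H^*(X,Y))$. Then Galois maximality, $\dim H^*(F(X,Y))=\dim H^1(G;H^*(X,Y))$, is equivalent to $\dim\prescript{\infty}{}H^*=\dim\prescript{2}{}H^*$, hence to collapse at the second page.

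The only delicate point is the finiteness and convergence step: that $\prescript{\infty}{}H^*$ really is reached after finitely many pages and that the filtration is exhaustive and finite, so that the dimension count is legitimate. For this I would rely on the Borel identification (\ref{Borel-identification}), valid in degrees above $\dim X$, and on the inclusion (\ref{automatic}). Together these make the filtration finite and the limit page well defined.
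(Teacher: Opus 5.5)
Your argument is correct and is the deduction the paper intends. The paper gives no separate proof; it records both items as consequences of Theorem \ref{main-ss} (citing \cite{dik} and \cite{krasnov}), which amounts to your comparison of $\dim\prescript{\infty}{}H^*=\dim H^*(F(X,Y))$ with $\dim\prescript{1}{}H^*=\dim H^*(X,Y)$ and with $\dim\prescript{2}{}H^*=\dim H^1(G;H^*(X,Y))$, using that each page lowers the total dimension by $2\,\rank\prescript{r}{}d^*$.

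The only thing to adjust is your justification of the finiteness step, where (\ref{automatic}) is not what does the work. The sequence stabilizes because $\prescript{r}{}d$ lowers degree by $r-1$ while $\prescript{r}{}H^q=0$ outside $0\le q\le\dim X$. The filtration is exhaustive, with $\ff_n=H^*(F(X,Y))$ for $n\ge\dim X$, because the Leray--Serre terms $E^{p,q}$ of the Borel fibration vanish for $q>\dim X$.
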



We recall next the following criteria of maximality and Galois maximality, 
respectively.


\begin{prop}\cite{AP, Fr}
\label{coh-max}
Let $(X,Y)$ be a $G$-pair. The following conditions are equivalent:
\begin{itemize}
\item[ 1)] $(X,Y)$ is a maximal pair.
\item[ 2)] The action of $G$ on 
$H^{*}(X,Y)$  is trivial and the Leray-Serre spectral sequence 
associated with the fibration $(X,Y)\hookrightarrow (X_G, Y_G) \ra BG$
degenerates at the second page.
\item[ 3)] $H^*_G(X,Y)$ is a free $H^*(BG)=H^*(\RR\PP^\infty)$-module.
\item[ 4)] The restriction homomorphism 
$\rho : H^{*}_{G}(X,Y) \rightarrow H^{*}(X,Y)$ is surjective.
\item[ 5)] The restriction homomorphism 
$r_G:H^{*}_{G}(X,Y) \rightarrow H^{*}_G(F(X,Y))$ 
is injective.
\qed
\end{itemize}
\end{prop}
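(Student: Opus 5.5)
The plan is to prove the implications in a cycle, $1)\Rightarrow 2)\Rightarrow 3)\Rightarrow 1)$, and then attach $4)$ and $5)$ to this cycle. Throughout I will use the Leray--Serre spectral sequence $E_r^{p,q}$ of the Borel fibration $(X,Y)_G\to BG$. Its second page is $E_2^{p,q}=H^p(G;H^q(X,Y))$, and by Proposition \ref{GM-def} and Theorem \ref{main-ss} its stabilization is Kalinin's spectral sequence. The basic counting fact comes from the Borel localization (\ref{Borel-identification}): for $i\gg0$,
$$\dim H^i((X,Y)_G)=\dim H^*(F(X,Y)),$$
and for large $p$ this dimension equals the stable value $\sum_q\dim E_\infty^{p,q}$.

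\emph{Step $1)\Leftrightarrow 2)$.} Suppose first that $(X,Y)$ is maximal. By Proposition \ref{GM-def}, Kalinin's spectral sequence collapses at the first page, so ${}^1d=1+\Conj^*$ vanishes. Hence $G$ acts trivially on $H^*(X,Y)$, and $E_2^{p,q}=H^q(X,Y)$ for every $p\ge0$. The higher Kalinin differentials are the stable limits of the $d_r$. Since the $d_r$ are $H^*(BG)$-linear and multiplication by $u$ is an isomorphism $E_2^{p,*}\to E_2^{p+1,*}$ for all $p\ge 0$, a nonzero $d_r$ in low degree would persist to high degree. That would contradict the collapse of the Kalinin sequence, so the Leray--Serre sequence degenerates at $E_2$. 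Conversely, if the action is trivial and $E_2=E_\infty$, then the stable total dimension equals $\dim H^*(X,Y)$, and the counting fact above gives $\dim H^*(F)=\dim H^*(X,Y)$, i.e. maximality.

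\emph{Step $2)\Leftrightarrow 3)\Leftrightarrow 4)$.} Under $2)$, $E_\infty=H^*(BG)\otimes H^*(X,Y)$ is free. Choosing lifts of a basis of the edge row $E_\infty^{0,*}$ gives a free basis of $H^*_G(X,Y)$, so $3)$ holds, and surjectivity of $\rho$ also follows, since $\rho$ is the edge map onto $E_\infty^{0,*}=E_2^{0,*}$. For $4)\Rightarrow 2)$ I use the Leray--Hirsch argument: if $\rho$ is surjective, lifts of a basis of $H^*(X,Y)$ generate a free submodule that surjects onto every $E_2$ term. This forces the action to be trivial ($E_2^{0,*}=H^*(X,Y)^G$ must be everything) and all differentials to vanish. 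For $3)\Rightarrow 1)$, freeness means multiplication by $u$ is injective. Comparing with the localization $H^*_G(X,Y)[u^{-1}]\cong H^*(F)\otimes\FF_2[u,u^{-1}]$, the rank of the free module equals $\dim H^*(F)$, and it also equals $\dim H^*(X,Y)$ because $H^*_G/u\,H^*_G$ injects into $H^*(X,Y)$ via the Gysin sequence of $S^\infty\to \RR\PP^\infty$. This gives $\dim H^*(X,Y)\le\dim H^*(F)$, and the Smith inequality supplies the reverse inequality.

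\emph{Step $5)$.} Localization theorem: the kernel of $r_G$ is exactly the $u$-torsion of $H^*_G(X,Y)$, since $r_G$ is an isomorphism in high degrees and is $u$-linear. Thus injectivity of $r_G$ is equivalent to $u$-torsion-freeness. For a finitely generated graded module over the PID-like ring $\FF_2[u]$, $u$-torsion-freeness is the same as freeness, so $5)\Leftrightarrow 3)$.

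The main obstacle is the careful dimension bookkeeping in $3)\Rightarrow 1)$ and $1)\Rightarrow 2)$. Specifically, one must check that the rank of a free $H^*_G$ equals $\dim H^*(X,Y)$, and this needs the Gysin sequence $\cdots\to H^{*-1}_G\xrightarrow{u}H^*_G\xrightarrow{\rho}H^*(X,Y)\to\cdots$. I would set up that exact sequence first and derive the other counts from it.
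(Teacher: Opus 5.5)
The paper does not prove this proposition; it quotes it from [AP, Fr]. Your route (Leray--Serre spectral sequence of the Borel fibration, the transfer/Gysin sequence of the double cover $(X,Y)\times S^\infty\to(X,Y)_G$, and Borel localization) is the standard one. The steps $1)\Leftrightarrow 2)$, $2)\Rightarrow 3)$, $2)\Leftrightarrow 4)$ and $3)\Leftrightarrow 5)$ are sound.

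\textbf{The gap: $3)\Rightarrow 1)$ as written does not work.} This step is your only link from $3)$, and hence from $5)$, back to the other conditions. An injection $H^*_G(X,Y)/uH^*_G(X,Y)\hookrightarrow H^*(X,Y)$ gives
\[
\dim H^*(F(X,Y))=\rank H^*_G(X,Y)=\dim H^*_G(X,Y)/uH^*_G(X,Y)\le \dim H^*(X,Y).
\]
That is the Smith inequality again, not $\dim H^*(X,Y)\le\dim H^*(F(X,Y))$. Combined with the Smith inequality it yields nothing, so the cycle of implications is not closed.

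\textbf{The fix uses the same exact sequence.} The missing ingredient is surjectivity, which comes from
\[
H^n_G(X,Y)\xrightarrow{\rho}H^n(X,Y)\xrightarrow{\tr}H^n_G(X,Y)\xrightarrow{\cup u}H^{n+1}_G(X,Y).
\]
Exactness at the second $H^n_G(X,Y)$ gives $\im\tr=\ker(\cup u)$. Freeness makes $u$ injective, so $\tr=0$. Then $\rho$ is onto and $H^*_G(X,Y)/uH^*_G(X,Y)\cong H^*(X,Y)$.

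This proves $3)\Rightarrow 4)$ directly, and the chain $3)\Rightarrow 4)\Rightarrow 2)\Rightarrow 1)$ closes the cycle. Together with your localization count it also gives $\rank H^*_G(X,Y)=\dim H^*(X,Y)=\dim H^*(F(X,Y))$, with no appeal to the Smith inequality.

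\textbf{Minor points on $2)\Rightarrow 3)$ and $4)\Rightarrow 2)$.}
\begin{itemize}
\item The relevant edge is the fiber column $E^{0,*}$, not a row.
\item The step $4)\Rightarrow 2)$ is cleanest stated as follows. The map $\rho$ factors through $E_\infty^{0,*}\subseteq E_2^{0,*}=H^*(X,Y)^G\subseteq H^*(X,Y)$. Its surjectivity therefore forces trivial action and $E_\infty^{0,*}=E_2^{0,*}$. The page $E_2$ is then generated over $\FF_2[u]$ by this column of permanent cycles, and the $d_r$ are $u$-linear, so all $d_r$ vanish.
\end{itemize}
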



\begin{prop}
\label{GM-char}
Let $(X,Y)$ be a $G$-pair. The following conditions are equivalent:
\begin{itemize}
\item[ 1)] $(X,Y)$ is a Galois maximal pair.
\item[ 2)] Any $\Conj_*$-invariant homology class $\alpha\in H_*(X,Y)$  
can be represented by a $\Conj$-invariant cycle.
\end{itemize}
\end{prop}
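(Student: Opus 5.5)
The plan is to derive the equivalence from Proposition \ref{GM-def}, 2): $(X,Y)$ is Galois maximal if and only if $\prescript{2}{}H^*=\prescript{\infty}{}H^*$. In the homology version of Kalinin's spectral sequence \cite{dik}, the second page is $H^1(G;H_*(X,Y))=\ker(1+\Conj_*)/\im(1+\Conj_*)$. Since we work over the field $\FF_2$, with $\dim H^*(X,Y)<\infty$, duality makes the homological and cohomological conditions equivalent. So Galois maximality is equivalent to every higher differential $\prescript{r}{}d$, $r\ge 2$, vanishing on homology. The task is to show that this vanishing is the same as condition 2), namely that every invariant class has an invariant representative.

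For the direction 2) $\Rightarrow$ 1), take $\alpha$ with $\Conj_*\alpha=\alpha$ and represent it by an invariant cellular chain $c$ in the $G$-CW structure. Then $c$ is a cycle of the equivariant (Borel) chain complex. Concretely, in the double complex $C_*(X,Y)\otimes C_*(S^\infty)$ with horizontal differential $1+\Conj$, the element $c$ is a cycle at every stage: $(1+\Conj)c=0$ holds exactly, not merely up to a boundary. The zig-zag defining the higher differentials therefore starts from an exact cocycle, and all $\prescript{r}{}d(\alpha)$ vanish for $r\ge2$. Every class of $\prescript{2}{}H$ is represented by such an $\alpha$, so the sequence collapses at page two.

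For the direction 1) $\Rightarrow$ 2), suppose collapse at page two and let $\alpha$ be invariant, represented by a cycle $c$. Then $(1+\Conj)c=\partial c_1$. Collapse means the obstruction $\prescript{2}{}d[\alpha]$ vanishes, so the next term $(1+\Conj)c_1$ can be corrected by boundaries and invariant chains. Iterating, we obtain a sequence $c_0=c,c_1,c_2,\dots$ with $(1+\Conj)c_k=\partial c_{k+1}$ modulo corrections. This process terminates because the chain complex is finite-dimensional: $c_k$ lies in degree $\deg\alpha+k$, which exceeds $\dim X$ eventually. I would then use the standard identification of the Borel equivariant homology in high degrees with $H_*(F(X,Y))$, and of the transfer with $H_*(\bar X,F\cup\bar Y)$ via the Smith sequence (\ref{rses}), to convert this zig-zag into a statement about chains. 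The goal is to produce $c'=c+(1+\Conj)b+\partial e$ with $\Conj c'=c'$, i.e. to modify $c$ by a boundary so that it becomes invariant.

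An alternative route for this direction counts dimensions using the Smith sequence. The image of invariant chains in homology, $\im(H_*(C^G)\to H_*(X,Y))$, always contains $\im(1+\Conj_*)$, and its dimension modulo $\im(1+\Conj_*)$ can be computed from the Smith sequence: it equals $\dim H_*(F)$ minus the rank of the terms killed by the connecting maps. Galois maximality, $\dim H_*(F)=\dim H^1(G;H_*)$, then forces every invariant class to lie in that image. The main obstacle is exactly this bookkeeping: identifying the image of the homology of invariant chains inside $\ker(1+\Conj_*)$ with the $E_\infty$ part of the spectral sequence. I would first try to establish it through the short exact sequence of chain complexes $0\to C^G\to C\xrightarrow{1+\Conj}\im(1+\Conj)\to0$ and the analogous sequence defining (\ref{rses}), then compare dimensions.
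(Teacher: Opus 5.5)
Your direction 2)$\Rightarrow$1) is fine. An invariant cellular cycle $c$ gives a genuine cycle $c\otimes e_p$ of the Borel complex, so every class of the second page is a permanent cycle and the sequence collapses.

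The gap is in 1)$\Rightarrow$2). There, the step that carries all the content is only announced (``I would then use the standard identification\dots''). That step is turning the data $c_q,c_{q+1},\dots$ with $(1+\Conj)c_m=\partial c_{m+1}$ into an invariant cycle homologous to $c_q$; the chain $c'$ is stated as a goal but never produced.

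Your fallback count rests on a false claim. Write $K=\ker(1+\Conj_*)$, $M=\im(1+\Conj_*)$, and $I\subseteq K$ for the classes carried by invariant cycles. Then $\dim I/M$ is not ``$\dim H_*(F(X,Y))$ minus something''. For the antipodal involution of $S^1$ one has $F=\emptyset$, but $I/M=H_1\neq 0$, carried by $\sigma+\Conj\sigma$ for a $1$-cell $\sigma$. In fact $\dim I/M=\dim H_*(F(X,Y))+\dim K/I$. The reason is that $I/M$ is the space of permanent cycles of the second page, not $E_\infty$: a class carried by an invariant cycle can still be hit by a differential. In the $S^1$ example the $H_1$-class is killed by $d^2$ from the $H_0$-class. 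With your formula, Galois maximality would only give $\dim K/I$ equal to your rank term, not $K=I$.

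Either of the following closes the gap.

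(a) Work at chain level, where invariant cells are fixed pointwise. Then $\ker(1+\Conj)=\im(1+\Conj)\oplus C_*(F(X,Y))$ and $\im(1+\Conj)\cap C_*(F(X,Y))=0$. Collapse makes every class of the second page the leading term $c_q$ of a genuine Borel cycle. This gives $c_q,\dots,c_N$ with $\partial c_{m+1}=(1+\Conj)c_m$ and $(1+\Conj)c_N=0$. Descend from $m=N$ (with $a_{N+1}=0$) and write $c_m+\partial a_{m+1}=(1+\Conj)a_m+f_m$ with $f_m\in C_*(F(X,Y))$. This is possible because $(1+\Conj)(c_m+\partial a_{m+1})=\partial f_{m+1}$ lies in $\im(1+\Conj)\cap C_*(F(X,Y))=0$. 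At $m=q$ this makes $c_q+\partial a_{q+1}$ an invariant cycle. Since classes in $M$ are carried by the invariant cycles $(1+\Conj)b$, this suffices.

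(b) The paper simply cites the Smith exact sequence, and the count behind it is short. Take the homology sequence of
\[
0\to\im(1+\Conj)\oplus C_*(F(X,Y))\to C_*(X,Y)\xrightarrow{\rho}\im(1+\Conj)\to 0,
\]
with $\rho=1+\Conj$ and connecting map $\Delta$. Let $j_*$ be induced by $\im(1+\Conj)\hookrightarrow C_*(X,Y)$, so that $j_*\rho_*=1+\Conj_*$. Then $\rho_*|_K$ has kernel $I$ and image $\ker\Delta\cap\ker j_*$, and $M=j_*(\ker\Delta)$. Exactness gives $\dim I-\dim\ker\Delta=\dim H_*(F(X,Y))$. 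Hence
\[
\dim H^1(G;H_*(X,Y))=\dim H_*(F(X,Y))+2\dim K/I .
\]
This yields both implications, and the Borel--Swan inequality, at once, without any spectral sequence.
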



\proof 
This is a known easy consequence of the Smith exact sequence in 
homology (see \cite[item 1.2.4]{dik}).
\qed


\begin{prop}
\label{Image-Beta}
If a $G$-pair $(X,Y)$ is Galois maximal, then Kalinin's filtration
$\{\ff_n\}$ coincides with the filtration of $H^*(F(X,Y))$ by the images 
$\im \beta^n$ of the homomorphisms
$$
\beta^n : H^n_G(X,Y)\to H^n_G(F(X,Y))=
\bigoplus_{i\leq n} H^{i}(F(X,Y)).
$$
\end{prop}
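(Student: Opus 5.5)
We will compare the Kalinin filtration with the images of $\beta^n$ through the Leray--Serre spectral sequence of the Borel fibration, using the identification (\ref{Borel-identification}). By definition, for $i$ large,
$$
\ff_n=\ff'_{i-n}\cap H^i((X,Y)_G),
$$
where $H^i((X,Y)_G)$ is identified with $H^i(F(X,Y)_G)=\bigoplus_{j\le i}u^{i-j}H^j(F(X,Y))$. For the fixed set $F$ the action is trivial, so $\ff'_{i-n}\cap H^i(F_G)=\bigoplus_{j\le n}u^{i-j}H^j(F)$. Multiplication by $u^{i-n}$ is an isomorphism $H^n_G(F)\to\ff'_{i-n}(F_G)\cap H^i(F_G)$. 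Thus the statement reduces to the following claim. Inside $H^i_G(X,Y)$ with $i\gg0$, the classes lying in $\pi^*(H^{\ge i-n}(BG))\cdot H^*_G$, i.e. in $\ff'_{i-n}$, are exactly $u^{i-n}\cdot H^n_G(X,Y)$. Once this holds, restricting to $F$ and dividing by $u^{i-n}$ gives $\ff_n=\im\beta^n$.

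The inclusion $u^{i-n}H^n_G(X,Y)\subseteq\ff'_{i-n}$ is automatic, since multiplication by $u$ raises the Leray--Serre filtration. It also shows $\im\beta^n\subseteq\ff_n$ in general, without any hypothesis.

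For the reverse inclusion we use Galois maximality. By Proposition \ref{GM-def} it means the Kalinin spectral sequence collapses at the second page. For the Leray--Serre spectral sequence this means that for $p$ large, $E_2^{p,q}=H^p(G;H^q(X,Y))=E_\infty^{p,q}$, and multiplication by $u$ is an isomorphism on these terms. We argue by induction on the filtration degree. Let $z\in\ff'_{i-n}\cap H^i_G$, and let $[z]$ be its image in $E_\infty^{p,i-p}$ with $p\ge i-n$ maximal. We need $[z]=u^{p-(i-n)}[w]$ with $w\in H^{i-(p-(i-n))}_G$ of filtration $\ge i-n$. The only delicate terms are $E_\infty^{0,q}$, which consist of invariant classes surviving to $E_\infty$. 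There $u\cdot$ maps $E_2^{0,q}=H^q(X,Y)^G$ onto $E_2^{1,q}=H^1(G;H^q)$, and the relevant source terms must also be permanent cycles. This is exactly what Galois maximality supplies via Proposition \ref{GM-char}: an invariant class is represented by an invariant cycle, hence lifts equivariantly to a permanent cycle. For $p\ge1$, $u:E_\infty^{p-1,\cdot}\to E_\infty^{p,\cdot}$ is surjective because at $E_2$ these are group cohomologies of $\ZZ/2$ with $u$ the periodicity map, and nothing is killed later. Subtracting $u^{p-(i-n)}w$ from $z$ strictly increases the filtration, and since the filtration is finite the induction ends.

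We expect the main obstacle to be the surjectivity of $u\colon E_\infty^{0,q}\to E_\infty^{1,q}$, which at $E_2$ is the quotient $\ker(1+\Conj^*)\to\ker/\im$. This is the one place where collapse at the second page must be used concretely, namely that no differential hits $E^{1,q}$ and all of $E_2^{0,q}$ survives. We would first check this directly from the collapse statement of Proposition \ref{GM-def}, since collapse of the stabilized sequence at page two forces collapse of $E_r^{p,q}$ for $p\ge1$. The $p=0$ column would then be handled by Proposition \ref{GM-char}.
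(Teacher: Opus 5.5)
Your proposal is correct and is essentially the paper's proof. Galois maximality makes the Borel--Serre spectral sequence degenerate at $E_2$; multiplication by $u$ is the quotient epimorphism from column $0$ to column $1$ and the periodicity isomorphism on columns $p\ge 1$; the resulting filtration induction gives $\ff'_{i-n}\cap H^i_G(X,Y)=u^{i-n}H^n_G(X,Y)$, hence $\ff_n=\im\beta^n$. Two small fixes: in the inductive step take $[w]\in E_\infty^{p-(i-n),\,i-p}$ with $u^{i-n}[w]=[z]$, so that $w\in H^n_G(X,Y)$ (your exponent $p-(i-n)$ makes the case $p=i-n$ circular); and column $p=0$ needs no appeal to Proposition~\ref{GM-char}, because $d_r$ is $u$-linear and $u$ is injective on the columns $p\ge 1$ of $E_2$, so $d_r x=0$ follows from $d_r(ux)=0$.
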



\proof Since the pair $(X,Y)$ is Galois maximal, the Borel-Serre spectral 
sequence degenerates at the second page, so that 
\begin{align*}
\prescript{\infty}{}E^{p,q}=\prescript{2}{}E^{p,q}=&\, H^1(G;H^q(X,Y)),\,  p>0,q\ge 0,\\ 
\prescript{\infty}{}E^{0, q}=\prescript{2}{}E^{0, q}=&\, H^q(X,Y)^G.
\end{align*} 
Furthermore, for every $r\geq 0,$ the ring homomorphism 
$$
H^*_G(X,Y)\to H^{* +1}_G(X,Y)
$$ 
given by multiplication by the generator $u$ of $H^*_G(pt)=\FF_2[u]$ 
acts on the spectral sequence as the simple shift 
$$
\prescript{r}{}E^{p,q}=H^1(G,H^q(X,Y))\to \prescript{r}{}E^{p+1,q}=H^1(G,H^q(X,Y))
$$  
if $p>0$ and as the quotient epimorphism 
$$
\prescript{r}{}E^{0,q}=H^q(X,Y)^G\to \prescript{r}{}E^{1,q}=H^1(G,H^q(X,Y))
$$
with $ \im \{1+\Conj^* :  H^q(X,Y)\to H^q(X,Y)\subseteq H^q(X,Y)^G\}$
as the kernel. This implies that 
$$
\beta^n(H^n_G(X,Y))=\beta^{n+i}(H^{n+i}_G(X,Y)\cap \ff'_n)=\ff_n
$$ 
for every $n,i\ge 0$.
\qed

We conclude this section with several results concerning the 
cohomology of maximal Kalinin effective spaces. Let $(X,Y)$ 
be a Smith-Thom maximal Kalinin effective $G$-pair.


\begin{prop}
\label{no-odd}
The odd dimensional cohomology groups $H^{2i+1}(X,Y)$, $i\ge 0$, 
vanish for any Smith-Thom maximal Kalinin effective $G$-pair.
\end{prop}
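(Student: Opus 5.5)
The argument combines maximality with Theorem \ref{prop-def} to count dimensions degree by degree. By Proposition \ref{GM-def}, Smith-Thom maximality means that Kalinin's spectral sequence collapses at the first page. Hence
$$
\prescript{\infty}{}H^q(X,Y)=\prescript{1}{}H^q(X,Y)=H^q(X,Y)\quad\text{for every } q .
$$
On the other hand, effectivity and item 1) of Theorem \ref{prop-def} say that $\psi^i_{X,Y}:H^i(F(X,Y))\to \prescript{\infty}{}H^{2i}(X,Y)$ is an isomorphism for every $i\ge 0$. Taking the direct sum over $i$ gives an injection
$$
H^*(F(X,Y))\cong \bigoplus_{i\ge 0}\prescript{\infty}{}H^{2i}(X,Y)=H^{\rm even}(X,Y).
$$

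The next step is the dimension count. Maximality gives $\dim H^*(F(X,Y))=\dim H^*(X,Y)$, while the isomorphism above gives $\dim H^*(F(X,Y))=\dim H^{\rm even}(X,Y)$. These are finite-dimensional, since maximality is only defined under $\dim H^*(X,Y)<\infty$. Subtracting, we get $\dim H^{\rm odd}(X,Y)=0$, so $H^{2i+1}(X,Y)=0$ for all $i$.

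For robustness I would also argue this directly through the filtration, avoiding any reliance on the summation. By Theorem \ref{main-ss}, item 3), we have $\gr^q_\ff H^*(F(X,Y))\cong \prescript{\infty}{}H^q$. Effectivity gives $\ff_{2i+1}=\Sq H^{\le i}(F(X,Y))=\ff_{2i}$, so every odd graded piece vanishes. Consequently $\prescript{\infty}{}H^{2i+1}(X,Y)=0$, and by the collapse at the first page this equals $H^{2i+1}(X,Y)$.

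The only point needing care is the identification $\prescript{\infty}{}H^q=H^q(X,Y)$ under maximality, which is exactly Proposition \ref{GM-def} 1). Once that is granted, the argument is a direct comparison, and I expect no real obstacle.
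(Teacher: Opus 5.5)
Your proposal is correct, and its second (filtration) argument is exactly the paper's proof. Collapse at the first page (Proposition \ref{GM-def}) identifies $\gr^q_\ff H^*(F(X,Y))$ with $H^q(X,Y)$, and the definition of effectivity gives $\ff_{2i+1}(X,Y)=\Sq H^{\le i}(F(X,Y))=\ff_{2i}(X,Y)$, so every odd graded piece vanishes. Your first dimension count via the isomorphisms $\psi^i_{X,Y}$ of Theorem \ref{prop-def} repackages the same two facts and is also valid; even injectivity of $\bigoplus_i\psi^i_{X,Y}$, combined with $\dim H^*(F(X,Y))=\dim H^*(X,Y)<\infty$, would already suffice.
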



\proof 
According to Proposition \ref{GM-def} and Theorem \ref{prop-def}, if 
$(X,Y)$ is a maximal pair, then $\ff_{r}(X,Y)$ is isomorphic to 
$H^{\le r}(X,Y)$ for any $r\geq 0.$ It remains to notice that, due to 
the definition of an effective pair, 
$$
\ff_{2i+1}(X,Y)=\Sq H^{\leq i}(F(X,Y))=\ff_{2i}(X,Y)
$$ 
for any $i\geq 0$.
\qed

As a consequence of Proposition \ref{no-odd}, the odd Steenrod squares 
of $(X,Y)$ vanish. This implies that $H^{2*}(X,Y)$ inherits an {\it intrinsic} 
structure of an algebra over (mod $2$) Steenrod algebra with the action of 
the $i$-th standard generator of the Steenrod algebra by $\Sq^{2i}$ 
(instead of $\Sq^{i}$). On the other hand, applying Theorem \ref{prop-def}, 
under assumptions of Proposition \ref{no-odd} we have a ``doubling" 
ring isomorphism 
$$
\psi^*_{X,Y}=
\bigoplus_{i\geq 0}\psi^i_{X,Y}: H^*(F(X,Y))\rightarrow 
\prescript{\infty}{}H^{2*}(X,Y)=H^{2*}(X,Y)
$$
satisfying
\begin{equation}
\label{steenrod-compatibility}
\psi^*_{X,Y}(\Sq^i x)=\Sq^{2i} (\psi^*_{X,Y}(x)),
\end{equation}
for any $x\in H^*(F(X,Y)).$ 
Thus, we get:

\begin{thm}
\label{coh-conj_spaces}
Let $(X,Y)$ a Smith-Thom maximal effective $G$-pair. Then, $H^{2*}(X,Y)$ 
with its intrinsic structure of algebra over Steenrod algebra is naturally 
isomorphic to $H^*(F(X,Y))$ with its standard structure of algebra over 
Steenrod algebra.
\qed
\end{thm}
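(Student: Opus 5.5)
The plan is to assemble the statement from three earlier results: the maximality criterion (Proposition \ref{GM-def}), Kalinin's structure theorem for effective pairs (Theorem \ref{prop-def}), and the vanishing of odd cohomology (Proposition \ref{no-odd}). The required isomorphism will be $\psi^*_{X,Y}$ itself.

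\emph{Step 1.} Since $(X,Y)$ is Smith-Thom maximal, Proposition \ref{GM-def} says Kalinin's spectral sequence collapses at the first page. Hence
$$
\prescript{\infty}{}H^*(X,Y)=\prescript{1}{}H^*(X,Y)=H^*(X,Y).
$$
Because all differentials vanish, $1+\Conj^*=0$, and the cup-product structure on $\prescript{\infty}{}H^*$ (Proposition \ref{multiplicative-kalinin}) is the ordinary cup product on $H^*(X,Y)$. Proposition \ref{no-odd} gives $H^{\rm odd}(X,Y)=0$, so in fact $\prescript{\infty}{}H^*=H^{2*}(X,Y)$ as rings. By the Cartan formula, $\Sq^{2i+1}$ of a class of even degree lies in odd degree and therefore vanishes. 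So $H^{2*}(X,Y)$, with $\Sq^{2i}$ playing the role of the $i$-th generator, is a module over the Steenrod algebra. The Adem relations for the doubled operations hold because the odd squares vanish; this is the standard intrinsic structure.

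\emph{Step 2.} Theorem \ref{prop-def} gives a ring isomorphism
$$
\psi^*_{X,Y}:H^*(F(X,Y))\to \prescript{\infty}{}H^{2*}(X,Y)=H^{2*}(X,Y)
$$
with $\psi(\Sq^i x)=\Sq^{2i}\psi(x)$. Here $\Sq^{2i}$ on the right is the Steenrod operation on $\prescript{\infty}{}H^*$. The isomorphism is natural because the filtration, the maps $\phi_i$, and $\Sq$ are all natural with respect to $G$-maps.

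\emph{Main obstacle.} The delicate point is to check that the Steenrod operation appearing in Theorem \ref{prop-def}~3) on $\prescript{\infty}{}H^*$ agrees with the genuine $\Sq^{2i}$ on $H^{2*}(X,Y)$ under the identification of Step 1. The operations on $\prescript{r}{}H^*$ are induced from those on $H^*(X,Y)=\prescript{1}{}H^*$ by passing to subquotients. When the sequence collapses at the first page there are no subquotients, so the two operations coincide. This yields equation (\ref{steenrod-compatibility}) and concludes the proof.
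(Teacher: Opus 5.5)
Your proposal is correct and follows essentially the same route as the paper, which likewise combines the first-page collapse for maximal pairs, Proposition~\ref{no-odd}, and Kalinin's doubling isomorphism $\psi^*_{X,Y}$ from Theorem~\ref{prop-def}, together with the identification $\prescript{\infty}{}H^{2*}(X,Y)=H^{2*}(X,Y)$. One small correction: $\Sq^{2i+1}$ vanishes on $H^{2*}(X,Y)$ by a pure degree argument, since its target lies in $H^{\rm odd}(X,Y)=0$; this is not a consequence of the Cartan formula.
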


\begin{rmk}
{\rm The conclusion of Theorem \ref{coh-conj_spaces} was one of motivations 
for Kalinin's introduction of the concept of effective spaces in 2002 
(see Remark 2.2 in \cite{kalinin}). The key result (\ref{steenrod-compatibility}) 
was rediscovered later, in 2006, by M.~Franz and V.~Puppe \cite{fp} in the context 
of conjugation spaces. 
}
\end{rmk}


\subsection{Conjugation Spaces}


\begin{defn}[Hausmann-Holm-Puppe \cite{HHP}]
A $G$-pair $(X, Y)$ is a conjugation $G$-pair if the following conditions 
are satisfied:
\begin{itemize}
\item[ 1)] $H^{\rm odd}(X,Y)=0,$
\item[ 2)] There exist a section $\sigma: H^*(X,Y)\to H^*_G(X,Y)$ of 
the restriction homomorphism $\rho : H^*_G(X,Y)\to H^*(X,Y)$ together 
with a degree-halving isomorphism $\kappa : H^{2*}(X, Y)\to H^*(F(X, Y))$ 
with the property that for every $x\in H^{2n}(X, Y), n\in \NN$ there exist 
elements $y_1,\dots, y_n\in H^*(F(X, Y))$ such that
$$
r_G (\sigma(x))= \kappa(x)u^n+y_1u^{n-1}+\dots +y_n \in H^*_G(F(X,Y))=H^*(F(X, Y))\otimes \FF_2[u],
$$
where $r_G: H^*_G(X, Y)\ra H^*_G(F(X, Y))$ is the restriction homomorphism
in equivariant cohomology.
\end{itemize}
\end{defn}

Furthermore, as it is proved in \cite{fp}, $\sigma$ and $\kappa$ are unique, 
multiplicative, and $y_i=\Sq^i (\kappa(x))$ for each $i=1,\dots, n.$


\begin{prop}
\label{equivalence} 
$(X, Y)$ is a conjugation $G$-pair if and only if $(X, Y)$ is a maximal and effective $G$-pair.
\end{prop}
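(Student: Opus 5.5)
Both implications can be read off the Kalinin filtration through Proposition \ref{Image-Beta} and the criteria of Proposition \ref{coh-max}.

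\emph{Conjugation implies maximal and effective.} The section $\sigma$ makes $\rho$ surjective, so $(X,Y)$ is maximal by Proposition \ref{coh-max}, 4). Maximality implies Galois maximality, hence by Proposition \ref{Image-Beta}
\[
\ff_n=\im\beta^n=r_G\bigl(H^n_G(X,Y)\bigr)
\]
under the identification (\ref{Borel-identification}). Concretely, an element of $H^n_G(F(X,Y))=\bigoplus_{i\le n}H^i(F)u^{n-i}$ is sent to the sum of its coefficients.

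Since $H^*_G(X,Y)$ is free over $\FF_2[u]$ with basis $\sigma(x)$, where $x$ runs through a basis of $H^{2*}(X,Y)$, we have
\[
H^n_G=\bigoplus_{2m\le n}\sigma\bigl(H^{2m}(X,Y)\bigr)\,u^{\,n-2m}.
\]
By the conjugation equation together with the Franz--Puppe identification $y_i=\Sq^i\kappa(x)$, the element $r_G(\sigma(x))$ has coefficient sum $\Sq\,\kappa(x)$. Because $\kappa$ is surjective onto $H^m(F(X,Y))$ in the relevant degrees, this gives
\[
\ff_n=\Sq\, H^{\le n/2}(F(X,Y)),
\]
which is exactly effectivity.

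\emph{Maximal and effective implies conjugation.} Proposition \ref{no-odd} gives $H^{\rm odd}(X,Y)=0$. Maximality gives surjectivity of $\rho$ and freeness of $H^*_G(X,Y)$; I choose a section $\sigma$ and then correct it below. Theorem \ref{prop-def} supplies the isomorphism $\psi^m\colon H^m(F)\to \prescript{\infty}{}H^{2m}=H^{2m}(X,Y)$, and I set $\kappa=(\psi^*)^{-1}$.

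To produce the required normal form I work in degree $2m$. The associated graded of the filtration $\ff'$ on $H^{2m}_G(X,Y)$ is $\bigoplus_p E_\infty^{p,2m-p}$, and its top piece $E^{0,2m}_\infty=H^{2m}(X,Y)$ is detected by $\rho$. Since $r_G$ is injective (Proposition \ref{coh-max}, 5)), I would take $x\in H^{2m}(X,Y)$ and the unique class $\Sq\,\kappa(x)\in\ff_{2m}$, and lift it to some $s\in H^{2m}_G$ with $\beta^{2m}(s)=\Sq\,\kappa(x)$. Effectivity should force $r_G(s)$ to have vanishing components in $H^j(F)u^{2m-j}$ for $j>m$. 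Indeed, by (\ref{automatic}) the coefficient sum lies in $H^{\le m}$, and because multiplication by $u$ is injective, the components in degrees above $m$ are determined and must vanish. The component in degree $m$ is $\kappa(x)u^m$, by the definition of $\phi_{2m}$ on $\ff_{2m}/\ff_{2m-1}$.

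It remains to check that $\rho(s)=x$, and to replace $s$ by a corrected lift if necessary, which is possible because $\ff_{2m-1}=\ff_{2m-2}$ comes from $u\cdot H^{2m-1}_G$. Setting $\sigma(x)=s$ linearly on a basis then yields the section.

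The main obstacle is this last step: identifying the leading coefficient of $r_G(s)$ with $\kappa(x)u^m$. It requires unwinding Kalinin's isomorphism $\phi$ through the direct-limit description of $\ff_n$ and the $u$-shifts in the Borel--Serre spectral sequence.
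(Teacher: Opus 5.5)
\textbf{Verdict.} Your proof follows the paper's approach. The direction ``conjugation $\Rightarrow$ maximal and effective'' is correct and is the paper's argument made explicit. You obtain maximality from surjectivity of $\rho$, the paper from the halving isomorphism; both work.

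\textbf{The shape of $r_G(s)$.} The converse direction has the paper's skeleton: $\kappa=(\psi^*)^{-1}$, and $\sigma(x)$ is the unique class $s\in H^{2m}_G(X,Y)$ with $\beta^{2m}(s)=\Sq\kappa(x)$. Such an $s$ exists because $\Sq\kappa(x)\in\ff_{2m}=\im\beta^{2m}$, and it is unique by injectivity of $r_G$. However, the step where you derive the shape of $r_G(s)$ is wrong. The components of $r_G(s)$ in $H^j(F)u^{2m-j}$ for $j>m$ do not vanish. They are $\Sq^{j-m}\kappa(x)$, i.e.\ exactly the $y_i$ of the conjugation equation; for $X=\CC\PP^2$, $F=\RR\PP^2$ one has $r_G(\sigma(x))=au+a^2$. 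What vanishes are the components with $j<m$. Also, $\ff_{2m}\subseteq H^{\le 2m}(F)$, not $H^{\le m}$.

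The correct observation is simpler. Setting $u=1$ is injective on $H^{2m}_G(F(X,Y))=\bigoplus_{j\le 2m}H^j(F(X,Y))\,u^{2m-j}$, because each coefficient is recovered as the degree-$j$ part of the sum. So $\beta^{2m}(s)=\Sq\kappa(x)$ literally says $r_G(s)=\sum_{i=0}^m\Sq^i\kappa(x)\,u^{m-i}$. The leading coefficient $\kappa(x)$ is just the degree-$m$ part of $\Sq\kappa(x)$, so what you call the main obstacle is not one.

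\textbf{The genuine gap: $\rho(s)=x$.} The step that really needs an argument is $\rho(s)=x$, and your proposal does not supply it. Your fallback cannot do so as stated. Your $s$ is already unique, and adding $ut$ with $t\in H^{2m-1}_G$ leaves $\rho$ unchanged, since $\rho$ kills $u\,H^*_G$; it only destroys the equation.

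The missing idea is that, in the maximal case, Kalinin's $\phi_{2m}$ is the edge homomorphism. For $N\gg0$ one has $\ff_{2m}/\ff_{2m-1}=E_\infty^{N-2m,2m}=u^{N-2m}H^{2m}(X,Y)$. The class of $\beta^{2m}(s)$ is represented by $u^{N-2m}s$, whose image there is $u^{N-2m}\rho(s)$. Hence $\phi_{2m}(\pr\beta^{2m}(s))=\rho(s)$, and therefore $\rho(s)=\phi_{2m}(\pr\Sq\kappa(x))=\psi^m(\kappa(x))=x$. This is precisely why $\kappa=(\psi^*)^{-1}$ is the right halving isomorphism.

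Equivalently, for an arbitrary lift $s_0$ of $x$ this identification gives $\beta^{2m}(s_0)\equiv\Sq\kappa(x)$ modulo $\ff_{2m-1}=\beta^{2m}(u\,H^{2m-1}_G)$. Only then does your correction by $u$-multiples work. The paper leaves this implicit in its remark that setting $u=1$ carries the degree filtration of $H^*(X,Y)[u]$ onto $\ff$; your proof needs it stated and used.
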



\proof 
Recall, first, that $(X, Y)$ is a maximal pair if and only if
$H^*_G(X, Y)$ is a free $H^*(BG)=H^*(\RR\PP^\infty)$-module 
$H^*(\RR\PP^\infty)\otimes H^*(X, Y)=H^*(X, Y)[u]$
(see Proposition \ref{coh-max}).
Thus, if a $G$-pair $(X, Y)$ is maximal and effective, 
the inverse of Kalinin's isomorphism $\psi^*$ can be taken as 
a halving isomorphism $\kappa.$ Moreover, since 
substituting $u=1$ into
$$
r_G : H^*(X, Y)[u]\to H^*(F(X, Y))[u]
$$ 
transforms the filtration $H^{\le i}(X, Y)\subseteq H^*(X, Y)$ 
into Kalinin's filtration $\ff_i(X, Y) = \Sq H^{\leq i/2}(F(X, Y))$, t
he required homomorphism  $\sigma$ becomes well defined
by an equation 
\begin{equation}\label{conj-eqtn}
r_G(\sigma(x))= \kappa(x)u^n+\Sq^1(\kappa(x)) u^{n-1}+\dots +\Sq^n(\kappa(x)).
\end{equation}
Conversely, if $(X, Y)$ is a conjugation $G$-pair, then due 
to existence of a halving isomorphism it is maximal, while due 
to \cite{fp} the conjugation equation is necessary of the form 
(\ref{conj-eqtn}), which implies Kalinin's effectivity condition 
$\ff_i(X, Y) = \Sq H^{\leq i/2}(F(X, Y))$.
\qed

\medskip

To conclude this section, we recall without proof a result of Kalinin  
\cite[Lemma 3.10]{kalinin} which we will use in the next section.
\begin{lem}
\label{max}
Let $(Y,Z)$ be an effective maximal pair, and let $Z$ be an effective 
maximal space. Then:
\begin{itemize}
\item [1)] $Y$ is a maximal space.
\item[ 2)] There exists a short exact sequence
$$
0\ra \prescript{\infty}{}H^*(Y,Z) \ra \prescript{\infty}{}H^*(Y)\ra \prescript{\infty}{}H^*(Z)\ra 0.
$$
\qed
\end{itemize}
\end{lem}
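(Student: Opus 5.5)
The plan is to work with the long exact cohomology sequence of the triple $(Y,Z,\emptyset)$ and play it off against the corresponding sequence for the fixed point sets, using Proposition \ref{delta-vanish} and maximality of the pair and of $Z$ to control dimensions.

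\emph{Step 1: split the fixed-point sequence.} The $G$-triple $(Y,Z,\emptyset)$ has both pairs $(Y,Z)$ and $(Z,\emptyset)$ effective. Proposition \ref{delta-vanish} then says that the boundary map $\delta:H^*(F(Z))\to H^{*+1}(F(Y,Z))$ is zero. Hence the sequence of the fixed-point pair breaks into a short exact sequence
$$
0\to H^*(F(Y,Z))\to H^*(F(Y))\to H^*(F(Z))\to 0,
$$
and so $\dim H^*(F(Y))=\dim H^*(F(Y,Z))+\dim H^*(F(Z))$.

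\emph{Step 2: compare with the ambient sequence.} Since $(Y,Z)$ and $Z$ are maximal, the right-hand side equals $\dim H^*(Y,Z)+\dim H^*(Z)$. Exactness of the ordinary sequence $H^*(Y,Z)\to H^*(Y)\to H^*(Z)$ gives $\dim H^*(Y)\le \dim H^*(Y,Z)+\dim H^*(Z)$. The Smith inequality gives $\dim H^*(F(Y))\le\dim H^*(Y)$. Putting these together forces equality throughout, so $Y$ is maximal, which is item 1). As a by-product, the ordinary boundary $H^*(Z)\to H^{*+1}(Y,Z)$ vanishes as well.

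\emph{Step 3: pass to the limit terms.} By maximality and Proposition \ref{GM-def}, $\prescript{\infty}{}H^*=\prescript{1}{}H^*=H^*$ for all three of $(Y,Z)$, $Y$ and $Z$. By naturality of Kalinin's spectral sequence under the $G$-maps of the triple, together with Proposition \ref{boundary}, the maps on the $\infty$-terms are exactly the ordinary restriction, inclusion and boundary maps. Step 2 showed the boundary vanishes, so the sequence
$$
0\to \prescript{\infty}{}H^*(Y,Z)\to \prescript{\infty}{}H^*(Y)\to \prescript{\infty}{}H^*(Z)\to 0
$$
is exact, which is item 2).

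\emph{Main obstacle.} The delicate point is making sure that Proposition \ref{delta-vanish} is applied with the correct $G$-triple, namely $(Y,Z,\emptyset)$. I also need to check that the identification of the $\infty$-terms with ordinary cohomology is compatible with the connecting maps. Item 1) of Proposition \ref{boundary} handles that compatibility.
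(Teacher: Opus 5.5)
Your proof is correct. The paper gives no proof of this lemma; it is quoted from Kalinin's Lemma 3.10. Your Steps 1--2 are exactly the Betti-number count the paper carries out in the proof of Proposition \ref{conj_spaces-blowup}: Proposition \ref{delta-vanish} kills the real connecting map, and then maximality plus the Smith inequality force $Y$ to be maximal and the ordinary connecting maps to vanish. Your Step 3 then correctly turns this into the short exact sequence of $\infty$-terms, using collapse at the first page together with naturality and Proposition \ref{boundary}.
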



\section{Examples and constructions}
\label{examples_and_constructions}


\subsection{Examples of Kalinin effective spaces}


Examples of effective $G$-spaces can be constructed by successively 
attaching {\it $G$-cells}.

\begin{defn}
A \emph{$G$-cell of complex dimension $n$} is the pair $(\CC^n, \mathrm{conj})$
where $\mathrm{conj}$ is the standard complex conjugation.
\end{defn}


\begin{defn}{\rm (}cf. \cite{kalinin, HHP}{\rm )}
We say that a $G$-space $(X,\Conj)$ admits a $G$-{\it cellular decomposition} 
if there exists a finite $G$-flag $(A_r,\dots,A_0)$ 
such that:
\begin{itemize}
\item[ 1)]  For every $i\geq 0,\, A_i\subseteq X$ is a closed subset,  and 
$A_{0}=\emptyset;$ 
\item[ 2)] For every $i\geq 1,$ the $G$-space $(A_i \setminus A_{i-1},\Conj)$ 
is $G$-equivariantly homeomorphic to a $G$-cell.
\end{itemize}
\end{defn}


\begin{lem}
\label{eff-add-cell} 
If $(X,Y)$ is a $G$-pair such that $Y$ is an effective space and 
$X\setminus Y$ is a  $G$-cell, 
then $X$ is an effective space.
\end{lem}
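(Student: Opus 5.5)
The plan is to first show that the $G$-pair $(X,Y)$ is effective and maximal, and then to pass from the pair and the effective space $Y$ to $X$ by means of the cohomology sequence of the triple $(X,Y,\emptyset)$.

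\emph{Step 1: the pair $(X,Y)$.} Since $X\setminus Y$ is equivariantly homeomorphic to $(\CC^n,\mathrm{conj})$, we have $H^*(X,Y)\cong H^*_c(\CC^n)$, which is $\FF_2$ concentrated in degree $2n$. Likewise $H^*(F(X,Y))\cong H^*_c(\RR^n)$, which is $\FF_2$ concentrated in degree $n$, with generator $x$. The first page of Kalinin's spectral sequence is one-dimensional, so all differentials vanish and ${}^{\infty}H^*(X,Y)={}^{1}H^*(X,Y)$. By item 3) of Theorem \ref{main-ss} the graded object $\gr_\ff$ is then concentrated in filtration degree $2n$. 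Hence $\ff_i(X,Y)=0$ for $i<2n$ and $\ff_i(X,Y)=H^*(F(X,Y))$ for $i\ge 2n$. Moreover $\Sq x=x$, since $H^{>n}(F(X,Y))=0$. It follows that $\ff_i(X,Y)=\Sq H^{\le i/2}(F(X,Y))$ for every $i$, so $(X,Y)$ is effective; it is also maximal. (Alternatively, when $X$ is a manifold one can invoke Proposition \ref{rel=PAL} together with the effectivity of $\CC^n$.)

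\emph{Step 2: exact sequences.} Both $(X,Y)$ and $(Y,\emptyset)$ are effective, so Proposition \ref{delta-vanish} shows that the boundary map $\delta: H^*(F(Y))\to H^{*+1}(F(X,Y))$ vanishes. This gives the short exact sequence
$$0\to H^*(F(X,Y))\xrightarrow{j} H^*(F(X))\xrightarrow{r} H^*(F(Y))\to 0.$$
By naturality, $j(\ff_i(X,Y))\subseteq \ff_i(X)$ and $r(\ff_i(X))\subseteq\ff_i(Y)$. Next I want the analogous exactness at the $\infty$-level, namely
$$0\to{}^{\infty}H(X,Y)\to{}^{\infty}H(X)\to{}^{\infty}H(Y)\to 0,$$
in the spirit of Lemma \ref{max}. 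For this I would use Proposition \ref{boundary} 3), which gives ${}^{\infty}\delta\,\phi_*=\phi_*F\delta=0$, together with a dimension count. The count runs as follows: $\dim H^*(F(X))=\dim H^*(F(X,Y))+\dim H^*(F(Y))$, and $\dim {}^{\infty}H$ equals $\dim H^*(F)$ in each case, so the $\infty$-sequence is exact at all three terms. Taking filtration degrees into account, this yields
$$\dim \ff_i(X)=\dim\ff_i(X,Y)+\dim\ff_i(Y)$$
for every $i$; in other words, the sequence of $\ff_i$'s is short exact.

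\emph{Step 3: the filtration of $X$.} Let $S_i=\Sq H^{\le i/2}(F(X))$. Since $j$ and $r$ commute with $\Sq$ and $r$ is surjective, the sequence $0\to \Sq H^{\le i/2}(F(X,Y))\to S_i\to \Sq H^{\le i/2}(F(Y))\to 0$ is exact, because $\Sq$ is injective. Combined with the effectivity of the pair and of $Y$, this gives $\dim S_i=\dim \ff_i(X)$. It therefore suffices to prove the inclusion $S_i\subseteq \ff_i(X)$, i.e.\ that $\Sq z\in\ff_{2k}(X)$ for every $z\in H^k(F(X))$.

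This inclusion is the main obstacle. I would try first to prove it in general, via the Borel description of $\ff$. The idea is that $\Sq z$ arises from the equivariant Steenrod square of a class of degree $k$: the class $z\in H^k$ determines $\Sq z\, u^{k}+\dots$ as the restriction of an equivariant class of degree $2k$, so $\Sq z\in\ff_{2k}$. Failing that, I would argue directly from Step 2 as follows. Take $z\in H^k(F(X))$. Because $Y$ is effective, $r(\Sq z)=\Sq r(z)\in\ff_{2k}(Y)$, and by the exactness of the $\ff_{2k}$-sequence there is $w\in\ff_{2k}(X)$ with $r(w)=r(\Sq z)$. Then $\Sq z-w\in j(H^*(F(X,Y)))$, which is one-dimensional and spanned by the single class $j(x)$. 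If $k<n$, the degree bound (\ref{automatic}) applied to $w$, together with $\Sq z\in H^{\le 2k}$, places the difference in degrees $\le 2k$, whereas $j(x)$ has degree $n$. I still need to check how this bound excludes $j(x)$: for $n\le 2k$ the degree argument alone does not rule it out, so some care is required there. If instead $k\ge n$, the difference lies in $j(\ff_{2n}(X,Y))\subseteq\ff_{2k}(X)$. In either case, once the difference is shown to lie in $\ff_{2k}(X)$, we get $\Sq z\in \ff_{2k}(X)$ as required.
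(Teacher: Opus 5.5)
\paragraph{Verdict.} Your outline is the same as the paper's, and the proposal does not close: the filtered exactness in Step~2 is unjustified and in fact false under the stated hypotheses, and the ambiguity you flag in Step~3 remains open.

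\paragraph{The gap in Step~2.} The outline is: effectivity of the pair, then vanishing of $\delta$ by Proposition~\ref{delta-vanish}, then short exact sequences of the $\ff_i$, then comparison with $\Sq H^{\le i/2}$. Your Step~1 is correct. Unlike the paper's appeal to Proposition~\ref{rel=PAL}, it needs no manifold hypothesis.

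The genuine gap is the exactness of $0\to\ff_i(X,Y)\to\ff_i(X)\to\ff_i(Y)\to 0$. Proposition~\ref{delta-vanish} only makes the \emph{unfiltered} sequence in $H^*(F)$ exact. Exactness at every level $\ff_i$ is a strictness property. Your count (${}^\infty\delta=0$, and $\dim {}^\infty H=\dim H^*(F)$ for each term) only shows that total dimensions add up. It gives neither injectivity, nor surjectivity, nor middle exactness of the ${}^\infty$-sequence, since defects in different degrees can cancel.

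\paragraph{A counterexample.} With only ``$Y$ effective'' this step, and the lemma itself, fail.
\begin{itemize}
\item Let $Y=C$ be a real elliptic curve with $F(C)\cong S^1$. It is effective by Proposition~\ref{effective-GM}.
\item Mod~$2$ there is $\beta\in H_1(C)$ with $\Conj_*\beta=\beta+\alpha$, where $\alpha=[F(C)]$.
\item Take a loop $\gamma$ at $p\in F(C)$ representing $\beta$. Let $\phi$ map the boundary of $(\bar D^2,\mathrm{conj})$ to $C$ by $\gamma$ on the upper half-circle and by $\Conj\circ\gamma$, run backwards, on the lower one. This map is equivariant.
\item Set $X=C\cup_\phi \bar D^2$. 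Then $X\setminus Y$ is a $G$-cell.
\item Since $\phi_*[S^1]=\beta+\Conj_*\beta=\alpha\neq 0$, the map $\delta\colon H^1(C)\to H^2(X,C)$ is onto. Hence $X$ has Betti numbers $(1,1,1)$.
\item $F(X)$ is a figure eight, with Betti numbers $(1,2)$.
\end{itemize}
So $X$ is maximal with $H^1(X)\neq 0$, and by Proposition~\ref{no-odd} it is not effective. Concretely, $\ff_1(X)=\langle 1, j(x)\rangle$ is two-dimensional, while $\ff_1(X,Y)=0$ and $\dim\ff_1(Y)=1$. The ${}^\infty$-sequence fails to be exact in degrees $1$ and $2$, and these failures cancel in your count.

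The paper deduces the filtered exactness directly from Proposition~\ref{delta-vanish}, so it has the same gap. An extra hypothesis is needed, for instance that $Y$ is maximal, as in the only application (Theorem~\ref{eff-cellular}). Then Lemma~\ref{max}, which you allude to, gives the short exact sequence of ${}^\infty$-terms, hence strict exactness of the $\ff_i$.

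\paragraph{Step~3.} Even granting strictness, Step~3 is unfinished, as you say yourself. For $k<n\le 2k$ you only get $\Sq z+\epsilon\, j(x)\in\ff_{2k}(X)$. Neither (\ref{automatic}) nor the exact sequences determine $\epsilon$: two subspaces with the same intersection with $\im j$ and the same image under $r$ need not coincide. The paper's phrase ``respect the dimension grading'' passes over exactly this point.

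Your route (a) cannot work in this generality, because $\Sq H^{\le k}(F)\subseteq\ff_{2k}$ fails for general $G$-spaces. For a real curve with several real components, $\dim\ff_0=1<\dim H^0(F)$.

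In the maximal setting $\epsilon$ can be pinned down as follows.
\begin{itemize}
\item $\ff_{2k}(X)$ is the image of $H^{2k}_G(X)$ by Proposition~\ref{Image-Beta}.
\item The equivariant class realizing $\Sq z+\epsilon\,j(x)$ restricts to $\sum_i \Sq^i z\,u^{k-i}+\epsilon\, j(x)u^{2k-n}$.
\item Since $2k-n<k$, the extra term only affects a lower coefficient of the conjugation equation.
\item One checks that $X$ satisfies the Hausmann--Holm--Puppe axioms.
\item The Franz--Puppe theorem \cite{fp} says the lower coefficients are the $\Sq^i$ of the leading one, which forces $\epsilon=0$.
\end{itemize}
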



\proof Since $X\setminus Y$  is homeomorphic to a $G$-cell, it is 
effective. Hence, by Proposition \ref{rel=PAL}, the $G$-pair $(X,Y)$ is 
effective, and, according to Proposition \ref{delta-vanish}, for each 
$i\ge 0,$ we have a short exact sequence
\begin{equation*}
0\to \ff_i(X,Y)\to \ff_i(X)\to \ff_i(Y)\to 0.
\end{equation*}
Therefore, we get a short exact sequence
$$
0\to \Sq^{-1}\ff_i(X,Y)\to \Sq^{-1}\ff_i(X)
\to  \Sq^{-1}\ff_i(Y)\to 0.
$$
From the effectivity of $(X,Y)$ and $Y$, we have
$$
\Sq^{-1}\ff_i(X,Y)=H^{\le \frac{i}2}(F(X),F(Y)) \quad{\text{and}}\quad 
\Sq^{-1}\ff_i(Y)=H^{\le \frac{i}2}(F(Y))
$$
Since the inclusion and relative homomorphisms respect the 
dimension grading, we find that
$$
\Sq^{-1}\ff_i(X)=H^{\le \frac{i}2}(F(X)),
$$
and so $X$ is effective.
\qed


\begin{thm}
\cite[Theorem 1.2]{kalinin}
\label{eff-cellular}
Let $(X,\Conj)$ be a $G$-space admitting a $G$-cellular decomposition\footnote{Kalinin 
states this result in \cite{kalinin} for real algebraic varieties, but his proof  does not use 
the algebraicity assumption. This theorem can also be used to produce non-algebraic 
examples of conjugation spaces. One such example is $S^6$ equipped with the standard 
almost-complex structure and its naturally associated real structure.}.
Then $(X,\Conj)$ is a conjugation space.
\end{thm}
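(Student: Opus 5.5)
We argue by induction on the length $r$ of the $G$-cellular decomposition $(A_r,\dots,A_0)$, proving simultaneously that each $A_i$ is effective and maximal. By Proposition \ref{equivalence}, effectivity together with maximality is exactly the conjugation-space property, so for $i=r$ this gives the theorem. The base case $A_0=\emptyset$ is trivial. For $A_1$, which is a single $G$-cell, we note that $(\CC^n,\mathrm{conj})$ is equivariantly contractible onto the fixed origin. Hence $H^*(\CC^n)=H^*(F(\CC^n))=\FF_2$ in degree $0$, the space is maximal, and $\ff_i=H^0=\Sq H^{\le i/2}$, so it is effective.

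For the inductive step, suppose $A_{i-1}$ is effective and maximal, and set $X=A_i$, $Y=A_{i-1}$. Effectivity of $X$ then follows directly from Lemma \ref{eff-add-cell}. Maximality is the substantive point, and we would obtain it from Lemma \ref{max} applied to the pair $(X,Y)$. This requires showing that the pair $(X,Y)$ is effective and maximal.

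\begin{itemize}
\item \emph{Effectivity of $(X,Y)$.} This follows from Proposition \ref{rel=PAL}, since $X\setminus Y$ is a $G$-cell, hence effective. (Proposition \ref{rel=PAL} is stated for manifolds, and we would check that its duality argument goes through for the closed-cell situation, e.g.\ by passing to a one-point compactification as below.)
\item \emph{Maximality of $(X,Y)$.} We compute directly. By excision, $H^*(X,Y)\cong H^*_c(\CC^n)=H^*(S^{2n},pt)$, which is $\FF_2$ concentrated in degree $2n$. Likewise, $H^*(F(X),F(Y))\cong H^*_c(\RR^n)=H^*(S^n,pt)$, which is $\FF_2$ concentrated in degree $n$. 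The total dimensions agree, so the Smith–Thom deficiency $\defi(X,Y)$ vanishes.
\end{itemize}

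With both hypotheses in place, Lemma \ref{max} 1) yields that $X$ is maximal. Combined with the effectivity of $X$, this completes the induction.

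The main obstacle is justifying the excision identifications $H^*(A_i,A_{i-1})\cong H^*_c(\CC^n)$ and $H^*(F(A_i),F(A_{i-1}))\cong H^*_c(\RR^n)$, because $A_{i-1}$ is merely a closed subset. Two routes are available:
\begin{itemize}
\item use the tautness of \v{C}ech/sheaf cohomology for compact pairs, which is enough in the compact setting where $X$ is a manifold or a CW-complex and the $A_i$ are subcomplexes;
\item assume, as in the $G$-flag definition, that the $A_i$ are $G$-subcomplexes, so that each $(A_i,A_{i-1})$ is a good pair.
\end{itemize}

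Even without Lemma \ref{max}, maximality could be obtained by a counting argument. The long exact sequences give $\beta_*(A_i)\le\beta_*(A_{i-1})+1$. On the other hand, $\beta_*(F(A_i))=\beta_*(F(A_{i-1}))+1$, because by Proposition \ref{delta-vanish} the fixed-point boundary map $\delta$ vanishes, so the corresponding sequences split. Combining these with the Smith inequality $\beta_*(F(A_i))\le\beta_*(A_i)$ forces $\beta_*(A_i)=\beta_*(F(A_i))$, i.e., equality throughout. I would use this counting argument as the primary route, since it avoids Lemma \ref{max} entirely.
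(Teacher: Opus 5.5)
Your proof is correct and follows the paper's route. It inducts on the number of $G$-cells, gets effectivity of each $A_i$ from Lemma~\ref{eff-add-cell} and maximality from Lemma~\ref{max}, and concludes with Proposition~\ref{equivalence}; you also usefully make explicit, via excision, the maximality of the pair $(A_i,A_{i-1})$, which the paper's one-line proof leaves implicit. Your preferred counting variant is exactly the argument the paper uses in the proof of Proposition~\ref{conj_spaces-blowup} (vanishing of the real boundary map by Proposition~\ref{delta-vanish}, combined with the Smith inequality), and it has the mild advantage of not relying on Kalinin's Lemma~\ref{max}, which the paper quotes without proof.
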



\proof
The proof follows immediately by induction on the number of 
$G$-cells, from Proposition \ref{equivalence}, Lemma \ref{max} and 
Lemma  \ref{eff-add-cell}.
\qed

\begin{cor}
\label{cell-examples}
Real projective spaces, real Grassmannians, real flag varieties, and nonsingular 
toric varieties are conjugation spaces.
\end{cor}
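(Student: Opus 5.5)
The corollary follows from Theorem \ref{eff-cellular}: it suffices to exhibit, for each listed variety with its standard real structure, a $G$-cellular decomposition. This means a finite chain of closed invariant subsets $\emptyset=A_0\subseteq A_1\subseteq\dots\subseteq A_r=X$ whose successive differences are $G$-equivariantly homeomorphic to $(\CC^{n_i},\mathrm{conj})$. The natural source of such decompositions is a Białynicki-Birula type (Bruhat or Schubert) cell decomposition defined over $\RR$. Each cell is then an affine space over $\RR$, and complex conjugation acts on it as the standard conjugation in suitable real affine coordinates.

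For $\PP^n$ with standard conjugation, we take $A_i=\PP^{i-1}=\{x_i=\dots=x_n=0\}$. Then $A_i\setminus A_{i-1}=\{x_{i-1}\neq0\}\cap A_i\cong\CC^{i-1}$, with coordinates $x_j/x_{i-1}$ that are real rational functions, so the conjugation is standard.

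For Grassmannians $G(k,n)$ and flag varieties, we use the Schubert cells relative to the standard real flag $\langle e_1\rangle\subset\langle e_1,e_2\rangle\subset\cdots$. Each Schubert cell is parametrized by matrices in reduced row echelon form with prescribed pivots. These entries are real coordinates, so conjugation acts entrywise and each cell is a $G$-cell. Ordering the cells by dimension, any union of cells of dimension $\le d$ is closed, since closures of Schubert cells are unions of smaller cells. This gives the required flag $A_i$. The same works for partial flag varieties $GL_n/P$ using Bruhat cells $BwP/P$, which are $U_w\cong\CC^{\ell(w)}$ with real root-coordinates.

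For nonsingular toric varieties we must restrict to the compact (complete) ones with their standard real structure, in order to have a finite decomposition by closed subsets. We choose a generic real one-parameter subgroup $\lambda:\CC^*\to T$. The Białynicki-Birula decomposition then gives attracting cells, one for each fixed point, that is, for each maximal cone $\sigma$. In the smooth affine chart $U_\sigma\cong\CC^n$ with real monomial coordinates, the cell is the coordinate subspace on which $\lambda$ has positive weights, so it is a $G$-cell. Choosing $\lambda$ as a generic real cocharacter, equivalently a generic linear height function on the fan, orders the cells so that the unions are closed, which is the standard shelling argument for smooth projective toric varieties.

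The main obstacle is this closedness/ordering in the toric case, particularly for complete but non-projective smooth toric varieties. There the Białynicki-Birula decomposition may not be filtrable with closed unions. I would first treat the projective case via a $\lambda$-invariant real ample divisor, which makes the decomposition filtrable. For the complete non-projective case, I would either appeal to Kalinin's variant of Theorem \ref{eff-cellular} or restrict the statement to projective toric varieties, noting that the equivariance of the cells is automatic because all charts are defined over $\RR$.
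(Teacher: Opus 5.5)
Your strategy is the paper's own. The paper states the corollary as an immediate consequence of Theorem~\ref{eff-cellular} and takes the existence of real cell decompositions for granted. Your checks for $\PP^n$, Grassmannians, (split, type A) flag varieties and smooth projective toric varieties, all with their standard real structures, are correct. One wording fix: each $A_i\setminus A_{i-1}$ must be a single cell. So what you need is that the union of all cells of dimension $<d$, together with any subset of the $d$-dimensional cells, is closed. It is not true that an arbitrary union of cells of dimension $\le d$ is closed. Likewise, in the projective toric case the ordering comes from the values of $\lambda$ at the vertices of the moment polytope, or simply from Bia\l{}ynicki-Birula's filtrability theorem for projective varieties, not from a function on the fan.

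The one real hole is the case you flag yourself: complete non-projective nonsingular toric varieties. Neither restricting the statement nor an unspecified ``Kalinin variant'' proves the corollary as written; the paper does not treat this case separately either. You do not need filtrability there at all: apply Theorem~\ref{hamel-conj} instead of Theorem~\ref{eff-cellular}. For any complete nonsingular toric variety $X_\Sigma$, the integral cohomology is torsion free and concentrated in even degrees (Jurkiewicz--Danilov). It is additively spanned by the classes of the torus-orbit closures $V(\tau)$, $\tau\in\Sigma$; for instance $[V(\tau)]=[D_{\rho_1}]\cdots[D_{\rho_k}]$ when $\tau$ is spanned by $\rho_1,\dots,\rho_k$. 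Each $V(\tau)$ is a complex analytic subvariety invariant under the standard conjugation, since it is cut out by real monomial equations in the affine charts. So van Hamel's criterion shows $X_\Sigma$ is a conjugation space, uniformly in the projective and non-projective cases.
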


Further examples of conjugation spaces are obtained as a direct consequence of 
the following result of van Hamel \cite{hamel}:


\begin{thm}
\label{hamel-conj}
Let $(X,\Conj)$ be a compact, complex manifold equipped with an 
anti-holomorphic involution,such that $H^{*}_{\rm odd}(X)=0.$ If each 
element of $H^*(X)$ can be represented by a $\Conj$-invariant 
complex analytic cycle, then $X$ is a conjugation space.
\qed
\end{thm}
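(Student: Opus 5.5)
By Proposition \ref{equivalence}, it suffices to show that $X$ is both Smith-Thom maximal and Kalinin effective. Since $H^{\rm odd}(X)=0$ is assumed, I would organize the argument around the equivariant cycle classes supplied by the hypothesis.

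\emph{Maximality.} Let $Z\subseteq X$ be a $\Conj$-invariant complex analytic cycle of complex codimension $k$. It has an equivariant fundamental class $[Z]_G\in H^{2k}_G(X)$, for instance via an equivariant Thom class of the normal structure on the smooth part, extended by excision as in Borel--Haefliger. This class restricts to $[Z]\in H^{2k}(X)$. By hypothesis every class in $H^*(X)$ is a sum of such $[Z]$, so the restriction $\rho:H^*_G(X)\to H^*(X)$ is surjective. Item 4) of Proposition \ref{coh-max} then gives Smith-Thom maximality. In particular $G$ acts trivially on $H^*(X)$ and $H^*_G(X)=H^*(X)[u]$.

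\emph{Effectivity.} This is the step I expect to be the main obstacle. The plan is to use the Borel--Haefliger description of $r_G[Z]_G\in H^*_G(F(X))=H^*(F(X))[u]$. When $Z$ is a $\Conj$-invariant complex cycle of codimension $k$, its real part $F(Z)$ is a cycle of codimension $k$ in $F(X)$, and one has
$$
r_G[Z]_G=[F(Z)]u^k+\Sq^1[F(Z)]u^{k-1}+\dots+\Sq^k[F(Z)],
$$
where we use that the complexified normal bundle restricted to the real part splits as $\nu\otimes\CC\cong\nu\oplus\nu$ with $G$ acting by $-1$ on the second summand. I would first check this formula at smooth points using equivariant Thom classes, where the Stiefel--Whitney classes of $\nu$ appear and produce $\Sq[F(Z)]$. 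I would then extend it over the singular locus by a dimension and excision argument, since the singular locus has real codimension at least $2$ more in each stratum.

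Given this formula, I would set $\sigma([Z])=[Z]_G$ and $\kappa([Z])=[F(Z)]$. To see that these are well defined and linear, I would argue as follows. By maximality, $\rho$ restricted to $H^{2k}_G$ modulo $u$-multiples is an isomorphism, so the top $u$-coefficient determines everything. Moreover, $r_G$ is injective by item 5) of Proposition \ref{coh-max}, so relations among the $[Z]$ propagate to relations among the $[F(Z)]$.

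To show that $\kappa$ is an isomorphism, I would compare dimensions. Maximality gives $\dim H^*(F(X))=\dim H^*(X)$. For injectivity, if $\kappa(x)=0$, then after setting $u=1$ the class $r_G\sigma(x)$ lies in a lower level of the Kalinin filtration. I would then run a downward induction on degree, again using the injectivity of $r_G$, to conclude that $x=0$. Once this is done, $\sigma$ and $\kappa$ satisfy the defining identity of a conjugation pair, and this yields the theorem. Alternatively, one can finish by reading the formula above as the Kalinin effectivity condition $\ff_i=\Sq H^{\le i/2}(F(X))$ via Proposition \ref{Image-Beta}.
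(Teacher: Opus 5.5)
The paper gives no proof of this statement; it quotes it from van Hamel \cite{hamel}, so your sketch has to stand on its own. The maximality step is fine: equivariant fundamental classes of invariant cycles make $\rho$ surjective, and item 4) of Proposition \ref{coh-max} applies.

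\textbf{The gap: the Steenrod-square formula for singular $Z$.} The formula $r_G[Z]_G=\sum_j \Sq^j[F(Z)]\,u^{k-j}$ is not established by your dimension/excision argument.

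\emph{What the argument does give.} The restriction $r_G[Z]_G$ comes from $\bigoplus_i H^i_{F(Z)}(F(X))\,u^{2k-i}$, and $H^i_{F(Z)}(F(X))\cong H^{\mathrm{BM}}_{n-i}(F(Z))$. This group vanishes for $i<k$. For $i=k$ the top Borel--Moore class is detected on the complement of any subset of dimension $<n-k$. So you correctly get the vanishing below degree $k$ and the leading term $[F(Z)]u^k$.

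\emph{What it does not give.} For $j\ge 1$ the component lies in $H^{\mathrm{BM}}_{n-k-j}(F(Z))$. This group is not detected on $F(Z)\setminus F(\mathrm{Sing}\,Z)$. The complex singular locus has real codimension $\ge 2$ in $Z$, but its real points can have codimension only $1$ in $F(Z)$. For a real nodal cubic in $\PP^2$, $F(Z)$ minus the node is a union of open arcs, whose $H^{\mathrm{BM}}_0$ is zero. So the $u^0$-term $\Sq^1[F(Z)]$ is invisible to your argument.

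\emph{How to prove it instead.} The formula is true, but it needs a different argument. One route uses an equivariant resolution $f:\tilde Z\to X$, the localization formula for $f_*$, and the mod $2$ Riemann--Roch identity $\Sq(g_*a)=g_*(w(\nu_g)\Sq a)$ for $g=f|_{F(\tilde Z)}$.

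\textbf{Consequence for the injectivity of $\kappa$.}

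\emph{With the full formula.} Injectivity is immediate and needs no induction: $\kappa(x)=0$ gives $r_G\sigma(x)=0$, hence $\sigma(x)=0$ and $x=\rho\sigma(x)=0$.

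\emph{With only the leading term.} This is all the conjugation equation actually demands, since the $y_i$ are arbitrary. But then your claim that $r_G\sigma(x)|_{u=1}$ drops to a lower Kalinin level is unsupported, and a ``downward induction'' does not supply it.

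\textbf{A clean repair.}
\begin{enumerate}
\item In each $H^{2k}(X)$, choose a basis of cycle classes and define $\sigma$ and $\kappa$ linearly on it. This also settles well-definedness. Your argument there only shows that a relation among the $[Z]_G$ holds modulo $u$.
\item Apply mod $2$ localization with $e_G(N_{F(X)|X})=\sum_j w_j(F(X))\,u^{n-j}$ (equivalently, Kalinin duality, Proposition \ref{al-lef}). For $x\in H^{2k}(X)$ and $y\in H^{2n-2k}(X)$ this gives $\langle xy,[X]\rangle=\langle \kappa(x)\kappa(y),[F(X)]\rangle$, because every other term lands in $H^{>n}(F(X))=0$.
\item Poincar\'e duality on $X$ then gives injectivity of $\kappa$.
\item Finally, $\dim H^{2*}(X)=\dim H^*(F(X))$ (from $H^{\rm odd}(X)=0$ and maximality) gives bijectivity.
\end{enumerate}
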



Examples of effective spaces that are not conjugation spaces can be obtained 
as a consequence of the following results:


\begin{prop}
\label{hyperquadrics} 
Every real nonsingular quadric hypersurface is an effective space.
\end{prop}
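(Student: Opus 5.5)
Every real nonsingular quadric $Q\subset\PP^{n+1}$ is given in suitable real coordinates by $x_0^2+\dots+x_p^2-x_{p+1}^2-\dots-x_{n+1}^2=0$. The plan is to cut $Q$ into simple pieces and use the gluing tools of Section~\ref{examples_and_constructions}: Proposition \ref{rel=PAL}, Proposition \ref{delta-vanish}, and the short-exact-sequence argument of Lemma \ref{eff-add-cell}. We do not aim for maximality, which in general fails; we only aim for effectivity.

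The decomposition comes from a real point. If $Q$ has a real point $P$, take the projection from $P$. The tangent section $T=Q\cap T_PQ$ is a cone with vertex $P$ over a real quadric $Q'$ of dimension $n-2$ of the same type. The complement $Q\setminus T$ is equivariantly isomorphic to $\CC^n$ with the standard conjugation, i.e.\ a $G$-cell. Moreover $T\setminus\{P\}$ fibres over $Q'$ with fibre $\CC$: it is a real line bundle, complexified, over $Q'$, and its fixed locus is the corresponding real line bundle over $F(Q')$.

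The induction then runs in three steps. First, by induction on the dimension, $Q'$ is effective. Second, the total space of a complex line bundle with a compatible real structure over an effective base is effective, since the Thom/homotopy equivalence preserves both $\ff$ and $\Sq$ functorially. Hence $T\setminus\{P\}$ is effective. Third, adding the point $P$ is a further $G$-cell attachment, done via the pair $(T,T\setminus\{P\})$ and Proposition \ref{rel=PAL}; here one uses the compact version $(T,\{P\})$ with $T\setminus P$ effective. Then Lemma \ref{eff-add-cell} applies once more with $Y=T$ and $X\setminus Y=Q\setminus T$, which gives that $Q$ is effective.

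The base cases are the genuinely non-cellular ones: quadrics with no real points, which appear as the induction terminus (e.g.\ signature $(n+2,0)$), and low-dimensional cases. If $F(Q)=\emptyset$, then $H^*(F(Q))=0$, so the filtration $\ff$ is identically $0=\Sq H^{\le i/2}(\emptyset)$. Thus effectivity is automatic. The conics and points with or without real points are checked directly. For the 0-dimensional quadric consisting of two conjugate points, $F=\emptyset$ again; for two real points the space is a disjoint union of two $G$-cells of dimension $0$.

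The main obstacle is the induction step through the cone $T$. One must verify carefully that the hypotheses of Proposition \ref{rel=PAL} hold, since $T$ is singular at $P$. To handle this, I would apply it in the smooth ambient $Q$ instead: $Q\setminus T\cong\CC^n$ is effective, so the pair $(Q,T)$ is effective. I would then argue effectivity of $T$ directly via the triple $(T,\{P\},\emptyset)$ and the vanishing of $\delta$ in Proposition \ref{delta-vanish}, using the same short exact sequences of filtrations as in the proof of Lemma \ref{eff-add-cell}. The pair $(T,P)$ has the cohomology of the Thom space of the line bundle over $Q'$, which is effective by the Thom isomorphism compatibility with $\Sq$ (Wu formula). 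The Thom class shifts the filtration by $2$ and the Steenrod degree by $1$, and checking that this matches $\Sq H^{\le i/2}$ is the delicate computation.
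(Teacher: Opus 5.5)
Your strategy is sound and differs from the paper's, but its one nontrivial step is missing.

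\textbf{What is correct.} The decomposition $Q=(Q\setminus T)\sqcup(T\setminus\{P\})\sqcup\{P\}$ is right: $Q\setminus T\cong\CC^n$ equivariantly, $Q'$ has signature $(a-1,b-1)$ when $Q$ has signature $(a,b)$, and the induction terminates at quadrics with $F=\emptyset$, pairs of points and conics. Your last step, Lemma~\ref{eff-add-cell} with $Y=T$, is also fine. Proposition~\ref{rel=PAL} does not literally apply there since $T$ is singular. However, $(Q,T)$ is effective by inspection: $H^*(Q,T)\cong H^*_c(\CC^n)$ and $H^*(F(Q),F(T))\cong H^*_c(\RR^n)$ are one-dimensional, in degrees $2n$ and $n$.

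\textbf{The gap.} You do not prove that the pair $(T,\{P\})$ is effective; you defer it as ``the delicate computation''. Your second step does not help. Here $\{P\}$ is the closed stratum, the open stratum $T\setminus\{P\}$ is not a cell, and $T$ is not a manifold. So neither Lemma~\ref{eff-add-cell} nor Proposition~\ref{rel=PAL} turns effectivity of $T\setminus\{P\}$ into effectivity of $(T,\{P\})$.

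\textbf{Two ways to close it.}

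(a) Via the Thom class, as you suggest. The pair $(T,P)$ is the Thom space of a line bundle $L$ over $Q'$ with real structure. Multiplication by the equivariant Thom class $U_G\in H^2_G(T,P)$ is an isomorphism of Borel spectral sequences from the second page on. Hence $\ff_{i+2}(T,P)$ is the image of $\ff_i(Q')$ under multiplication by the restriction of $U_G$ to the fixed pair, taken at $u=1$. That restriction is $U_\RR\,(u+w_1(L_\RR))$, because over $F(Q')$ one has $L=L_\RR\oplus iL_\RR$ with $G$ acting by $-1$ on $iL_\RR$. At $u=1$ it becomes $U_\RR(1+w_1(L_\RR))=\Sq U_\RR$. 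By the Cartan formula,
$\ff_{i+2}(T,P)=\Sq\bigl(H^{\le i/2}(F(Q'))\cdot U_\RR\bigr)=\Sq H^{\le (i+2)/2}(F(T),P)$,
where $y\mapsto y\cdot U_\RR$ is the real Thom isomorphism. Also $\ff_0(T,P)=\ff_1(T,P)=0$.

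(b) Staying inside the paper's toolkit. Blowing up $T$ at $P$ gives the $\PP^1$-bundle $\hat T=\PP(\OO\oplus L)$ over $Q'$, with exceptional section $E\cong Q'$.
\begin{itemize}
\item $\hat T$ is effective by Proposition~\ref{effective-bundles}.
\item $(\hat T,E)$ is stretched by Proposition~\ref{proj-stretch}, hence effective by Proposition~\ref{stretch-effective}.
\item The blow-down $(\hat T,E)\to(T,P)$ is an equivariant relative homeomorphism. It therefore identifies complex and real relative cohomology and, by naturality, the Kalinin filtrations.
\end{itemize}

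Either way, Proposition~\ref{delta-vanish} applied to the triple $(T,\{P\},\emptyset)$ then gives that $T$ is effective, as you intended.

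\textbf{Comparison with the paper.} The paper does not induct on dimension. Outside the cellular case it uses three facts:
\begin{itemize}
\item $H^*(F(X))$ is spanned by the duals of the real parts of iterated real hyperplane sections and real linear subspaces;
\item the corresponding complex cycles survive to $\prescript{\infty}{}H^*(X)$;
\item $\phi^{-1}([Z]^*)=\Sq([F(Z)]^*)$ for such cycles $Z$, by \cite[Corollary A.2.10]{dk}.
\end{itemize}
Together these locate the jumps of $\ff$ explicitly. Your induction needs no description of $H^*(F(Q))$. It reduces everything to the gluing lemmas plus one Thom-class identity, which is the simplest instance of that cycle formula, and it treats all signatures uniformly. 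The paper's argument yields the explicit filtration as a by-product.
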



\proof
Even-dimensional maximal quadric hypersurfaces $X$ admit a real 
$G$-cellular decomposition, and thus for them the result follows from 
Theorem \ref{eff-cellular}. Otherwise,  there exists an integer $s$ 
with $0\le s < \frac12\dim X$ such that  $H^*(F(X))$ is generated 
by the duals of the fundamental classes of iterated hyperplane sections 
$[F(X)\cap F(H^r)]$ with $0\leq r\leq s$ and  the duals of the fundamental 
classes of linear subspaces $[F(L^r)]$ with $0\leq r\leq s$. Since the 
Poincar\'e duality descends to Kalinin's spectral sequence (see \cite{dik}), 
the classes $[X\cap H^r]$ and $[L^r]$  yield non-zero classes in 
$\prescript{\infty}{}H^*(X).$ In particular, we find $\ff_{2i}(X)=\ff_{2i+1}(X)$ 
for each $i$, $\ff_{2i}(X)/\ff_{2i-1}(X)=\FF_2$ for each $i\le r$ and each $i\ge n-r,$ 
while $\ff_{2r}(X)=\dots=\ff_{2n-2r-1}(X)$. It remains to observe that
for each of the classes $u=[Z]^*\in H^*(X),$ we have $\phi^{-1}(u)=\Sq([F(Z)]^*),$ 
where $Z$ is any of the submanifolds $X\cap H^r$ or $L^r,\,0\leq r\leq s$, which 
follows from  \cite[Corollary A.2.10]{dk}. Thus, every graded piece 
$\ff_{2i}(X)/\ff_{2i-1}(X)$ is generated by Steenrod squares of elements in 
$H^{\leq i/2}(F(X)),$  and hence $X$ is effective by definition.
\qed


\begin{prop}
\label{effective-GM}
If $X$ is either a nonsingular real curve with nonempty connected real locus or 
a nonsingular real surface $X$ with nonempty connected real locus and $H^1(X)=0,$ 
then $X$ is effective and Galois maximal. 
\end{prop}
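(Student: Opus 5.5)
The plan is to compute Kalinin's filtration $\ff_*(X)$ directly, using that in both cases $F(X)$ is a closed connected manifold of small dimension $m\in\{1,2\}$. Effectivity then amounts to checking a short list of equalities of the form $\ff_i(X)=\Sq H^{\le i/2}(F(X))$. Since $\ff_i\subseteq H^{\le i}(F(X))$ by (\ref{automatic}), and $\ff$ exhausts $H^*(F(X))$ at $i=\dim_\CC X=n$ via the isomorphism $\phi$, the work is to identify the jumps of the filtration.

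\textbf{Galois maximality.} I will use Proposition \ref{GM-char}. The goal is to show that each $\Conj_*$-invariant class in $H_*(X)$ is represented by a $\Conj$-invariant cycle.
\begin{itemize}
\item In degree $0$, a real point works.
\item In the top degree, $[X]$ is invariant.
\item In degree $1$ (the curve case), I will argue via the Smith exact sequence. Equivalently, it suffices to check the count $\dim H^*(F(X))=\dim H^1(G;H^*(X))$. For a curve of genus $g$ with connected real locus, the standard computation gives $\dim H^1(G;H^1(X))=\dim H^1(F)=1$. The involution on $H^1$ is described by Comessatti/Klein: with $k=1$ real component the relevant form has rank computable from $g$ and $k$. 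Together with $H^1(G;H^0)=H^1(G;H^2)=\FF_2$, this gives total $4=\beta_*(S^1)$.
\item For surfaces with $H^1(X)=0$ and $F$ connected, I will use the Kalinin/Krasnov criterion. Galois maximality is then equivalent to $\beta_*(F)=2+\dim H^1(G;H^2(X))$. This is the standard equality for connected real loci, obtained from the Smith sequence plus the fact that $\bar X$ is simply connected in this case (Arnold–Rokhlin style). I expect this to be the main obstacle, and I would try to deduce it from the Smith sequence (\ref{rses}) together with $H^1(X)=H^3(X)=0$.
\end{itemize}

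\textbf{Effectivity.} By Proposition \ref{Image-Beta}, Galois maximality lets me compute $\ff_i$ as $\im\beta^i$. For $m=1$ (curve, $n=1$), $F=S^1$ and effectivity requires:
\begin{itemize}
\item $\ff_0=H^0(F)$, which holds since $\ff_0\subseteq H^0$ and the unit lies in $\ff_0$;
\item $\ff_1=H^0(F)$;
\item $\ff_2=H^*(F)$, which is automatic.
\end{itemize}
For $\ff_1$: as $\gr^1=\prescript{\infty}{}H^1=\prescript{2}{}H^1=H^1(G;H^1(X))$, whose dimension is $1$, the jump occurs at $i=1$. This contradicts effectivity unless something is wrong, so I recheck the count. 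With $F$ connected, $\beta_*(F)=2$, so $\gr^0\oplus\gr^1\oplus\gr^2$ has dimension $2$. Then $H^1(G;H^1(X))$ must be $0$ and $\ff_1=\ff_0$, and Galois maximality reads $2=1+0+1$. So the correct statement is that for $k=1$ the involution acts on $H^1(X)$ with trivial $H^1(G;\cdot)$, and I will prove this via the Smith sequence. The jumps are then at $0$ and $2$, with $\phi_2^{-1}$ of the top class equal to $\Sq[\text{pt}]^*=[\text{pt}]^*\in H^1(F)$. This gives $\ff_2=\Sq H^{\le1}(F)$.

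For the surface ($n=2$), $\gr^{odd}=H^1(G;H^{odd}(X))=0$, so $\ff_{2j}=\ff_{2j+1}$. We have $\ff_0=H^0$ and $\ff_4=H^*(F)$. The key check is $\ff_2=\Sq H^{\le1}(F)=\{1\}\cup\{x+x^2:x\in H^1(F)\}$. I will prove it by dimension count plus duality. Proposition \ref{al-lef}(2) gives $\ff_2^\perp=w^{-1}\ff_1=w^{-1}H^0$, where $w$ is the Stiefel–Whitney class of the normal bundle of $F$, and $w=\Sq$-related to $F$'s own classes via the Wu formula (as in the proof of Proposition \ref{rel=PAL}). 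One then verifies that $(\Sq H^{\le 1}(F))^\perp=\Sq^{-1}$-twisted $H^0$ under the normal-bundle identification $w_1(\nu)=w_1(F)$ for a Lagrangian-type real locus. Both sides have dimension $1+\beta_1(F)$, which finishes the argument.
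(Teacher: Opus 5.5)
\textbf{Gap: Galois maximality is never proved.} The curve count is wrong: $\beta_*(S^1)=2$, not $4$, and in fact $H^1(G;H^1(X))=0$. Your ``correction'' then derives $H^1(G;H^1(X))=0$ from $\gr^1=\prescript{\infty}{}H^1=\prescript{2}{}H^1$. The second equality already assumes collapse at the second page, which is exactly the statement being proved, so the step is circular. ``I will prove this via the Smith sequence'' remains a promise.

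In the surface case you only assert $\beta_*(F)=2+\dim H^1(G;H^2(X))$ as standard, and flag it yourself as the main obstacle. Your appeal to simple connectivity of $X/\Conj$ does not follow from $H^1(X)=0$ with $\FF_2$ coefficients, and you do not turn it into an argument.

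\textbf{The missing idea is a degree argument in Kalinin's spectral sequence, which is what the paper uses.} Since $X$ is connected and $F\neq\emptyset$, the unit of $H^0(F)$ lies in $\ff_0$. Hence $\prescript{\infty}{}H^0=\prescript{2}{}H^0=H^1(G;H^0(X))=\FF_2$, so no differential $\prescript{r}{}d$ with $r\ge 2$ can hit degree $0$. By the adjointness in Proposition \ref{al-lef}(1), no nonzero differential can leave the top degree $2n$ either. For curves, the only candidate differentials $H^1\to H^0$, $H^2\to H^1$, $H^2\to H^0$ are therefore zero. For surfaces, $H^1(X)=H^3(X)=0$ leaves only $H^2\to H^0$, $H^4\to H^2$, $H^4\to H^0$, which vanish for the same reason. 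So the sequence collapses at the second page, which is Galois maximality by Proposition \ref{GM-def}. No Comessatti or Arnold--Rokhlin input is needed, and not even connectedness of $F$.

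\textbf{The effectivity part is essentially the paper's argument, and it does not need Galois maximality,} even though you route it through Proposition \ref{Image-Beta}.
\begin{itemize}
\item For surfaces, $\gr^{\rm odd}=0$ simply because $H^{\rm odd}(X)=0$.
\item For curves, $\gr^1=0$ because $\beta_*(F)=2$ and $\prescript{\infty}{}H^2\cong(\prescript{\infty}{}H^0)^*=\FF_2$ by Proposition \ref{al-lef}(1). You skipped this last step.
\item For the key surface step you use Proposition \ref{al-lef}(2), where the paper cites \cite[Proposition 2.4.9(1)]{dik}. This is a valid and more self-contained substitute, but make it precise. Since $\nu\cong TF$, we have $w=w(F)$. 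Wu's formula on a closed surface gives $w_2=w_1^2$, so $w^{-1}=1+w_1$. The degree-$2$ part of $(x+x^2)(1+w_1)$ is $xw_1+x^2=0$ in $H^2(F)$. Hence $\Sq H^{\le 1}(F)\subseteq\{w^{-1}\}^{\perp}=\ff_2$, with equality since both have dimension $\beta_*(F)-1$.
\end{itemize}
One small slip: the filtration exhausts $H^*(F)$ at $i=2n$, not at $i=n$.
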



\proof
When $F(X)$ is nonempty and connected, we find $\dim\, \ff_0(X)=1.$ 
Since $\ff_0(X)\subseteq H^0(F(X)),$ we conclude that 
$\ff_0(X)=H^0(F(X)).$

If $X$ is a curve with a nonempty connected  real locus, 
then $\prescript{\infty}{}H^i(X)=\FF_2$ for $i=0,2$ and it 
is $0$ for $i=1.$ Therefore, in this case, $\ff_0(X)=H^0(F(X))$ 
is the only nontrivial term in Kalinin's filtration. This implies 
effectivity in the case of curves with a connected real locus.

If $X$ is a surface with $H^1(X)=0$, we find $\prescript{\infty}{}H^i(X)=\FF_2$ 
for $i=0,4$ and it is $0$ for $i=1,3$. By the same arguments 
as above we get 
\begin{align*}
\ff_0(X)=&\,H^0(F(X)),\\
\ff_1(X)=&\, \ff_0(X),\\
\ff_3(X)= &\, \ff_2(X),\\ 
\ff_4(X)=&\, 
H^*(F(X)),\\
\ff_4(X)/\ff_3(X)=&\, \FF_2.
\end{align*}
Finally, to prove the effectivity of $X$ there remain to check that 
$\ff_2^\perp\subset H_*(F(X))$ is generated by $W_1$ and $[F(X)]$ 
where $W_1\in H_1(F(X))$ is the Poincar\'e dual to $w_1(F(X))$ 
and $[F(X)]\in H_2(F(X))$ is the fundamental class. The latter 
property is a direct consequence of \cite[Proposition 2.4.9(1)]{dik} 
and the duality between Kalinin's spectral sequences in homology 
and cohomology (see \cite[Subsection 1.5.3 ]{dik}).

The Galois maximality in both cases discussed above holds in 
more generality, without assumption of connectedness of the real 
locus (see also  \cite[page 262]{krasnov}). The assumption 
$F(X)\ne\emptyset$ and, in the case of surfaces, $H^1(X)=0$ 
imply, respectively,  $\prescript{r}{}H^0(X)=\FF_2$ and, in the 
case of surfaces,  $\prescript{r}{}H^1(X)=0$  for any $r\ge 1$. 
Then, by duality (see Proposition \ref{al-lef} (1)), $\prescript{r}{}H^2(X)=\FF_2$
for curves, and $\prescript{r}{}H^3(X)=0,\, \prescript{r}{}H^4(X)=\FF_2$ 
for surfaces. As a consequence, $\prescript{r}{}d^q=0$ for any 
$r\ge 2, q\ge 0$, which is equivalent to Galois maximality according 
to Proposition \ref{GM-def}.
\qed

\medskip

We conclude this subsection by recalling, without proof, the 
following result of Kalinin, which is one of the main ingredients 
in the proof of Theorem \ref{defect-GM}:


\begin{prop}\cite[Proposition 1.3]{kalinin}
\label{sym-kalinin}
Let $X$ be a finite CW-complex and let 
$\tau : X \times X\ra X \times X,\, \tau(x,y)=(y,x)$.
Then $(X \times X, \tau)$ is a Galois maximal, effective 
space.\qed
\end{prop}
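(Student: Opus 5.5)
Fix a $\tau$-invariant cellular structure on $X\times X$: take the product cells $e_\alpha\times e_\beta$. Then $\tau$ swaps $e_\alpha\times e_\beta$ with $e_\beta\times e_\alpha$. The diagonal cells $e_\alpha\times e_\alpha$ are invariant but are not fixed pointwise, so I first pass to a subdivision (the standard one from Steenrod's construction of the squares) in which $\tau$ acts identically on invariant cells. The fixed locus is the diagonal $\Delta\cong X$.

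The chain-level computation is Steenrod's. Over $\FF_2$ we have $H^*(X\times X)=H^*(X)\otimes H^*(X)$ with $\tau^*(a\otimes b)=b\otimes a$. In a basis $\{a_i\}$ of $H^*(X)$ the involution simply permutes basis elements, so
\[
H^1(G;H^*(X\times X))=\ker(1+\tau^*)/\im(1+\tau^*)
\]
is spanned by the classes $a_i\otimes a_i$. Hence $\dim H^1(G;H^*(X\times X))=\beta_*(X)=\dim H^*(\Delta)$. By Definition \ref{def-max} this is exactly Galois maximality, and by Proposition \ref{GM-def} the Kalinin spectral sequence collapses at the second page. Moreover ${}^\infty H^{2k}={}^2H^{2k}$ is spanned by the classes $a\otimes a$ with $a\in H^k(X)$, and ${}^\infty H^{\rm odd}=0$.

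For effectivity I use Proposition \ref{Image-Beta}: the filtration $\ff_n$ equals the image of $\beta^n:H^n_G(X\times X)\to H^n_G(\Delta)$. The key input is the equivariant external square $P(a)\in H^{2k}_G(X\times X)$ of $a\in H^k(X)$, i.e.\ Steenrod's construction. By the very definition of the Steenrod squares, its restriction to the diagonal is
\[
\sum_i \Sq^i(a)\,u^{k-i}.
\]
Under the stabilization identification with $u=1$ this is $\Sq(a)$. Since $P(a)$ restricts to $a\otimes a$ in $H^*(X\times X)$, these classes represent the generators of ${}^\infty H^{2k}$. Therefore $\phi_{2k}$ identifies $\ff_{2k}/\ff_{2k-1}$ with the span of the images of $\Sq(a)$, $a\in H^k(X)$. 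Combining this with ${}^\infty H^{\rm odd}=0$, so that $\ff_{2k+1}=\ff_{2k}$, an induction on $k$ gives
\[
\ff_i=\Sq H^{\le i/2}(\Delta),
\]
which is effectivity.

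The main obstacle is making sure $\Sq(H^{\le k}(\Delta))$ fills all of $\ff_{2k}$, and not just its associated graded piece. This follows by induction using the isomorphisms $\phi$. I will, however, double-check that the total square $\Sq$ is injective: it is, being unipotent. I will also check the dimension count. The graded pieces of the filtration have dimensions $\beta_k$, the classes $\Sq(a)$ for $a\in H^{\le k}$ are linearly independent, and so the dimensions match.
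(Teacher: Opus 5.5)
The paper does not prove this statement. It quotes it from Kalinin \cite[Proposition 1.3]{kalinin} and uses it as a black box in Proposition \ref{eff-images}, so there is no in-paper argument to compare with. Your proposal is correct and gives a self-contained proof. It also makes clear that the statement is essentially Steenrod's construction of the squares, rewritten in Kalinin's language.

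Galois maximality is exactly your permutation-module count. For $i\neq j$, the free $\tau^*$-orbits $\{a_i\otimes a_j,\, a_j\otimes a_i\}$ contribute nothing to $\ker(1+\tau^*)/\im(1+\tau^*)$. The fixed vectors $a_i\otimes a_i$ give $\dim H^1(G;H^*(X\times X))=\beta_*(X)=\dim H^*(\Delta)$. This is concentrated in even degrees, with ${}^\infty H^{2k}$ of dimension $\beta_k(X)$.

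For effectivity, only two facts are needed.
\begin{enumerate}
\item[(i)] $\Sq H^{\le k}(\Delta)\subseteq\ff_{2k}$. This follows from Steenrod's formula $P(a)|_{\RR\PP^\infty\times\Delta}=\sum_i u^{k-i}\Sq^i a$.
\item[(ii)] $\dim\ff_i=\sum_{j\le i}\dim{}^\infty H^j=\sum_{k\le i/2}\beta_k(X)=\dim\Sq H^{\le i/2}(\Delta)$. The last equality holds because $\Sq$ is injective.
\end{enumerate}
Together these give $\ff_i=\Sq H^{\le i/2}(\Delta)$ at once. So your identification $\phi_{2k}(\Sq a)=[a\otimes a]$ and the induction on $k$, while correct, are redundant. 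Proposition \ref{Image-Beta} is also not needed for (i): the class $u^{i-2k}P(a)$ already lies in $\ff'_{i-2k}\cap H^i_G(X\times X)$, so $\Sq a\in\ff_{2k}$ follows directly from the definition of Kalinin's filtration.

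The cell-structure point can be made concrete instead of appealing to an unspecified subdivision. Replace $X$ by a homotopy equivalent finite simplicial complex $K$ with ordered vertices; then $f\times f$ is a $G$-homotopy equivalence that restricts to $f$ on the diagonals. Take the standard product triangulation of $K\times K$, whose simplices are chains in the product order. A $\tau$-invariant simplex cannot contain both $(v,w)$ and $(w,v)$ with $v\neq w$, since these are incomparable. Hence it lies in the diagonal and is fixed pointwise, which places $(X\times X,\tau)$ in the paper's $G$-flag framework.
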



\subsection{Kalinin effectivity under standard constructions}
\label{K_eff_constructions}


We now turn to a systematic construction of further examples of effective spaces, 
and proceed by recalling the following two results of Kalinin:


\begin{prop}\cite[Proposition 3.3]{kalinin}
\label{products-effective}
A finite product of effective pairs is an effective pair.\qed
\end{prop}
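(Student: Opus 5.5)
By induction on the number of factors, it suffices to treat two effective $G$-pairs $(X,Y)$ and $(A,B)$. The plan is to show that the product pair
$$(P,Q)=(X\times A,\,Y\times A\cup X\times B)$$
is effective. Its fixed flag is $F(P,Q)=F(X,Y)\times F(A,B)$. The Künneth theorem over $\FF_2$ gives
$$H^*(F(P,Q))=H^*(F(X,Y))\otimes H^*(F(A,B)),$$
and by the Cartan formula the total square is multiplicative under the cross product: $\Sq(a\times b)=\Sq a\times \Sq b$.

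First we prove the inclusion $\Sq H^{\le i/2}(F(P,Q))\subseteq \ff_i(P,Q)$. A class of degree $\le i/2$ in $F(P,Q)$ is a sum of terms $a\times b$ with $\deg a=p$, $\deg b=q$ and $p+q\le i/2$. By effectivity of the factors, $\Sq a\in\ff_{2p}(X,Y)$ and $\Sq b\in \ff_{2q}(A,B)$. Item 2) of the product proposition then places $\Sq a\times\Sq b$ in $\ff_{2p+2q}(P,Q)\subseteq \ff_i(P,Q)$, which gives the inclusion.

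For the reverse inclusion we compare dimensions via the graded pieces. By item 1) of the product proposition, taken in the limit $r=\infty$ (limits commute with the tensor product, since everything is finite dimensional and the isomorphism holds for all $r$), we get
$$\prescript{\infty}{}H^k(P,Q)\cong \bigoplus_{i+j=k}\prescript{\infty}{}H^i(X,Y)\otimes \prescript{\infty}{}H^j(A,B).$$
By Theorem \ref{prop-def} applied to the effective factors, $\prescript{\infty}{}H^{\rm odd}$ vanishes for each factor and $\prescript{\infty}{}H^{2m}\cong H^m(F(\cdot))$. Hence
$$\dim \ff_i(P,Q)=\sum_{k\le i}\dim \prescript{\infty}{}H^k(P,Q)=\sum_{2(p+q)\le i}\beta_p(F(X,Y))\,\beta_q(F(A,B))=\dim H^{\le i/2}(F(P,Q)).$$
Since $\Sq$ is injective (it is the identity plus terms of higher degree), $\dim \Sq H^{\le i/2}(F(P,Q))$ equals this same number. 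An inclusion of finite-dimensional spaces of equal dimension is an equality, so $\ff_i(P,Q)=\Sq H^{\le i/2}(F(P,Q))$.

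The main obstacle is the identification of the $E_\infty$-level Künneth isomorphism. One must check that the isomorphisms of item 1) at each finite page pass to $\prescript{\infty}{}H$. For this I would argue that a tensor product of differential graded vector spaces with derivation-type differential has homology equal to the tensor product of homologies at every page, so the isomorphism propagates inductively page by page. An alternative is to use only the weaker inclusion $\ff$ together with item 3) ($\phi_*(a\otimes b)=\phi_*a\times\phi_*b$) to show that the $\psi$-images already span the limiting term.
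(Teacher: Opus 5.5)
Your proof is correct. The paper gives no argument of its own for this statement; it only cites Kalinin. Your route is the natural derivation from the tools the paper records just before the statement: you get the inclusion $\Sq H^{\le i/2}(F(P,Q))\subseteq \ff_i(P,Q)$ from the Cartan formula and item 2) of Kalinin's product proposition, and then equality from a dimension count using the K\"unneth isomorphism of item 1) on the limiting page.

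The step you flag as the main obstacle is not really one. All pages are finite dimensional and concentrated in degrees $0,\dots,N$, so $\prescript{r}{}d=0$ for $r>N+1$ in all three spectral sequences. The isomorphisms at finite $r$ therefore already give the one at $r=\infty$, and no separate algebraic K\"unneth induction is needed.

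Two minor points:
\begin{itemize}
\item The vanishing of $\prescript{\infty}{}H^{\rm odd}$ for an effective pair follows directly from the definition, since $\ff_{2m+1}=\ff_{2m}$. It is not part of Theorem \ref{prop-def}, which you only need for $\dim \prescript{\infty}{}H^{2m}=\beta_m$.
\item Your dimension count never uses item 3) of the product proposition, so the alternative you sketch is unnecessary.
\end{itemize}
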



\begin{prop} \cite[Proposition 3.11]{kalinin}
\label{effective-bundles}
Let $(X,\Conj)$ be a $G$-space, $\xi$ a complex vector bundle on $X$ endowed with 
an anti-linear involution covering the involution $\Conj,$ and  $Y$ an associated bundle 
of flags in $\xi$ equipped with the induced involution.
If $X$ is effective then $Y$ is effective. \qed
\end{prop}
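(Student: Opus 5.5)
We prove the statement (Proposition \ref{effective-bundles}) by reducing to projective bundles, where a Leray--Hirsch argument applies. A bundle of flags in $\xi$ is obtained as an iterated tower of projectivizations of complex vector bundles with anti-linear involutions. Over $\PP(\xi)$ there is the tautological line subbundle $L\subset\pi^*\xi$ and the quotient $\pi^*\xi/L$. Choosing a $\Conj$-invariant Hermitian metric, this quotient is identified equivariantly with $L^\perp$, which inherits an anti-linear involution. The partial flag bundle of $\xi$ is then the flag bundle of $L^\perp$ over $\PP(\xi)$, or a further projectivization. By induction on the length of the flag it suffices to treat $Y=\PP(\xi)\to X$ of fibre $\PP^{r-1}$.

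For the projective bundle, the first step is a Leray--Hirsch description on both sides. Put $h=c_1(\mathcal O(1))\bmod 2\in H^2(Y)$ and $a=w_1(\mathcal O_\RR(1))\in H^1(F(Y))$. Then
\begin{equation*}
H^*(Y)=\bigoplus_{k<r}H^*(X)h^k, \qquad H^*(F(Y))=\bigoplus_{k<r}H^*(F(X))a^k,
\end{equation*}
where $F(Y)=\PP_\RR(F\xi)$ is the real projectivization of the real part of $\xi$ over $F(X)$.

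The second step is to control how Kalinin's filtration behaves under this decomposition. The class $h$ is $\Conj$-anti-invariant in the appropriate sense, and its equivariant lift restricts on $F(Y)$ to $a^2+a\,u$. This is the standard conjugation-space computation, $\sigma(h)\mapsto a u + a^2=\Sq(a)$ with $u=1$, as for $\CC\PP^{r-1}$, which is a conjugation space by Corollary \ref{cell-examples}. Hence $\Sq(a)=a+a^2\in\ff_2(Y)$ and is not in $\ff_1(Y)$. Using naturality of $\ff$ under $\pi^*$, multiplicativity of $\ff$ (Proposition \ref{multiplicative-kalinin}) and effectivity of $X$, we get
\begin{equation*}
\Sq\bigl(\pi^*x\, a^k\bigr)=\pi^*\Sq(x)\,\Sq(a)^k\in\ff_{2(\deg x+k)}(Y).
\end{equation*}
Since the elements $\pi^*x\,a^k$ span $H^*(F(Y))$ as $x$ ranges over $H^{\le j}(F(X))$ with $\deg x+k\le i/2$, this gives the inclusion $\Sq H^{\le i/2}(F(Y))\subseteq\ff_i(Y)$.

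The main obstacle is the reverse inclusion, which amounts to a dimension count. I would prove a Leray--Hirsch statement for the Kalinin spectral sequence: $\prescript{r}{}H^*(Y)\cong\prescript{r}{}H^*(X)\otimes\prescript{r}{}H^*(\CC\PP^{r-1})$. The second factor is concentrated in even degrees and collapses. This should follow by comparing Borel spectral sequences, since $H^*_G(Y)$ is a free $H^*_G(X)$-module on $1,\sigma(h),\dots,\sigma(h)^{r-1}$. Indeed, equivariantly $\sigma(h)$ satisfies the relation $\sum \sigma(h)^{r-k}c^G_k(\xi)=0$, which is monic in $\sigma(h)$, and freeness persists after localizing in $u$. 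It then gives
\begin{equation*}
\dim\gr^i_\ff H^*(F(Y))=\sum_{2k\le i}\dim\gr^{i-2k}_\ff H^*(F(X)).
\end{equation*}
By effectivity of $X$ this equals $\dim\bigoplus_{2(j+k)=i} H^j(F(X))a^k$ in the graded sense. Equality of dimensions, together with the inclusion above and injectivity of $\Sq$, then yields $\ff_i(Y)=\Sq H^{\le i/2}(F(Y))$, so $Y$ is effective.
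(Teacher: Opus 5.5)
The paper gives no proof of this statement (it is quoted from Kalinin), so your argument has to stand on its own. For a single projective bundle the first half works: Leray--Hirsch on $Y$ and on $F(Y)=\PP_\RR(F\xi)$, the lift $\sigma(h)=w_2^G$ of the Real tautological line bundle restricting to $a^2+au$, and naturality plus multiplicativity of $\ff$ do give $\Sq H^{\le i/2}(F(Y))\subseteq\ff_i(Y)$. The reduction to that case, however, fails for genuinely partial flags. A Grassmannian bundle such as $\mathrm{Gr}_2(\xi)$ with $\rank\xi=4$ is not a tower of projectivizations: over a point, $\beta_2(\mathrm{Gr}_2(\CC^4))=1$ would force a single $\PP^4$, but $\beta_4=2$. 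Going through $\Fl(\xi)\to\mathrm{Gr}_k(\xi)$ does not help either, since it runs in the wrong direction. So your induction only reaches flags built one line at a time. That is enough for the projective bundles used later in the paper, but not for the statement. The remedy is to drop the reduction and run your argument with the whole fibre. Take as Leray--Hirsch basis the monomials in the mod $2$ Chern classes of the tautological bundles $S_l$. Lift them to $H^*_G(Y)$ by pulling back the conjugation-space lifts of $c_j$ along equivariant classifying maps $Y\to\mathrm{Gr}_{d}(\CC^N)$. These lifts restrict at $u=1$ to $\Sq\,w_j(S_l^{\RR})$, and the corresponding monomials in the $w_j(S_l^{\RR})$ form the Leray--Hirsch basis of $F(Y)\to F(X)$.

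Second, the step you single out as the main obstacle is supported by the wrong property. Freeness of $H^*_G(Y)$ over $H^*_G(X)$, even after inverting $u$, only recovers the total dimension $\dim H^*(F(Y))=r\dim H^*(F(X))$, which you already have. The graded count of $\gr_\ff$ needs the decomposition $H^*_G(Y)=\bigoplus_k \pi^*H^*_G(X)\,\sigma(h)^k$ to be compatible with the Serre filtration $\ff'_j$ coming from $BG$. This compatibility holds and is easy to prove. Apply Leray--Hirsch to the restriction of $Y_G\to X_G$ over each $S^{j-1}\times_G X$ (the kernel of restriction to it is $\ff'_j$); this gives $\ff'_j(Y)=\bigoplus_k\pi^*\ff'_j(X)\,\sigma(h)^k$. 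Equivalently, the Leray--Serre spectral sequence of $Y_G\to BG$ is a module over that of $X_G\to BG$, and the $h^k\in E_2^{0,2k}$ are permanent cycles, so $E_s(Y)=E_s(X)\otimes\langle 1,h,\dots,h^{r-1}\rangle$ with $d_s=d_s^X\otimes 1$. The first form even makes your dimension count unnecessary. Restricting to $F(Y)$ and setting $u=1$ yields $\ff_n(Y)=\sum_{k=0}^{r-1}F(\pi)^*\ff_{n-2k}(X)\cdot\Sq(a)^k$. When $X$ is effective this equals $\Sq H^{\le n/2}(F(Y))$, so both inclusions come out at once.
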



\subsubsection{Effective blowing-up} 
\label{ebu}


Let $(Y,B, \Conj)$ be a $G$-pair, where $Y$ is a complex manifold 
equipped with an anti-holomorphic involution $\Conj$, and $B\subseteq Y$ 
is a $\Conj$-invariant complex submanifold of complex codimension at least 
two. Consider the blow-up  $\pi: X\ra Y$ of $Y$ along $B,$ and denote by 
$E$ the exceptional divisor. Notice that $(X,E)$ is  a $G$-pair when equipped 
with the induced anti-holomorphic involution.


\begin{prop}
\label{blowup-emptyset}
If $Y$ is effective and $F(B)=\emptyset,$ then $X$ is effective.
\end{prop}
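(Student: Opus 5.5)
We compare $X$ and $Y$ through their complements of the exceptional locus, using Proposition \ref{rel=PAL} as the bridge between pairs and open complements. Since $F(B)=\emptyset$ and $\pi$ is an isomorphism outside $E$, the exceptional divisor $E$ has no real points either: $F(E)=\pi^{-1}(F(B))\cap F(X)=\emptyset$. Moreover $\pi$ restricts to a $G$-equivariant biholomorphism $X\setminus E\cong Y\setminus B$, and it identifies $F(X)$ with $F(Y)$.

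\textbf{Step 1 (pass to the pair $(Y,B)$).} We first show that $(Y,B)$ is effective. Since $F(B)=\emptyset$, the restriction $H^*(F(Y),F(B))\to H^*(F(Y))$ is the identity. Naturality of the Kalinin filtration with respect to the $G$-map $(Y,\emptyset)\to(Y,B)$ gives a homomorphism $\ff_i(Y,B)\to\ff_i(Y)$ which is compatible with this identity. The aim is to show this map is an isomorphism onto $\ff_i(Y)$.

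\textbf{Step 2 (the exact sequence of the triple).} For this we use Proposition \ref{boundary} for the triple $(Y,B,\emptyset)$. The boundary $\delta:H^*(B)\to H^{*+1}(Y,B)$ induces a morphism of Kalinin spectral sequences. Because $F(B)=\emptyset$, the Smith/Borel localization forces ${}^\infty H^*(B)=0$, since the limit term computes $H^*(F(B))$ via $\phi$. This is the main point to verify carefully: that the long exact sequence of the triple passes to the $\infty$-pages, or at least that the induced map ${}^\infty H^*(Y,B)\to{}^\infty H^*(Y)$ is an isomorphism.

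Here I would argue directly with the Borel construction. Since $B$ is a free $G$-space, $H^i_G(B)=H^i(B/G)$ vanishes for $i>\dim B$. Hence $H^i_G(Y,B)\to H^i_G(Y)$ is an isomorphism for large $i$, and it is compatible with multiplication by $u$ and with the filtrations $\ff'_j=\pi^*H^{\ge j}(BG)$. Passing to the limit through (\ref{Borel-identification}) gives $\ff_n(Y,B)=\ff_n(Y)$ for every $n$.

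The subtle point is equality of the filtrations, not just of the ambient groups. Write $\ff'_{i-n}\cap H^i_G$ as the image of $u^{i-n}H^n_G$. The isomorphism in high degrees, combined with the fact that $u^{k}H^{*}_G(B)=0$ for $k$ large (the equivariant cohomology of a free finite complex is $u$-torsion), should show that the two images coincide for $i$ large. Consequently $\ff_i(Y,B)=\ff_i(Y)=\Sq H^{\le i/2}(F(Y))=\Sq H^{\le i/2}(F(Y,B))$, so $(Y,B)$ is effective.

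\textbf{Step 3 (transport to $X$).} By Proposition \ref{rel=PAL}, which needs only that $Y$ is a closed manifold and $B$ an invariant submanifold, $Y\setminus B$ is effective. Hence $X\setminus E$ is effective, and applying Proposition \ref{rel=PAL} again, $(X,E)$ is effective. Repeating Step 2 for $X$ and the free $G$-space $E$, where $F(E)=\emptyset$, gives $\ff_i(X)=\ff_i(X,E)=\Sq H^{\le i/2}(F(X))$. Thus $X$ is effective.

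I expect the main obstacle to be the filtration comparison in Step 2. I would first try the $u$-torsion argument above. If it becomes messy, I would fall back on the long exact sequence of the triple $(Y,B,\emptyset)$ in Borel cohomology, together with $H^{>\dim B}_G(B)=0$.
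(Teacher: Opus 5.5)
Your route cannot be completed: the claims in Steps~2 and~3 are false, not merely unproved. Suppose $Y$ is connected, $F(Y)\neq\emptyset$ and $B\neq\emptyset$. An example satisfying all hypotheses is $Y=\PP^2$ with the standard conjugation and $B$ a pair of conjugate points. Then $H^0(Y,B)=0$, so $\ff_0(Y,B)\cong{}^{\infty}H^0(Y,B)$, a subquotient of $H^0(Y,B)=0$, vanishes. On the other hand $\ff_0(Y)=H^0(F(Y))\neq 0$ because $Y$ is effective. Effectivity of the pair would require $\ff_0(Y,B)=\Sq H^0(F(Y),F(B))=H^0(F(Y))$. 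Hence $\ff_n(Y,B)\neq\ff_n(Y)$, the map ${}^{\infty}H^*(Y,B)\to{}^{\infty}H^*(Y)$ is not an isomorphism, and $(Y,B)$ is not effective. By Proposition~\ref{rel=PAL}, neither is $Y\setminus B\cong X\setminus E$, although $X$ is.

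Your Borel argument breaks exactly at the point you flagged. In a large degree $N$, the term $\ff_n$ is the image of $u^{N-n}H^n_G$, and $H^n_G$ sits in the low degree $n$, where $H^n_G(Y,B)\to H^n_G(Y)$ is far from an isomorphism. The $u$-torsion of $H^*_G(B)=H^*(B/G)$ only shows that $u^k y$ lifts to $H^{n+k}_G(Y,B)$ for $k>\dim_{\RR}B$. That gives $\ff_n(Y,B)\subseteq\ff_n(Y)\subseteq\ff_{n+k}(Y,B)$, a shift rather than an equality. Already for $y=1\in H^0_G(Y)$, $u^k$ lifts but $u^kH^0_G(Y,B)=0$. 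Passing to the pair kills the classes of $Y$ that restrict nontrivially to $B$, starting with $1\in H^0(Y)$. The real classes they carried are then reached only at higher filtration levels. This is why no argument through $(Y,B)$, $Y\setminus B$ or $(X,E)$ can work.

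What works is to compare $X$ and $Y$ directly through $\pi$, as the paper does. Since $F(B)=\emptyset$, $\pi$ is an equivariant isomorphism over an invariant neighbourhood of $F(Y)$. Moreover $F(X)=\pi^{-1}(F(Y))$, and $\pi_F=\pi|_{F(X)}\colon F(X)\to F(Y)$ is a homeomorphism. Naturality of Kalinin's spectral sequence and of $\Sq$ gives $\pi_F^*\ff_n(Y)\subseteq\ff_n(X)$. The paper then asserts that $\pi^*$ induces an isomorphism of Kalinin's spectral sequences from the second page on, so the filtrations correspond. This is transparent when $H^*(B)$ is a free $\FF_2[G]$-module, e.g.\ when $B$ is a pair of disjoint conjugate components, which is the case used for $\overline{\mathcal M}_{0,n}$.

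For the reverse inclusion without such an assumption, you can use the equivariant Gysin map $\pi_!\colon H^*_G(X)\to H^*_G(Y)$. It preserves degree and is $u$-linear. By base change to that neighbourhood it satisfies $(\pi_!x)|_{F(Y)}=(\pi_F^{-1})^*(x|_{F(X)})$, whence $\ff_n(X)\subseteq\pi_F^*\ff_n(Y)$. Together with the first inclusion, $\ff_n(X)=\pi_F^*\Sq H^{\le n/2}(F(Y))=\Sq H^{\le n/2}(F(X))$.
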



\proof
Since
$F(B)=\emptyset $,  the pull-back homomorphism $\pi^* : H^*(Y)\to H^*(X)$
induces an isomorphism between the second pages of Kalinin's spectral 
sequences for $X$ and $Y$. Therefore, due to naturality of Kalinin's spectral 
sequences, it induces isomorphisms up to the limit. Since, in addition, $\pi$ 
establishes a homeomorphism between $F(Y)$ and $F(X)$, by the naturality 
of Kalinin's filtration, $\pi^* : H^*(F(Y))\to H^*(F(X))$ induces an isomorphism 
between Kalinin's filtrations in $F(Y)$ and $F(X)$. Hence, the effectivity of $Y$ 
is equivalent to that for $X$.
\qed 


\begin{prop}
\label{effectivity-blowup}
If $B$ and $(Y,B)$ are effective, then  $E, \,X$ and $(X,E)$ are effective. 
\end{prop}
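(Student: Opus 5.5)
The plan is to treat the three assertions in the order $E$, then $(X,E)$, then $X$, each step feeding the next.

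\emph{Effectivity of $E$.} The exceptional divisor is the projectivization $E=\PP(N)\to B$ of the normal bundle $N=N_{B/Y}$. Since $B$ is $\Conj$-invariant and $\Conj$ is anti-holomorphic, the differential of $\Conj$ induces an anti-linear involution on $N$ covering $\Conj|_B$. The involution induced on $E$ is the one coming from this bundle involution. A projectivization is the bundle of complete flags of length one (lines) in $N$. So Proposition \ref{effective-bundles}, applied to the effective $G$-space $B$, gives directly that $E$ is effective.

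\emph{Effectivity of $(X,E)$.} The blow-down map restricts to a $G$-equivariant biholomorphism $X\setminus E\cong Y\setminus B$. By Proposition \ref{rel=PAL}, effectivity of $(Y,B)$ is equivalent to effectivity of the $G$-space $Y\setminus B$. This holds for the open manifold $Y\setminus B$, and effectivity of an open complement depends only on the complement together with its involution. Hence $X\setminus E$ is effective, and a second application of Proposition \ref{rel=PAL}, now to the closed manifold $X$ and its invariant submanifold $E$, shows that $(X,E)$ is effective.

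\emph{Effectivity of $X$.} I would copy the argument of Lemma \ref{eff-add-cell}, with the $G$-triple $(X,E,\emptyset)$ in place of $(X,Y,\emptyset)$.
\begin{itemize}
\item[(a)] Since $(X,E)$ and $E=(E,\emptyset)$ are effective, Proposition \ref{delta-vanish} says that the boundary map $H^*(F(E))\to H^{*+1}(F(X),F(E))$ is zero. Hence the cohomology sequence of the pair $F(X,E)$ splits into short exact sequences.
\item[(b)] Using naturality of Kalinin's filtration (Theorem \ref{main-ss} and Proposition \ref{boundary}), I would derive the short exact sequence
$$
0\to\ff_i(X,E)\to\ff_i(X)\to\ff_i(E)\to 0 .
$$
\item[(c)] Apply $\Sq^{-1}$, which commutes with the restriction and relative maps. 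Combined with $\Sq^{-1}\ff_i(X,E)=H^{\le i/2}(F(X),F(E))$ and $\Sq^{-1}\ff_i(E)=H^{\le i/2}(F(E))$, and the fact that these maps preserve degree, this forces $\Sq^{-1}\ff_i(X)=H^{\le i/2}(F(X))$.
\end{itemize}

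The main obstacle is exactness on the right in (b), i.e.\ surjectivity of $\ff_i(X)\to\ff_i(E)$; injectivity and exactness in the middle follow from (a) and naturality. For surjectivity I would pass to the limit pages. By Proposition \ref{boundary}(3) and Theorem \ref{prop-def}, the boundary ${}^\infty\delta$ vanishes on the image of the isomorphisms $\psi$, so ${}^\infty\delta=0$. Then I would compare dimensions: $\dim\ff_i(X)\ge\dim\ff_i(X,E)+\dim\ff_i(E)$ is obtained by lifting $\Sq$-images, and the reverse inequality comes from the injection and middle exactness. In Lemma \ref{eff-add-cell} this step is taken as immediate from Proposition \ref{delta-vanish}. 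Here I would expect to justify it through that limit-page vanishing together with the graded isomorphisms $\phi_i$.
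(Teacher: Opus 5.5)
\textbf{There is a genuine gap in step (b), and a second one in step (c).} Your treatment of $E$ (Proposition \ref{effective-bundles} applied to $\PP(N_{B|Y})$) is exactly the paper's. So is your treatment of $(X,E)$ (Proposition \ref{rel=PAL} twice, through $X\setminus E\cong Y\setminus B$). For $X$, your steps (a)--(c) reproduce the paper's proof, which obtains $0\to\ff_i(X,E)\to\ff_i(X)\to\ff_i(E)\to 0$ in one line from Proposition \ref{delta-vanish}, as in Lemma \ref{eff-add-cell}. You correctly sense that (b) carries the content, but your justification does not close it.

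\begin{itemize}
\item Naturality gives only $\ff_i(X,E)\subseteq\ff_i(X)\cap H^*(F(X),F(E))$. So middle exactness is a strictness claim, and it does not follow from (a). For $X=\CC\PP^1$ and $E$ a pair of conjugate points, $F\delta=0$, yet $\ff_0(X,E)=0$ while $\ff_0(X)=H^0(F(X))$ lies in the kernel of restriction to $E$.
\item ``Lifting $\Sq$-images'' is circular. For $y\in H^{\le i/2}(F(E))$ with lift $\tilde y$, one has $\Sq\tilde y\in\Sq H^{\le i/2}(F(X))$. But $\Sq\tilde y\in\ff_i(X)$ is precisely the effectivity of $X$ you are trying to prove.
\item The vanishing of ${}^\infty\delta$ is just (a) transported to the limit page. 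It does not make ${}^\infty H^*(X,E)\to{}^\infty H^*(X)\to{}^\infty H^*(E)$ exact.
\item Step (c) does not follow from (b) either. A subspace of $H^*(F(X))$ that meets $H^*(F(X),F(E))$ in $H^{\le i/2}(F(X),F(E))$ and maps onto $H^{\le i/2}(F(E))$ can be the graph of a nonzero map into higher-degree relative classes. So it need not equal $H^{\le i/2}(F(X))$.
\end{itemize}

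\textbf{These formal ingredients cannot suffice.} Take $X=S^1$ with a reflection and $E=F(X)=S^0$. Then:
\begin{itemize}
\item $E$ is effective;
\item $(X,E)$ is effective, because $H^*(F(X),F(E))=0$;
\item $F\delta$ and ${}^\infty\delta$ vanish trivially.
\end{itemize}
But $\ff_0(X)\cong{}^\infty H^0(X)$ has dimension at most $\dim H^0(S^1)=1$. So it does not map onto $\ff_0(E)=H^0(S^0)\cong\FF_2^2$, and $X$ is not effective. Here $F(X)$ is not half-dimensional, so this does not refute the proposition. It does show that the missing step must use the geometry of the situation.

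\textbf{What is actually needed.} Every $y\in H^k(F(E))$ must have a lift $\tilde y\in H^k(F(X))$ with $\Sq\tilde y\in\ff_{2k}(X)$. In the circle example this fails for the class of one point. Given this lifting property, the argument closes as follows:
\begin{itemize}
\item Classes coming from $H^*(F(X),F(E))$ are already handled by effectivity of $(X,E)$ and naturality. Together with the lifts, this gives $\Sq H^{\le i/2}(F(X))\subseteq\ff_i(X)$ for all $i$.
\item Since $X$ is compact and $\dim F(X)=\frac12\dim X$, Proposition \ref{al-lef} and the Wu formula, as in the proof of Proposition \ref{rel=PAL}, upgrade this inclusion to equality.
\item No filtered short exact sequence is needed.
\end{itemize}
Neither your proposal nor the paper's appeal to Proposition \ref{delta-vanish} supplies this lifting statement.
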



\proof
By Proposition \ref{effective-bundles}, we find that the effectivity of $B$ implies the 
effectivity of $E.$ On the other hand, from Proposition \ref{rel=PAL} we see that 
the effectivity of the pair $(Y,B)$ is equivalent to effectivity of the pair $(X,E).$ 
As a consequence of Proposition \ref{delta-vanish}, we find that  the boundary 
homomorphisms
$\delta_\RR^i: H^i(F(E))\to H^{i+1}(F(X),F(E))$
are trivial for every $i\geq 0$. Hence, for each $i\geq 0$ we have a short exact 
sequence 
$$
0\to \ff_i(X,E)\to \ff_i(X)\to \ff_i(E)\to 0.
$$
Therefore, we get a short exact sequence
$$
0\to \Sq^{-1}\ff_i(X,E)\to \Sq^{-1}\ff_i(X)
\to  \Sq^{-1}\ff_i(E)\to 0.
$$
From the effectivity of  $(X,E)$ and $E$, we have
$$
\Sq^{-1}\ff_i(X,E)=H^{\le \frac{i}2}(F(X),F(E)) \quad{\text{and}}\quad 
\Sq^{-1}\ff_i(E)=H^{\le \frac{i}2}(F(E))
$$

Since the inclusion and relative homomorphisms respect the dimension grading, 
we find that
$$
\Sq^{-1}\ff_i(X)=H^{\le \frac{i}2}(F(X)),
$$
and so $X$ is effective.
\qed


\subsection{Constructions of maximal spaces}


We discuss next the behavior of maximal and Galois maximality, respectively, 
for some general constructions, which parallels the discussion on Kalinin effectivity in the 
previous section.

\medskip

As a direct application of K\"unneth's formula and Proposition \ref{GM-char} we have:


\begin{prop}
\label{products-maximal}
A finite product of maximal (Galois maximal) pairs is a maximal (Galois maximal) pair.\qed
\end{prop}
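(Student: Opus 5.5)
The plan is to reduce both parts of the statement to the Künneth formula in $\FF_2$-cohomology for the ambient pairs and for their fixed point pairs, using the identity
$$
F\bigl(X\times A,\; Y\times A\cup X\times B\bigr)=\bigl(F(X)\times F(A),\; F(Y)\times F(A)\cup F(X)\times F(B)\bigr),
$$
which holds because the involution on the product acts diagonally. By induction on the number of factors it suffices to treat the product of two $G$-pairs $(X,Y)$ and $(A,B)$ with finite dimensional cohomology.

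\emph{Maximality.} First, by the relative Künneth formula,
$$
\dim H^*\bigl(X\times A,\; Y\times A\cup X\times B\bigr)=\dim H^*(X,Y)\cdot\dim H^*(A,B).
$$
The same formula applied to the fixed point pairs gives $\dim H^*(F(X,Y))\cdot\dim H^*(F(A,B))$ for the fixed locus of the product. If both factors have vanishing Smith-Thom deficiency, the two products agree, so the deficiency of the product vanishes. Alternatively, one can appeal to item 3) of Proposition \ref{coh-max}: the tensor product of free $H^*(BG)$-modules is free, and the equivariant Künneth formula applies in that situation. The direct dimension count is simpler, however.

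\emph{Galois maximality.} Second, the cross product $H^*(X,Y)\otimes H^*(A,B)\to H^*(X\times A,\dots)$ is a $G$-equivariant isomorphism, where $G$ acts diagonally on the tensor product. For $\FF_2[G]$-modules, $H^1(G;M)=\ker(1+\Conj)/\im(1+\Conj)$ is Tate cohomology, and we will use
$$
H^1(G;M\otimes N)\cong H^1(G;M)\otimes H^1(G;N).
$$
To prove this, decompose $M$ and $N$ into sums of trivial modules $\FF_2$ and free modules $\FF_2[G]$, which is possible since these are the only indecomposable $\FF_2[G]$-modules. A product of a free module with any module is free, so it has vanishing $H^1$, while $\FF_2\otimes\FF_2=\FF_2$. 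This is the Künneth-type statement that Proposition \ref{GM-char} is implicitly pointing to.

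Hence, if $\dim H^*(F(X,Y))=\dim H^1(G;H^*(X,Y))$ and likewise for $(A,B)$, then multiplying these equalities and using the two Künneth identities yields Galois maximality of the product. This agrees with the product statement for Kalinin's spectral sequences, which gives $\prescript{2}{}H$ of the product as the tensor product of the $\prescript{2}{}H$ terms.

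The only step needing care is the Tate Künneth identity, and it is settled by the indecomposable decomposition above. The relative Künneth formula for pairs also requires excisive pairs, which holds for CW (sub)complexes.
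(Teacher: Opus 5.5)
Your proof is correct. For Smith--Thom maximality it coincides with the paper's argument, which simply applies the K\"unneth formula to the pairs and to their fixed loci.

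For Galois maximality you take a different route. The paper derives it from K\"unneth together with Proposition~\ref{GM-char}: every $\Conj_*$-invariant homology class of the product is represented by an invariant cycle. You instead verify the defining equality $\dim H^*(F)=\dim H^1(G;H^*)$ directly. Your key lemma is $H^1(G;M\otimes N)\cong H^1(G;M)\otimes H^1(G;N)$, which you get by splitting finite-dimensional $\FF_2[G]$-modules into trivial and free summands.

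The paper's argument implicitly needs the same algebra. To see that invariant classes of the product are carried by invariant cycles, one must know that $(M\otimes N)^G$ is spanned by $M^G\otimes N^G$ together with the norms $(1+\Conj_*)(\cdot)$, and this again comes from that decomposition. One then uses that cross products of invariant cycles, and cycles of the form $c+\Conj c$, are invariant.

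Your version is purely algebraic and self-contained. It needs neither the Smith-sequence criterion of Proposition~\ref{GM-char} nor chain-level representatives. The paper's version is shorter once that criterion is available, and it uses the same geometric language the paper relies on again for blow-ups (Proposition~\ref{derval-GM}).

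Your closing remark in fact gives a third proof. By Kalinin's product proposition, $\prescript{r}{}H$ of the product is the tensor product of the factors' $\prescript{r}{}H$, with Leibniz differentials for $r\ge 2$. Hence collapse at the second page, which characterizes Galois maximality by Proposition~\ref{GM-def}, is inherited by the product.
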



\begin{prop}
\label{max-bundles}
Let $(X,\Conj)$ be a $G$-space, $\xi$ be a complex vector bundle on $X$ 
endowed with an anti-linear involution covering the involution $\Conj,$ and 
$Y$ an associated bundle of flags in $\xi$ equipped with the induced involution.
\begin{itemize}
\item[ 1)] $X$ is maximal if and only if $Y$ is maximal.
\item[ 2)] $X$ is Galois maximal if and only if $Y$ is Galois maximal.
\end{itemize}
\end{prop}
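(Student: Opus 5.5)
We prove both items through the Leray--Hirsch theorem, applied to the total space and to the real locus at once. First reduce to a single projectivization: a bundle of flags in $\xi$ is obtained by iterating projective bundles $\PP(\eta)\to Z$. Here each $\eta$ is a tautological quotient or subbundle, and it inherits an anti-linear involution covering the one on the base. Both maximality and Galois maximality are ``if and only if'' statements, so they are transitive along such a tower. It therefore suffices to treat $Y=\PP(\xi)\xrightarrow{p} X$, where $\xi$ has rank $r$.

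For the total space, Leray--Hirsch gives
$$
H^*(Y)=\bigoplus_{j=0}^{r-1} p^*H^*(X)\, h^j, \qquad h=c_1(\OO(1)) \bmod 2.
$$
The involution of $\xi$ is anti-linear, so the induced involution sends $\OO(1)$ to its conjugate and $h$ to $-h=h$ mod $2$. Since $p$ is equivariant, this decomposition is an isomorphism of $\FF_2[G]$-modules $H^*(Y)\cong H^*(X)^{\oplus r}$. Hence $\dim H^*(Y)=r\dim H^*(X)$ and $\dim H^1(G;H^*(Y))=r\dim H^1(G;H^*(X))$.

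For the real locus, $F(Y)=\PP_\RR(F\xi)\to F(X)$ is the real projectivization of the real rank-$r$ bundle $F\xi$. Real Leray--Hirsch, with generator $w_1(\OO_\RR(1))$, gives $\dim H^*(F(Y))=r\dim H^*(F(X))$. If $F(X)=\emptyset$, both sides vanish.

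Comparing the two computations gives $\defi(Y)=r\,\defi(X)$. Likewise the Borel--Swan defect of $Y$ is $r$ times that of $X$. By the Smith and Borel--Swan inequalities both defects are nonnegative, so each vanishes for $Y$ exactly when it vanishes for $X$. This proves items 1) and 2).

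The main obstacle is verifying that $\Conj^*$ preserves the Leray--Hirsch decomposition. That is, we need $\Conj^*h=h$ modulo $2$, together with naturality $\Conj^*p^*=p^*\Conj^*$. The first follows because $\Conj^*$ takes $c_1(L)$ to $c_1(\overline{\Conj^*L})=-c_1(\Conj^*L)$ and the anti-linear lift identifies $\Conj^*\OO(1)$ with $\overline{\OO(1)}$. With this in hand, the $G$-module splitting is immediate.

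An alternative for 2) is Proposition \ref{GM-char}. One would represent invariant classes of $Y$ by invariant cycles of the form $p^{-1}(\text{cycle})\cap(\text{hyperplane sub-bundles})$, but the dimension count above avoids this.
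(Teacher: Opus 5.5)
Your argument is correct in substance. Item 1) matches the paper, while item 2) takes a different route.

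\emph{Item 1).} The paper applies Leray--Hirsch directly to the flag bundle and to its real locus. This gives $\beta_*(Y)=\beta_*(X)\beta_*(\Fl)$ and $\beta_*(F(Y))=\beta_*(F(X))\beta_*(F(\Fl))$, and the paper concludes from $\beta_*(\Fl)=\beta_*(F(\Fl))$. That is your count with $r$ replaced by $\beta_*(\Fl)$.

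\emph{Item 2).} The paper argues inside Kalinin's spectral sequence. It combines the Leray--Hirsch isomorphism, the derivation property of the differentials (Proposition~\ref{multiplicative-kalinin}), and the characterization of Galois maximality as collapse at the second page (Proposition~\ref{GM-def}). You avoid the spectral sequence altogether. Invariance of the fiber generator makes the Leray--Hirsch splitting an isomorphism of $\FF_2[G]$-modules. Hence $\dim H^1(G;H^*(Y))$ and $\dim H^*(F(Y))$ scale by the same factor, and the Borel--Swan inequality finishes.

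\emph{What each route buys.} Yours is more elementary and also quantitative: the Borel--Swan defect, like $\defi$, is multiplied by the fiber's total Betti number. It needs only $\Conj^*$-invariance of the fiber generators. The spectral-sequence argument needs in addition that these generators are permanent cycles. This holds because $\OO(1)$ carries an equivariant structure, but the paper leaves it implicit. In exchange, the paper's route stays within the spectral-sequence framework that it also needs for effectivity, where a dimension count alone cannot suffice.

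\emph{A step to repair.} A partial flag bundle, e.g.\ a Grassmannian bundle $\mathrm{Gr}_k(\xi)\to X$, is not a tower of projective bundles. So your reduction as stated covers only $\PP(\xi)$ and complete flag bundles. Two easy fixes:
\begin{itemize}
\item The complete flag bundle of $\xi$ is a tower of projective bundles both over $X$ and over any partial flag bundle $Y$. Over $Y$, take complete flags in the tautological sub- and quotient bundles, which inherit anti-linear involutions. Your ``if and only if'' transitivity then still applies.
\item Alternatively, drop the reduction and run your argument directly with fiber $\Fl$. Use monomials in the mod~$2$ Chern classes of the tautological bundles as Leray--Hirsch generators; these are all $\Conj^*$-invariant. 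Combine this with $\beta_*(F(\Fl))=\beta_*(\Fl)$.
\end{itemize}

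\emph{A notational slip.} Your displayed identity for $\Conj^*c_1$ is garbled. The correct chain is $\Conj^*c_1(\OO(1))=c_1(\Conj^*\OO(1))=c_1(\overline{\OO(1)})=-c_1(\OO(1))$, although modulo~$2$ the sign is irrelevant.
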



\proof
Let $\pi: Y\ra X$ denote the projection. From the Leray-Hirsch theorem 
for the fibrations $\pi:Y\ra X$ and $\pi_{|F(Y)}:F(Y)\ra F(X),$ we obtain
\begin{equation}
\label{l-h-max}
\beta_*(X)\beta_*(\Fl)=\beta_*(Y) \quad {\text{and}}\quad \beta_*(F(X))\beta_*(F(\Fl))=\beta_*(F(Y)).
\end{equation}
Since $\Fl$ is a maximal variety, we have $\beta_*(\Fl)=\beta_*(F(\Fl))$. 
Therefore, from (\ref{l-h-max}) we infer that the maximality of $X$ is 
equivalent to the maximality of $Y.$ Finally, the proof of item 2) follows 
immediately from the Leray-Hirsch isomorphism and Propositions 
\ref{multiplicative-kalinin} and \ref{GM-def}.
\qed


\subsubsection{Maximal blowing-up} 
\label{mbu}


As in Section \ref{ebu}, let $(Y,B, \Conj)$ be a $G$-pair, where $Y$ is a complex 
manifold equipped with an anti-holomorphic involution $\Conj$, and $B\subseteq Y$ is a 
$\Conj$-invariant complex submanifold of complex codimension at least two. Consider  
the blow-up  $\pi: X\ra Y$ of $Y$ along $B,$ and denote by $E$ the exceptional divisor.


\begin{prop}
\label{max-classic}
$X$ is maximal if and only if $Y$ and $B$ are maximal.
\end{prop}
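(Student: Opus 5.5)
Throughout, maximal means Smith--Thom maximal, i.e. $\beta_*(F(\cdot))=\beta_*(\cdot)$. The proof is a count of total Betti numbers on both sides: compute $\beta_*(X)$ from the classical blow-up formula, compute $\beta_*(F(X))$ from the real blow-up, and compare using the Smith inequality.

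\emph{Complex side.} Let $c=\codim_\CC B\ge 2$. The exceptional divisor $E=\PP(N_{B/Y})$ is a $\PP^{c-1}$-bundle over $B$. The standard formula gives
\begin{equation*}
H^*(X)\cong H^*(Y)\oplus \bigoplus_{j=1}^{c-1} H^{*-2j}(B),
\end{equation*}
which can be obtained from Leray--Hirsch for $E\to B$ together with the long exact sequences of $(X,E)$ and $(Y,B)$, using the excision isomorphism $H^*(X,E)\cong H^*(Y,B)$. Hence
\begin{equation*}
\beta_*(X)=\beta_*(Y)+(c-1)\beta_*(B).
\end{equation*}

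\emph{Real side.} The fixed locus $F(X)$ is the real blow-up of $F(Y)$ along $F(B)$. The normal bundle of $F(B)$ in $F(Y)$ has real rank $c$: $F(B)$ is a real form of $B$, so its normal bundle is the real part of $N_{B/Y}$. The real exceptional divisor $F(E)$ is therefore an $\RR\PP^{c-1}$-bundle over $F(B)$, and Leray--Hirsch applies since $w_1$ of the tautological line bundle restricts to a generator on each fibre. With $\FF_2$-coefficients the same argument (excision $H^*(F(X),F(E))\cong H^*(F(Y),F(B))$ plus Leray--Hirsch) should give
\begin{equation*}
H^*(F(X))\cong H^*(F(Y))\oplus\bigoplus_{j=1}^{c-1}H^{*-j}(F(B)),
\end{equation*}
so that
\begin{equation*}
\beta_*(F(X))=\beta_*(F(Y))+(c-1)\beta_*(F(B)).
\end{equation*}
This uses that $\pi^*:H^*(F(Y))\to H^*(F(X))$ is injective (the mod 2 degree of $\pi$ is $1$) and that restriction to $F(E)$ maps onto the classes $\xi^j\pi^*(\cdot)$.

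\emph{Conclusion.} Subtracting the two formulas,
\begin{equation*}
\defi(X)=\defi(Y)+(c-1)\defi(B).
\end{equation*}
By the Smith inequality each deficiency is $\ge 0$, and $c-1\ge 1$. Hence $\defi(X)=0$ if and only if $\defi(Y)=\defi(B)=0$, which is the statement. If $F(B)=\emptyset$, the real formula still holds with $\beta_*(F(B))=0$, and the argument is unchanged.

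\emph{Main obstacle.} The delicate step is the real blow-up formula mod 2. One must check that the splitting holds (that the relevant connecting maps vanish) and that the real exceptional divisor is exactly the projectivized real normal bundle with fibre $\RR\PP^{c-1}$. I would first try to establish it through the Mayer--Vietoris sequence for $F(X)=\overline{(F(X)\setminus U)}\cup U$, with $U$ a tubular neighbourhood of $F(E)$, compared with the analogous decomposition of $F(Y)$. Surjectivity of $H^*(F(X))\to H^*(F(E))$ onto the complement of $\pi^*H^*(F(B))$ comes from the powers of the class of the exceptional divisor.
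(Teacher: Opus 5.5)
Your proposal is correct and follows essentially the paper's argument. The paper likewise combines the complex and real blow-up isomorphisms $H^*(X)\oplus H^*(B)\cong H^*(Y)\oplus H^*(E)$ and $H^*(F(X))\oplus H^*(F(B))\cong H^*(F(Y))\oplus H^*(F(E))$ with Leray--Hirsch for $E\to B$ and $F(E)\to F(B)$. This gives $\defi(X)=\defi(Y)+(d-1)\defi(B)$, and the conclusion follows from $d\ge 2$ and the Smith inequality.

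The step you flag as the main obstacle is already settled by the injectivity of $\pi^*$ that you cite. In the exact sequence $\cdots\to H^k(F(Y))\to H^k(F(X))\oplus H^k(F(B))\to H^k(F(E))\to H^{k+1}(F(Y))\to\cdots$ obtained from excision, injectivity of $\pi^*$ forces every connecting map to vanish.
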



\proof
Let $d=\codim_Y^\CC B.$ Due to the well-known isomorphisms 
\begin{align*}
\label{blowup-formulas}
H^*(X)\oplus H^*(B)=&H^*(Y)\oplus H^*(E),\notag \\
H^*(F(X))\oplus H^*(F(B))=&H^*(F(Y))\oplus H^*(F(E)),
\end{align*}
we find 
$$
\mathfrak D(X)+\mathfrak D(B)=\mathfrak D(Y)+\mathfrak D(E).
$$
On the other hand, by Leray-Hirsch theorem, we have
$$
\beta_*(E)=d\beta_*(B)\quad {\text{and}} \quad \beta_*(F(E))=d\beta_*(F(B)),
$$
and so 
\begin{equation}
\label{deficiency-blowup}
\mathfrak D(X)=\mathfrak D(Y)+(d-1)\mathfrak D(B).
\end{equation}
 Since $d\geq 2$ and the Smith-Thom deficiency is non-negative, the result 
 follows from (\ref{deficiency-blowup}).
\qed

\medskip

We conclude this section by recalling the following result of Derval 
\cite{derval} which establishes the behavior of Galois maximality 
under blow-up. For the reader's convenience, we sketch a short proof, 
different from Derval's, based on Proposition \ref{GM-char}.


\begin{prop}
\cite[Proposition 3.5]{derval}
\label{derval-GM}
If $Y$ and $B$ are Galois maximal, then $X$ is Galois maximal.
\end{prop}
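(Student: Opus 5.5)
Since $Y$ and $B$ are assumed Galois maximal, I would use the criterion of Proposition \ref{GM-char}: it suffices to show that every $\Conj_*$-invariant class $\alpha\in H_*(X)$ can be represented by a $\Conj$-invariant cycle. The input is the standard additive description of the homology of a blow-up. With $d=\codim^\CC_Y B\ge 2$ and $\xi\in H^2(E)$ the first Chern class (mod $2$) of the tautological line bundle of the projectivized normal bundle $p:E=\PP(N_{B/Y})\to B$, there is a natural decomposition
$$
H_*(X)\cong \pi^!H_*(Y)\oplus\bigoplus_{j=1}^{d-1} j_*\bigl(\xi^{j-1}\cap p^!H_*(B)\bigr).
$$
Here $\pi^!$ is the Umkehr map: Poincar\'e dual of $\pi^*$, i.e.\ taking the proper transform of a generic representative. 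The map $j:E\to X$ is the inclusion, and $p^!$ is the Leray--Hirsch transfer for $p$.

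I would first check that this decomposition is $\Conj$-equivariant. This holds because $\pi$, $j$ and $p$ are $G$-maps. Moreover $\Conj^*\xi=\xi$ mod $2$: the anti-holomorphic involution lifts to the tautological bundle by an anti-linear map, and the mod $2$ first Chern class is preserved under conjugation. Consequently, an invariant class $\alpha$ decomposes into invariant components $\pi^!(\alpha_0)$ and $j_*(\xi^{j-1}\cap p^!\alpha_j)$, with $\alpha_0\in H_*(Y)$ and $\alpha_j\in H_*(B)$ each $\Conj_*$-invariant. The maps involved are injective on the corresponding summands, which is what justifies this.

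Next I would realize each component by an invariant cycle.
\begin{itemize}
\item[1)] By Galois maximality of $B$, each $\alpha_j$ has an invariant representing cycle $c_j$. Then $p^{-1}(c_j)$ is invariant in $E$. I would cap it with $\xi^{j-1}$ by intersecting with $j-1$ generic invariant sections of the bundle $\OO(1)$. Such sections exist because $\OO(1)$ with its real structure admits real sections, at least locally or after passing to a real smooth (non-holomorphic) transverse section. Finally I would push the result into $X$.
\item[2)] For $\alpha_0$, Galois maximality of $Y$ gives an invariant cycle $c_0$ representing it. I would take a $\Conj$-invariant triangulation of $Y$ in which $B$ is a subcomplex. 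Lifting $c_0$ to $X$ then means taking the closure of $\pi^{-1}(c_0\setminus B)$, with a correction inside $E$ to turn it into a cycle representing $\pi^!\alpha_0$.
\end{itemize}

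The main obstacle is exactly this lifting in item 2): making the pullback transfer $\pi^!$ realizable on invariant cycles. My first attempt would be to define $\pi^!$ at the chain level via an equivariant regular neighborhood of $B$ and its boundary sphere bundle, and to choose $c_0$ transverse to $B$ equivariantly, which can be done cell by cell. If that is too delicate, I would fall back on the Smith-sequence/counting characterization. In that approach, the blow-up formulas $H^*(X)\oplus H^*(B)=H^*(Y)\oplus H^*(E)$ and the analogous formula for fixed loci would be combined with the $G$-equivariant version for $H^1(G;\cdot)$; the equivariance of the decomposition gives $H^1(G;H^*(X))\oplus H^1(G;H^*(B))\cong H^1(G;H^*(Y))\oplus H^1(G;H^*(E))$. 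Together with Proposition \ref{max-bundles}, which gives Galois maximality of $E$, the Galois deficiencies then satisfy the analogue of (\ref{deficiency-blowup}), which forces the deficiency of $X$ to vanish.
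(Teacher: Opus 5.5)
Your primary route is the paper's own argument. The paper writes down the same $G$-equivariant blow-up decomposition (Voisin's formula together with $\Conj^*\xi=\xi$ mod $2$) and then simply says that applying Proposition~\ref{GM-char} to the Poincar\'e-dual isomorphisms gives the result. Your indexing by $\xi^{j-1}$, $1\le j\le d-1$, is the correct one; the paper's displayed formula with $\xi^k$ is off by one in degree.

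The paper never addresses the chain-level realization you single out, and your sketches do not supply it. In item~2), equivariant transversality to $B$ cannot in general be arranged cell by cell. Take a $q$-dimensional piece of an invariant cycle lying in $F(Y)$. In general position inside $F(Y)$ it meets $F(B)$, which has real codimension $d$ in $F(Y)$, in dimension $q-d$. That exceeds the transverse dimension $q-2d$. Moreover, the ``correction inside $E$'' would require an \emph{invariant} chain in $E$ bounding the boundary of the closure of $\pi^{-1}(c_0\setminus B)$. That is itself a statement of Galois-maximality type which you have not established.

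Item~1) can be bypassed. $E$ is Galois maximal by Proposition~\ref{max-bundles}, so the invariant class $\xi^{j-1}\cap p^!\alpha_j$ is already carried by an invariant cycle in $E$, and $j_*$ preserves this. Your construction via zero sets of invariant sections of $\OO(1)$ is therefore unnecessary, and it would meet similar transversality problems at fixed points. Item~2), however, stays unproved along this route, and the paper leaves it implicit as well.

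Your fallback, by contrast, is a complete proof, and a genuinely different one: it is the Galois analogue of the proof of Proposition~\ref{max-classic}. Put $\mathfrak g(Z)=\dim H^1(G;H^*(Z))-\dim H^*(F(Z))$, which is nonnegative by the Borel--Swan inequality. The argument uses four ingredients:
\begin{itemize}
\item[1)] the $G$-module isomorphisms $H^*(X)\cong H^*(Y)\oplus H^*(B)^{\oplus(d-1)}$ and $H^*(E)\cong H^*(B)^{\oplus d}$, up to degree shifts;
\item[2)] the additivity of $H^1(G;-)$;
\item[3)] the mod~$2$ blow-up formula for the real blow-up $F(X)=\Bl_{F(B)}F(Y)$;
\item[4)] Leray--Hirsch for the $\RR\PP^{d-1}$-bundle $F(E)\to F(B)$.
\end{itemize}
Together these give $\mathfrak g(X)=\mathfrak g(Y)+(d-1)\,\mathfrak g(B)$, the exact analogue of (\ref{deficiency-blowup}). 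You do not even need Proposition~\ref{max-bundles}, since Leray--Hirsch yields $\mathfrak g(E)=d\,\mathfrak g(B)$ directly.

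This route needs no geometric realization of $\pi^!$ or of cap products on invariant cycles. It only uses the equivariance of the decomposition, which you justify correctly. Since $d\ge 2$ and $\mathfrak g\ge 0$, it also gives the converse: if $X$ is Galois maximal, so are $Y$ and $B$. What the paper's route would add, once the realization step is actually carried out, is explicit invariant cycles representing the invariant classes of $X$.
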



\proof
By the blow-up formula, for every $i\geq 0,$ there exists a $G$-invariant isomorphism 
$$
H^i (Y)\oplus \bigoplus_{k\geq 1} H^{i-2k}(B)\to   H^i(X),
$$
given by  
$$
(y,b_1,b_2,\dots)=\pi^*(y)+\sum_{k\geq 1} j_*(\pi_E^*(b_k)\xi^k )
$$ 
where $\pi_{E}:E\to B$ is the restriction of $\pi$, $E$ is identified with $\PP(N_{B|Y})$,
and $\xi$ stands for $c_1(\OO_E(1))\pmod 2\in H^2(E),$ (see, for example, 
\cite[Th. 7.31]{voisin} and its proof). Thus, applying Proposition \ref{GM-char} 
to the Poincar\'e dual isomorphisms,  we obtain the result.
\qed

\begin{rmk}
{\rm In general, the examples found in Propositions \ref{hyperquadrics}, \ref{effective-GM}, 
\ref{sym-kalinin}, and \ref{blowup-emptyset} while effective, are not conjugation spaces.
This shows that the class of effective spaces is strictly larger than the class of conjugation spaces.}
\end{rmk}


\section{Kalinin effectivity and stretchedness}
\label{stretchedG-pairs}


Let $(Y,B,\Conj)$ be a $G$-pair, where $Y$ is a complex manifold equipped 
with an anti-holomorphic involution $\Conj$, and $B\subseteq Y$ is a 
$\Conj$-invariant complex submanifold of complex codimension at least two. 
As suggested by Proposition \ref{effectivity-blowup}, stability of Kalinin effectivity 
under blowing-up requires both the effectivity of the $G$-space $B$ and that 
of the $G$-pair $(Y,B)$. For stability of Smith-Thom maximality and Galois 
maximality under blow-up, one typically assumes Smith-Thom maximality 
(respectively Galois maximality) of both $B$ and $Y.$ In this section we introduce 
a class of $G$-pairs $(Y,B)$ for which Kalinin effectivity or maximality of $B$ 
and $(Y,B)$ is equivalent to Kalinin effectivity or maximality of $B$ and $Y.$ 
This will be useful for the applications in the next section.

Before doing so, we consider the case in which both Kalinin effectivity 
and maximality are assumed for $B$ and $(Y,B)$.


\begin{prop}
\label{conj_spaces-blowup}
If $B$ and $(Y,B)$ are maximal and effective, then $Y$ is a conjugation space.
\end{prop}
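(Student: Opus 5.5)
By Proposition \ref{equivalence}, a conjugation space is the same thing as a maximal and effective $G$-space. So the plan is to prove separately that $Y$ is maximal and that $Y$ is effective. Both facts should follow from the $G$-triple $(Y,B,\emptyset)$ by combining Lemma \ref{max} with the same short exact sequence argument used in Lemma \ref{eff-add-cell} and Proposition \ref{effectivity-blowup}.

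\emph{Maximality.} The hypotheses give that $(Y,B)$ is an effective maximal pair and that $B$ is an effective maximal space. These are exactly the hypotheses of Lemma \ref{max}, so item 1) of that lemma shows that $Y$ is maximal.

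\emph{Effectivity.} Both $(Y,B)$ and $B=(B,\emptyset)$ are effective, so Proposition \ref{delta-vanish} applied to the triple $(Y,B,\emptyset)$ shows that the boundary map $\delta:H^*(F(B))\to H^{*+1}(F(Y),F(B))$ vanishes. The cohomology sequence of the pair $(F(Y),F(B))$ therefore splits into short exact sequences
$$
0\to H^*(F(Y),F(B))\to H^*(F(Y))\to H^*(F(B))\to 0.
$$
The key step is to show that these restrict to short exact sequences of Kalinin filtrations,
$$
0\to \ff_i(Y,B)\to \ff_i(Y)\to \ff_i(B)\to 0 ,
$$
as was asserted in the proofs of Lemma \ref{eff-add-cell} and Proposition \ref{effectivity-blowup}. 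Here we have additional information: all three objects are maximal, so every Kalinin spectral sequence degenerates at the first page (Proposition \ref{GM-def}). By item 2) of Lemma \ref{max}, the sequence $0\to \prescript{\infty}{}H^*(Y,B)\to \prescript{\infty}{}H^*(Y)\to \prescript{\infty}{}H^*(B)\to 0$ is exact. Through the isomorphisms $\phi$, which are natural (Theorem \ref{main-ss}, Proposition \ref{boundary}), this translates into exactness of the associated graded sequence $0\to \gr_\ff H^*(F(Y,B))\to \gr_\ff H^*(F(Y))\to \gr_\ff H^*(F(B))\to 0$. A filtered complex that is exact on the associated graded level is exact on every filtration level, by induction on $i$ and the five lemma. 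This gives the filtered short exact sequence.

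Next, apply $\Sq^{-1}$, which commutes with restriction and coboundary maps. Effectivity of $(Y,B)$ and of $B$ gives $\Sq^{-1}\ff_i(Y,B)=H^{\le i/2}(F(Y),F(B))$ and $\Sq^{-1}\ff_i(B)=H^{\le i/2}(F(B))$. Since the maps in the short exact sequence preserve degree, $\Sq^{-1}\ff_i(Y)$ and $H^{\le i/2}(F(Y))$ fit as middle terms of the same extension. One inclusion, $\Sq H^{\le i/2}(F(Y))\subseteq \ff_i(Y)$ or its converse, has to be checked directly; a dimension count then gives equality.

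I expect this last comparison to be the main obstacle. My first attempt will be to derive the inclusion $\ff_i(Y)\subseteq \Sq H^{\le i/2}(F(Y))$ from a diagram chase: take $x\in\ff_i(Y)$, lift its restriction to $B$ to an element of $H^{\le i/2}$, and note that the difference lies in $\Sq H^{\le i/2}(F(Y),F(B))$. Once this inclusion holds, the two middle terms have equal dimension by exactness, so $\ff_i(Y)=\Sq H^{\le i/2}(F(Y))$. Thus $Y$ is effective, and together with maximality and Proposition \ref{equivalence}, $Y$ is a conjugation space.
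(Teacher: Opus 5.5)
Your maximality step (via Lemma \ref{max}\,1), where the paper instead compares Betti numbers against the Smith inequality) is sound. So is your derivation of the filtered short exact sequences from Lemma \ref{max}\,2) and the naturality of $\phi$; the induction there is really the $3\times 3$ lemma rather than the five lemma, and it justifies something the paper only asserts.

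The gap is the last step, and your proposed diagram chase is circular. Write $r\colon H^*(F(Y))\to H^*(F(B))$ and $j\colon H^*(F(Y),F(B))\to H^*(F(Y))$. Take $x\in\ff_i(Y)$ with $r(x)=\Sq b$, $b\in H^{\le i/2}(F(B))$, and a degree-preserving lift $y\in H^{\le i/2}(F(Y))$ of $b$. Then all you know is $x-\Sq y\in\ker r=j\bigl(H^*(F(Y),F(B))\bigr)$. To put it in $j(\ff_i(Y,B))=j\bigl(\Sq H^{\le i/2}(F(Y),F(B))\bigr)$ you would need $x-\Sq y\in\ff_i(Y)$, i.e.\ $\Sq y\in\ff_i(Y)$. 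That is the reverse inclusion, which is equally unknown.

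Nor can degree bookkeeping settle it. Take a graded space $M=\langle c\rangle\oplus\langle a\rangle$ with $\deg c=0$, $\deg a=2$, and an exact sequence $0\to\langle a\rangle\to M\to\langle\bar c\rangle\to 0$ of degree-preserving maps. Both $M^{\le 0}=\langle c\rangle$ and $\langle c+a\rangle$ are extensions of $\langle\bar c\rangle$ by $0$ compatible with all the maps. So the fact that $\Sq^{-1}\ff_i(Y)$ and $H^{\le i/2}(F(Y))$ are extensions of the same two pieces does not force them to coincide. The paper's own proof closes this step only with the remark that the maps respect the dimension grading, so this is a place where genuinely more input is needed.

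What is missing is control over the degree components of classes in $\ff_i(Y)$. The filtration does not record this, but equivariant cohomology does.

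By Propositions \ref{no-odd} and \ref{equivalence}, $B$ is a conjugation space, $(Y,B)$ is a conjugation pair, and all odd cohomology vanishes. Freeness (Proposition \ref{coh-max}) together with $\delta^i=0$ then gives a short exact sequence $0\to H^*_G(Y,B)\xrightarrow{J}H^*_G(Y)\to H^*_G(B)\to 0$.

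Now fix $x\in H^{2n}(Y)$.
\begin{itemize}
\item Pick $\tilde x\in H^{2n}_G(Y)$ with $\rho(\tilde x)=x$ whose restriction to $B$ is $\sigma_B(x|_B)$, and write $r_G(\tilde x)=\sum_p y_pu^{2n-p}$.
\item For $p<n$, $y_p$ vanishes on $F(B)$, so $y_p=j(z_p)$ by Proposition \ref{delta-vanish}.
\item If $p_0<n$ is the lowest index with $y_{p_0}\neq0$, subtract $u^{2n-2p_0}J(\sigma_{Y,B}(w))$, where $\kappa_{Y,B}(w)=z_{p_0}$. By the conjugation equation for the pair, this kills $y_{p_0}$ and changes only components of degree $>p_0$, all inside $\im j$. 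It changes neither $\rho(\tilde x)$ nor the restriction to $B$.
\end{itemize}
Iterate this, using the lifts $J\sigma_{Y,B}$ on the image of $H^{2n}(Y,B)$. The result is a section $\sigma_Y$ with $r_G(\sigma_Y(x))=\kappa_Y(x)u^n+\dots$, where $\kappa_Y$ is compatible with $\kappa_{Y,B}$ and $\kappa_B$ and is therefore an isomorphism by the five lemma.

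Hence $Y$ is a conjugation space. Its effectivity then follows from Proposition \ref{equivalence}, i.e.\ from the Franz--Puppe description of the lower coefficients, and not from the filtered exact sequence alone.
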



\proof
From the exact sequence of the pair $(Y,B)$
$$
\cdots \xrightarrow{\delta^{i-1}} H^i(Y,B) \ra H^i(Y) \ra H^i(B)\xrightarrow{\delta^i}\cdots,
$$
we find that the total Betti numbers satisfy
\begin{equation}
\label{sum-betti-cx}
\beta_*(Y)=\beta_*(B)+\beta_*(Y,B)-2\sum_{i\geq 0} \dim \im\delta^i.
\end{equation}
Since $B$ and the $G$-pair $(Y,B)$ are effective, by Proposition 
\ref{delta-vanish}, for every $i\geq 0$ the boundary operators 
$\delta_\RR^i:H^i(F(B))\ra H^{i+1}(F(Y),F(B))$ 
vanish, and so we have short exact sequences:
\begin{equation}
\label{eff-real-stretch}
0\ra H^i(F(Y),F(B))\ra \, H^i(F(Y))\ra H^i(F(B))\ra 0,
\end{equation}
and so the total Betti numbers satisfy
\begin{equation}
\label{sum-betti-real}
\beta_*(F(Y))=\beta_*(F(B))+\beta_*(F(Y),F(B)).
\end{equation}
Since $B$ and the $G$-pair $(Y,B)$ are maximal, we have 
$$
\beta_*(F(B))=\beta_*(B) \quad{\text{and}}\quad \beta_*(F(Y),F(B))=\beta_*(Y,B),
$$ 
while Smith inequality gives
$\beta_*(F(Y))\leq \beta_*(Y).$
From  (\ref{sum-betti-cx}) and  (\ref{sum-betti-real}) we obtain
\begin{align*} 
\beta_*(B)+\beta_*(Y,B)-2\sum_{i\geq 0}  \dim \im\delta^i
=&\, \beta_*(Y)\\
\geq  &\, \beta_*(F(Y))\\
=& \, \beta_*(F(B))+\beta_*(F(Y),F(B))\\
= &\, \beta_*(B)+\beta_*(Y,B).
\end{align*}
Therefore, $Y$ is maximal and the boundary maps $\delta^i$ are trivial 
for all $i\geq 0.$ To prove that $Y$ is effective, it remains to observe
that from the exact sequence (\ref{eff-real-stretch}), we obtain 
an induced short exact sequence on filtrations:
$$
0\to \ff_i(Y,B)\to \ff_i(Y)\to \ff_i(B)\to 0,
$$
and so 
$$
0\to \Sq^{-1}\ff_i(Y,B)\to \Sq^{-1}\ff_i(Y)
\to  \Sq^{-1}\ff_i(B)\to 0.
$$
From the effectivity of  $(Y,B)$ and $B$, we have
$$
\Sq^{-1}\ff_i(Y,B)=H^{\le \frac{i}2}(F(Y),F(B)) \quad{\text{and}}\quad 
\Sq^{-1}\ff_i(B)=H^{\le \frac{i}2}(F(B)).
$$
Since the inclusion and relative homomorphisms respect the dimension grading, 
we find that
$$
\Sq^{-1}\ff_i(Y)=H^{\le \frac{i}2}(F(Y)),
$$
and so $Y$ is effective. Thus, by Proposition \ref{equivalence}, 
being both effective and maximal, $Y$ is a conjugation space.
\qed


\begin{cor}
\label{induction-max+eff}
If $B$ and $(Y,B)$ are maximal and effective, $X$ is the blow-up of $Y$ along $B,$ 
and $E$ is the exceptional divisor, then $X,\, E$ and $(X,E)$ 
are maximal and effective.
\end{cor}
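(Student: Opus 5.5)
We prove Corollary~\ref{induction-max+eff} by assembling earlier results; only maximality of $E$ and of $(X,E)$ requires a separate argument.

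\emph{Effectivity.} Since $B$ and $(Y,B)$ are effective, Proposition~\ref{effectivity-blowup} gives directly that $E$, $X$ and $(X,E)$ are effective.

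\emph{Maximality of $B$, $Y$ and $X$.} By Proposition~\ref{conj_spaces-blowup}, $Y$ is a conjugation space, so by Proposition~\ref{equivalence} it is maximal. Since $B$ is maximal by hypothesis, Proposition~\ref{max-classic} (equivalently, formula (\ref{deficiency-blowup}), which reads $\defi(X)=\defi(Y)+(d-1)\defi(B)=0$) shows that $X$ is maximal.

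\emph{Maximality of $E$.} The exceptional divisor is $E=\PP(N_{B|Y})$. The normal bundle $N_{B|Y}$ carries the anti-linear involution induced by $d\Conj$, which covers $\Conj|_B$. Hence $E$ is an associated flag bundle, and item 1) of Proposition~\ref{max-bundles} shows that $E$ is maximal because $B$ is.

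\emph{Maximality of $(X,E)$.} This is the only step needing some work. I would compare the long exact sequences of the pair $(X,E)$ and of its fixed locus $F(X,E)$, exactly as in the proof of Proposition~\ref{conj_spaces-blowup}.
\begin{itemize}
\item[(a)] Since $(X,E)$ and $E$ are effective, Proposition~\ref{delta-vanish} makes the real boundary maps vanish. This gives
$$\beta_*(F(X))=\beta_*(F(E))+\beta_*(F(X),F(E)).$$
\item[(b)] On the complex side, the sequence of the pair gives
$$\beta_*(X)=\beta_*(E)+\beta_*(X,E)-2\sum_i\dim\im\delta^i.$$
\item[(c)] Since $X$ and $E$ are maximal, $\beta_*(F(X))=\beta_*(X)$ and $\beta_*(F(E))=\beta_*(E)$. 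Substituting these into (a) and comparing with (b) yields
$$\beta_*(F(X),F(E))=\beta_*(X,E)-2\sum_i\dim\im\delta^i\le\beta_*(X,E).$$
\item[(d)] The Smith inequality for the pair gives the reverse inequality $\beta_*(F(X),F(E))\ge\dots$ only in the form $\beta_*(F(X),F(E))\le\beta_*(X,E)$, so by itself it is not enough. However, the blow-up formula (restriction $H^*(X)\to H^*(E)$ is surjective onto the part coming from $\pi_E^*H^*(B)\xi^k$ together with $\pi_E^* H^*(B)$...) needs care; rather than rely on it, I would note that $(X,E)$ and $(Y,B)$ are related by the relative homeomorphism $X\setminus E\cong Y\setminus B$, equivariantly and also on fixed loci. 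Excision/Lefschetz duality then gives
$$H^*(X,E)\cong H^*(Y,B)\quad\text{and}\quad H^*(F(X),F(E))\cong H^*(F(Y),F(B)).$$
Hence $\defi(X,E)=\defi(Y,B)=0$.
\end{itemize}
This excision identification is the step I expect to be the main point. Once it holds, it settles maximality of $(X,E)$ directly, and it also forces $\delta^i=0$ in (b).
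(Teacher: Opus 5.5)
Your proof is correct and follows the paper's route. The paper's proof just cites Propositions \ref{conj_spaces-blowup}, \ref{effective-bundles}, \ref{effectivity-blowup} and \ref{max-classic}; you have correctly filled in the two points it leaves implicit, namely maximality of $E$ via Proposition \ref{max-bundles} and maximality of $(X,E)$ via the equivariant relative homeomorphism $X\setminus E\cong Y\setminus B$ (also on fixed loci), which gives $\defi(X,E)=\defi(Y,B)=0$. Steps (a)--(c) and the garbled opening of (d) are an inconclusive detour and can be deleted, since the excision argument alone settles $(X,E)$.
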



\proof
The result follows from Proposition \ref{conj_spaces-blowup}, and 
Propositions \ref{effective-bundles}, \ref{effectivity-blowup} and 
\ref{max-classic}.
\qed


\begin{rmk}
{\rm 
The proof of Proposition \ref{conj_spaces-blowup} suggests that 
Kalinin effectivity and maximality of $G$-pairs have the same good 
behavior when the boundary maps $\delta^i$ and $\delta^i_\RR$ 
vanish for every $i\geq 0.$
}

\end{rmk}


\subsection{Stretched $G$-pairs} 
\label{stretchedness}


Let $(X,\Conj)$ be a connected compact complex manifold equipped with 
an anti-holomorphic involution $\Conj.$ 
\begin{defn}
\label{pair-stretch}
A $G$-pair $(A,B)$ of $\Conj$-invariant subspaces of $X$ is called a 
{\em stretched $G$-pair} if for every $p\geq 0,$ the inclusion homomorphisms
\begin{equation}
\label{str-cond-pair}
\begin{aligned}
H^p(A)\to &\, H^p(B), \\
H^p(F(A))\to &\, H^p(F(B))
\end{aligned} 
\end{equation}
are epimorphisms. 
\end{defn}


\begin{lem}
\label{triple-stretch}
Let $(A,B)$ and $(B,C)$ be two $G$-pairs  of $\Conj$-invariant 
subspaces of $X.$ 
\begin{itemize}
\item[ 1)] If the $G$-pair $(A,C)$ is stretched, then the $G$-pair $(B,C)$ 
is stretched.
\item[ 2)] If the $G$-pairs $(A,B)$ and $(B,C)$ are stretched, then the 
$G$-pair $(A,C)$ is stretched.
\end{itemize}
\end{lem}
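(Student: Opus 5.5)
The plan is to work directly from Definition \ref{pair-stretch} and to use only the functoriality of restriction maps in cohomology. There are two restriction maps to control: the complex one $H^p(\cdot)\to H^p(\cdot)$ and the real one $H^p(F(\cdot))\to H^p(F(\cdot))$. Since the $G$-pairs consist of nested $\Conj$-invariant subspaces $C\subseteq B\subseteq A$, the fixed point sets are nested as well: $F(C)=F(A)\cap C\subseteq F(B)\subseteq F(A)$. Consequently the two cases can be handled by one argument, applied once to $(A,B,C)$ and once to $(F(A),F(B),F(C))$.

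For item 1), I factor the inclusion $C\hookrightarrow A$ as $C\hookrightarrow B\hookrightarrow A$. Functoriality then gives a factorization of the restriction
$$
H^p(A)\to H^p(C)
$$
as the composition $H^p(A)\to H^p(B)\to H^p(C)$. By hypothesis this composition is surjective, so the last map $H^p(B)\to H^p(C)$ must also be surjective. The same reasoning applied to $F(C)\subseteq F(B)\subseteq F(A)$ shows that $H^p(F(B))\to H^p(F(C))$ is surjective. Both conditions in (\ref{str-cond-pair}) therefore hold for $(B,C)$.

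For item 2), I use the same factorization in the other direction. The restriction $H^p(A)\to H^p(C)$ is the composition of $H^p(A)\to H^p(B)$ and $H^p(B)\to H^p(C)$. By hypothesis both of these are epimorphisms, and a composition of epimorphisms is an epimorphism. The same argument on fixed loci shows that $H^p(F(A))\to H^p(F(C))$ is surjective. Hence $(A,C)$ is stretched.

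I do not expect any real obstacle here. The only point to check is that the restriction maps of the fixed-point flags are compatible, that is, that $F(C)\subseteq F(B)$ and that restriction is functorial. This follows at once from the definition $F(V_j)=F(V_p)\cap V_j$.
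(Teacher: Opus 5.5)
Your proposal is correct and is essentially the paper's proof. Both factor the restriction $k^*=j^*\circ i^*$, and likewise $F(k)^*=F(j)^*\circ F(i)^*$ on fixed loci, then use that a surjective composite forces its last factor to be surjective and that a composite of surjections is surjective.
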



\proof
Let $i: B\to A,\,j: C\to B$ and $k:C\to A$ denote the inclusions, and 
$F(i): F(B)\to F(A),$ $F(j): F(C)\to F(B)$ and $F(k):F(C)\to F(A),$ are the 
corresponding inclusions of fixed loci. By functoriality, for every $p\geq 0$ 
we have the following commutative diagrams:
\begin{figure}[h]
\hspace*{-1in}    
\centering
    \begin{minipage}{0.4\textwidth}
        \centering
        \leavevmode 
       \xymatrix{ 
&H^p(A)\ar[rd]_{k^*}\ar[r]^{i^*}  & H^p(B)  \ar[d]^{j^*}\\
& &  H^p(C)
 } 
    \end{minipage}
    \begin{minipage}{0.4\textwidth}
        \centering
        \leavevmode 
       \xymatrix{ 
&H^p(F(A))\ar[rd]_{F(k)^*}\ar[r]^{F(i)^*}  & H^p(F(B))  \ar[d]^{F(j)^*}\\
& &  H^p(F(C)).
 }        
    \end{minipage}
    \caption{Stretchedness for triples.}
\end{figure}

Since $j^*\circ i^*=k^*$ and $F(j)^*\circ F(i)^*=F(k)^*,$ the conclusions 
of the lemma follow.
\qed


\begin{prop}
\label{stretch-effective}
Let $(Y,B)$ be a stretched $G$-pair. Then 
\begin{itemize}
\item[ 1)] If $Y$ and $B$ are effective then $(Y,B)$ is effective.
\item[ 2)] If $B$ and $(Y,B)$ are effective then $Y$ is effective.
\item[ 3)] If $Y$ and $(Y,B)$ are effective then $B$ is effective.
\end{itemize}
\end{prop}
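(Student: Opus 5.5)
Stretchedness makes both long exact sequences of the pair split, and the plan is to run the three-term filtration argument already used in Lemma \ref{eff-add-cell}, Proposition \ref{effectivity-blowup} and Proposition \ref{conj_spaces-blowup}. Since the restrictions $H^p(Y)\to H^p(B)$ and $H^p(F(Y))\to H^p(F(B))$ are surjective, all connecting homomorphisms $\delta$ and $\delta_\RR$ vanish. Hence we have short exact sequences
$$
0\to H^*(Y,B)\to H^*(Y)\to H^*(B)\to 0,\qquad 0\to H^*(F(Y),F(B))\to H^*(F(Y))\to H^*(F(B))\to 0.
$$
The first sequence is $G$-equivariant, and by Proposition \ref{boundary} (with $\delta=0$) together with the naturality in Theorem \ref{main-ss}, it induces morphisms of Kalinin spectral sequences.

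The key step is to show that, for each $i$, the real sequence restricts to a short exact sequence of Kalinin filtrations
$$
0\to \ff_i(Y,B)\to \ff_i(Y)\to \ff_i(B)\to 0 .
$$
By naturality the maps send filtrations into filtrations, and the composite is zero. To obtain exactness I would argue through the spectral sequences. The $\prescript{1}{}H$-level short exact sequence is $G$-split as a sequence of $\FF_2[G]$-modules only up to an extension, so I would instead use the Borel picture.

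\begin{itemize}
\item Surjectivity of $H^*(Y)\to H^*(B)$ together with surjectivity on fixed loci gives, via the Borel identification (\ref{Borel-identification}), a long exact sequence in equivariant cohomology $H^*_G$ whose limits recover the real sequence.
\item Writing $\ff_n=\ff'_{i-n}\cap H^i(\,\cdot\,_G)$ for large $i$, exactness of the filtered pieces reduces to strictness of the equivariant maps with respect to the $u$-adic filtration $\ff'$.
\item If strictness is hard to prove directly, I would fall back on a dimension count. The graded pieces are $\prescript{\infty}{}H$, so it suffices to prove that $0\to\prescript{\infty}{}H(Y,B)\to\prescript{\infty}{}H(Y)\to\prescript{\infty}{}H(B)\to0$ is exact. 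The ranks of the $\gr$ pieces are then additive, and together with injectivity at the left end (the inclusion of $\ff(Y,B)$ is the restriction of an injective map) and the zero composite, this forces exactness at every filtration level.
\end{itemize}

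This exactness on $\prescript{\infty}{}H$ is the main obstacle. I expect to get it by the same argument as Kalinin's Lemma \ref{max}, using that the total dimension of $\prescript{\infty}{}H$ equals $\beta_*$ of the fixed locus, which is additive by the real short exact sequence. Subadditivity then comes from the long exact sequence of $\prescript{\infty}{}$-terms, which has an induced connecting map.

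Granting the filtered short exact sequences, apply $\Sq^{-1}$, which commutes with the restriction and coboundary-free relative maps, to get
$$
0\to \Sq^{-1}\ff_i(Y,B)\to \Sq^{-1}\ff_i(Y)\to \Sq^{-1}\ff_i(B)\to 0 .
$$
The real cohomology sequence is exact and respects degree, so $0\to H^{\le i/2}(F(Y),F(B))\to H^{\le i/2}(F(Y))\to H^{\le i/2}(F(B))\to0$ is also exact. The three items of the proposition are then a five-lemma-style comparison of these two sequences.

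\begin{itemize}
\item[1)] We always have $\Sq^{-1}\ff_i\subseteq H^{\le i/2}$ is not automatic, only $\ff_i\subseteq H^{\le i}$ by (\ref{automatic}). However, injectivity of $H^*(F(Y),F(B))\to H^*(F(Y))$ and degree-preservation of that map give $\Sq^{-1}\ff_i(Y,B)=\Sq^{-1}\ff_i(Y)\cap H^*(F(Y),F(B))$. From this, $\Sq^{-1}\ff_i(Y,B)=H^{\le i/2}(F(Y),F(B))$ follows.
\item[2)] This is the argument already carried out in Proposition \ref{effectivity-blowup}: given the outer two terms, the middle term equals $H^{\le i/2}(F(Y))$.
\item[3)] Surjectivity of the filtered sequence gives $\Sq^{-1}\ff_i(B)$ as the image of $H^{\le i/2}(F(Y))$. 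This image equals $H^{\le i/2}(F(B))$ by stretchedness on fixed loci.
\end{itemize}
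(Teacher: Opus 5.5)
\textbf{Verdict: genuine gap.} Your outline is the paper's own: stretchedness splits both long exact sequences, then one asserts $0\to\ff_i(Y,B)\to\ff_i(Y)\to\ff_i(B)\to0$, applies $\Sq^{-1}$, and compares with the degree-wise exact sequence of real cohomology. You rightly single out the filtered sequence as the crux; the paper only says it follows ``by applying Kalinin's spectral sequence''. But this step is a genuine gap, and neither of your justifications closes it.

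Why your justifications fail:
\begin{itemize}
\item There is no long exact sequence of $\prescript{\infty}{}H$-terms with a connecting map, since $\prescript{\infty}{}H$ is not exact in the pair. Lemma~\ref{max} gets its short exact sequence only from maximality.
\item In the Borel picture, $x\in\ff_n(Y)$ means $\sum_k x_ku^{n-k}=r_G(\hat x)$ for some $\hat x\in H^n_G(Y)$. If $x|_{F(B)}=0$, then $\hat x|_B$ is merely killed by $r_G$. That makes it $u$-torsion, not zero, so $\hat x$ need not come from $H^n_G(Y,B)$.
\item The vanishing of $\delta$ and $\delta_\RR$ controls the equivariant sequence only after inverting $u$. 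It says nothing about strictness with respect to $u$-divisibility, which is what the filtration measures.
\end{itemize}

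In fact the step, and item 1) itself, fail in the paper's own Example~\ref{ex1}. Take $Y$ the quadric ellipsoid, $F(Y)\cong S^2$, and $B$ a conic without real points.
\begin{itemize}
\item The pair is stretched, $Y$ is effective by Proposition~\ref{effective-GM}, and $B$ is trivially effective since $F(B)=\emptyset$.
\item But $H^0(Y,B)=0$, so $\ff_0(Y,B)\cong\prescript{\infty}{}H^0(Y,B)=0$. Meanwhile $\ff_0(Y)=H^0(F(Y))\neq0$ and $\ff_0(B)=0$, so the filtered sequence is not exact at $i=0$.
\item The culprit is $\hat x=1\in H^0_G(Y)$: its restriction to $B$ is the nonzero $u$-torsion class $1\in H^0_G(B)=H^0(B/\Conj)$.
\item Your identity $\Sq^{-1}\ff_0(Y,B)=\Sq^{-1}\ff_0(Y)\cap H^*(F(Y),F(B))$ here reads $0=H^0(S^2)$.
\item $(Y,B)$ is not effective, since effectivity would force $\ff_0(Y,B)=\Sq H^0(F(Y),F(B))=H^0(S^2)$. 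Equivalently, by Proposition~\ref{rel=PAL}, $Y\setminus B$ retracts equivariantly onto $S^2$ with trivial action, which is not effective.
\item The total dimensions of the $\prescript{\infty}{}H$-terms are additive here ($2=2+0$), so your dimension count cannot detect the failure.
\end{itemize}
Your derivations of 2) and 3) rest on the same unproved exactness, so they are not established either.

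What your strategy does prove is a version with hypotheses forcing exactness of the $\prescript{\infty}{}H$ sequence, for example $Y$ and $B$ maximal. Then $(Y,B)$ is maximal by Proposition~\ref{stretch-max}, and all three spectral sequences collapse at the first page. The $\prescript{\infty}{}H$ sequence becomes the split sequence $0\to H^*(Y,B)\to H^*(Y)\to H^*(B)\to0$. Induction on $i$ then gives the filtered short exact sequences, after which your two-out-of-three comparison goes through.
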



\proof
Since the $G$-pair $(Y,B)$ is stretched, for every integer $i\geq 0$ 
we have a short exact sequence
\begin{equation}
\label{stretch-real}
0\ra H^i(F(Y),F(B))\ra H^i(F(Y))\ra H^i(F(B))\ra 0.
\end{equation}
By applying Kalinin's spectral sequence, for each $i\geq 0$ 
we get a short exact sequence 
$$
0\to \ff_i(Y,B)\to \ff_i(Y)\to \ff_i(B)\to 0.
$$
Therefore, we have a short exact sequence
\begin{equation}
\label{inverse-sq-ses}
0\to \Sq^{-1}\ff_i(Y,B)\to \Sq^{-1}\ff_i(Y)\to  \Sq^{-1}\ff_i(B)\to 0.
\end{equation}
Since the inclusion and relative homomorphisms respect the dimension 
grading, from the short exact sequences 
(\ref{stretch-real}) and 
(\ref{inverse-sq-ses}) we infer that if two of the following relations 
\begin{align*}
\Sq^{-1}\ff_i(Y,B)=&\, H^{\leq i/2}(F(Y,B)),\\
\,\Sq^{-1}\ff_i(Y)=&\, H^{\leq i/2}(F(Y)),\\
\Sq^{-1}\ff_i(B)=&\, H^{\leq i/2}(F(B))
\end{align*}
hold, then so does the third.
\qed

\medskip

An analogous result holds for Smith-Thom maximality:


\begin{prop}
\label{stretch-max}
Let $(Y,B)$ be a stretched $G$-pair. Then 
\begin{itemize}
\item[ 1)] If $Y$ and $B$ are maximal then $(Y,B)$ is maximal.
\item[ 2)] If $B$ and $(Y,B)$ are maximal then $Y$ is maximal.
\item[ 3)] If $Y$ and $(Y,B)$ are maximal then $B$ is maximal.
\end{itemize}
\end{prop}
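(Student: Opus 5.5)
Since $(Y,B)$ is stretched, the restriction maps $H^p(Y)\to H^p(B)$ and $H^p(F(Y))\to H^p(F(B))$ are onto for every $p$. Hence all the connecting homomorphisms in the long exact sequences of the pairs $(Y,B)$ and $(F(Y),F(B))$ vanish. These long exact sequences therefore split into short exact sequences
$$
0\ra H^i(Y,B)\ra H^i(Y)\ra H^i(B)\ra 0,\qquad 0\ra H^i(F(Y),F(B))\ra H^i(F(Y))\ra H^i(F(B))\ra 0 .
$$
Summing dimensions over $i$ gives the additivity relations
$$
\beta_*(Y)=\beta_*(Y,B)+\beta_*(B),\qquad \beta_*(F(Y))=\beta_*(F(Y),F(B))+\beta_*(F(B)).
$$
Subtracting the second relation from the first yields
$$
\defi(Y)=\defi(Y,B)+\defi(B).
$$
This additivity of the Smith--Thom deficiency is the heart of the argument, and it plays the role of equation (\ref{deficiency-blowup}) in the proof of Proposition \ref{max-classic}.

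All three items then follow from the nonnegativity of the deficiency, i.e.\ the Smith inequality, applied to $Y$, $B$ and the pair $(Y,B)$.

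For 1), if $\defi(Y)=\defi(B)=0$, then $\defi(Y,B)=0$.

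For 2), if $\defi(B)=\defi(Y,B)=0$, then $\defi(Y)=0$.

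For 3), if $\defi(Y)=0$, then $0=\defi(Y,B)+\defi(B)$ with both terms nonnegative, so both vanish. In particular $B$ is maximal.

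The only points needing care are finite-dimensionality, which I take for granted for compact manifolds and their invariant submanifolds, and the applicability of the Smith inequality to the pair $(Y,B)$. The latter was recorded after (\ref{rses}) for general $G$-pairs. I do not expect a genuine obstacle here. Unlike Proposition \ref{conj_spaces-blowup}, we need neither Kalinin's filtration nor Proposition \ref{delta-vanish}, because stretchedness directly kills the connecting maps on both the complex and the real side.
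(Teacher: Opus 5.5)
Your proof is correct and is essentially the paper's: stretchedness kills the connecting maps on both the complex and the real sides, the resulting short exact sequences give additivity of total Betti numbers, and hence $\defi(Y)=\defi(Y,B)+\defi(B)$, from which all three items follow. One small remark: the Smith inequality is not actually needed for the items as stated, since in each item two of the three deficiencies are already assumed to vanish; your use of it in item 3 does, however, give the slightly stronger fact that maximality of $Y$ alone forces maximality of both $B$ and $(Y,B)$.
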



\proof
By assumption, for every $i\geq 0$ we have short exact sequences
\begin{equation}
\label{pairwise-stretching}
\begin{aligned}
0\ra H^i(Y,B)\ra &\, H^i(Y) \ra H^i(B)\ra 0\\
0\ra H^i(F(Y),F(B))\ra &\, H^i(F(Y))\ra H^i(F(B))\ra 0.
\end{aligned}
\end{equation}
As a consequence, we find 
$$
\beta_*(Y)=\beta_*(B)+\beta_*(Y,B)\,\,{\text{and}}\,\, \beta_*(F(Y))=\beta_*(F(B))+\beta_*(F(Y,B)),
$$
and the claims follow immediately.
\qed

\smallskip

The following examples show that no direct analogue of Propositions 
\ref{stretch-effective} and \ref{stretch-max} holds for Galois maximality.


\begin{examples}
\label{ex1}
{\rm 
Let $(Y, \Conj)$ be the {\it quadric ellipsoid,} i.e. $Y = \PP^1\times \PP^1$ 
equipped with an anti-holomorphic involution whose fixed locus is 
homeomorphic to $S^2$ and let $B$ be a hyperplane section with no 
real points. In this case, we can immediately see that the pair $(Y,B)$ is 
stretched. By Proposition \ref{effective-GM}, $Y$ is Galois maximal. 
In addition, the Galois maximality of $(Y,B)$ is equivalent to the Galois 
maximality of $Y\setminus B.$ The latter admits a $G$-equivariant 
deformation retractionto $(F(Y), \Conj=\id)$ which is Galois maximal 
since $\Conj$ acts as the identity. Hence, $(Y,B)$ is Galois maximal. 
However, $B$ is not Galois maximal since $F(B)=\emptyset.$
}
\end{examples}


\begin{examples}
\label{ex2}
{\rm 
Let $(B,\Conj_B)$ be the quadric ellipsoid and consider the interval 
$[-1,1]$ equipped with the involution  $\Conj_{[-1,1]}(t)=-t.$ Let $Y$ 
be obtained from $B$  and $[-1,1]$ by identifying  the pair $\{\pm 1\}$ 
with a pair of complex conjugate points on a purely imaginary hyperplane 
section $H$ of $B$. We  equip $Y$ with the involution $\Conj$ induced 
from $\Conj_B$ and $\Conj_{[-1,1]}.$ Then $(Y,B)$ is a Galois maximal 
$G$-pair, since the involution $\Conj_{[-1,1]}$ restricted to 
$Y\setminus B=(-1,1)$ is maximal. Notice now that while $B$ is also 
Galois maximal, $Y$ is not Galois maximal. Indeed, if $\xi\in H_1(Y)$ 
is the $1$-cycle given by the interval $[-1,1]$ completed by a meridian 
in $H,$ then the Kalinin differential on the second page is
$$
d^2(\xi)=[H]\neq 0\in \prescript{2}{}H_2=H^1(G, H_2(Y))=\FF_2[H].
$$ 
It should also be noted that in this example the $G$-pair $(Y,B)$ is 
stretched.
}
\end{examples}


\subsection{On the absence of stretchedness}


In general, in the absence of stretchedness, the behavior of Kalinin effectivity 
and maximality exhibited in Propositions \ref{stretch-effective} and \ref{stretch-max}, 
respectively, fails, as the following examples show. These examples show that 
stretchedness is not merely technical, but is genuinely needed in the cases 
discussed in Sections \ref{Wond-Compact} and \ref{examples}.


\begin{examples}
{\rm 
Let $Y=\CC\PP^2$ be equipped with the standard conjugation, and let 
$B\subseteq \CC\PP^2$ be a nonempty real conic. Then, the $G$-pair $(Y,B)$ 
is not stretched, while both $Y$ and $B$ are effective. However, since 
$F(Y\setminus B)$ is not connected, we see that the $G$-pair $(Y,B)$ 
is not effective.
}
\end{examples}


\begin{examples}
{\rm 
Let $(Y,\Conj)$ be the quadric ellipsoid, and $B$ a nonsingular hyperplane 
section with real points. Then, $B$ and the pair $(Y,B)$ are maximal, while 
$Y$ is not. Notice that in this case the map $H^1(F(Y))\to H^1(F(B))$ is 
the zero map, and so the pair $(Y,B)$ is not stretched.
}
\end{examples}


\subsection{Constructions preserving stretchedness}


We conclude this section with two results which discuss the behavior of stretchedness in two 
cases relevant for the applications discussed in the next section.

\begin{prop}
\label{proj-stretch}
Let $(Y,\Conj)$ be a compact complex manifold equipped with 
an anti-holomorphic involution and $Z\subseteq Y$ a $\Conj$-invariant submanifold. 
If $\EE$ is a holomorphic vector bundle $Y$
equipped with an anti-linear involution $\hat \Conj$ covering the involution $\Conj,$ and
$\ff$ a $\hat \Conj$-invariant subbundle of $\EE,$ then:
\begin{itemize} 
\item[ 1)] 
The  $G$-pair $(\PP(\EE),\PP(\ff))$ is stretched.
\item[ 2)] If the $G$-pair $(Y,Z)$ is stretched, then the $G$-pair 
$(\PP(\EE),\PP(\EE)_{|Z})$ is stretched.
\end{itemize}
\end{prop}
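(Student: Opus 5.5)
Both items will follow from the Leray--Hirsch theorem, applied at once to the complex projectivizations and to their real loci. The fixed locus of $\PP(\EE)$ is the real projective bundle $\PP(F(\EE))\to F(Y)$, where $F(\EE)$ is the real vector bundle fixed by $\hat\Conj$ over $F(Y)$; likewise $F(\PP(\ff))=\PP(F(\ff))$. Let $r=\rank\EE$ and $s=\rank\ff$, and put $h=c_1(\OO_{\PP(\EE)}(1))\bmod 2\in H^2(\PP(\EE))$ and $h_\RR=w_1(\OO_{\PP(F(\EE))}(1))\in H^1(F(\PP(\EE)))$. By Leray--Hirsch with $\FF_2$ coefficients, $H^*(\PP(\EE))$ is a free $H^*(Y)$-module with basis $1,h,\dots,h^{r-1}$. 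In the same way $H^*(F(\PP(\EE)))$ is free over $H^*(F(Y))$ with basis $1,h_\RR,\dots,h_\RR^{r-1}$, and the analogous statements hold for $\ff$ with $s$ in place of $r$.

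For item 1), the inclusion $\PP(\ff)\hookrightarrow\PP(\EE)$ covers the identity of $Y$ and pulls $\OO_{\PP(\EE)}(1)$ back to $\OO_{\PP(\ff)}(1)$. The restriction therefore sends $\pi^*(y)h^k$ to $\pi_\ff^*(y)h_\ff^k$. For $k\le s-1$ these images form the whole Leray--Hirsch basis of $H^*(\PP(\ff))$ over $H^*(Y)$, so the restriction is surjective in every degree $p$. The identical argument on the real loci, using $h_\RR^k$ for $k\le s-1$, gives surjectivity of $H^p(F(\PP(\EE)))\to H^p(F(\PP(\ff)))$.

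For item 2), write $\PP(\EE)_{|Z}=\PP(\EE_{|Z})$, whose real locus is $\PP(F(\EE)_{|F(Z)})$. The Leray--Hirsch decompositions are compatible with restriction: the class $\pi^*(y)h^k$ restricts to $\pi_Z^*(y_{|Z})h_Z^k$, because $h$ restricts to the tautological class $h_Z$ of $\PP(\EE_{|Z})$. Under the identifications $H^p(\PP(\EE))\cong\bigoplus_k H^{p-2k}(Y)$ and $H^p(\PP(\EE_{|Z}))\cong\bigoplus_k H^{p-2k}(Z)$, the restriction map becomes the direct sum of the restrictions $H^{p-2k}(Y)\to H^{p-2k}(Z)$. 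These are surjective by the stretchedness of $(Y,Z)$. The same holds on the real loci with degree shifts $p-k$, since $h_\RR$ sits in degree $1$, and surjectivity there follows from the stretchedness hypothesis on $F(Y)\to F(Z)$.

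The only point requiring care is the identification of the fixed loci with real projective bundles whose tautological class $w_1$ satisfies Leray--Hirsch over $\FF_2$. This is standard: the powers $h_\RR^k$ for $k<r$ restrict to a basis on each fibre $\RR\PP^{r-1}$. One also needs that $\hat\Conj$ preserves $\ff$, so that $F(\PP(\ff))=\PP(F(\ff))$. Neither step is a real obstacle.
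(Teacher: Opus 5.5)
Your proposal is correct and follows the same route as the paper: you apply Leray--Hirsch to both the complex projectivizations and their real loci, and use that the tautological class of $\PP(\EE)$ restricts to the tautological classes of $\PP(\ff)$ and $\PP(\EE_{|Z})$. You simply make explicit what the paper leaves implicit, namely the truncation of the basis in item 1) and the degreewise direct-sum form of the restriction map in item 2).
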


\proof
Both items are direct consequences of the Leray-Hirsch theorem, since 
the tautological bundles of $\PP(\ff)$ and $\PP(\EE_{|Z})$ are restrictions 
of the tautological bundle of $\PP(\EE).$ In particular, the restriction 
maps on cohomology are surjective in each degree, both on the complex 
and real loci, so the pairs are stretched by Definition \ref{pair-stretch}.
\qed

\begin{prop}
\label{exceptional-stretch}
Let $(Y,\Conj)$ be a compact connected complex manifold equipped with 
an anti-holomorphic involution and  $Z$ a $\Conj$-invariant submanifold of $Y.$ 
If the $G$-pair $(Y,Z)$ is stretched, then the $G$-pair $(\wt Y, E)$ is 
stretched, where $\wt Y$ is the blow-up of $Y$ along $Z$ and $E$ is the 
exceptional divisor of the blow-up.
\end{prop}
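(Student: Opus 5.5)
We need to show that $H^p(\wt Y)\to H^p(E)$ and $H^p(F(\wt Y))\to H^p(F(E))$ are surjective for every $p$. The plan is to use the blow-up formula together with the Leray--Hirsch description of $E=\PP(N)$, where $N=N_{Z|Y}$, and to treat the complex and real loci in parallel. The real locus is handled by the same argument because $F(\wt Y)$ is the real blow-up of $F(Y)$ along $F(Z)$ and $F(E)=\PP_\RR(F(N))$.

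\emph{Structure of $H^*(E)$.} Let $\pi:\wt Y\to Y$ be the blow-down and $\pi_E=\pi_{|E}:E\to Z$. Set
\[
\xi=c_1(\OO_E(1)) \pmod 2 \in H^2(E).
\]
By Leray--Hirsch, $H^*(E)$ is a free $H^*(Z)$-module with basis $1,\xi,\dots,\xi^{d-1}$, where $d=\codim_Y^\CC Z$. Hence any class in $H^p(E)$ can be written as $\sum_k \pi_E^*(z_k)\,\xi^k$ with $z_k\in H^{p-2k}(Z)$.

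\emph{Lifting the pieces.} It suffices to show that each $\pi_E^*(z)\xi^k$ lies in the image of the restriction $r:H^*(\wt Y)\to H^*(E)$, since this image is a subring.
\begin{itemize}
\item[(a)] For $\pi_E^*(z)$: because $(Y,Z)$ is stretched, $z=y_{|Z}$ for some $y\in H^*(Y)$. Then $\pi^*(y)_{|E}=\pi_E^*(y_{|Z})=\pi_E^*(z)$.
\item[(b)] For $\xi$: let $[E]\in H^2(\wt Y)$ be the class dual to the divisor $E$. Its restriction is $[E]_{|E}=c_1(N_{E|\wt Y})=c_1(\OO_E(-1))\equiv \xi \pmod 2$, since the normal bundle of the exceptional divisor is the tautological line bundle.
\end{itemize}
Consequently $\pi_E^*(z)\xi^k=r\bigl(\pi^*(y)\cup[E]^k\bigr)$, which proves surjectivity for the complex pair.

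\emph{Real locus.} The argument is identical with $\FF_2$ coefficients. $F(E)$ is the real projectivization of the real normal bundle of $F(Z)$ in $F(Y)$. Its cohomology is free over $H^*(F(Z))$ on powers of $w_1$ of the tautological real line bundle, which has degree $1$. This class is the restriction of the class dual to the real hypersurface $F(E)\subset F(\wt Y)$, equivalently $w_1$ of the line bundle $\OO(E)$ restricted to the real locus. The restriction $H^*(F(Y))\to H^*(F(Z))$ is surjective by stretchedness of $(Y,Z)$, and the real blow-down $F(\wt Y)\to F(Y)$ restricts to $F(E)\to F(Z)$. The Leray--Hirsch argument above therefore carries over verbatim.

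The main obstacle is justifying the real-locus identifications. First, one must check that $F(\wt Y)$ is the real blow-up of $F(Y)$ along $F(Z)$ and that $F(E)$ is its exceptional divisor, namely the real projectivized normal bundle. Second, one must check that the degree-one class $w_1$ of the tautological bundle on $F(E)$ extends to $F(\wt Y)$ as $w_1$ of the real line bundle $\OO(E)_{|F(\wt Y)}$, using that $\OO(E)$ carries an anti-linear lift of $\Conj$ and that $\OO(E)_{|E}=\OO_E(-1)$. When $F(Z)=\emptyset$ the real statement is vacuous, since $F(E)=\emptyset$.
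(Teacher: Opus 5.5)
Your proposal is correct and follows essentially the same route as the paper. Both use Leray--Hirsch for $E=\PP(N_{Z|Y})$ and lift the base classes via the stretchedness of $(Y,Z)$. Both also lift $\xi$ (respectively $w_1$ on the real locus $F(E)=\PP_\RR(N_{F(Z)|F(Y)})$) as the restriction of the class of $\OO_{\wt Y}(\pm E)$; the paper uses $\OO_{\wt Y}(-E)$ where you use $[E]$, and the two agree mod $2$.
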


\proof Let $\pi:\wt Y\to Y$ be blow-up map, and denote by $i: Z\to Y$ and 
$j:E\to \wt Y$ the inclusion maps. 
By the Leray-Hirsch theorem, for every $p\geq 0$ and 
$x\in H^p(E)=H^p(\PP(N_{Z|Y})),$ we have  
$$
x=\sum_{i+2k=p}(\pi^* a_i)\xi^k,
$$
where $\xi=c_1(\OO_E(1))\pmod 2\in H^2(E)$ and $a_i\in H^i(Z).$ 
Notice now that
$c_1(\OO_E(1))=j^*\eta,$ where $\eta=c_1(\OO_{\wt Y}(-E))\in H^2(\wt Y).$ 
Moreover, since $(Y, Z)$ is stretched,  for every $i\geq 0$ there exists  
$b_i\in H^i(Y)$ such that $b_i=i^*(a_i).$ Therefore
$$
x=\sum_{i+2k=p}\pi^* a_i\xi^k=\sum_{i+2k=p}(\pi^* i^*b_i) (j^*\eta)^k=
j^*(\sum_{i+2k=p}(\pi^*b_i)\eta^k)\in H^p(\wt Y),
$$
and so the inclusion homomorphism $j^*:H^p(\wt Y)\to H^p(E)$ is surjective.

To prove the surjectivity of the inclusion 
homomorphism $H^p(F(\wt Y))\to H^p(F(E)),$ 
we proceed 
in the same manner. By using again the Leray-Hirsch theorem, 
for every $p\geq 0$ and 
$$
x\in H^p(F(E))=H^p(F(\PP(N_{Z|Y})))=H^p(\PP_\RR(N_{F(Z)|F(Y)})),
$$ 
 we find
$$
x=\sum_{i+k=p}(\pi^* a_i)\xi^k,
$$
where $a_i\in H^i(F(Z))$ and $\xi=w_1(\OO_{F(E)}(1))\in H^1(F(E)).$  
We notice next  that
$w_1(\OO_E(1))=j^*\eta,$ where $\eta=w_1(\OO_{\wt Y}(-E))\in H^1(F(\wt Y)).$ 
Moreover, since $(Y, Z)$ is stretched, for every $i\geq 0$ there exists 
$b_i\in H^i(F(Y))$ such that $b_i=i^*(a_i).$  Therefore
$$
x=\sum_{i+k=p}\pi^* a_i\xi^k=\sum_{i+k=p}(\pi^* i^*b_i) (j^*\eta)^k=
j^*(\sum_{i+k=p}(\pi^*b_i)\eta^k)\in H^p(F(\wt Y)).
$$
Hence, the map $j^*:H^p(F(\wt Y))\to H^p(F(E))$ is surjective.
\qed


\section{Wonderful Compactifications}
\label{Wond-Compact}


Let $X$ be a compact connected complex manifold.  An {\it arrangement} 
$\aa$ of submanifolds in $X$ is  a finite collection of submanifolds in $X,$ 
not necessarily connected, such that for every $A, B\in \aa$ we have 
$A\cap B\in \aa$ and their tangent bundles satisfy $T(A\cap B)=T A\cap T B$, 
i.e., $A$ and $B$ intersect cleanly.

\begin{defn}
Let $(X, \Conj)$ be a compact connected complex manifold equipped with 
an anti-holomorphic involution. An arrangement $\aa$ of submanifolds in 
$X$ is called a {\em $G$-arrangement} if $\Conj(A)=A$ for every 
$A\in \aa$.
\end{defn}

We introduce next a $G$-equivariant analogue of  Li's construction 
of  wonderful compactifications of arrangements. The following definitions 
are adaptations of \cite[Definition 2.2]{li} to our $G$-equivariant setting.
\begin{defn}
Let $(X, \Conj)$ be a compact connected complex manifold equipped 
with an anti-holomorphic involution. A subset $\BB$ of a $G$-arrangement 
$\aa$   of submanifolds in $X$ is called a {\em $G$-building set of $\aa$} 
if for every $A\in \aa$, the minimal elements of$\{B\in\BB : B\supseteq A\}$ 
intersect transversally and their intersection is $A.$
\end{defn}

\begin{defn}
Let $(X, \Conj)$ be a compact connected complex manifold equipped with an 
anti-holomorphic involution. A finite set $\BB$ of  $\Conj$-invariant submanifolds 
of $X$ is called a {\em $G$-building set} if the set of all possible intersections of 
collections of submanifolds from $\BB$ forms a $G$-arrangement $\aa$ and 
that $\BB$ is a $G$-building set of $\aa.$ In this case, $\aa$ is called the 
{\em arrangement induced by $\BB.$}
\end{defn}

For any $G$-arrangement $\aa$ of submanifolds in $X,$ we define
$$
X^\circ:=X\setminus\bigcup_{A\in\aa}A.
$$

If $\BB$ is a $G$-building set of a $G$-arrangement $\aa,$ we have
$$
\bigcup_{B\in \BB} B= \bigcup_{A\in \aa} A\quad{\text{and}} \quad
\bigcup_{B\in \BB} F(B)= \bigcup_{A\in \aa} F(A),
$$ 
and so
$$X^\circ=X\setminus\bigcup_{B\in\BB}B \quad{\text{and}} \quad
F(X^\circ)=F(X)\setminus\bigcup_{B\in\BB}F(B).
$$

\medskip

We define ({\it cf.} \cite[Definition 1.1]{li}) the wonderful compactification 
of $X^\circ$ induced by a $G$-building set $\BB$ as follows:

\begin{defn}
\label{def-wonderful-compactification} 
Let $(X, \Conj)$ be a compact connected complex manifold equipped 
with an anti-holomorphic involution,  and $\BB$ a $G$-building set of 
submanifolds in $X.$ The closure of the image of the diagonal embedding
$$
X^\circ\hookrightarrow \prod_{B\in\BB}\Bl_B X,
$$
is called the {\em wonderful compactification} of the arrangement 
induced by $\BB,$ and is denoted by $X_\BB$.
\end{defn}

\begin{defn}
Let $X$ be a compact connected complex manifold and $\pi: Bl_C X\to X$ 
the blow-up of $X$ along  a  submanifold $C.$ For any analytic subspace 
$V \subseteq X,$ the dominant transform of $V,$ denoted by $\wt V,$ 
is defined as  the proper transform of $V$ if $V\not\subseteq C,$ and 
$\pi^{-1}(V)$ if $V\subseteq C.$ 
\end{defn}


\begin{prop}
\label{blowup-arrangement}
Let $(X,\Conj)$ be a compact connected complex manifold equipped with 
an anti-holomorphic involution, $\BB$ a $G$-building set with the induced 
$G$-arrangement $\aa$ of submanifolds in $X.$ Let $F$ be a minimal 
element in $\BB$ and let $\pi : \Bl_F X\ra X$ be the blow-up of $X$ along 
$F.$ Denote the exceptional divisor by $E.$
\begin{itemize}
\item[ 1)] The collection $\aa_F$ of submanifolds in $\Bl_F X$ defined as 
$$
\aa_F := \{\wt A\}_{A\in \aa} \cup \{\wt A \cap E\}_{A\in \aa,
\, \emptyset\subsetneq A\cap F\subsetneq A}
$$
is a $G$-arrangement of submanifolds in $\Bl_F X.$
\item[ 2)] $\displaystyle \BB_F:= \{\wt B\}_{B\in \BB}$ is a building set of $\aa_F.$
\end{itemize}
\end{prop}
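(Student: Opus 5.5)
The plan is to reduce everything to Li's non-equivariant result \cite[Proposition 2.8]{li}. That result says that, for a building set $\BB$ of an arrangement $\aa$ and a minimal element $F\in\BB$, the collection $\aa_F$ is an arrangement of submanifolds of $\Bl_F X$ and $\BB_F$ is a building set of $\aa_F$. Given this, the only genuinely new content is $G$-equivariance: every member of $\aa_F$ and of $\BB_F$ must be $\Conj$-invariant, where $\Bl_F X$ carries the lifted involution.

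The first step is to lift the involution. Since $F$ is $\Conj$-invariant and $\Conj$ is anti-holomorphic, $\Conj$ maps the ideal sheaf of $F$ to the conjugate of itself. By the universal property of blowing up, or directly via the local description of $\Bl_F X$ as the closure of the graph of the projection to $\PP(N_{F|X})$, $\Conj$ lifts to a unique anti-holomorphic involution $\wt\Conj$ on $\Bl_F X$ with $\pi\circ\wt\Conj=\Conj\circ\pi$. It follows that $E=\pi^{-1}(F)$ is $\wt\Conj$-invariant. The same reasoning gives a lifted involution on each $\Bl_B X$, which is used implicitly in Definition \ref{def-wonderful-compactification}.

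The second step is invariance of dominant transforms. For $A\in\aa$ there are two cases.
\begin{itemize}
\item If $A\subseteq F$, then $\wt A=\pi^{-1}(A)$, and $\wt\Conj(\pi^{-1}(A))=\pi^{-1}(\Conj(A))=\pi^{-1}(A)$.
\item If $A\not\subseteq F$, then $\wt A$ is the closure of $\pi^{-1}(A\setminus F)$. Since $\wt\Conj$ is a homeomorphism commuting with $\pi$ and preserving $A\setminus F$, it preserves this closure.
\end{itemize}
Consequently each $\wt A\cap E$ is invariant as an intersection of invariant sets. This shows that every element of $\aa_F$ and of $\BB_F$ is $\Conj$-invariant.

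The third step is to invoke Li's result for the remaining properties. These are: the elements are submanifolds; the collection is closed under intersection; intersections are clean; and the minimal elements of $\{\wt B\in \BB_F : \wt B\supseteq \wt A\}$ meet transversally with intersection $\wt A$ (and similarly for the sets $\wt A\cap E$). All of these are non-equivariant statements, so they are supplied directly by \cite[Proposition 2.8]{li}. Combining this with the invariance from the second step gives items 1) and 2).

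The only potential obstacle is making sure Li's hypotheses match. Li works with nonsingular algebraic varieties, or more generally with manifolds whose submanifolds intersect cleanly, and his argument is local, via local coordinates adapted to clean intersections. I would therefore state explicitly that his proof applies verbatim in the compact complex-manifold setting. If a self-contained argument is preferred, I would instead check the building-set property in local charts where $F$ and the minimal $B\supseteq A$ are coordinate subspaces; this is the technically heaviest part, but it is standard.
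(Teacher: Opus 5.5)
Your proposal is correct and takes the same approach as the paper. The paper's proof also cites \cite[Proposition 2.8]{li} for all the non-equivariant properties. It then only observes that $\Conj$ lifts to an anti-holomorphic involution of $\Bl_F X$, and that dominant transforms of $\Conj$-invariant submanifolds (hence also $E$ and each $\wt A\cap E$) are invariant under the lift.

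Your invariance check is more detailed than the paper's. One small refinement: elements of $\aa$ need not be connected, so the two-case argument for $\wt A$ should be run component by component, keeping in mind that $\Conj$ may permute components; it goes through unchanged.
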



\proof
The proof follows from \cite[Proposition 2.8]{li}, noting that since $X$ is 
equipped with an anti-holomorphic involution $\Conj$ and $F$ is a 
$\Conj$-invariant submanifold of $X,$ then the blow-up $\Bl_F X$  
admits a naturally induced anti-holomorphic involution $\hat\Conj$ 
and the dominant transform of a $\Conj$-invariant submanifold is 
$\hat\Conj$-invariant. 
\qed

\medskip

We call $\aa_F$ and $\BB_F$ the {\it blow-up $G$-arrangement} and 
its {\it blow-up $G$-building set}, respectively.

\medskip

Let $(X, \Conj)$ be a compact connected complex manifold equipped 
with an anti-holomorphic involution,  $\aa$ a $G$-arrangement of 
submanifolds in $X,$ and $\BB=\{B_1,\dots, B_N\}$  a $G$-building 
set of $\aa.$ We now describe the standard iterative construction of 
the wonderful compactification via successive blow-ups along the 
elements of the building set.

\medskip

For every $k=0,\dots,N,$ we inductively define a triple 
$$
(X_k, \aa^{(k)},\BB^{(k)})
$$ 
consisting of a compact connected complex manifold $(X_k,\Conj_k)$ 
equipped with an anti-holomorphic involution,  a $G$-arrangement 
$\aa^{(k)}$ of submanifolds in $X_k,$ and  a $G$-building set $\BB^{(k)}$, 
as follows:

\medskip

(i) For $k=0$, define $X_0=X$, $\aa^{(0)}=\aa$,
$\BB^{(0)}=\BB$. 

(ii) Assume that $(X_{k-1},\aa^{(k-1)},\BB^{(k-1)})$ is constructed. 
\begin{itemize}
\item Define $X_{k}$ to be the blow-up of $X_{k-1}$ along the 
submanifold $B^{(k-1)}_k,$  and denote by $E^{(k)}$ the exceptional divisor.

\item 
For each $B\in\BB^{(k-1)}$, let $B^{(k)}$ denote its dominant transform in 
$X_k$, and define 
$$
\BB^{(k)}:=\{B^{(k)}\}_{B\in\BB^{(k-1)}}.
$$

\item Define the collection of submanifolds of $X_k$ given by 
$$ 
\aa^{(k)}:=\{\wt{A}\}_{A\in\aa^{(k-1)}} \cup\{\wt{A}
\cap E^{(k)}\}_{\emptyset\subsetneq A\cap B_k^{(k-1)}\subsetneq A}.
$$ 
Since $B^{(k-1)}_k$ is assumed $\Conj_{k-1}$-invariant, by Proposition 
\ref{blowup-arrangement} and \cite[Proposition 2.8]{li},  $\aa^{(k)}$ is a 
$G$-arrangement of submanifolds in $X_k$ equipped with the induced 
anti-holomorphic involution $\Conj_k$ and $\BB^{(k)}$ is a $G$-building 
set of $\aa^{(k)}.$
\end{itemize}

(iii) Continue the inductive construction until $k=N$. We obtain $$(X_N, \aa^{(N)},
\BB^{(N)})$$ where all the subvarieties in the building set
$\BB^{(N)}$ are divisors.

\smallskip

We define
$$
\Bl_\BB X:=X_N,\quad \aa_\BB:=\aa^{(N)},\quad \wt {\BB}:=\BB^{(N)}.
$$
By construction, $\Bl_\BB X$ is a compact complex manifold equipped with 
an anti-holomorphic involution, and $\aa_\BB$ is a $G$-arrangement of 
submanifolds in $\Bl_\BB X$ induced by the $G$-building set $\wt \BB.$ 

\smallskip

The following theorem summarizes the basic properties of the 
wonderful compactification, and is a direct consequence of  
\cite[Proposition 2.13, Theorem 1.2 and Theorem 1.3]{li}:


\begin{thm}
\label{main_wonder}
Let $(X,\Conj)$  be a compact complex manifold equipped with 
an anti-holomorphic involution and a nonempty $G$-building
set $\BB=\{B_1,\dots, B_N\},$ and let 
$$
\displaystyle X^{\circ}=X\setminus \bigcup_{i=1}^N B_i.
$$ 
Then:
 \begin{itemize}
 \item[ 1)]  
 $\Bl_\BB X$ is the wonderful compactification of the $G$-arrangement 
 induced by the $G$-building set $\BB.$

\item[ 2)] For each $B_i\in \BB$
 there is a nonsingular divisor $D_{B_i}\subset \Bl_\BB X$, such that
$\displaystyle D_\BB=\bigcup_{i=1}^N D_{B_i}=\Bl_\BB X\setminus  X^\circ$ 
and $D_\BB$ is a normal crossing divisor.

\item[ 3)] Let $\I_i$ be the ideal sheaf of $B_i\in\BB$.  The wonderful 
compactification $\Bl_\BB X$ is isomorphic to the blow-up of \,$X$ 
along the ideal sheaf $\I_1\I_2\cdots \I_N$.

\item[ 4)] 
If we arrange $\BB=\{B_1,\dots, B_N\}$ in such an order that the first $i$ terms 
$B_1,\dots, B_i$ form a $G$-building set for any $1\le i\le N,$ then
\begin{equation}
\label{iterated-bu}
\Bl_\BB X=\Bl_{\widetilde{B}_N}\cdots \Bl_{\widetilde{B}_2}\Bl_{B_1} X,
\end{equation} 
where each blow-up is along a nonsingular subvariety. \qed
\end{itemize}
\end{thm}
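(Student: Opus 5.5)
The plan is to reduce every item to the corresponding non-equivariant statement in \cite{li} applied to the underlying complex manifold $X$ and the building set $\BB$, and then to check separately that the anti-holomorphic involution $\Conj$ is compatible with each construction. Li's results are purely holomorphic: \cite[Theorem 1.2]{li} identifies the wonderful compactification with the iterated blow-up, \cite[Theorem 1.3]{li} gives the divisor and normal crossing statements, and \cite[Proposition 2.13]{li} handles the ideal-sheaf description and the admissible orderings. So forgetting $\Conj$ yields items 1)--4) as statements about complex manifolds, and what remains is to show that all the objects involved are $\Conj$-invariant and that the identifications are $G$-equivariant.

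For the equivariance I would use the universal property of blow-ups in its anti-holomorphic form. If $C\subseteq Y$ is a $\Conj$-invariant submanifold, then $\Conj$ maps the ideal sheaf $\I_C$ to its conjugate. Hence $\Conj$ lifts uniquely to an anti-holomorphic involution of $\Bl_C Y$ covering $\Conj$: apply the universal property to the composite of $\pi$ with $\Conj$, viewed as a holomorphic map to the conjugate manifold $\overline Y$. Uniqueness of lifts then gives the following.
\begin{itemize}
\item[(a)] The diagonal embedding $X^\circ\hookrightarrow\prod_{B\in\BB}\Bl_B X$ intertwines $\Conj$ with the product of the lifted involutions, because $X^\circ$ is $\Conj$-invariant. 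Its closure $X_\BB$ is therefore invariant.
\item[(b)] The ideal $\I_1\cdots\I_N$ is $\Conj$-invariant, so its blow-up carries an induced involution.
\item[(c)] The iterated blow-up of item 4) carries involutions $\Conj_k$ at every step. Here I would use Proposition \ref{blowup-arrangement}, which says that dominant transforms of invariant submanifolds are invariant, so the centres $\wt B_k$ stay $\Conj_{k-1}$-invariant.
\end{itemize}
Each of Li's isomorphisms restricts to the identity on the dense open set $X^\circ$, and both involutions agree with $\Conj$ there. Two continuous maps that agree on a dense set coincide, so these isomorphisms commute with the involutions. This gives items 1), 3) and 4) equivariantly.

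For item 2), I would take $D_{B_i}$ to be the final dominant transform $B_i^{(N)}\in\BB^{(N)}$. It is $\Conj_N$-invariant by the inductive construction, and Li shows that it is a nonsingular divisor, that the union of the $D_{B_i}$ is the complement of $X^\circ$, and that this union has normal crossings. For the ordering hypothesis in item 4), I would observe that being a $G$-building set only adds invariance to Li's definition. Hence any ordering that is admissible in Li's sense, for example one refining inclusion so that smaller elements come first, works verbatim, and each initial segment is automatically $G$-invariant.

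The main obstacle I expect is the careful justification that the lifted involutions are well defined and anti-holomorphic. This is needed especially on the closure in the product in item 1), and when matching Li's abstract isomorphisms, which are only characterized holomorphically, with the equivariant structures. I would handle it by the density argument above, together with the observation that an anti-holomorphic map is holomorphic into the conjugate complex structure, so that Li's uniqueness statements apply.
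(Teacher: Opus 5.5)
Your proposal is correct and follows the same route as the paper. The paper gives no argument beyond deducing the statement from \cite[Proposition 2.13, Theorems 1.2 and 1.3]{li}, leaving the $\Conj$-invariance of the centres and of their dominant transforms to Proposition \ref{blowup-arrangement} and the inductive construction; your universal-property and density arguments make that equivariance explicit. (A minor citation point: in \cite{li} the divisor and normal-crossing statements are Theorem 1.2 and the ideal-sheaf and iterated blow-up statements are Theorem 1.3, so your assignment of items to those results is permuted; this does not affect the argument.)
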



\subsection{Stretched $G$-arrangements}
\label{arrangements}


\begin{defn}
Let $(X, \Conj)$ be a compact complex manifold equipped with an 
anti-holomorphic involution, and let $\aa$ be a $G$-arrangement 
of submanifolds in $X.$
\begin{itemize}
\item[ 1)]
The $G$-arrangement $\aa$ is called {\em stretched} if for any  $A\in \aa$ the 
inclusion homomorphisms 

\begin{equation}
\label{str-cond}
\begin{aligned}
H^p(X)\to &\, H^p(A), \\
H^p(F(X))\to &\, H^p(F(A))
\end{aligned} 
\end{equation}
are surjective. 

\item[ 2)] The $G$-arrangement $\aa$ is called effective, maximal 
or Galois maximal, if $X$ and every $A\in \aa$  are effective, maximal 
or Galois maximal, respectively.
\end{itemize}
\end{defn}

\begin{rmk}
{\rm 
If a $G$-arrangement $\aa$ is stretched, then by Lemma \ref{triple-stretch} 
one can see that for any $A, B\in \aa$ such that $B\subseteq A,$ the 
$G$-pair $(A,B)$ is stretched.}
\end{rmk}


\begin{prop}
\label{induction-stretch}
Let $(X, \Conj)$ be a compact complex manifold equipped with an anti-holomorphic 
involution, $\aa$ a  $G$-arrangement of submanifolds in $X,$ $\BB$ a 
$G$-building set of $\aa$, and let $B\in \BB$ be a minimal element 
with respect to inclusion.  
Consider the blow-up $G$-arrangement $\aa_B$ in the blow-up 
$\pi: \Bl _B X\to X.$ Then
\begin{itemize}
\item[ 1)] 
If the $G$-arrangement $\aa$ is stretched, then the blow-up 
$G$-arrangement $\aa_B$ is stretched. 
\item[ 2)] 
If the $G$-arrangement $\aa$ is  stretched and effective, then 
the blow-up $G$-arrangement $\aa_B$ is effective.
\item[ 3)] If the $G$-arrangement $\aa$ is maximal or Galois maximal, 
then the blow-up $G$-arrangement $\aa_B$ is maximal or Galois maximal, 
respectively. 
\end{itemize}
\end{prop}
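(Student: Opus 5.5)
The plan is to check each member of $\aa_B$ separately. By Proposition \ref{blowup-arrangement}, the members are of two types: dominant transforms $\wt A$ with $A\in\aa$, and intersections $\wt A\cap E$ with $\emptyset\subsetneq A\cap B\subsetneq A$. Because $B$ is minimal in $\BB$, each $A\in\aa$ lies in one of three cases: $A\subseteq B$ (then $A=B$, or $A$ is a smaller member contained in $B$), $A\cap B=\emptyset$, or $A\cap B$ a proper nonempty member of $\aa$ along which $A$ meets $B$ cleanly. The geometry in each case is standard (cf. \cite{li}):

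\begin{itemize}
\item[(a)] If $A\subseteq B$, then $\wt A=\pi^{-1}(A)=\PP(N_{B|X}|_A)$.
\item[(b)] If $A\cap B=\emptyset$, then $\wt A\cong A$.
\item[(c)] Otherwise $\wt A=\Bl_{A\cap B}A$, and $\wt A\cap E$ is its exceptional divisor $\PP(N_{A\cap B|A})$.
\end{itemize}

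For item 1), I would first show that $H^*(\Bl_BX)\to H^*(E)$ is surjective, together with its real analogue. This is Proposition \ref{exceptional-stretch}, applied to the stretched pair $(X,B)$.

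\begin{itemize}
\item[(a)] The pair $(E,\pi^{-1}(A))$ is stretched by Proposition \ref{proj-stretch} 2), since $(B,A)$ is stretched by the Remark following the definition of stretched arrangements. Composing with the previous step via Lemma \ref{triple-stretch} 2) gives stretchedness of $(\Bl_BX,\wt A)$.
\item[(b)] Surjectivity onto $H^*(\wt A)=H^*(A)$ is harder, because classes must be lifted through the blow-up. I would use the blow-up formula $H^*(\Bl_BX)=\pi^*H^*(X)\oplus(\text{classes supported on }E)$ and note that $\pi^*$ composed with restriction is the restriction $H^*(X)\to H^*(A)$, which is surjective. The real case is identical.
\item[(c)] Use the blow-up decomposition of $H^*(\Bl_{A\cap B}A)$: it is generated by $\pi^*H^*(A)$ together with classes $j_*(\pi_E^*(a)\xi^k)$ with $a\in H^*(A\cap B)$. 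The first summand lifts via $\pi^*H^*(X)$. For the second, choose $b\in H^*(X)$ restricting to $a$, and lift $\xi^k$ by the power of $c_1(\OO(-E))$, as in the proof of Proposition \ref{exceptional-stretch}. The pushforward term then has to be expressed as a restriction of $\pi^*b\cdot\eta^{k+1}$, up to sign mod $2$, using $j_*j^*y=y\cdot[E]$. Restricting the global class $\pi^*b\cdot\eta^{k+1}$ to $\wt A$ gives the correct class, because $E|_{\wt A}$ is the exceptional divisor of $\wt A$ (clean intersection). Over the real loci the same argument works with $w_1$ in place of $c_1$.
\item[(c$'$)] Stretchedness of $(\Bl_BX,\wt A\cap E)$ then follows from case (c) and Proposition \ref{exceptional-stretch} applied to the stretched pair $(A,A\cap B)$, composed via Lemma \ref{triple-stretch} 2).
\end{itemize}

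For item 2), effectivity of $\Bl_BX$ comes from Proposition \ref{effectivity-blowup}. Its hypotheses are that $B$ is effective (given) and that $(X,B)$ is effective; the latter follows from Proposition \ref{stretch-effective} 1), since $(X,B)$ is stretched and both $X$ and $B$ are effective. Each member of $\aa_B$ is then handled by type:

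\begin{itemize}
\item[(a)] $\pi^{-1}(A)$ is a projective bundle over the effective space $A$, so it is effective by Proposition \ref{effective-bundles}.
\item[(b)] $\wt A\cong A$ is effective by assumption.
\item[(c)] $\wt A=\Bl_{A\cap B}A$ is effective by Proposition \ref{effectivity-blowup}. The needed effectivity of $(A,A\cap B)$ again comes from stretchedness and Proposition \ref{stretch-effective} 1).
\item[(c$'$)] $\wt A\cap E$ is a projective bundle over $A\cap B$, so it is effective by Proposition \ref{effective-bundles}.
\end{itemize}

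Item 3) follows the same case analysis, with Proposition \ref{max-classic} and Proposition \ref{max-bundles} in the maximal case, and Proposition \ref{derval-GM} and Proposition \ref{max-bundles} 2) in the Galois maximal case. No stretchedness is needed here, since these results only require maximality (resp. Galois maximality) of the ambient space and the center.

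I expect the main obstacle to be the surjectivity in case (c) of item 1). The delicate point is checking that the extra summands of $H^*(\Bl_{A\cap B}A)$ coming from the exceptional divisor are restrictions of global classes, on both the complex and the real loci. I would handle this with the explicit Leray–Hirsch generators and the identification $E|_{\wt A}=E_{\wt A}$.
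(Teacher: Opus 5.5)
Your proposal is correct and follows essentially the same route as the paper. It uses the same case analysis on the position of $A$ relative to $B$ and the same supporting results for effectivity and (Galois) maximality. For the key surjectivity onto $H^*(\wt A)$ in the clean-intersection case, it uses the same lifting of blow-up generators to $\pi^*b\cup[E]^{k+1}$ via the projection formula and $\wt i^*[E]=[E_A]$; the paper packages this as a commutative diagram and cites Gitler for surjectivity. The differences are cosmetic: you merge the paper's Cases 3 and 4, and you obtain stretchedness of $(\Bl_BX,\wt A\cap E)$ through $(\wt A,E_A)$ and Proposition \ref{exceptional-stretch} rather than through $(E,E_A)$ and Proposition \ref{proj-stretch}.
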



\proof 
Let $i:A\to X$ and $j:B\to X$ denote the inclusion maps. We denote by 
$E=\PP(N_{B|X})$ the exceptional divisor of the blow-up, and by 
 ${\wt A}$ the dominant transform of $A.$ Let  $\wt j: E\to  \Bl_B X$ 
 and $\wt i:\wt A\to  \Bl_B X$ be the inclusion  maps, $\pi_A, f_B$ and  
 $f_A$ the restriction of $\pi$ to ${\wt A}$ and $E,$ respectively.
These maps give rise to the following commutative diagram:

\begin{equation}
\label{standard-diag}
\xymatrix{ 
&\wt A\ar[d]_{\pi_A}\ar[r]^{\wt i}  &  \Bl_B X  \ar[d]^{\pi} & E\ar[l]_{\wt j}\ar[d]^{f_B}\\
&A \ar[r]_i&  X&B\ar[l]^j.
 }
\end{equation}

\smallskip

Recall that the blow-up arrangement $\aa_B$ on $ \Bl_B X$ is defined by
$$
\aa_B=\{\wt{A}\}_{A\in\aa} \cup
\{\wt{A}\cap E\}_{\emptyset\subsetneq A\cap B\subsetneq A}.
$$

\medskip

Before we proceed, notice that if $B$ and $X$ are effective, maximal or 
Galois maximal, then by Propositions \ref{stretch-effective},  
\ref{effectivity-blowup}, \ref{max-classic}, \ref{derval-GM}, we find that 
$\Bl_B X$ is effective, maximal or Galois maximal, respectively. 
We prove the three statements simultaneously. Since stretchedness 
concerns the surjectivity of restriction maps, we examine 
each submanifold appearing in the blow-up arrangement $\aa_B$. 
This leads to four cases depending on the relative position of $A\in\aa$ with 
respect to the blow-up center $B$.

\medskip

{\em Case 1:} $A\cap B=\emptyset.$

\smallskip

Since $A\cap B=\emptyset,$ the map $\pi_A^*$ is the identity, and so  
$i^*={\wt i}^*\circ\pi^*.$ The stretchedness of the $G$-pair $(X,A)$ implies 
that the map $i^*$ is surjective, and so ${\wt i}^*$ is surjective, as well. 
Similarly, we find that $H^*(F( \Bl_B X))\to H^*(F(\wt A))$ is surjective, 
which shows that the $G$-pair $( \Bl_B X, \wt A)$ is stretched. 

It remains to notice that, since the map $\pi_A^*$ is the identity, if $A$ 
is effective, maximal or Galois maximal, then $\wt A$ satisfies the 
corresponding property as well.

\medskip

{\em Case 2:} $A\subseteq  B.$

\smallskip

In this case $\wt A=\pi^{-1}(A)$ coincides with the projectivized normal 
bundle $\PP(N_{B|X}|_A)\subseteq E$. By Lemmas \ref{triple-stretch} and 
\ref{exceptional-stretch}, it suffices to show that the $G$-pair $(E,\wt A)$ 
is stretched. However, since $E=\PP(N_{B|X}),$ this follows from 
Lemma \ref{proj-stretch}.

If $A$ is effective, from Proposition \ref{effective-bundles} we conclude 
that$\wt A=\PP({N_{B|X}}_{|A})\subseteq E$ and, consequently, 
$\wt A\cap E=\wt A$ are effective. Similarly, if $A$ is maximal or Galois 
maximal, then by Proposition \ref{max-bundles}, $\wt A$ and consequently, 
$\wt A\cap E=\wt A$ are maximal or Galois maximal, as well.

\medskip

{\em Case 3:} $B\subseteq  A.$

\smallskip

Let $\ell: B\to A$ denote the inclusion. In this case, $\wt A$ is the blow-up of $A$ 
along $B$ and  ${\wt A} \cap E$ is the exceptional divisor $E_A=\PP(N_{B|A}).$ 
Notice we have a commutative diagram:

\begin{equation}
\label{diagram-BinA}
\xymatrix{ 
&E_A\ar[d]_{f_A}\ar[r]^{\wt \ell}  & \wt A \ar[r]^{\wt i} \ar[d]^{\pi_A} &  \Bl_B X\ar[d]^{\pi}\\
&B \ar[r]_\ell&  A\ar[r]_i&X,
 }
\end{equation}
where  ${\wt \ell}:E_A\to {\wt A}$ denotes the inclusion, and $f_A:E_A\to B$ is the 
restriction of $\pi$ to $E_A.$

We prove first that the $G$-pair $( \Bl_B X, \wt A\cap E)$ is stretched. 
Notice that $N_{B|A}$ is a holomorphic sub-bundle of $N_{B|X},$ and 
since $A$ and $B$ are $\Conj$-invariant, it carries anti-linear involution 
covering the restriction of $\Conj$ to $B.$ Therefore, by Lemma \ref{proj-stretch}, 
the $G$-pair $(E, E_A)$ is stretched. Since $\wt A\cap E=E_A,$ using Lemmas 
\ref{triple-stretch} and \ref{exceptional-stretch}, we can now see that $G$-pair 
$( \Bl_B X, \wt A\cap E)$
is stretched.

To prove that the $G$-pair $( \Bl_B X, \wt A)$  is stretched, 
let $R$ be the polynomial ring $\FF_2[v],$ where $v$ is of degree $2,$ 
and consider the diagram
\begin{equation}
\label{blowup-formula-BinA}
\xymatrix{ 
&H^*(X)\otimes R \ar[d]_{i^* \otimes Id}\ar[r]^{\quad\phi_X}  & H^*( \Bl_B X)\ar[d]^{{\wt i}^*}\ \\
& H^*(A)\otimes R  \ar[r]_{\quad\phi_A}& H^*(\wt A).}
\end{equation}
 The horizontal maps are given by
 \begin{equation}
 \label{gitler-map-X}
 \phi_X(x\otimes v^r)=
 \begin{cases}
 \pi^*x,\, {\text{if}}\, r=0\\
{\wt j}_*(f_B^*j^*x\cup\xi^r),\, {\text{if}}\, r>0
\end{cases}
\end{equation}
and
\begin{equation}
\label{gitler-map-A-B}
\phi_A(a\otimes v^r)=
\begin{cases}
\pi_A^*a,\, {\text{if}}\, r=0\\
{\wt \ell}_*(f_A^*\ell^*a\cup\xi_A^r),\, {\text{if}}\, r>0,
\end{cases}
 \end{equation}
where $\xi$ and $\xi_A$ stand for the reduction mod $2$ of the first Chern 
class of the tautological line bundles of $E$ and $E_A,$ respectively.

\smallskip

Let us first show 
that the diagram (\ref{blowup-formula-BinA}) is commutative. 
Indeed, when $r=0,$
due to the commutativity of diagram (\ref{diagram-BinA}), for every 
$x\in H^*(X),$ we have 
$$
{\wt i}^*\phi_X(x\otimes 1)={\wt i}^*\pi^*x=\pi_A^*i^*x= \phi_A(i^*x\otimes 1).
$$
For the case $r>0,$ we recall two well-known results:
\begin{itemize}
\item[ 1)] 
If $[E]$ and $[E_A]$ denote the 
cohomology classes of $E$ and $E_A$ in $ \Bl_B X$ and $\wt A,$ 
respectively, then 
\begin{equation}
\label{restriction-exceptionals}
{\wt i}^*[E]=[E_A],
\end{equation}
\item[ 2)] For every $r\geq 0,$ we have 
\begin{equation}
\label{xis}
{\wt j}_*\xi^{r}=[E]^{r+1}\quad{\text{and}}\quad {\wt \ell}_* \xi_A^{r}=[E_A]^{r+1}.
\end{equation}
\end{itemize}
Using the commutativity of the diagrams (\ref{standard-diag}) 
and (\ref{diagram-BinA}), the formulas (\ref{restriction-exceptionals}) 
and (\ref{xis}), and  the projection formula, 
for every $r>0$ and $x\in H^*(X),$ we have 
\begin{align*}
{\wt i}^*\phi_X(x\otimes v^r)=&\,{\wt i}^*({\wt j}_*(f_B^*j^*x\cup\xi^r))\\
=&\,{\wt i}^*({\wt j}_*({\wt j}^*\pi^*x\cup\xi^r))\\
=&\,{\wt i}^*(\pi^*x\cup[E]^{r+1})\\
=&\,({\wt i}^*\pi^*)x\cup[E_A]^{r+1}\\
=&\,({\pi_A}^*i^*)x\cup[E_A]^{r+1}.
\end{align*}
Similarly, we compute 
\begin{align*}
\phi_A(i^*x\otimes v^r)=&\,{\wt \ell}_*(f_A^*\ell^*i^*x\cup\xi_A^r)\\
=&\,{\wt \ell}_*({\wt \ell}^*\pi^*_Ai^*x\cup\xi_A^r)\\
=&\,(\pi_A^*i^*)x\cup[E_A]^{r+1}\\
=&\,{\wt i}^*\phi_X(x\otimes v^r),
\end{align*}
which shows that the diagram (\ref{blowup-formula-BinA}) is  
commutative\footnote{The same argument proves  commutativity 
of the diagram (\ref{blowup-formula-BinA}) with coefficients in $\ZZ$.}.

\smallskip

By \cite[Theorem 3.11]{gitler}, the cohomology of the blow-up is generated 
by the pull-back of classes from the base together with powers of the 
exceptional divisor class, and so the maps $\phi_X$ and $\phi_A$ are 
surjective. Since the restriction map $i^*$ is surjective, from the commutativity 
of diagram (\ref{blowup-formula-BinA}), we find that the map ${\wt i}^*$ 
is surjective, as well. 

To prove that the restriction map 
$
F({\wt i}^*):H^*(F( \Bl_B X))\to H^*(F(\wt A))
$
is surjective, we proceed in similar manner. Once again, the crucial ingredient  
is Theorem 3.11 in \cite{gitler}. As pointed out in \cite[Section 5]{gitler}, this result 
is also valid for the real blow-up, with the degree of the variable $v$ reset to 
one and the first Chern class replaced by the first Stiefel-Whitney class. The 
other arguments are identical with those used for proving the surjectivity of 
${\wt i}^*$  and will therefore not be repeated.

\smallskip

If the $G$-arrangement is effective, then $A$ and $B$ are effective. 
Hence, by Proposition \ref{stretch-effective}, the $G$-pair $(A,B)$ is 
effective. Since in our case, $\wt A$ is the blow-up of $A$ along $B$ 
and  ${\wt A} \cap E$ is the exceptional divisor $E_A=\PP(N_{B|A}),$ 
by Proposition \ref{effectivity-blowup}, $\wt A$ is effective. Furthermore, 
since $\wt A\cap E=\PP(N_{B|A})$ and  $B$ is effective, from Proposition 
\ref{effective-bundles} we conclude that $\wt A\cap E$ is also effective.

Similarly, if the $G$-arrangement is maximal or Galois maximal, then 
$A$ and $B$ are maximal or Galois maximal, respectively. By applying 
Proposition \ref{max-classic} in the maximal case and Proposition 
\ref{derval-GM} in the Galois maximal case, we see that $\wt A$ is 
maximal or Galois maximal, respectively. The maximality or Galois 
maximality of $\wt A\cap E=\PP(N_{B|A})$ is a consequence 
of Proposition \ref{max-bundles}.

\smallskip

{\em Case 4:} 
$\emptyset \subsetneq A\cap B \subsetneq A$ and $B\not\subseteq A.$

\smallskip

In this case, $\wt A$ is the blow-up of $A$ along $A\cap B$ and  
$E_A=\PP(N_{A\cap B|A}).$ Let $g: A\cap B\to A$ and 
$k:A\cap B\to X$ denote the inclusions.

Notice we have a commutative diagram
\begin{equation}
\label{diagram-BcapA}
\xymatrix{ 
&E_A\ar[d]_{f_A}\ar[r]^{{\wt g}}  & \wt A \ar[r]^{\wt i} \ar[d]^{\pi_A} &  \Bl_B X\ar[d]^{\pi}\\
&A\cap B \ar[r]_{\quad g}&  A\ar[r]_i&X,
 }
\end{equation}
where  ${\wt g}:E_A\to {\wt A}$ denotes the inclusion, and, as before, 
$f_A:E_A\to B$ is the restriction of $\pi$ to $E_A.$

As in the previous case, to show that the $G$-pair $(\Bl_B X, \wt A\cap E)$ 
is stretched, it suffices to argue that the $G$-pair $(E, E_A)$ is stretched. 
However, since $A$ and $B$ meet cleanly, $N_{A\cap B|A}$ is a holomorphic 
vector sub-bundle of the restriction of  ${N_{B|X}}$ to ${A\cap B}.$ Hence, 
by applying Lemma \ref{proj-stretch}, it follows that the $G$-pair $(E, E_A)$ 
is indeed stretched.

To show that the pair $( \Bl_B X, {\wt A})$ is stretched we proceed 
as in the previous case and consider the diagram (\ref{blowup-formula-BinA}) 
\begin{equation}
\label{gliter-AcapB}
\xymatrix{ 
&H^*(X)\otimes R \ar[d]_{i^* \otimes Id}\ar[r]^{\quad\phi_X}  & H^*(\Bl_BX)\ar[d]^{{\wt i}^*}\\
& H^*(A)\otimes R  \ar[r]_{\quad\phi'_A}& H^*(\wt A),
}
\end{equation}
where the map $\phi_X$ is defined as in (\ref{gitler-map-X}), while the 
map $\phi'_A$ is given by
 \begin{equation}
 \label{gitler-map-AcapB}
 \phi'_A(a\otimes v^r)=
\begin{cases}
 \pi_A^*a,\, {\text{if}}\, r=0\\
{{\wt g}}_*(f_A^*g^*a\cup\xi_A^r),\, {\text{if}}\, r>0,
\end{cases}
 \end{equation}
where $\xi_A$ the reduction mod $2$ of the first Chern class of 
the tautological line bundle of $E_A,$ respectively. Using the 
commutativity of the diagram (\ref{diagram-BcapA}) and the projection 
formula, for every $r>0$ and $x\in H^*(X),$ we have 
\begin{align*}
\phi'_A(i^*x\otimes v^r)=&\, {\wt g}_*(f_A^*g^*i^*x\cup\xi_A^r)\\
=&\, {\wt g}_*({\wt g}^*\pi_A^*i^*x\cup\xi_A^r),\\
=&\, (\pi_A^*i^*)x\cup[E_A]^{r+1}\\
=&\, {\wt i}^*\phi_X(x\otimes v^r),
\end{align*}
while, as in the previous case,
$$
{\wt i}^*\phi_X(x\otimes 1)={\wt i}^*\pi^*x=\pi_A^*i^*x= \phi_A(i^*x\otimes 1),
$$
proving that the diagram (\ref{gliter-AcapB}) is commutative. By 
\cite[Theorem 3.11]{gitler} the maps $\phi_X$ and $\phi'_A$ are 
surjections. Since the $G$-pair $(X,A)$ is stretched, the map $i^*$ 
is a surjection implying that the map ${\wt i}^*$ is a surjection, as well.

The argument proving that the map 
$
F({\wt i}^*):H^*(F( \Bl_B X))\to H^*(F(\wt A))
$
is surjective coincides with the one in the previous case and will 
not be repeated.

\smallskip

Since the $G$-arrangement $\aa$ is effective, then $A$ and $A\cap B$, 
which both belong to $\aa,$ are effective. As a consequence of 
Proposition \ref{stretch-effective}, the $G$-pair $(A,A\cap B)$ is effective, 
and so, by Proposition \ref{effectivity-blowup}, we find that $\wt A$ is 
effective. In addition, since $A$ and $B$ meet cleanly, then 
$\wt A\cap E=\PP(N_{A\cap B|A}).$ It is an effective $G$-space due 
to the effectivity of $A\cap B$ and Lemma \ref{proj-stretch}.

Similarly, if the $G$-arrangement is maximal or Galois maximal, then 
$A, B$ and $A\cap B$ are maximal or Galois maximal, respectively. 
Since $\wt A$ is the blow-up of $A$ along $A\cap B,$ then $\wt A$ is 
maximal by Proposition \ref{max-classic} or Galois maximal according 
to Proposition \ref{derval-GM}, respectively. The maximality or Galois 
maximality of  $\wt A\cap E=\PP(N_{A\cap B|A})$ follows from 
Proposition \ref{max-bundles}. 
\qed


\begin{thm}
\label{thm-stretched}
Let $\aa$ be a $G$-arrangement of submanifolds in a compact connected complex 
manifold $X,\, \BB\subseteq \aa$ a building set, and $(X_\BB,\aa_\BB)$ its 
wonderful compactification. 
Then:
\begin{itemize}
\item[ 1)] If the arrangement $\aa$ is stretched then the $G$-arrangement 
$(X_\BB,\aa_\BB)$ is stretched.
\item[ 2)] If the  $G$-arrangement $\aa$ is maximal or Galois maximal then 
the $G$-arrangement $(X_\BB,\aa_\BB)$ is maximal or Galois maximal, 
respectively.
\item[ 3)] If the $G$-arrangement $\aa$ is stretched and effective then the 
$G$-arrangement $(X_\BB,\aa_\BB)$ is effective.
\end{itemize}
\end{thm}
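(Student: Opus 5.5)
The plan is to induct on the number of blow-ups in the iterative construction (\ref{iterated-bu}) of Theorem \ref{main_wonder}, applying Proposition \ref{induction-stretch} at each stage. Order the building set $\BB=\{B_1,\dots,B_N\}$ so that for every $i$ the first $i$ elements form a $G$-building set, as in item 4) of Theorem \ref{main_wonder}. Then $X_\BB=\Bl_\BB X=X_N$, obtained from $X_0=X$ by successive blow-ups $X_k=\Bl_{B^{(k-1)}_k}X_{k-1}$. Each stage carries the triple $(X_k,\aa^{(k)},\BB^{(k)})$, where $\aa^{(k)}=(\aa^{(k-1)})_{B^{(k-1)}_k}$ is the blow-up $G$-arrangement of Proposition \ref{blowup-arrangement}, and by definition $\aa_\BB=\aa^{(N)}$.

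The key point is that Proposition \ref{induction-stretch} requires the blow-up center to be a \emph{minimal} element of the current building set $\BB^{(k-1)}$. I will check this from the construction as in \cite{li}. With the chosen order, once $B_1,\dots,B_{k-1}$ have been blown up, the dominant transforms of any $B_j$ with $j<k$ are divisors. The transform $B^{(k-1)}_k$ is then minimal among the elements of $\BB^{(k-1)}$ that have not yet become divisors. Since $B_k$ contains no $B_j$ with $j>k$ (by the building set ordering), it is minimal in $\BB^{(k-1)}$ up to divisorial elements, and blowing up a divisor changes nothing. If needed, one can instead take the more standard ordering in which $B_1,\dots,B_N$ is compatible with inclusion (nondecreasing dimension), which satisfies the hypothesis of item 4) by \cite[Proposition 2.8]{li} and makes minimality immediate. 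I expect this bookkeeping, reconciling Li's ordering with the minimality hypothesis, to be the main, though mild, obstacle.

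Given this, each item follows by induction on $k$. For 1): if $\aa^{(k-1)}$ is stretched, then item 1) of Proposition \ref{induction-stretch} shows $\aa^{(k)}$ is stretched. For 2): maximality (or Galois maximality) of $\aa^{(k-1)}$ passes to $\aa^{(k)}$ by item 3). For 3): stretchedness and effectivity of $\aa^{(k-1)}$ give, by items 1) and 2), that $\aa^{(k)}$ is both stretched and effective, so the induction hypothesis is reproduced at the next step. This is why stretchedness must be carried along even though only effectivity is concluded. In all cases, at $k=N$ we obtain the corresponding property for $(X_\BB,\aa_\BB)$.

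Finally, the statement concerns $X$ and all members of the arrangement. Since $X_k$ is the ambient space of $\aa^{(k)}$, its effectivity, maximality or Galois maximality is part of what Proposition \ref{induction-stretch} delivers, as noted at the start of its proof via Propositions \ref{stretch-effective}, \ref{effectivity-blowup}, \ref{max-classic} and \ref{derval-GM}. Hence nothing beyond the induction is needed.
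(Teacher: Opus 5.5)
Your proposal is correct and follows the paper's proof: induction along the iterated blow-up of Theorem \ref{main_wonder}, applying Proposition \ref{induction-stretch} at each step and carrying stretchedness along so that item 3) can be iterated. You also address the minimality hypothesis, which the paper does not; note, however, that your claim that Li's ordering already prevents $B_j\subsetneq B_k$ for $j>k$ is false in general (a line followed by a point on it is an admissible order), so it is your fallback to an inclusion-compatible ordering that actually secures minimality.
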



\proof
Let $\BB=\{B_1,\dots ,B_N\}$ be a building set of $\aa.$ The proof of 
Theorem  \ref{thm-stretched} is by induction on $N.$ If the arrangement 
$\aa$ is stretched, from Proposition \ref{induction-stretch} we see that 
each time when we blow-up along an element of the building set, 
the resulting $G$-arrangement is stretched. Iterating, the resulting 
$G$-arrangement $(X_\BB,\aa_\BB)$ is stretched, proving the first item.

The second item follows from Proposition \ref{max-classic} and 
Proposition \ref{derval-GM}, respectively, which show that the Smith-Thom 
maximality (Galois maximality) is preserved under blowing-up a 
maximal (Galois maximal) manifold along a smooth maximal 
(Galois maximal) center.

The third item follows by a repeated application of Propositions 
\ref{induction-stretch}.
\qed

As a direct consequence of Theorem \ref{thm-stretched}, we find:


\begin{cor}
\label{w-conj-space}
If the  $G$-arrangement $\aa$ is stretched, maximal and effective, then $X_\BB$ 
is a conjugation space.
\end{cor}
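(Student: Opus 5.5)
The plan is to combine Theorem \ref{thm-stretched} with the characterization of conjugation spaces in Proposition \ref{equivalence}. A conjugation space is exactly a space that is both Smith-Thom maximal and Kalinin effective. So it suffices to show that the $G$-space $X_\BB$ is maximal and effective.

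Assume the $G$-arrangement $\aa$ is stretched, maximal and effective.

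\textbf{Maximality.} By item 2) of Theorem \ref{thm-stretched}, maximality of $\aa$ implies that the $G$-arrangement $(X_\BB,\aa_\BB)$ is maximal. By the definition of a maximal $G$-arrangement, this means in particular that the ambient manifold $X_\BB$ is maximal.

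\textbf{Effectivity.} Since $\aa$ is stretched and effective, item 3) of Theorem \ref{thm-stretched} shows that $(X_\BB,\aa_\BB)$ is an effective $G$-arrangement. In particular $X_\BB$ is effective.

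\textbf{Conclusion.} Being both maximal and effective as a $G$-space (that is, as the $G$-pair $(X_\BB,\emptyset)$), $X_\BB$ is a conjugation space by Proposition \ref{equivalence}.

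The only point needing care is bookkeeping: at each blow-up one must confirm that $X_k$ itself, and not merely the members of $\aa^{(k)}$, stays maximal and effective. This is built into the definitions and into the inductive step of Proposition \ref{induction-stretch}. There, $\Bl_B X$ is shown to be effective (respectively maximal) using two facts: $B$ and $X$ have the corresponding property, and $(X,B)$ is stretched. The relevant results are Propositions \ref{stretch-effective}, \ref{effectivity-blowup} and \ref{max-classic}. As a cross-check, one could instead apply Corollary \ref{induction-max+eff} at each step. For that one needs the pair $(X_{k-1},B_k^{(k-1)})$ to be maximal and effective, which follows from stretchedness by Propositions \ref{stretch-effective} and \ref{stretch-max}. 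I expect no real obstacle, since the statement is an immediate corollary.
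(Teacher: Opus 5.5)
Your proposal is correct and follows the paper's argument: the corollary is stated as a direct consequence of Theorem \ref{thm-stretched}. Items 2) and 3) of that theorem, by the definition of a maximal or effective $G$-arrangement, give that the ambient space $X_\BB$ is maximal and effective, and Proposition \ref{equivalence} then identifies it as a conjugation space. Your bookkeeping remark is also accurate; the only small imprecision is that stretchedness is needed only for the effectivity step, since maximality of $\Bl_B X$ follows from Proposition \ref{max-classic} alone.
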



\section{Examples}
\label{examples}


In this section we illustrate the general results obtained above by applying 
them to several natural geometric constructions.


\subsection{Wonderful compactification of subspace arrangements - 
Proof of Theorem \ref{dcp-theorem}}
\label{wonderful-examples}


Let $\PP^n$ be the complex projective space equipped with the standard 
conjugation $\Conj$, and let $\aa$ be an arrangement generated by a finite 
collection $\{L_i\}_{i\in I}$ of $\Conj$-invariant linear subspaces. Then for every 
$i\in I$ the $G$-pair $(\PP^n, L_i)$ is stretched, while $\PP^n$ and each $L_i$ 
is a conjugation space by Theorem \ref{eff-cellular}. In this case, as a direct 
application of Corollary \ref{w-conj-space}, we obtain a proof of Theorem 
\ref{dcp-theorem}.


\subsection{The moduli space of real stable rational curves with marked points - 
Proof of Theorem \ref{R-DM-effective}}
\label{KKspace-effectivity}


The Deligne-Mumford moduli space $\overline{\mathcal M}_{0,n}$  
consists of isomorphism classes of $n$-pointed stable curves of arithmetic 
genus $0$,
$$
((C,J); p_1,\dots,p_n),
$$ 
where $C$ is an underlying tree of smooth spheres, $J$ is a complex 
structure on $C$, and $p_1,\dots, p_n\in C$ are marked points. For each 
element $\sigma$ of the permutation group $S_n$ we have a holomorphic 
bijection$g_\sigma:\overline{\mm}_{0,n}\to \overline{\mm}_{0,n}$, 
$$
g_\sigma((C,J); p_1,\dots,p_n)=((C,J);\sigma(p_1),\dots,\sigma(p_n)),
 $$
and a bijection $\Conj_\sigma: \overline{\mm}_{0,n}\to \overline{\mm}_{0,n},$ 
$$
 \Conj_\sigma((C,J);p_1,\dots,p_n)=((C,-J);\sigma(p_1),\dots,\sigma(p_n)),
$$
which is anti-holomorphic. 
According to Bruno and Mella \cite{bruno}, for every $n\ge 5$ the group of 
holomorphic automorphisms, $\displaystyle \Aut(\overline{\mm}_{0,n})$, 
is composed of $g_\sigma$, $\sigma\in S_n$. Notice that 
$\Conj_{\id}\circ g_\sigma=\Conj_\sigma$ is an involution if and only if 
$\sigma^2=\id.$ This implies that the set of real structures on 
$\overline{\mm}_{0,n}$ is exhausted by the anti-holomorphic involutions 
$\Conj_\sigma$ with $\sigma^2=\id$.

\begin{xpl} \cite{ceyhan}:

\begin{itemize}
\item[ 1)] $\overline{\mathcal M}_{0,3}$ is a point. 
\item[ 2)] $\overline{\mathcal M}_{0,4}$ is biholomorphic to $\CC\PP^1$  with 
the standard real structure, independently of $\sigma$.
\item[ 3)] 
$\overline{\mathcal M}_{0,5}$ is biholomorphic to $\PP^2$ blown-up at four points.
These four points are real if $\sigma=\id$, two of them are real and the other two
are imaginary complex conjugate points, if $\sigma$ is a transposition, and the 
four points form two pairs of imaginary complex conjugate points, 
if $\sigma$ consists of two transpositions.
\end{itemize}
\end{xpl}

\proof[Proof of Theorem \ref{R-DM-effective}]

Kapranov's description \cite{kapranov-chow} of the Deligne-Mumford 
moduli space of rational curves with $n$-marked points exhibits 
$\overline{\mathcal M}_{0,n}$  as the wonderful compactification of 
the arrangement $\aa$ generated by the building set $\BB$ consisting 
of all projective subspaces of $\CC\PP^{n-3}$ spanned by any 
subset of fixed $n-1$ generic points $\Pi=\{p_1,p_2,\dots,p_{n-1}\}.$ 
This description coincides with the wonderful compactification of linear 
subspaces 
$$
A_{n-2}=\{(z_1,\dots, z_{n-1})\in \CC^{n-1}\,|\, z_i=z_j\,,\, 1\leq i<j\leq n-1\}
$$
due to De Concini-Procesi \cite[page 483]{dp} (see also \cite{li}). 

Kapranov's construction depends on the choice of one of the marked points, 
which we take to be the $n$-th marked point.
The moduli space 
$\overline{\mathcal M}_{0,n}$ can be obtained from $\CC\PP^{n-3}$ 
by an iterated blow-up 
$$
\kappa_n:\overline{\mathcal M}_{0,n}\ra \CC\PP^{n-3}
$$
as follows: first, we blow up the points of $\Pi$, next 
the proper images of the lines through the points of $\Pi,$ then the proper 
images of the planes spanned by points of $\Pi,$ etc.

The Kapranov blow-up presentation of $\overline{\mathcal M}_{0,n}$ is 
compatible with the real structure $\Conj_\sigma$  induced by a permutation 
$\sigma\in S_n$ if and only if $\sigma$ fixes the distinguished marked 
point. In this case $\CC\PP^{n-3}$ carries the standard real structure 
$$
\Conj ([z_1:\cdots:z_{n-2}])=[\bar z_1:\cdots:\bar z_{n-2}]
$$
and the points in $\Pi$ are permuted 
accordingly, making the iterated blow-up sequence $\kappa_n$ equivariant at 
each stage.

When $\sigma=\id,$ the $n-1$ points are chosen in 
$F(\CC\PP^{n-3})=\RR\PP^{n-3}.$ Hence, $\BB$ is $G$-invariant, 
and $\aa$ is a $G$-arrangement. Notice that since the elements 
of the buliding set $\BB$ are real linear subspaces in $\CC\PP^{n-3},$ 
equipped with the standard real structure, as in the proof of 
Theorem \ref{dcp-theorem} we see that the $G$-arrangement 
$\aa$ is stretched, maximal and effective. As a direct application 
of Corollary \ref{w-conj-space}, we find that 
$(\overline{\mathcal M}_{0,n}, \Conj_{\id})$ is a conjugation space, 
proving the first part of Theorem \ref{R-DM-effective}.

To prove the second part, notice that if  $\sigma\ne \id$ and the  fixed-point 
set $\Fix \sigma\ne \emptyset,$ the set $\Pi$ in the Kapranov model for 
$\overline{\mathcal M}_{0,n}$ can be chosen $\Conj$-invariant and having 
$\card\vert\Fix\sigma\vert -1$ real points. The associated arrangement of all 
linear subspaces spanned by the subsets of $\Pi$  is merely $\Conj$-invariant. 
Note that at each step in the iterated blow-up $\kappa_n$ the centers are 
pairwise disjoint varieties. As in the proof of Theorem \ref{dcp-theorem}, the 
stretchedness of the blow-up centers is preserved. Blowing up along centers 
with nonempty real locus, the effectivity is preserved due to Theorem 
\ref{thm-stretched}, while for blowing-up a pair of disjoint, complex conjugated 
centers the effectivity is preserved due to Proposition \ref{blowup-emptyset}. 
The Galois maximality of $(\overline{\mathcal M}_{0,n},\Conj_\sigma)$ follows 
from Proposition \ref{derval-GM} applied to Kapranov's iterated blowing-up 
description of the maximal $G$-space $(\CC\PP^{n-3},\Conj)$ along 
Galois maximal centers with possible empty real locus.
\qed


\begin{rmk}
{\rm 
An alternative description of ${\overline \mm}_{0,n}$ is due to Keel and it gives 
an alternative proof of the first part of Theorem \ref{R-DM-effective}.

Keel's construction \cite{keel,li} is the wonderful compactification of the triple
$((\PP^1)^{n-3},\aa,\BB),$ where $\BB$ is the set of all diagonals and augmented diagonals
$$
\Delta_{I,a}=\{(p_4,\dots, p_n)\in (\PP^1)^{n-3},\,\, p_i =a\, \, \forall i\in I\}
$$
for $I\subseteq \{4,\dots,n\},\, |I|\geq 2,$ and $a\in \{0,1,\infty\}.$ Here $\aa$ 
is the set of all intersections of elements in $\BB.$ Again, when  $\sigma=\id$, 
using Lemma \ref{diag-stretched}, one can see that $\aa$ is a stretched, 
maximal $G$-arrangement, and we, as before, we find 
$(\overline{\mathcal M}_{0,n},\Conj_{\id}))$ is a conjugation space 
as a consequence of Corollary \ref{w-conj-space}.
}
\end{rmk}


\begin{rmk}
{\rm 
For the real structure $(\overline{\mathcal M}_{0,n},\Conj_{\sigma})$ with 
$\Fix \sigma=\emptyset,$ an equivariant Kapranov model is unavailable, 
and it will be treated elsewhere.
}
\end{rmk}


\proof[Proof of Corollary \ref{improved-ehkr}]
The proof is a straightforward consequence of Theorems \ref{R-DM-effective} and  
\ref{coh-conj_spaces}.
\qed


\subsection{Wonderful compactifications of configuration spaces - 
Proof of Theorem \ref{eff-wc-config}}
\label{wonderful-configuration}


Let $(X,\Conj)$ be a compact connected complex manifold equipped with an 
anti-holomorphic involution. For $n\ge 2$, a diagonal in $X^n$ is the submanifold
$$
\Delta_I=\{(p_1,\dots,p_n)\in X^n \, |\,  p_i=p_j, \forall\, i,j\in I\}
$$ 
for $I\subseteq \{1,\dots,n\}$, with $|I|\ge 2.$ 
A polydiagonal is an intersection of diagonals
$$
\Delta_{I_1\dots I_k}:=\Delta_{I_1}\cap\cdots\cap\Delta_{I_k}
$$ 
where $I_i\subseteq \{1,\dots,n\}, |I_i|\ge 2$ for each $1\le i\le k.$ The collection $\aa$ 
of all diagonals and polydiagonals is an arrangement of submanifolds in $X^n,$ and the 
configuration space of ordered $n$-tuples of distinct labeled points in $X$ is defined as
$$
\Conf_n(X):=X^n\setminus \bigcup_{|I|\ge 2}\Delta_I=
\{(p_1,\dots,p_n)\in X^n\, |\,  p_i\neq p_j, \forall\, i\neq j\}.
$$
Since all diagonals $\Delta_I$ are $\Conj_n$-invariant with respect to the induced 
anti-holomorphic involution $\Conj_n:X^n\ra X^n$ given by 
$$
\Conj_n(p_1,\dots,p_n)=(\Conj(p_1),\dots,\Conj(p_n)),
$$ 
$\aa$ is a $G$-arrangement, and $\Conf_n(X)$ is a $G$-space.


\begin{lem}
\label{diag-stretched}
For every $1\le i\le k$ and every $I_i\subseteq \{1,\dots,n\}$ with $|I_i|\ge 2,$ the $G$-pair 
$(X^n,\Delta_{I_1\dots I_k})$ is stretched.
\end{lem}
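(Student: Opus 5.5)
The plan is to identify the polydiagonal with a product of copies of $X$ and to exhibit the restriction map as split by a projection. A polydiagonal $\Delta_{I_1\dots I_k}$ is determined by the partition $\mathcal P$ of $\{1,\dots,n\}$ generated by the sets $I_1,\dots,I_k$: two indices lie in the same block if they are linked by a chain of the $I_i$'s. Let $m$ be the number of blocks. The equivariant isomorphism
\[
\Delta_{I_1\dots I_k}\cong X^m
\]
is obtained by keeping one coordinate per block. This isomorphism commutes with the involutions, since $\Conj_n$ acts coordinatewise. Consequently $F(\Delta_{I_1\dots I_k})=\Delta^{\RR}_{I_1\dots I_k}\cong F(X)^m$, where $\Delta^{\RR}$ denotes the corresponding polydiagonal in $F(X)^n=F(X^n)$.

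Next, choose a representative index $r_b$ in each block $b$ of $\mathcal P$, and let
\[
p:X^n\to X^m,\qquad (x_1,\dots,x_n)\mapsto (x_{r_b})_b
\]
be the projection onto these coordinates. Then $p$ is $G$-equivariant. Composed with the inclusion $\iota:\Delta_{I_1\dots I_k}\hookrightarrow X^n$, it gives exactly the isomorphism $\Delta_{I_1\dots I_k}\cong X^m$. Hence $\iota^*\circ p^*$ is an isomorphism in each degree, and so $\iota^*:H^p(X^n)\to H^p(\Delta_{I_1\dots I_k})$ is surjective.

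Since $p$ is equivariant, it restricts to $F(p):F(X)^n\to F(X)^m$. By the same computation, $F(\iota)^*\circ F(p)^*$ is the identity on $H^*(F(X)^m)$ under the identification above. Therefore $F(\iota)^*:H^p(F(X^n))\to H^p(F(\Delta_{I_1\dots I_k}))$ is surjective as well. These two surjectivities are exactly the conditions of Definition \ref{pair-stretch}, so the pair $(X^n,\Delta_{I_1\dots I_k})$ is stretched.

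The only point requiring care is checking that the fixed locus of the polydiagonal is the real polydiagonal, i.e.\ $F(X^n)\cap\Delta_{I_1\dots I_k}=\Delta^{\RR}_{I_1\dots I_k}$. This is immediate: a point of the polydiagonal is fixed precisely when each of its coordinates lies in $F(X)$. No Künneth formula or other hypothesis on $X$ is needed, since the splitting $p$ works at the level of spaces.
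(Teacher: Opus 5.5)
Your proof is correct. It takes a slightly different route from the paper, whose entire proof is the remark that the lemma is a direct consequence of the Künneth formula.

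\textbf{The paper's route.} Künneth identifies $H^*(X^n)\cong H^*(X)^{\otimes n}$ and $H^*(\Delta_{I_1\dots I_k})\cong H^*(X)^{\otimes m}$, and likewise with $F(X)$ in place of $X$. Under these identifications, restriction multiplies together the tensor factors indexed by each block of the partition. Surjectivity is then visible: $a_1\otimes\cdots\otimes a_m$ is the image of the class carrying $a_b$ in slot $r_b$ and $1$ in every other slot.

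\textbf{Your route.} You observe that the polydiagonal is an equivariant retract of $X^n$: compose $p$ with the inverse of $\Delta_{I_1\dots I_k}\cong X^m$. So restriction is split surjective already at the level of spaces.

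\textbf{Comparison.} Your argument is more elementary. It needs no Künneth decomposition, works with any coefficients, and gives a splitting of the resulting short exact sequences for free. Because the retraction is equivariant, it passes to fixed loci verbatim. It would equally handle the augmented diagonals $\Delta_{I,a}$ used in the remark on Keel's model, since sending coordinates to a real point $a$ is equivariant. The Künneth picture, in exchange, gives an explicit formula for the restriction map and its kernel, which is what one would use for further computations.

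At heart the two proofs coincide: the preimage one writes down in the Künneth argument is exactly $p^*(a_1\otimes\cdots\otimes a_m)$.
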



\proof
The proof is a direct consequence of the K\"unneth formula.
\qed

The space $\Conf_n(X)$ admits several wonderful 
$G$-compactifications\footnote{We refer the interested reader to \cite{li} for details.}.

\begin{itemize}

\item[ 1)] An example is the Fulton-MacPherson compactification \cite{fm}, denoted 
by $X[n].$ In this case, the building set $\BB$ is the set of all diagonals $\Delta_I,$ for $
I\subseteq \{1,\dots,n\}$, with $|I|\ge 2.$

\item[ 2)] Ulyanov's compactification  \cite{ulyanov}, denoted by $X\langle n\rangle,$ 
is another wonderful $G$-compactification of $\Conf_n(X).$ In this case, the building set 
$\BB$ is the set of all polydiagonals.

\item[ 3)] The Kuperberg-Thurston compactification $X^{\Gamma},$ 
where $\Gamma$ is a connected graph with $n$ labeled vertices 
\cite{li}. In this case, $\BB$ is  an appropriate set of polydiagonals.

\end{itemize}

The proof of Theorem \ref{eff-wc-config} is now a direct consequence of 
Lemma \ref{diag-stretched} and Theorem \ref{thm-stretched}.


\section{Hilbert Squares}


In this section we apply Kalinin effectivity to the study of Hilbert squares 
of real algebraic varieties and obtain a formula for the Smith-Thom deficiency 
of $X^{[2]}$.

\medskip

Let $(X,\Conj)$ be a compact $n$-dimensional complex manifold equipped 
with a real structure. We denote by $F$ the fixed locus of $\Conj,$ and let 
$F_i,$ $i=1,\dots,r$ be the connected components of $F.$ The involution 
$\tau:X\times X\to X\times X$ permuting the factors lifts to an involution 
$\Bl(\tau)$ on the blowup $\Bl_\Delta (X\times X)$ of  $X\times X$ along the 
diagonal $\Delta\subset X\times X.$ The quotient of $\Bl_\Delta (X\times X)$ 
by $\Bl(\tau)$ is then naturally isomorphic to the Hilbert square $X^{[2]}.$ The 
real structure $\Conj$ on $X$ naturally lifts to a real structure on $X^{[2]},$ denoted 
by $\Conj^{[2]}.$

By construction, the branch locus $E\subset X^{[2]}$ of the double 
ramified covering $\Bl_\Delta (X\times X)\to X^{[2]}$ is naturally 
isomorphic to $\PP(T^*X),$ it is $\Conj^{[2]}$-invariant and coincides with 
the exceptional divisor of the canonical projection $X^{[2]}\to X^{(2)}$ 
to the symmetric square $X^{(2)}$ of $X.$ Notice ({\it cf.} \cite{loss-general})
that its fixed locus 
$F(E)=\PP_\RR(T^*F)$ is simultaneously the boundary for two real 
$2n$-manifolds, $\HH_0$ and $\HH,$ 
whose interiors satisfy
$$
\interior \HH_{0}\cong (X/\Conj) \setminus F,
\quad  \interior \HH\cong F^{(2)}\setminus \Delta F,
$$
where $\Delta F$ is the diagonal embedding of $F$ in $F^{(2)}.$ Let
$$
\inc_0: F(E)\ra \HH_0 \quad{\text{and}}\quad \inc_1: F(E)\ra \HH
$$
be the inclusion maps, and 
$\displaystyle \mu =(\inc_{0},\inc_1) : F(E)\ra \HH_0\sqcup \HH.$ 

Recall a well known statement, dual to the one proved in 
\cite[Lemma 2.10]{loss-general}.


\begin{lemma}
\label{half-and-half}
Let $M$ be a compact $n$-dimensional manifold with boundary, 
and $\inc: \partial M\to M$ the inclusion. Then 
$$
 \Dim \coker (\inc^*) =\rank (\inc^*)=\frac12 \Dim \, H^*(\partial M). 
 $$
 \qed
\end{lemma}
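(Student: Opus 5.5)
The statement to prove is Lemma~\ref{half-and-half}: for a compact $n$-manifold $M$ with boundary, the restriction $\inc^*:H^*(M)\to H^*(\partial M)$ has rank and cokernel dimension both equal to $\frac12\Dim H^*(\partial M)$, with $\FF_2$ coefficients throughout. Since $\Dim\coker(\inc^*)+\rank(\inc^*)=\Dim H^*(\partial M)$, it suffices to show $\rank(\inc^*)=\frac12\Dim H^*(\partial M)$. The plan is the classical ``half lives, half dies'' argument, which combines the long exact sequence of the pair $(M,\partial M)$ with Poincar\'e--Lefschetz duality over $\FF_2$. No orientability is needed, since we work mod $2$.

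\textbf{Step 1 (isotropy).} Write $V^k=H^k(\partial M)$, so that the cup product pairing $V^k\otimes V^{n-1-k}\to\FF_2$, evaluated on $[\partial M]$, is nondegenerate by Poincar\'e duality for the closed $(n-1)$-manifold $\partial M$. Let $L=\im(\inc^*)=\bigoplus_k L^k$. For $a\in H^k(M)$ and $b\in H^{n-1-k}(M)$ we have
\[
\langle \inc^*a\cup \inc^*b,[\partial M]\rangle=\langle \inc^*(a\cup b),\partial[M,\partial M]\rangle=\langle \delta\inc^*(a\cup b),[M,\partial M]\rangle=0,
\]
because $\delta\circ\inc^*=0$ in the exact sequence of the pair. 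Hence $L^{n-1-k}\subseteq (L^k)^\perp$, and therefore $\Dim L^k+\Dim L^{n-1-k}\le \Dim V^k$.

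\textbf{Step 2 (the reverse inequality).} I will identify $\ker\bigl(\inc^*: H^{n-1-k}(M)\to H^{n-1-k}(\partial M)\bigr)$ with the image of $H^{n-1-k}(M,\partial M)\to H^{n-1-k}(M)$. Under Lefschetz duality $H^{j}(M,\partial M)\cong H_{n-j}(M)$ and $H^j(M)\cong H_{n-j}(M,\partial M)$, the long exact sequence in cohomology is carried to the long exact sequence in homology. Dualizing over the field $\FF_2$ then gives
\[
\Dim L^{n-1-k}=\Dim H^{k}(\partial M)-\Dim L^{k}.
\]
Equivalently, $\delta:V^k\to H^{k+1}(M,\partial M)$ is adjoint to $\inc^*:H^{n-1-k}(M)\to V^{n-1-k}$, so that $\ker\delta=(\im \inc^*)^\perp$. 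Combined with exactness, $\ker\delta=L^k$, this gives $L^k=(L^{n-1-k})^\perp$ exactly. It follows that $\Dim L^k+\Dim L^{n-1-k}=\Dim V^k$.

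\textbf{Step 3 (summation).} Sum the equality of Step~2 over all $k$. Each $L^j$ appears twice, once as $L^k$ and once as $L^{n-1-k}$, so $2\Dim L=\Dim H^*(\partial M)$. This gives the claimed rank, and the cokernel dimension follows from the first observation.

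The main obstacle is the adjointness identity in Step 2, namely that $\langle\delta x,y\rangle_{(M,\partial M)}=\langle x,\inc^*y\rangle_{\partial M}$ for $x\in V^k$ and $y\in H^{n-1-k}(M)$, including the sign-free compatibility of the fundamental classes $\partial[M,\partial M]=[\partial M]$. Over $\FF_2$ there are no signs to track, so I would verify this by the standard naturality of cap products with respect to the connecting homomorphism.
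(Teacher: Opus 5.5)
Your proof is correct. The paper does not actually prove this lemma. It records it as the cohomological dual of \cite[Lemma 2.10]{loss-general}, which is the corresponding statement for $\inc_*:H_*(\partial M)\to H_*(M)$. Passing between the two is immediate: over $\FF_2$ the map $\inc^*$ is the transpose of $\inc_*$, so $\rank(\inc^*)=\rank(\inc_*)$ and $\Dim\coker(\inc^*)=\Dim\ker(\inc_*)$.

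You instead give the standard ``half lives, half dies'' argument directly in cohomology. Its ingredients are exactness of the pair sequence and the adjointness of $\delta$ and $\inc^*$ under Poincar\'e--Lefschetz duality, which rests on $\delta(x\cup\inc^*y)=\delta x\cup y$ and $\partial[M,\partial M]=[\partial M]$. This makes the lemma self-contained rather than dependent on the earlier paper, at the cost of reproving what is presumably the same argument that underlies the cited homological lemma.

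One simplification: Step~1 is redundant. Step~2 already gives $L^k=(L^{n-1-k})^\perp$, and this alone yields $\Dim L^k+\Dim L^{n-1-k}=\Dim H^k(\partial M)$.
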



\begin{prop}
\label{eff-images}
If $X$ is Galois maximal and effective, then $\im(\inc_0^*)=\im(\inc_1^*).$
\end{prop}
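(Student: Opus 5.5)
The plan is to reduce the equality to a single inclusion and then count dimensions. By Lemma \ref{half-and-half}, applied to $M=\HH_0$ and to $M=\HH$ (both have boundary $F(E)=\PP_\RR(T^*F)$), each of $\im(\inc_0^*)$ and $\im(\inc_1^*)$ has dimension $\frac12\Dim H^*(F(E))$. So it suffices to produce a subspace $V\subseteq H^*(F(E))$ with three properties: $V\subseteq\im(\inc_1^*)$, $V\subseteq \im(\inc_0^*)$, and $V$ has dimension at least $\frac12\Dim H^*(F(E))$ (or, alternatively, $V$ spans $\im(\inc_0^*)$). Throughout, $\xi=w_1(\OO_{F(E)}(1))$, so that by Leray--Hirsch $H^*(F(E))$ is a free $H^*(F)$-module on $1,\xi,\dots,\xi^{n-1}$.

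\emph{Side of $\HH_0$.} The interior of $\HH_0$ is $(X/\Conj)\setminus F$, so $H^*(\HH_0)=H^*_G(X\setminus F)$. The boundary $F(E)$ is the quotient of the normal sphere bundle of $F$ by the antipodal map, and $N_{F|X}\cong iTF$. I will use the commutative square formed by $H^*_G(X)\to H^*_G(X\setminus F)$, $H^*_G(X)\xrightarrow{r_G}H^*_G(F)=H^*(F)[u]$, and the map $H^*(F)[u]\to H^*(F(E))$ given by $u\mapsto \xi$. It shows that $\inc_0^*$ restricted to the image of $H^*_G(X)$ is computed by $r_G$ followed by the substitution $u\mapsto\xi$. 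By Galois maximality and Proposition \ref{Image-Beta}, the images of $r_G$ in each degree are Kalinin's filtration terms $\ff_m$. Effectivity says $\ff_m=\Sq H^{\le m/2}(F)$. Hence, after the substitution $u\mapsto\xi$, these images are spanned by the elements
$$
P(a)=\sum_{k}\Sq^k(a)\,\xi^{m-k},\qquad a\in H^m(F),
$$
together with their products with powers of $\xi$. Let $V$ be the subring generated by $\xi$ and the classes $P(a)$.

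\emph{Side of $\HH$.} The interior of $\HH$ is $(F\times F\setminus\Delta F)/\tau$, and its boundary $F(E)$ is the projectivized normal bundle of the diagonal. Steenrod's construction gives an equivariant class $a\otimes a\in H^{2m}_{\ZZ/2}(F\times F)$ whose restriction to $H^*_{\ZZ/2}(\Delta F)=H^*(F)[u]$ is $\sum_k \Sq^k(a)u^{m-k}$. Restricting this class instead to the complement of the diagonal, and then to the boundary sphere-bundle quotient, gives exactly $P(a)$; the same computation identifies $u$ with $\xi$ there. The class $\xi$ itself is $w_1$ of the double cover $F\times F\setminus\Delta\to\interior\HH$, so it also lies in the image. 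Since $\inc_1^*$ is a ring homomorphism, this gives $V\subseteq\im(\inc_1^*)$. The same multiplicativity gives $V\subseteq\im(\inc_0^*)$.

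\emph{Main obstacle: the dimension count.} The step I expect to be hardest is showing that $\Dim V\ge\frac12\Dim H^*(F(E))$, or equivalently that $\im(\inc_0^*)$ is already spanned by the image of $H^*_G(X)$ and powers of $\xi$. I would first try the localization exact sequence of the pair $(X,X\setminus F)$ in equivariant cohomology, using the Thom isomorphism $H^*_G(X,X\setminus F)\cong H^{*-n}_G(F)$. Galois maximality, through Proposition \ref{Image-Beta}, should control the cokernel of $H^*_G(X)\to H^*_G(X\setminus F)$ in high degrees. The aim is to show that this cokernel contributes nothing new upon restriction to $F(E)$. Failing that, I would count directly: choose a basis $\{a_j\}$ of $H^*(F)$. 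Then the elements $P(a_j)\xi^s$, for $0\le s<n-\deg a_j$, are linearly independent in $H^*(F(E))$, because their leading coefficient in $\xi$ is $a_j$. Summing over the basis gives $\sum_j (n-\deg a_j)$ independent elements. By Poincar\'e duality on $F$ this sum equals $\frac12 n\beta_*(F)=\frac12\Dim H^*(F(E))$, which would finish the proof.
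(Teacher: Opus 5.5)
Your proposal is correct and follows the paper's proof: the same reduction via Lemma \ref{half-and-half}, and the same use of Proposition \ref{Image-Beta} plus effectivity to lift $\Sq z$ to $H^{2p}_G(X)$ and restrict it to $F(E)$ (your commutative square is the paper's Mayer--Vietoris step). It also uses the same independent family $\xi^l P(z)$, $0\le l\le n-1-\deg z$, of total dimension $\frac n2\beta_*(F)$, so the localization-sequence detour is unnecessary, and your leading-coefficient argument justifies the independence that the paper only asserts. The one difference is on the $\HH$ side: you use Steenrod's construction $z\otimes z\in H^{2p}_G(F\times F)$ directly, whereas the paper invokes Kalinin's Proposition \ref{sym-kalinin} and reruns the argument; yours is a slightly more self-contained version of the same step.
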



\proof
By Lemma \ref{half-and-half}, it is sufficient to show that 
$\im(\inc_1^*)\cap \im(\inc_0^*)$ contains a subspace of dimension
$\frac{n}2\beta_*(F)$.

\smallskip

Let us consider a decomposition $X=U\cup V$ where $U$ is 
a closed
equivariant tubular neighborhood of $F$ and $V$ is the closure of 
$X\setminus U$. Here, $U$ can be identified with a disc subbundle $
D(F)$ of $T^* F$ and $U\cap V$ with a sphere subbundle 
$S(F)=\partial D(F)=\partial V$. By applying the Mayer-Vietoris sequence 
in equivariant cohomology to this decomposition, we get a long exact 
sequence
$$
\dots \to H^n_G(X)\to H_G^n(U)\oplus H_G^n(V)\to H_G^n(S)\to \dots 
$$
Since $U$ is homotopy equivalent to $F$ and $G$ acts freely on 
$V$ and $S$, this sequence turns into
\begin{align*}
\dots \to H^n_G(X)\to \left(\bigoplus_p H^p(F)\otimes 
H^{n-p}(\RR\PP^\infty)\right) &\oplus H^n(V/\Conj)\\
\to &\, H^n(F(E))=H^n(S/\Conj )\to \dots
\end{align*}

According to Proposition \ref{Image-Beta}, due to effectivity of $X,$ 
for every $z\in H^p(F), p\ge 0,$ there exists $\hat z\in H^{2p}_G(X)$ 
such that the first component of
the mapping 
$$
H^*_G(X)\to (H^*(F)\otimes H^*(\RR\PP^\infty))\oplus H^*(V/\Conj )
$$
sends $\hat z$ to $u^p z+ u^{p-1}\Sq^1 z +\dots + \Sq^p z,$
where $u$ is the characteristic class of the tautological line bundle 
over $BG=\RR \PP^\infty$.

Let now $w\in H^*(V/\Conj )$ denote the second summand of the image of 
$\hat z$. Due to exactness of the Mayer-Vietoris sequence, the image of
$$
(u^p z + u^{p-1}\Sq^1 z  + \dots + \Sq^p z, w)
$$ 
in $H^*(F(E))=H^*(S/\Conj )$ is equal to $0$. This gives
$$
\gamma^{p} z +\gamma^{p-1}\Sq^1 z+\dots 
+\gamma\Sq^{p-1} z +\Sq^{p} z = \inc_0^*w 
$$
where $\gamma\in H^1(F(E))$ is the characteristic class of the 
tautological line bundle over $E$.

Due to Proposition \ref{sym-kalinin}, $(F\times F, \tau)$ is also a Galois 
maximal effective $G$-space. Therefore, the above arguments apply 
literally to it and show that for every $z\in H^p(F)$ there exists
$w'\in H^*(F^{(2)}\setminus \Delta F),$ such that
$$
\gamma^{p} z +\gamma^{p-1}\Sq^1 z+\dots 
+\gamma\Sq^{p-1} z +\Sq^{p} z = \inc_1^*w'.
$$

Finally, since, for any integer $l\ge 1$, we have
$$
\inc_0^*\gamma_0^l w= \gamma^l \inc_0^* w \quad{\text{and}}\quad 
 \inc_1^*\gamma_1^l w'= \gamma^l \inc_1^* w',
$$
where $\gamma_{0}$ and $\gamma_1$ are characteristic 
classes of the corresponding 2-sheeted coverings,
we conclude that $\im(\inc_1^*)\cap \im(\inc_0^*)$ contains a 
subspace generated by
$$
\gamma^l \left(\gamma^{p} z +\gamma^{p-1}\Sq^1 z+\dots 
+\gamma\Sq^{p-1} z +\Sq^{p} z\right), \, 0\le l\le n-1-p, z\in H^p(F).
$$
Thus, there remains to notice that these generators are linear 
independent, and therefore the dimension of this subspace is equal to
$$
\sum_{p=0}^{n-1} (n-p)\beta_p(F)=\frac{n}2\beta_*(F).
$$
\qed

Let $\delta_k=\rank \Delta_k, \, k\geq 0$ where $\Delta_k$ are the 
connecting homomorphisms in the Smith exact sequence (\ref{rses}).


\begin{prop}
\label{6.1-deficiency}
Let $X$ be a real nonsingular  $n$-dimensional  algebraic variety 
with $\Tors_2H_*(X,\ZZ)=0$ and Smith-Thom deficiency $\defi(X)=a$.
Then the Smith-Thom deficiency of $X^{[2]}$ is given by
$$
\defi(X^{[2]})=2\,\rank \mu_*+\sum_{k=1}^{2n}(2k-1)\delta_k
+a\beta_*+\frac{a(a-1)}2-n\beta_*(F(X))-\beta_{\rm odd}.
$$
In particular, if $X$ is maximal then
$
\displaystyle
\defi(X^{[2]})=2\,\rank \mu_*-n\beta_*-\beta_{\rm odd}.
$
\end{prop}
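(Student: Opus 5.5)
The plan is to compute the two total Betti numbers $\beta_*(X^{[2]})$ and $\beta_*(F(X^{[2]}))$ separately and then subtract. Throughout, $\beta_*$, $\beta_{\rm odd}$ without argument refer to $X$.

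\emph{Complex side.} The assumption $\Tors_2H_*(X,\ZZ)=0$ makes the $\FF_2$-Betti numbers of $X$ equal to the rational ones. I will use the description $X^{[2]}=\Bl_\Delta(X\times X)/\Bl(\tau)$ together with the blow-up formula and the Leray--Hirsch theorem for $E=\PP(T^*X)$. From these, $H^*(X^{[2]})$ decomposes as the cohomology of the symmetric square $X^{(2)}$ plus $(n-1)$ shifted copies of $H^*(X)$ coming from $E$. I first do this with rational coefficients, using Macdonald's formula $\beta_*(X^{(2)})=\frac12(\beta_*^2+\beta_{\rm even}-\beta_{\rm odd})$. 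I then check that no $2$-torsion appears in $H_*(X^{[2]},\ZZ)$ under our hypothesis; this follows from the torsion-free description of $H^*(X^{[2]},\ZZ)$ (Totaro-type results). This yields
$$
\beta_*(X^{[2]})=\tfrac12\beta_*^2+\tfrac12\beta_*-\beta_{\rm odd}+(n-1)\beta_*.
$$
The term $-\beta_{\rm odd}$ in the statement comes from the anti-symmetric contribution of odd classes.

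\emph{Real side.} Use $F(X^{[2]})=\HH_0\cup_{F(E)}\HH$ and the Mayer--Vietoris sequence for this decomposition. It gives
$$
\beta_*(F(X^{[2]}))=\beta_*(\HH_0)+\beta_*(\HH)-\beta_*(F(E))+2\dim\coker(\mu^*).
$$
Since $\mu^*=\inc_0^*\oplus\inc_1^*$, one has $\dim\coker\mu^*=\beta_*(F(E))-\rank\mu_*$, which is how the $2\,\rank\mu_*$ term enters. The three pieces are then computed as follows.
\begin{itemize}
\item[ 1)] Leray--Hirsch for $\PP_\RR(T^*F)$ gives $\beta_*(F(E))=n\beta_*(F)$.
\item[ 2)] For $\HH$, Lefschetz duality identifies $H^*(\HH)$ with the cohomology of $F^{(2)}$ relative to a neighborhood of $\Delta F$. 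Since the $\FF_2$-cohomology of $F^{(2)}$ and of $(F^{(2)},\Delta F)$ is classically known, this gives $\beta_*(\HH)$ as a quadratic expression in $\beta_*(F)$, of the shape $\frac12\beta_*(F)(\beta_*(F)-1)+n\beta_*(F)$-type terms.
\item[ 3)] For $\HH_0$, duality identifies $H^*(\HH_0)$ with $H^*(X/\Conj,F)$. This group is exactly what appears in the Smith sequence~(\ref{rses}), and counting dimensions along that exact sequence via the ranks $\delta_k$ gives $\beta_*(\HH_0)$ in terms of $\beta_*$, $\beta_*(F)$ and $\sum_k k\,\delta_k$-type sums. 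This step is where the weights $(2k-1)$ arise.
\end{itemize}

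\emph{Combining.} Finally substitute $\beta_*(F)=\beta_*-a$ and expand the quadratic terms. The $\beta_*^2$ contributions cancel, leaving $a\beta_*+\frac{a(a-1)}2$. The case $a=0$ is immediate; if $X$ is maximal, the Smith sequence gives all $\delta_k=0$.

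\emph{Expected obstacle.} The main difficulty is the bookkeeping in step 3): relating $\sum_k\dim H^k(X/\Conj,F)$ to the $\delta_k$ with exactly the weights $(2k-1)$ requires a careful induction along the Smith sequence in each degree, together with the Poincar\'e duality of~(\ref{rses}). I would first check the degree-by-degree count on small examples, such as curves, before writing the general telescoping sum.
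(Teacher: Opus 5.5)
Your plan follows the same route as the paper. The paper's proof also reruns \cite[Proposition 6.1]{loss-general}: a Mayer--Vietoris count for $F(X^{[2]})=\HH_0\cup_{F(E)}\HH$, fed with $\beta_*(F(E))=n\beta_*(F)$, $\beta_*(\HH)=\frac12\beta_*(F)^2+\frac{n-1}2\beta_*(F)$ and $\beta_*(\HH_0)=n\beta_*-\frac n2\beta_*(F)-\sum(2k-1)\delta_k$. In your step 2 the exact value is $\frac12\beta_*(F)(\beta_*(F)-1)+\frac n2\beta_*(F)$, not $+n\beta_*(F)$. The square is of the total $\beta_*(F)$, because pairs of points on distinct components of $F$ also lie in $F^{(2)}\setminus\Delta F$.

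\textbf{First gap: the $\beta_*(\HH_0)$ computation.} This third value carries all the content, and you leave it open. Meanwhile you assert things about it that hold only for one reading of $\delta_k$. Write $h_k,b_k,f_k$ for the dimensions of $H^k(\bar X,F)$, $H^k(X)$, $H^k(F)$, and $p_k$ for the rank of $\pr^*$ in degree $k$ of (\ref{rses}). Exactness gives $h_0=p_0$ and $h_{k+1}=h_k+f_k-b_k+p_k+p_{k+1}$. Telescoping up to $h_{2n+1}=0$ gives $\sum_k p_k=a/2$. Summing $h_k$ over $k\le 2n$, with Poincar\'e duality on $X$ and on $F$ ($\sum_k kb_k=n\beta_*$, $\sum_k kf_k=\frac n2\beta_*(F)$), gives
\[
\beta_*(\HH_0)=\dim H^*(\bar X,F)=n\beta_*-\frac n2\beta_*(F)-\sum_{k\ge 0}(2k-1)p_k .
\]
So the weights $(2k-1)$ sit on the ranks of $\pr^*$, and these are what vanish for maximal $X$. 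The $k=0$ term drops only if every component of $X$ meets $F$.

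In terms of the connecting maps $\Delta_k$, the same sequence gives only $\beta_*(\HH_0)=\frac a2+\sum_k\rank\Delta_k$, with no weights. Those ranks do not vanish for maximal $X$. For $\CC\PP^1$ with the standard structure, $\bar X=D^2$ and $F=S^1$, and $\Delta_1$ restricts to the coboundary $H^1(S^1)\to H^2(D^2,S^1)$, which is an isomorphism. Hence both the appearance of the weights $(2k-1)$ and your claim that maximality forces all $\delta_k=0$ require $\delta_k=\rank\pr^*_k$.

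\textbf{Second gap: the final algebra.} Under your convention $\beta_*=\beta_*(X)$, the final algebra does not leave $a\beta_*+\frac{a(a-1)}2$. After substitution, everything except $2\rank\mu_*+\sum(2k-1)\delta_k-n\beta_*(F)-\beta_{\rm odd}$ combines to
\[
\frac12\bigl(\beta_*^2-\beta_*(F)^2\bigr)-\frac a2=a\beta_*(X)-\frac{a(a+1)}2=a\beta_*(F)+\frac{a(a-1)}2 .
\]
So the term $a\beta_*$ in the statement must be read as $a\beta_*(F(X))$. With $\beta_*(X)$ it is off by $a^2$. The two readings coincide only for $a=0$, which is why the maximal case is unaffected.

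As a check, take the ellipsoid $\PP^1\times\PP^1$. Here $a=2$, $\sum(2k-1)\delta_k=3$, $\rank\mu_*=2$, $\beta_*(X^{[2]})=14$ and $\beta_*(F(X^{[2]}))=3+3+4-2\cdot 2=6$. So $\defi(X^{[2]})=8$, which is what the $\beta_*(F)$ reading gives; the $\beta_*(X)$ reading gives $12$.

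With these two points settled, your outline coincides with the paper's argument.
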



\proof
This computation repeats almost literally the proof of 
\cite[Proposition 6.1]{loss-general}. The changes are caused by 
replacing the maximality assumption by Galois maximality, and are 
the following. The equality $\beta_*(F(E))=n\beta_*$ is to be 
replaced by $\beta_*(F(E))=n\beta_*(F(X))$.  In particular, the equality 
$\displaystyle \beta_*(\HH)=
\frac12\sum_{i=1}^r\beta_*^2(F_i)+\frac{n-1}{2}\beta_*$ is replaced by  
$\displaystyle \beta_*(\HH)=\frac12\sum_{i=1}^r\beta_*^2(F_i)+
\frac{n-1}{2}\beta_*(F(X))$. By contrast, the equality 
$\displaystyle \beta_*(\HH_0)=\frac{n}{2}\beta_*,$ proved in  
 \cite[Lemma 3.1]{loss-general} as an application of  the 
Smith exact sequence, is to be replaced by 
$\displaystyle \beta_*(\HH_0)=n\beta_* -\frac{n}{2}\beta_*(F(X)) -
\sum^{2n}_{k=1}(2k-1)\delta_k.$
\qed


\begin{lemma}\label{rank-mu}
If $\im (\inc_{0}^*)=\im (\inc_{1}^*)$ then 
$\rank \mu_*=\frac{n}2 \beta_*(F(X))$
\end{lemma}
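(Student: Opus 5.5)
We need $\rank\mu_*$ where $\mu=(\inc_0,\inc_1):F(E)\to\HH_0\sqcup\HH$. In cohomology $\mu^*(a,b)=\inc_0^*a+\inc_1^*b$, so $\im\mu^*=\im\inc_0^*+\im\inc_1^*$. The homology map $\mu_*$ is dual to $\mu^*$ with $\FF_2$ coefficients, so $\rank\mu_*=\rank\mu^*=\Dim(\im\inc_0^*+\im\inc_1^*)$.

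Under the hypothesis $\im(\inc_0^*)=\im(\inc_1^*)$, this sum collapses to $\im(\inc_0^*)$. Hence $\rank\mu_*=\rank\inc_0^*$.

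The space $\HH_0$ is a compact $2n$-manifold with boundary $\partial\HH_0=F(E)$. Lemma \ref{half-and-half} therefore gives $\rank\inc_0^*=\frac12\Dim H^*(F(E))$.

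It remains to compute $\Dim H^*(F(E))$. We have $F(E)=\PP_\RR(T^*F)$, a bundle over $F$ with fibre $\RR\PP^{n-1}$. The Leray--Hirsch theorem applies, using powers of $w_1$ of the tautological bundle, and yields $\beta_*(F(E))=n\beta_*(F(X))$. Combining,
$$\rank\mu_*=\tfrac12\, n\,\beta_*(F(X))=\tfrac n2\beta_*(F(X)).$$

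The only point needing care is the identification $\rank\mu_*=\Dim(\im\inc_0^*+\im\inc_1^*)$. This rests on $\FF_2$ universal coefficients, under which homology and cohomology maps of finite complexes are mutually dual. One should also check that this reading of $\mu$ (the map into the disjoint union) matches its use in Proposition \ref{6.1-deficiency}. Everything else is a direct application of Lemma \ref{half-and-half} and Leray--Hirsch.
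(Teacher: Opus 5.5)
Your proposal is correct and is essentially the paper's argument: the paper also combines $\beta_*(F(E))=n\beta_*(F(X))$ with Lemma~\ref{half-and-half} applied to either half $\HH_0$ or $\HH$ (its reference to ``$U, V$'' is a slip), and it uses $\rank\mu_*=\dim\im(\inc_0^*)$ without comment. You just make explicit the $\FF_2$-duality behind $\rank\mu_*=\dim(\im\inc_0^*+\im\inc_1^*)$ and the Leray--Hirsch count for $F(E)=\PP_\RR(T^*F)$.
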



\proof
Since $\beta_*(E)=n\beta_*(F(X))$, the relation stated follows 
from Lemma \ref{half-and-half} applied to any of the halves $U, V$.
\qed


\proof[Proof of Theorem \ref{defect-GM}] 
Due to Proposition \ref{eff-images} we have $\im (\inc_{0}^*)=\im (\inc_{1}^*).$ 
Therefore, $\rank \mu_*=\dim \im (\inc_{0}^*)=\dim \im (\inc_{1}^*)$
so that by Lemma \ref{rank-mu} we get $\rank \mu_* =\frac{n}2 \beta_*(F(X)).$ 
This, together with Proposition \ref{6.1-deficiency}, completes the proof.
\qed


\proof[Proof of Corollary \ref{square}] 
Since $X$ is a conjugation space, it is maximal and effective. In particular, 
to compute the Smith-Thom deficiency of $X^{[2]}$ one can apply Theorem 
\ref{defect-GM} and we notice that  for conjugation spaces we have 
$\beta_{\rm odd}=0,\, \delta_k=0$ and $\defi(X)=0.$ Therefore 
$\defi(X^{[2]})=0.$ 
\qed


\bibliographystyle{alpha}

\begin{thebibliography}{7}



\bibitem{AP}
{\sc C.~Allday, V.~Puppe,}
{\em  Cohomological methods in transformation groups.}
 Cambridge Studies in Advanced
Mathematics, vol. 32. Cambridge University Press, Cambridge, 1993.



\bibitem{bruno}
{\sc A. Bruno, M. Mella,} 
{\em The automorphism group of $\overline M_{0,n}$.} 
J. Eur. Math. Soc. (JEMS) 15 (2013), no. 3, 949–968.



\bibitem{ceyhan}
{\sc \"O. Ceyhan,} 
{\em Graph homology of the moduli space of pointed real curves of genus zero.}
Selecta Math. (N.S.) 13 (2007), no. 2, 203–237.


\bibitem{cgz}
{\sc X. Chen, P. Georgieva, and A. Zinger,} 
{\em The cohomology ring of the Deligne-Mumford space of real 
rational curves with conjugate marked points,} 
\href{https://doi.org/10.48550/arXiv.2305.08798}{arXiv:2305.08798 [math.AG]}.


\bibitem{dp}
{\sc C.~De Concini, C.~Procesi, }
{\em Wonderful models of subspace arrangements.} 
Selecta Mathematica 1 (1995), 459--494.


\bibitem{dik}
{\sc A. Degtyarev, I. Itenberg, V. Kharlamov,} 
{\em Real Enriques surfaces.} 
Lecture Notes in Mathematics, 1746. Springer-Verlag, Berlin, 2000.


\bibitem{dk}
{\sc A. Degtyarev, V. Kharlamov,} 
{\em Topological properties of real algebraic varieties: Rokhlin's way.} 
Uspekhi Mat. Nauk 55 (2000), no. 4 (334), 129--212; 
translation in Russian Math. Surveys 55 (2000), no. 4, 735--814.



\bibitem{dk-halves}
{\sc A. Degtyarev, V. Kharlamov,}  
{\em Halves of a real Enriques surface. } 
Comm. Math. Helv., 71, no. 4 (1996), 628--663.
Extended version: {\em Distribution of the components of a real Enriques surface.} 
Preprint of the Max-Planck Institute, MPI/95-58 (1995)


\bibitem{derval}
{\sc D. Derval,}
{\em Vari\'et\'es Galois-maximales et \'eclatements.}
Manuscripta Math.   110 (2003), no. 4, 467--474.


\bibitem{ehkr}
{\sc P. Etingof, A. Henriques, J. Kamnitzer, E. Rains,} 
{\em The cohomology ring of the real locus of the moduli space of stable curves 
of genus 0 with marked points.} 
 Ann. of Math. 171 (2010), no. 2, 731--777.


\bibitem{fm}
{\sc W. Fulton, R. MacPherson,} 
{\em A compactification of configuration spaces.} Ann. of Math. (2) 139 (1994), no. 1, 183--225.



\bibitem{fp}
{\sc M. Franz, V. Puppe,} 
{\em Steenrod squares on conjugation spaces.} 
C. R. Acad. Sci. Paris, Ser. I 342 (2006), 187 -- 190.


\bibitem{Fr}
{\sc M. Franz,} 
{\em Symmetric products of equivariantly formal spaces.} 
Canad. Math. Bull. 61 (2018), no. 2, 272--281.


\bibitem{gitler}
{\sc S. Gitler,}
{\em The cohomology of blow ups.} 
Bol. Soc. Mat. Mexicana (2) 37 (1992), no. 1-2, 167 -- 175.



\bibitem{HHP}
{\sc J.-C. Hausmann, T. Holm, V. Puppe,}
{\em  Conjugation spaces.}
 Alg. Geom. Topology 5 (2005), 923–964.


\bibitem{kalinin-ss}
{\sc I.O.~Kalinin,} 
{\em Cohomological characteristics of real projective hypersurfaces.} 
Algebra i Analiz 3 (1991), no. 2, 91--110 (Russian); 
English transl. in St. Petersburg Math. J. 3 (1992), no. 2, 313--332.



\bibitem{kalinin-eff}
{\sc I.O.~Kalinin,} 
{\em Cohomological characteristics of real algebraic manifolds.} 
Algebra i Analiz 14(2002), no.5, 39–72 (Russian); translation in
St. Petersburg Math. J. 14 (2003), no. 5, 739–763.



\bibitem{kalinin}
{\sc I.O.~Kalinin,} 
{\em Cohomology of real algebraic varieties.} (Russian) 
Zap. Nauchn. Sem. S.-Peterburg. Otdel. Mat. Inst. Steklov. (POMI) 299 (2003), 
Geom. i Topol. 8, 112–151, 328--329; translation in J. Math. Sci. (N.Y.) 131 (2005), 
no. 1, 5323--5344.


\bibitem{kapranov-chow}
{\sc M. M. Kapranov,} 
{\em Chow quotients of Grassmannians, I.}
Adv. Soviet Math. 16, Part 2, Amer. Math. Soc., Providence, 1993, 29 – 110.

\bibitem{keel}
{\sc S. Keel,}
{\em Intersection theory of moduli space of stable n-pointed curves of genus zero,} 
Trans. Amer. Math. Soc. 330 (1992), 545--574.


\bibitem{loss-surfaces}
{\sc V. Kharlamov, R. R\u asdeaconu,}
 {\em On the maximality problem for Hilbert squares of real surfaces.}
Selecta Math., New Ser. 31, 104 (2025). 


\bibitem{loss-general}
{\sc V. Kharlamov, R. R\u asdeaconu,}
{\em On the Smith-Thom deficiency of Hilbert squares.} 
J. Topol.   17 (2024), no. 2, Paper No. e12345, 29 pp.


\bibitem{krasnov}
{\sc V.A. Krasnov,}
{\em Harnack-Thom inequalities for mappings of real algebraic varieties.} 
Izv. Akad. Nauk SSSR, Ser. Mat. 47 (2) (1983) 268--297.


\bibitem{kt}
{\sc G. Kuperberg, D. Thurston,}
{\em Perturbative 3-manifold invariants by cut-and-paste topology.} 
preprint, math.GT/9912167.


\bibitem{li}
{\sc L. Li,} 
{\em Wonderful compactification of an arrangement of subvarieties.} 
Michigan Math. J. 58 (2009), no. 2, 535--563.


\bibitem{rains}
{\sc E. Rains,}
{\em  The homology of real subspace arrangements,} 
J. Topol. 3 (2010), no. 4, 786--818.


\bibitem{ulyanov}
{\sc A. P. Ulyanov,}
{\em Polydiagonal compactification of configuration spaces.}
J. Algebraic Geom.   11 (2002), no. 1, 129--159.


\bibitem{hamel}
{\sc J. van Hamel,}
{\em  Geometric cohomology frames on Hausmann-Holm-Puppe conjugation spaces.}
Proc. Amer. Math. Soc. 135 (2007), no. 5, 1557--1564.


\bibitem{voisin}
{\sc C. Voisin,}
{\em  Hodge Theory and Algebraic Geometry, I.}
Cambridge studies in advanced mathematics, Vol. 76, 2002.



\end{thebibliography}

\end{document}